%

\documentclass[10pt]{article}

\usepackage[utf8x]{inputenc}
\usepackage{amsfonts}
\usepackage{amsmath}\usepackage{amssymb}
\usepackage{amsthm}
\usepackage{mathrsfs}
\usepackage{latexsym}
\usepackage{graphicx}
\usepackage{bm}
\usepackage{inputenc}
\usepackage[shortlabels]{enumitem}

\usepackage{hyperref}
\usepackage{graphicx}
\usepackage[a4paper,bindingoffset=0.2in,%
left=1.0in,right=1.0in,top=1.2in,bottom=1.2in,%
footskip=.35in]{geometry}
\usepackage{imakeidx}
\makeindex[title=Notation Index]

\allowdisplaybreaks


\newtheorem{Lemma}{Lemma}[section]
\newtheorem{Corollary}[Lemma]{Corollary}
\newtheorem{Theorem}[Lemma]{Theorem}

\newtheorem{theorem}{Theorem}

\theoremstyle{definition}

\newtheorem{Definition}[Lemma]{Definition}

\theoremstyle{remark}

\newtheorem{Remark}[Lemma]{Remark}
\newtheoremstyle{citing}
{3pt}
{3pt}
{\bfshape}
{}
{\itseries}
{.}
{.5em}
{\thmnote{#3}}
\theoremstyle{citing}



\DeclareMathOperator{\restrict}{\llcorner}

\DeclareMathOperator{\loc}{loc}
\DeclareMathOperator{\Tan}{Tan}
\DeclareMathOperator{\diam}{diam}
\DeclareMathOperator{\im}{im}
\DeclareMathOperator{\Lip}{Lip}
\DeclareMathOperator{\Unp}{Unp}
\DeclareMathOperator{\dmn}{dmn}
\DeclareMathOperator{\Nor}{Nor}
\DeclareMathOperator{\Hom}{Hom}
\DeclareMathOperator{\Der}{D}

 \DeclareMathOperator{\dist}{dist}
\DeclareMathOperator{\trace}{trace}
\DeclareMathOperator{\Clos}{clos}
\DeclareMathOperator{\ap}{ap}
\DeclareMathOperator{\lin}{lin}
\DeclareMathOperator{\Cut}{Cut}
\DeclareMathOperator{\reach}{reach}
\DeclareMathOperator{\Id}{Id}
\DeclareRobustCommand{\rchi}{{\mathpalette\irchi\relax}}
\newcommand{\irchi}[2]{\raisebox{\depth}{$#1\chi$}}
\DeclareMathOperator{\Int}{int}
\DeclareMathOperator{\graph}{graph}
\DeclareMathOperator{\epi}{epi}

\DeclareMathOperator{\conv}{conv}
\DeclareMathOperator{\Dis}{Dis}
\DeclareMathOperator{\ind}{\bm{1}}

\newcommand{\ar}[2]{\boldsymbol{r}_{#1}^{#2}}
\newcommand{\arl}[2]{\underline{\ar{#1}{#2}}}
\newcommand{\R}{\mathbf{R}}
\newcommand{\bS}{\mathbf{S}}
\newcommand{\N}{\mathbb{N}}
\usepackage{color}

\title{Curvature measures and soap bubbles beyond convexity}
\author{Daniel Hug \and Mario Santilli}
\date{}

\begin{document}
	\maketitle
	
\begin{abstract}
Extending the celebrated results of Alexandrov (1958) and Korevaar--Ros (1988) for smooth sets, as well as the results of Schneider (1979) and the first author (1999) for arbitrary convex bodies, we obtain for the first time the characterization of the isoperimetric sets of a uniformly convex  smooth finite-dimensional normed space (i.e.~Wulff shapes) in the non-smooth and non-convex setting, based on a natural geometric condition involving the curvature measures. More specifically we show, under a weak mean convexity assumption,  that finite unions of disjoint Wulff shapes are the only sets of positive reach $ A \subseteq \mathbf{R}^{n+1} $ with finite and positive volume such that, for some $ k \in \{0, \ldots , n-1\}$, the  $ k $-th generalized curvature measure $ \Theta^\phi_{k}(A, \cdot) $, which is defined on the unit normal bundle of $ A $ with respect to the relative geometry induced by $ \phi $, is proportional to the top order generalized curvature measure $ \Theta^\phi_{n}(A, \cdot)$. If $ k = n-1 $ the conclusion  holds for all sets of positive reach with finite and positive volume. We also prove a related sharp result about the removability of the singularities. This result is based on the extension of the notion of a normal boundary point, originally introduced by Busemann and Feller (1936) for arbitrary convex bodies, to  sets of positive reach.

These findings are new even in the Euclidean space.

Several auxiliary and related results are proved, which are of independent interest. They include the extension of the classical Steiner--Weyl tube formula to arbitrary closed sets in a finite dimensional uniformly convex normed vector space, a general formula for the derivative of the localized volume function, which extends and complements recent results of Chambolle--Lussardi--Villa (2021), and general versions of the Heintze--Karcher inequality.
\end{abstract}
	
	\paragraph{\small MSC-classes 2020.}{\small 49Q10, 52A30, 53C45, 53C60, 53C65, 28A75, 26B25.}
\paragraph{\small Keywords.}{\small Tube formula, curvature measures, sets of positive reach, Wulff shapes, removable singularities}
	\tableofcontents
	
	\section{Introduction}
	
The following result is fundamental in the geometry of submanifolds: if a
smooth hypersurface in a Euclidean space  encloses a bounded domain and one of its mean curvature functions is constant, then it is a Euclidean sphere. We refer to this statement as the \emph{soap bubble theorem}. We remark that we are considering \emph{only} hypersurfaces that \emph{enclose} a domain (i.e.~they are embedded);
otherwise (i.e.~for immersed hypersurfaces) it is well known that such a uniqueness result is in general true only in very special situations; see \cite{MR707850} and \cite{MR815044}. The aforementioned fundamental result was proved by Alexandrov for the mean curvature function in \cite{MR0102114} and by Korevaar and Ros for the higher-order mean curvature functions in \cite{MR925120} and \cite{MR996826}. Several other proofs were found  earlier under various additional hypotheses (e.g.\ convexity, star-shapedness, mean convexity type assumptions) starting from the pioneering work of Jellett in \cite{Jellett1853} in the nineteenth century; see \cite{MR68236} (and the references therein). An analogous result is true for hypersurfaces embedded in finite dimensional uniformly convex normed spaces,  provided that  the Euclidean mean curvature functions and the Euclidean sphere are replaced by their anisotropic counterparts (in particular the role of the sphere is played by the Wulff shape); see \cite{MR2514391}.

A key feature of all the results mentioned so far is that they hold for \emph{smooth} hypersurfaces. In fact, since these results are about hypersurfaces with constant mean curvature functions, the smoothness hypothesis may appear to be natural and somehow unavoidable. However, considering different but equivalent points of view, it turns out that the soap bubble theorem is only part of a more general and more natural problem that does not require any a-priori smoothness assumption. There are at least two standard ways to adjust the soap bubble theorem: via the variational approach based on the notion of a critical point of the isoperimetric functional, and via the integral-geometric approach based on the notion of curvature measures.

Let us first briefly describe the variational approach. A standard computation shows that if a smooth hypersurface with constant mean curvature encloses a bounded domain $ \Omega $, then $ \Omega $ is a critical point of the Euclidean isoperimetric functional among all sets of finite perimeter. Therefore, the Alexandrov theorem can be equivalently stated saying that a critical point of the isoperimetric functional is a sphere, provided it has a smooth boundary. The same is true for the anisotropic counterpart studied in \cite{MR2514391}, if a suitable anisotropic isoperimetric functional is considered. From this point of view the assumption of smoothness appears to be a possibly convenient condition rather than a necessary  restriction. In fact, one can ask whether it is true that all critical points of the Euclidean  isoperimetric functional are Euclidean balls. It should be remarked that the regularity theory in geometric measure theory does not imply that a critical point is automatically smooth. Hence, in the non-smooth framework the problem genuinely involves hypersurfaces which a priori may have severe singularities. A positive resolution of this type of problem is given in \cite{MR3921314} for the Euclidean case and in \cite{MR4160798} in an anisotropic setting, under some additional hypotheses.

We now focus on the integral-geometric approach, which is the one adopted in the present work. The Weyl tube formula asserts that for all sufficiently small radii $ \rho > 0 $ the volume of a tubular neighbourhood $ C_\rho $ around  a domain $ C \subseteq \mathbf{R}^{n+1} $ with $ \mathcal{C}^2 $-boundary $\partial C$ is a polynomial function in $ \rho $; in other words,
\begin{equation*}
\mathcal{L}^{n+1}(C_\rho \setminus
C) = \sum_{j=0}^{n} \frac{\rho^{j+1}}{j+1}c_{n-j}   \qquad \textrm{for all sufficiently small $ \rho > 0 $.}
\end{equation*}
 The coefficients $ c_k $ can be obtained integrating over $ \partial C $ (or, equivalently, over the unit normal bundle of $ C $) the $ (n-k) $-th mean curvature function of $ \partial C $ (with respect to the exterior normal map) for $ k \in \{0, \ldots , n\} $. In the special case of smooth convex domains or of convex polytopes in $\R^d$ with $d\in\{2,3\}$, this formula has already been found in the nineteenth century by Steiner. Now if $ k \in \{1, \ldots , n\} $, then the $ k $-th curvature measure  of $ C $ is defined as the Borel measure obtained by integrating over a given Borel subset of $ \partial C $ the $ (n-k) $-th mean curvature function of $ \partial C$. It has been Federer's fundamental discovery in \cite{MR0110078}  that the existence of the curvature measures and the validity of the polynomial Weyl tube formula are independent of the smoothness hypothesis. In the same seminal paper, Federer laid the foundation of a theory of \emph{sets of positive reach}, a class that includes all convex bodies, all embedded $ \mathcal{C}^2 $-submanifolds and much more; indeed the boundary of a domain with positive reach need not even be a topological manifold (see the example described at the end of this section).  The soap bubble theorem can be equivalently stated saying that if $ C $ is a bounded domain with smooth boundary  such that one of the curvature measures of $C$ is a multiple of the area measure associated to the  boundary of $C$, then $ C $ is a Euclidean ball. At this point the most compelling problem is to establish a corresponding uniqueness result without any smoothness hypothesis. In fact, this is a classical task in convex geometry. For an arbitrary convex body and for the $ 0 $-th mean curvature measure, Diskant accomplished this task and even established a sharp stability result in \cite{MR0239541}. A decade later, Schneider  \cite{MR522031}  resolved the problem for all curvature measures associated with a general convex body  (see also \cite[Theorem 8.5.7]{MR3155183}). Different approaches to prove and generalize Schneider's theorem were found by Kohlmann in \cite{MR1621971} and  \cite{MR1604003}. The extension to the anisotropic setting of the results of Schneider and Kohlmann can be found in \cite{Hug99}. On the other hand, as far as we are aware of, up to now no results are available for arbitrary sets of positive reach, and thus the problem has remained unexplored in the non-convex and non-smooth setting. The main goal of this paper is to address this problem in full generality.  Our main theorems, Theorem \ref{theo: Alexandrov}, Theorem \ref{theo: Alexandrovmodifiedb}  and Corollary \ref{Cor: Alexandrov} extend Alexandrov's theorem to all sets of positive reach  and extend its higher-order version, considered by Korevaar and Ros in the smooth setting, to sets of positive reach under a natural mean convexity hypothesis. Actually we treat this problem directly in the more general setting of uniformly convex finite-dimensional normed spaces, hence generalizing also the main result in \cite{MR2514391} to arbitrary sets of positive reach.

A central notion of this paper are the generalized curvature measures of a set of positive reach in the relative (Minkowski) geometry induced by a uniformly convex $ \mathcal{C}^2$-norm $ \phi $ in $\R^{n+1}$.     If $ \phi $ is the Euclidean norm, then  Federer's tube formula for sets of positive reach allows to introduce the Euclidean curvature measures (see \cite{MR0110078} and \cite{MR0849863}). On the other hand, for non-Euclidean norms a general tube formula for non-smooth and non-convex sets was missing so far. In the recent paper \cite{MR3458179}, an attempt to obtain an anisotropic tube formula for arbitrary sets of positive reach has been impeded by difficulties to obtain Lipschitz estimates for the nearest point projection (which in the Euclidean setting were established by Federer in \cite{MR0110078}); see the  comments after Theorem 1.1 in  \cite[p.~472]{MR3458179}.  Given this premise, the first task in this paper is to lay the foundation of the theory of curvature measures in the anisotropic setting for sets of positive reach and, more generally, for arbitrary closed sets. For this purpose, let $ \bm{\delta}^\phi_A $ and $ \bm{\nu}^\phi_A $ be the distance function and the Cahn--Hoffman map of $ A $ with respect to the metric induced by the conjugate norm $ \phi^\ast $ of $ \phi $ (see equations \eqref{eq: distance function} and  \eqref{eq: nu function} below). Then we define the $ \phi $-unit normal bundle $ N^\phi(A) $ of $ A $ by
\begin{equation*}
    N^\phi(A) = \{ (a, \eta) : a \in A,\, \eta \in \partial \mathcal{W}^\phi, \, \bm{\delta}^\phi_A(a + r \eta) = r \; \textrm{for some $ r > 0 $}\},
\end{equation*}
where $ \mathcal{W}^\phi = \{\eta \in \mathbf{R}^{n+1}: \phi^\ast(\eta) \le  1\}$ is called the Wulff shape of $ \phi $. In general, $ \bm{\nu}^\phi_A $ is a multivalued map and $ N^\phi(A) $ is a countably $ (\mathcal{H}^n, n) $ rectifiable subset of $ A \times \partial \mathcal{W}^\phi$. Employing recent results on the Lipschitz and differentiability properties of $ \bm{\nu}^\phi_A $ provided in \cite{kolasinski2021regularity}, we introduce the $ \phi $-principal curvatures
\begin{equation*}
    - \infty < \kappa^\phi_{A,1}(a, \eta) \leq \ldots \leq \kappa^\phi_{A,n}(a, \eta) \leq + \infty
\end{equation*}
of $ A $  at $ \mathcal{H}^n $ a.e.\ $ (a, \eta) \in N^\phi(A) $, similarly as in the smooth or convex case, as follows: if $ \chi^\phi_{A,1} (a+r\eta)\leq \ldots \leq \chi^\phi_{A,n}(a+r\eta) $ denote the eigenvalues of $ \Der \bm{\nu}^\phi_{A}(a+r\eta) $, then we define
 \begin{equation*}
     \kappa^\phi_{A,i}(a, \eta) = \frac{\chi^\phi_{A,i}(a+ r\eta)}{1 - r \chi^\phi_{A,i}(a + r \eta)} \in (-\infty, +\infty],
 \end{equation*}
where the right-hand side is independent of $ r $, if $ r>0 $ is chosen sufficiently small, depending on $(a, \eta)$. We denote by $ \widetilde{N}^\phi(A) $  the set of points $(a, \eta) \in N^\phi(A) $ where the principal curvatures exist (hence $ \mathcal{H}^n(N^\phi(A) \setminus \widetilde{N}^\phi(A)) =0$) and define $ \widetilde{N}^\phi_d(A) $ to be the set of all $ (a, \eta) \in \widetilde{N}^\phi(A) $ such that $ \kappa^\phi_{A,d}(a, \eta) < \infty $ and $ \kappa^\phi_{A, d+1}(a, \eta) = + \infty $. In particular, $ \widetilde{N}^\phi_n(A)$ is the set of all $(a, \eta) \in N^\phi(A)$ such that the $\phi $-principal curvatures of $ A $ at $(a,\eta)$ are finite. The $ i $-th $ \phi $-mean curvature function $ \bm{H}^\phi_{A,i}(a, \eta) $ of $ A $ at $ \mathcal{H}^n $ a.e.\ $ (a, \eta) \in N^\phi(A) $ is defined by taking certain combinations of the elementary symmetric functions of the $ \phi $-principal curvatures (if $ A $ is a smooth submanifold, then we recover classical definitions); see Definition \ref{Def:finitecurv}. As a consequence of Theorem \ref{theo: Steiner closed},   the volume of the tubular neighbourhood $ B^\phi(A, \rho)\setminus A= \{x \in \mathbf{R}^{n+1}: 0<\bm{\delta}^\phi_A(x) \leq \rho \}$ of an arbitrary compact set $ A $ can be expressed by the formula
 \begin{equation*}
     \mathcal{L}^{n+1}(B^\phi(A, \rho)\setminus A) = \sum_{j=0}^n \frac{1}{j+1} \int_{N^\phi(A)}\phi(\bm{n}^\phi(\eta))\, J^\phi_A(a,\eta)\,  \inf\{ \rho, \bm{r}^\phi_A(a,\eta)  \}^{j+1}\bm{H}^\phi_{A,j}(a,\eta)\,  d\mathcal{H}^n(a,\eta).
 \end{equation*}
Here $ \bm{r}^\phi_A $ is the reach function of $ A $ (see \eqref{eq: reach function}),  $ J^\phi_A $ is a Jacobian-type function encoding the tangential properties of the normal bundle (see Definition \ref{def: jacobian}) and $ \bm{n}^\phi$ is the Euclidean exterior unit-normal of $ \mathcal{W}^\phi$ (see \eqref{eq: normal of Wulff shape}). For an arbitrary compact set $A$, the right side of the tube formula is  in general not a polynomial function in $ \rho $ (the volume growth is sub-polynomial)  and the mean curvature functions $ \bm{H}^\phi_{A,i}$ are  in general not integrable on $ N^\phi(A) $ (in spite of the fact that the integral is well defined due to the power of the reach function under the integral). On the other hand, if the $\phi $-reach of $ A $ (see Definition \ref{def: reach}) is greater than or equal to some positive threshold $ \rho_0>0 $, then $ \bm{r}^\phi_A(a, \eta) \geq \rho_0  $ for every $(a, \eta) \in N^\phi(A)$ and we obtain a polynomial-type formula
 \begin{equation*}
     \mathcal{L}^{n+1}(B^\phi(A, \rho)\setminus A) = \sum_{j=0}^n \frac{\rho^{j+1}}{j+1} \int_{N^\phi(A)}\phi(\bm{n}^\phi(\eta))\, J^\phi_A(a,\eta)\,  \bm{H}^\phi_{A,j}(a,\eta)\,  d\mathcal{H}^n(a,\eta)
 \end{equation*}
for   $ \rho\in (0, \rho_0 )$. Noting that the functions $ J^\phi_A \cdot \bm{H}^\phi_{A,i} $ are integrable on $ N^\phi(A) $ if $ A $ has positive reach,  we define the  $ m $-th generalized $ \phi $-curvature measure of $ A $ as the signed Radon measure supported on $ N^\phi(A) $ given by
\begin{equation*}
    \Theta^\phi_{m}(A, B) = \frac{1}{n-m+1}\int_{N^\phi(A) \cap B}\phi(\bm{n}^\phi(\eta))\, J^\phi_A(a,\eta)\,  \bm{H}^\phi_{A,n-m}(a,\eta)\,  d\mathcal{H}^n(a,\eta)
\end{equation*}
for every bounded Borel set $ B \subseteq \mathbf{R}^{n+1} \times \mathbf{R}^{n+1}$ and $m\in\{0,\ldots,n\}$. If $ \phi $ is the Euclidean norm, these measures coincide with the classical generalized curvature measures for sets of positive reach (up to the normalization); see \cite{MR0849863}. Moreover, if $ \phi $ is the Euclidean norm, then the curvature functions and the tube formula from Section \ref{sec: steiner formula}  agree with those in \cite{MR2031455}. However, our approach here is substantially different from the one in \cite{MR2031455}. In fact, in \cite{MR2031455} the Euclidean principal curvatures of an arbitrary closed set are introduced by means of an approximation with sets of positive reach (see Stach{\'o}'s approximation Lemma in \cite[Lemma 2.3]{MR2031455} and \cite{MR0534512}). Observe that such an approximation argument is not available in the anisotropic case, since there is no fully fledged theory of sets with positive reach ready to be used  in the current more general framework (see the discussion above on \cite{MR3458179}).

An important consequence of the tube formula for arbitrary closed sets (see also Corollary \ref{theo: change of variable}) is the sharp integral-geometric inequality in Theorem \ref{theo: heintze karcher}, for which equality is attained  only by disjoint unions of finitely many rescaled and translated Wulff shapes (assuming an a priori bound for the mean curvature). Theorem \ref{theo: heintze karcher} generalizes the geometric inequality known as the  Heintze--Karcher inequality for sufficiently smooth sets (see \cite{MR996826} and \cite{MR1173047}) to arbitrary closed sets in Euclidean space, under a natural (weak) mean convexity assumption. For sufficiently smooth sets $C$, the Heintze--Karcher inequality provides an upper bound for the volume of $C$ by the integral average over the boundary  of $C$ of the reciprocal of the mean curvature function of $C$. For convex bodies (compact convex sets with non-empty interiors) this inequality was proved in \cite{MR1604003} (in the Euclidean case) and in \cite[Lemma 2.45]{Hug99} (in the anisotropic framework). The general anisotropic version for arbitrary closed sets given in Theorem \ref{theo: heintze karcher} will be specialized to sets of positive reach in Theorem \ref{theo: heintze karcher positive reach}, and this result is one of the pillars for the subsequent soap bubble theorems.

In Section \ref{sec: steiner formula}, we also provide a detailed analysis of the curvature functions of an arbitrary closed set $ A $ in relation to  the dimension of the fibers of the $\phi$-unit normal bundle $ N^\phi(A) $ of $A$.  This analysis allows us to obtain the general disintegration formula stated in Theorem \ref{thm:disint}, which in the present generality is new even in the special Euclidean case (see \cite[Theorem 5.5]{MR1742247} for sets with positive reach  and \cite[Theorems 1.56 and 1.57]{Hug99} for convex bodies).

As another consequence of the tube formula, we obtain differentiability  properties of the parallel volume function in Section \ref{sec:three}.  In Theorem \ref{theo: derivative of volume} we determine the left and right derivatives of the localized volume function of the tubular neighbourhood around a closed set $ A $ with respect to $\phi$ and provide a novel necessary and sufficient condition for the existence of the two-sided derivative. Although this result is not needed in the proof of the soap bubble theorems, we  have decided to include it here since it is of interest in itself. Indeed the derivative of the volume function has been the subject of several investigations; see  \cite{MR442202}, \cite{MR2031455}, \cite{MR2218870}, \cite{MR2865426} and \cite{MR4329249}.   The most recent contribution \cite[Theorem 5.2]{MR4329249} treats \emph{arbitrary and possibly asymmetric} norms and establishes formulae for the left and the right derivative of the volume function in terms of \emph{area-integrals} on the boundary of the tubular neighbourhood of a compact set. Theorem 5.2 in \cite{MR4329249} can be compared with the third equality in \eqref{theo: derivative of volume:1} and in \eqref{theo: derivative of volume:2} (notice, however, that the results in \cite{MR4329249} are not localized). On the other hand, the main novelty of our results are the formulae for the left and the right derivatives of the localized volume function in terms of \emph{curvature integrals} on the $ \phi $-unit normal bundle of $ A $.  As a consequence, our result establishes the relation between the (localized) area integrals on the boundary of the tubular neighbourhood of $ A $ with the corresponding (localized)  curvature integrals on the $ \phi $-unit normal bundle of $ A $.

In Section \ref{sec: Alexandrov points}, we generalize the classical notion of a  normal boundary point of a convex body (see \cite{MR3155183} and \cite{MR3272763} and the references therein)  to arbitrary sets of positive reach. It is well known that the boundary of a convex body $ C $ is locally the epigraph of a convex function around each of its boundary points. Employing the classical theorem of Alexandrov on the twice differentiability of  convex functions, one can see that the normal boundary points are precisely those boundary points where the locally representing function is twice differentiable. Consequently, the notion of a pointwise second fundamental form and pointwise defined mean curvature functions can be introduced at each such boundary point.  Some authors refer to the normal boundary points as \emph{Alexandrov points}, and for this reason we denote the set of these points by $\mathcal{A}(C)$. If $ \bm{p}: \mathbf{R}^{n+1} \times \mathbf{R}^{n+1} \rightarrow \mathbf{R}^{n+1}$, $ \bm{p}(a, \eta) = a $ for $(a, \eta) \in \mathbf{R}^{n+1} \times \mathbf{R}^{n+1}$, is the projection onto the first coordinate, it is known that for an arbitrary convex body $ C\subset\R^{n+1} $  it holds  that
\begin{equation*}
    \mathcal{A}(C) = \bm{p}(\widetilde{N}^\phi_n(C)).
\end{equation*}
Moreover, the pointwise defined mean curvature functions of $ C $, associated with the pointwise second fundamental form, coincide with the mean curvature functions defined on $ \widetilde{N}^\phi_n(C)$; see \cite[Lemma 3.1]{MR1654685} and \cite{Hug99}.
We extend these results to arbitrary sets of positive reach in an arbitrary uniformly convex normed space. A key difference to the case of convex bodies is that the boundary of a set $C$ of positive reach is in general not graphical around each of its points. Therefore we define $ \partial^v C$ as the set of points $ a \in \partial C $ where the fibre $ N(C,a) $ of the Euclidean unit normal bundle of $ C $ contains only one vector (the same set  is obtained if $ \partial^v C$ is defined with respect to the fibres of $N^\phi(C)$ and a general norm $\phi$). In Theorem \ref{prop:p1} we show that a set of positive reach $ C $ is locally the epigraph of a semiconvex function around each point  $ a \in \partial^v C$ and we prove that this function is twice differentiable at $ a $ if and only if $ \kappa^\phi_{C,i}(a, \eta) < \infty $ for  $ i = 1, \ldots , n $, where $ N^\phi(C,a) = \{\eta \}$. This result opens the way to introduce the notion of an Alexandrov point for a set of positive reach: these are all points of $ \partial^v A $ where the semiconvex function locally representing $ A $ is twice differentiable. We denote the set of all Alexandrov points of $ C $ by $ \mathcal{A}(C)$ and, as in the convex case, each Alexandrov point entails pointwise curvature information that we express by the mean curvature functions $ \bm{h}^\phi_{C,k} $, for $ k \in\{0, \ldots , n\}$; see Definition \ref{def: pointwise mean curvature}. We prove that
\begin{equation*}
    \mathcal{A}(C) = \bm{p}(\widetilde{N}^\phi_n(C)) \cap \partial^v C
\end{equation*}
and
\begin{equation*}
    \bm{h}^\phi_{C,k}(a) = \bm{H}^\phi_{C,k}(a, \eta) \quad \textrm{for every $ a \in \mathcal{A}(C) $ and $ N^\phi(C,a) = \{\eta\}$};
\end{equation*}
see Corollary \ref{cor: postive reach and viscosity boundary}. In the remaining part of Section \ref{sec: Alexandrov points}, employing the geometric inequality for closed sets from Theorem \ref{theo: heintze karcher}, we derive  a version of the Heintze--Karcher inequality for sets of positive reach in Theorem \ref{theo: heintze karcher positive reach}. As a consequence, we obtain Corollary \ref{theo: lower bound} which states that the only sets $C$ of positive reach  with finite and positive volume such that
\begin{equation}\label{eq:introlb}
    \bm{h}^\phi_{C,1}(a) \geq\frac{n \mathcal{P}^\phi(C)}{(n+1) \mathcal{L}^{n+1}(C)} \qquad \textrm{for $ \mathcal{H}^n $ a.e.\ $ a \in \mathcal{A}(C) $},
\end{equation}
are finite unions of rescaled and translated Wulff shapes of radius $ \frac{(n+1) \mathcal{L}^{n+1}(C)}{\mathcal{P}^\phi(C)}$. Here $ \mathcal{P}^\phi(C) $ is the $ \phi $-perimeter of $ C $, namely
\begin{equation*}
    \mathcal{P}^\phi(C) = \int_{\partial^\ast C}\phi(\bm{n}(C,a))\, d\mathcal{H}^n (a),
\end{equation*}
where $ \partial^\ast C $ is the reduced boundary of $ C $ and $ \bm{n}(C,\cdot) $ is the measure-theoretic Euclidean unit normal of $ C $ (notice that a set of positive reach has always locally finite perimeter, see Lemma \ref{lem:finite perimeter and positive reach}). One can easily see that the lower bound in  \eqref{eq:introlb} is sharp by considering convex bodies obtained as  unions of congruent spherical caps; see  Remark \ref{spherical caps}.  A key feature of these results is that they provide information on the global geometry of a set $C$ of positive reach  requiring only assumptions on points in $ \partial^v C $. This is quite surprising in view of the fact that there exist sets $ C $  of positive reach with finite volume and non-empty interior such that $ \mathcal{H}^n(\partial C \setminus \partial^v C) > 0 $ (an explicit example is obtained by taking the function $ f $ in the example described at the end of this introduction such that $ \{ f =0\} $ has positive $  \mathcal{L}^1$ measure).  Corollary \ref{theo: lower bound} plays a key role for the soap bubble theorems in Section \ref{Section: positive reach}, but is also of independent interest.

\paragraph{} A special case of our first soap bubble theorem (Theorem \ref{theo: Alexandrov}) can be stated as follows. In view of condition \eqref{intro Al eq 1}, we point out that while the Radon measures $\Theta^\phi_{j}(C, \cdot)$, for $j=0,\ldots,n-1$ and a set $C\subset\R^{n+1}$ of positive reach, are signed in general, the measure  $\Theta_n^\phi(C,\cdot)$ is always  non-negative. The hypothesis in \eqref{intro Al eq 1}, as well as the hypothesis in  \eqref{intro Alb eq} and \eqref{intro Al 2 eq}, respectively, of the subsequent theorems, is the natural generalization of the hypothesis of ``$ k $-convexity" for smooth domains (see \cite{MR3107515} and references therein) to the singular setting of the present paper. If $C$ is a convex body in $\R^{n+1}$, then all generalized curvature measures $\Theta_j^\phi(C,\cdot)$ are non-negative. In fact, a set $C\subset \R^{n+1}$ of positive reach is convex  if and only if  $\Theta_j^\phi(C,\cdot)\ge 0$ for all $j=0,\ldots,n-1$.

\begin{theorem}[\protect{cf.\ Theorem \ref{theo: Alexandrov}}]\label{intro Al}
Let $ k \in \{1, \ldots, n \}$,  and let $ C \subset \mathbf{R}^{n+1}$ be a set of positive reach with positive and finite volume. Assume  that
\begin{equation}\label{intro Al eq 1}
  \Theta^\phi_{n-i}(C, \cdot) \quad \textrm{is a non-negative measure for  $  i=1,\ldots, k-1 $}
\end{equation}
and
\begin{equation*}
    \Theta^\phi_{n-k}(C, \cdot) = \lambda \, \Theta^\phi_n(C, \cdot)\quad \textrm{for some $ \lambda \in \mathbf{R}\setminus \{0\}$}.
\end{equation*}
Then  $ C $ is a finite disjoint union of rescaled and translated Wulff shapes of radius $ \frac{(n+1) \mathcal{L}^{n+1}(C)}{\mathcal{P}^\phi(C)}$ and $\lambda>0$.

If $ k = 1 $, then the conclusion holds for every set of positive reach with finite and positive volume and for every $ \lambda \in \mathbf{R}$.
\end{theorem}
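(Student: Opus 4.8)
The plan is to derive the theorem from the Heintze--Karcher inequality for sets of positive reach (Theorem \ref{theo: heintze karcher positive reach}) together with the sharp lower bound for the first mean curvature (Corollary \ref{theo: lower bound}), reducing the general $k$ to the case $k=1$ by a chain of generalized Minkowski-type inequalities among the measures $\Theta^\phi_j(C,\cdot)$. The starting point is the hypothesis $\Theta^\phi_{n-k}(C,\cdot)=\lambda\,\Theta^\phi_n(C,\cdot)$, which after integrating (taking $B=\R^{n+1}\times\R^{n+1}$, or rather a bounded set exhausting $N^\phi(C)$) yields a relation of the form $V^\phi_{n-k}(C)=\lambda\,V^\phi_n(C)$ between the total curvature integrals, where $V^\phi_n(C)$ is (a multiple of) the volume $\mathcal{L}^{n+1}(C)$ and $V^\phi_{n-1}(C)$ is (a multiple of) the $\phi$-perimeter $\mathcal{P}^\phi(C)$. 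First I would record that $\Theta^\phi_n(C,\cdot)\ge 0$ always and that $\lambda$ must have the correct sign: since $C$ has finite positive volume, $V^\phi_n(C)>0$, and since the $k$-convexity hypothesis \eqref{intro Al eq 1} forces $\Theta^\phi_{n-i}(C,\cdot)\ge0$ for $i<k$, a Newton--Maclaurin / Gårding-type inequality applied pointwise to the $\phi$-mean curvature functions on $\widetilde N^\phi_n(C)$ shows $\bm{H}^\phi_{C,n-k}\ge 0$ on a set of full measure, hence $\lambda>0$.

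Next I would establish the key quantitative step: under \eqref{intro Al eq 1}, one has the pointwise inequality
\begin{equation*}
\bm{H}^\phi_{C,n-k}(a,\eta)^{1/k}\le c_{n,k}\,\bm{H}^\phi_{C,n-1}(a,\eta)\quad\text{(up to normalizing constants)}
\end{equation*}
for $\mathcal{H}^n$-a.e.\ $(a,\eta)\in\widetilde N^\phi_n(C)$, which is the classical Maclaurin inequality $p_k^{1/k}\le p_1$ for the normalized elementary symmetric functions of the $\phi$-principal curvatures, valid precisely because the lower-order mean curvatures are non-negative there (so the principal curvature vector lies in the Gårding cone). Combining this with Hölder's inequality on $N^\phi(C)$ and the hypothesis converts the identity $V^\phi_{n-k}(C)=\lambda V^\phi_n(C)$ into a lower bound of the form $\bm{h}^\phi_{C,1}(a)\ge \dfrac{n\,\mathcal{P}^\phi(C)}{(n+1)\,\mathcal{L}^{n+1}(C)}$ holding for $\mathcal{H}^n$-a.e.\ $a\in\mathcal{A}(C)$ — i.e.\ exactly hypothesis \eqref{eq:introlb} of Corollary \ref{theo: lower bound}. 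That corollary then immediately yields that $C$ is a finite disjoint union of rescaled translated Wulff shapes of radius $\frac{(n+1)\mathcal{L}^{n+1}(C)}{\mathcal{P}^\phi(C)}$, and the value of $\lambda$ is pinned down by evaluating both measures on a single Wulff shape.

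For the case $k=1$ one does not need \eqref{intro Al eq 1} at all: the hypothesis reads $\Theta^\phi_{n-1}(C,\cdot)=\lambda\,\Theta^\phi_n(C,\cdot)$, i.e.\ $\bm{H}^\phi_{C,n-1}$ equals the constant $\lambda$ (in the appropriate normalization) $\Theta^\phi_n$-a.e., so $\bm{h}^\phi_{C,1}$ is constant on $\mathcal{A}(C)$; integrating and using the divergence-type identity relating $\int \bm{h}^\phi_{C,1}$ to $\mathcal{P}^\phi(C)$ and $\mathcal{L}^{n+1}(C)$ (a consequence of the tube formula / Theorem \ref{theo: heintze karcher positive reach}) forces that constant to equal $\frac{n\mathcal{P}^\phi(C)}{(n+1)\mathcal{L}^{n+1}(C)}$, again triggering Corollary \ref{theo: lower bound}; the sign $\lambda>0$ is then automatic and no a priori restriction on $\lambda$ is needed. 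The main obstacle I anticipate is the first reduction step in full rigour: justifying that the pointwise Maclaurin inequality holds $\mathcal{H}^n$-a.e.\ on $\widetilde N^\phi_n(C)$ and that the contributions from $N^\phi(C)\setminus\widetilde N^\phi_n(C)$ (where some $\phi$-principal curvature is $+\infty$) are controlled — this requires knowing that on that part the relevant curvature functions $\bm{H}^\phi_{C,j}$ have a definite sign compatible with the inequality, which is where the structural analysis of the fibers of $N^\phi(C)$ from Section \ref{sec: steiner formula} and the disintegration formula (Theorem \ref{thm:disint}) must be invoked carefully, rather than the smooth-case algebra alone.
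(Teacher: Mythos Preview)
Your overall architecture---establish the pointwise lower bound $\bm{h}^\phi_{C,1}\ge n\mathcal{P}^\phi(C)/((n+1)\mathcal{L}^{n+1}(C))$ on $\mathcal{A}(C)$ and then invoke Corollary~\ref{theo: lower bound}---is exactly the paper's route. The gap is in how that lower bound (and the sign $\lambda>0$) is obtained. You propose to combine the integral identity $\mathcal{V}^\phi_{n-k}(C)=\lambda\,\mathcal{V}^\phi_n(C)$ with H\"older's inequality and Maclaurin; but $\mathcal{V}^\phi_n(C)=\int_{\widetilde N^\phi_n(C)}\phi(\bm{n}^\phi(\eta))J^\phi_C\,d\mathcal{H}^n$ is a boundary-type quantity (in fact $\mathcal{V}^\phi_n(C)\ge\mathcal{P}^\phi(C)$), not the volume, so this identity alone never produces a factor $\mathcal{L}^{n+1}(C)$. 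The missing ingredient is the anisotropic Minkowski--Hsiung formula (Theorem~\ref{theo: minkowski formula}), which is what brings the volume into play. In the paper (Lemma~\ref{lem: constant mean curvature properties}) one first reads off from the \emph{measure} equality that $\bm H^\phi_{C,k}=(k+1)\lambda$ pointwise on $\widetilde N^\phi_n(C)$ and $=0$ on its complement; then the Minkowski formula for $r=k$ gives $(n-k+1)\int_{N^\phi(C)}\phi(\bm n^\phi(\eta))J^\phi_C\,\bm H^\phi_{C,k-1}\,d\mathcal{H}^n=k(k+1)\lambda\,(n+1)\mathcal{L}^{n+1}(C)$. Bounding $\bm H^\phi_{C,k-1}$ from below via Maclaurin on $\partial^v_+C$ (legitimate because mean convexity plus $\bm H^\phi_{C,k}=(k+1)\lambda>0$ puts the curvature vector in $\Gamma_k$) and using $\bm H^\phi_{C,k-1}\ge0$ on the rest yields $[(k+1)\lambda]^{1/k}\ge\binom{n}{k}^{1/k}\mathcal{P}^\phi(C)/((n+1)\mathcal{L}^{n+1}(C))$, and then pointwise Maclaurin again delivers the bound on $\bm h^\phi_{C,1}$.

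The same omission breaks your argument for $\lambda>0$: G{\aa}rding--Maclaurin does \emph{not} yield $S_k\ge0$ from $S_1,\dots,S_{k-1}\ge0$; the conclusion $\lambda\ge0$ comes instead from the Minkowski identity just displayed, whose left side is non-negative by the $(k-1,\phi)$-mean-convexity hypothesis. Two smaller points: (i) $\Theta^\phi_{n-k}$ involves $\bm H^\phi_{C,k}$, not $\bm H^\phi_{C,n-k}$---you have the index reversed throughout; (ii) your anticipated obstacle concerning $N^\phi(C)\setminus\widetilde N^\phi_n(C)$ is not one: since $\bm H^\phi_{C,0}=\bm 1_{\widetilde N^\phi_n(C)}$, the measure $\Theta^\phi_n(C,\cdot)$ vanishes on that set, so the hypothesis $\Theta^\phi_{n-k}=\lambda\,\Theta^\phi_n$ directly forces $\bm H^\phi_{C,k}=0$ there $\mathcal{H}^n$-a.e., and no appeal to Theorem~\ref{thm:disint} or stratification is needed.
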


 In Theorem \ref{theo: Alexandrov} we point out how the common radius of the translated and rescaled Wulff shapes can be expressed in terms of $\lambda$, $n$ and $k$. Conversely, whenever $C$ is a finite disjoint union of rescaled and translated Wulff shapes, each of the generalized curvature measures $\Theta^\phi_{n-k}(C, \cdot)$ is proportional to the top order curvature measure $\Theta^\phi_n(C, \cdot)$.  For the proof of Theorem \ref{theo: Alexandrov}, we first establish an anisotropic extension of the Minkowski--Hsiung formulae for arbitrary sets of positive reach, which is  provided in Theorem \ref{theo: minkowski formula} and adds to several previous versions available in the literature (see \cite[Theorem 3.4]{MR1273334}, \cite[Corollary 3.4]{MR1651402},  \cite[Theorem 2.42]{Hug99}). It is interesting to notice that the hypothesis in \eqref{intro Al eq 1} is preserved for limits of sequences of smooth sets satisfying a positive lower bound on the reach; see Lemma \ref{lem:reachvague} and Corollary \ref{cor:reachvague}. Henceforth Theorem \ref{intro Al} might be helpful to study global geometric properties of limits of smooth almost $ k $-th mean convex sets. We remark that if $ C $ is a smooth set, then the hypothesis in \eqref{intro Al eq 1} is redundant, because the existence of an elliptic point in combination with the continuity of the principal curvatures guarantees the non-negativity hypothesis in \eqref{intro Al eq 1}. This is a classical argument outlined in \cite{MR996826}. In the general situation of Theorem \ref{intro Al}, we do not have any continuity for the curvature functions and it is unclear whether   the hypothesis in \eqref{intro Al eq 1} can be relaxed further (beyond the mean convexity assumption of Theorem \ref{theo: Alexandrov}).

 If the assumption \eqref{intro Al eq 1} is strengthened to include also the condition that $\Theta^\phi_{n-k}(C,\cdot)$ is a non-negative measure, then we obtain the following version of Theorem \ref{intro Al}, which is a special case of Theorem \ref{theo: Alexandrovmodifiedb}. In the statement of the result, we use the $\phi$-curvature measures $  \mathcal{C}^\phi_{j}(C, \cdot)$, $j\in\{0,\ldots,n\}$, of $C$, which are the Radon measures on $\R^{n+1}$ that are obtained as the image measures of the generalized curvature measures $  \Theta^\phi_{j}(C, \cdot)$ under the projection map $\bm{p}$, that is, $  \mathcal{C}^\phi_{j}(C, \cdot)= \Theta^\phi_{j}(C, \cdot\times\partial\mathcal{W}^\phi)$.

\begin{theorem}[\protect{cf.\ Theorem \ref{theo: Alexandrovmodifiedb}}]\label{intro Alb}
Let $ k \in \{1, \ldots, n \}$,  and let $ C \subset \mathbf{R}^{n+1}$ be a set of positive reach with positive and finite volume. Assume  that
\begin{equation}\label{intro Alb eq}
  \Theta^\phi_{n-i}(C, \cdot) \quad \textrm{is a non-negative measure for  $  i=1,\ldots, k $}
\end{equation}
and
\begin{equation*}
    \mathcal{C}^\phi_{n-k}(C, \cdot) = \lambda \, \mathcal{C}^\phi_n(C, \cdot)\quad\textrm{for some $ \lambda >0$}.
\end{equation*}
Then $ C $ is a finite union of rescaled and translated Wulff shapes of radius $ \frac{(n+1) \mathcal{L}^{n+1}(C)}{\mathcal{P}^\phi(C)}$.
\end{theorem}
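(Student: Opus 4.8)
The plan is to derive Theorem \ref{intro Alb} (i.e.\ Theorem \ref{theo: Alexandrovmodifiedb}) from Theorem \ref{intro Al} (Theorem \ref{theo: Alexandrov}) by an induction-type argument on $k$ that trades the proportionality of the \emph{generalized} curvature measures on $N^\phi(C)$ for proportionality of the \emph{projected} curvature measures $\mathcal{C}^\phi_j(C,\cdot)$ on $\R^{n+1}$, together with the full force of the anisotropic Minkowski--Hsiung formulae from Theorem \ref{theo: minkowski formula} and the Heintze--Karcher inequality for sets of positive reach (Theorem \ref{theo: heintze karcher positive reach}, and its Corollary \ref{theo: lower bound}). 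The key observation is that the hypothesis $\mathcal{C}^\phi_{n-k}(C,\cdot)=\lambda\,\mathcal{C}^\phi_n(C,\cdot)$ with $\lambda>0$ is \emph{weaker} than $\Theta^\phi_{n-k}(C,\cdot)=\lambda\,\Theta^\phi_n(C,\cdot)$, since it only records the total curvature over each fibre of the unit normal bundle rather than its full distribution; so we cannot invoke Theorem \ref{intro Al} directly. What saves us is the extra sign assumption in \eqref{intro Alb eq}, namely that $\Theta^\phi_{n-k}(C,\cdot)$ is itself non-negative.

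First I would integrate the proportionality relation $\mathcal{C}^\phi_{n-k}(C,\cdot)=\lambda\,\mathcal{C}^\phi_n(C,\cdot)$ against the constant function $1$ to get the global identity $\mathcal{C}^\phi_{n-k}(C,\R^{n+1})=\lambda\,\mathcal{C}^\phi_n(C,\R^{n+1})=\lambda\,\mathcal{L}^{n+1}(C)$ (up to the normalization fixed in Section \ref{sec: steiner formula}), and then feed this into the anisotropic Minkowski--Hsiung formulae of Theorem \ref{theo: minkowski formula}, which express the total curvature measures $\mathcal{C}^\phi_j(C,\R^{n+1})$ of a set of positive reach in terms of one another and of $\mathcal{L}^{n+1}(C)$ and $\mathcal{P}^\phi(C)$. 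This pins down $\lambda$ in terms of $\mathcal{L}^{n+1}(C)$, $\mathcal{P}^\phi(C)$, $n$ and $k$, and in particular shows $\lambda=\big(\tfrac{(n+1)\mathcal{L}^{n+1}(C)}{\mathcal{P}^\phi(C)}\big)^{-k}$ times a combinatorial constant, which is exactly the value it takes on a disjoint union of Wulff shapes of radius $\tfrac{(n+1)\mathcal{L}^{n+1}(C)}{\mathcal{P}^\phi(C)}$.

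Next I would exploit the sign hypotheses in \eqref{intro Alb eq} together with Newton-type (Maclaurin) inequalities between the $\phi$-mean curvature functions $\bm H^\phi_{C,j}$ on the normal bundle. Because $\Theta^\phi_{n-i}(C,\cdot)\ge 0$ for $i=1,\ldots,k$, the integrand $J^\phi_C\cdot\bm H^\phi_{C,i}$ is non-negative $\mathcal{H}^n$-a.e.\ on $N^\phi(C)$ for those $i$; on $\widetilde N^\phi_n(C)$ this forces the elementary symmetric functions of the $\phi$-principal curvatures to lie in the Gårding cone, so that the Maclaurin inequalities $\big(\bm H^\phi_{C,1}/\binom{n}{1}\big)\ge\cdots\ge\big(\bm H^\phi_{C,k}/\binom{n}{k}\big)^{1/k}$ hold pointwise there (up to normalizing constants, with the convention that outside $\widetilde N^\phi_n(C)$ the higher $\bm H^\phi_{C,j}$ are dealt with via the fibre-dimension analysis of Section \ref{sec: steiner formula}, which makes the relevant integrands vanish or have a sign). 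Integrating the chain of Maclaurin inequalities over $N^\phi(C)$ against the measure $\phi(\bm n^\phi(\eta))\,J^\phi_C\,d\mathcal{H}^n$, and using Jensen's inequality to pass the concave powers through the (normalized) integral, converts the assumed identity $\int \bm H^\phi_{C,n-k}\,d(\cdots)=\lambda\int\bm H^\phi_{C,n}\,d(\cdots)$ — wait, more precisely the identity $\mathcal{C}^\phi_{n-k}(C,\R^{n+1})=\lambda\,\mathcal{L}^{n+1}(C)$ combined with $\mathcal{C}^\phi_n(C,\R^{n+1})=\tfrac1{n+1}\mathcal{P}^\phi(C)$-type boundary identities — into a lower bound for $\int_{\mathcal{A}(C)}\bm h^\phi_{C,1}\,d\mathcal{H}^n$ of exactly the form needed to apply Corollary \ref{theo: lower bound}. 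That corollary then yields that $C$ is a finite disjoint union of rescaled, translated Wulff shapes of radius $\tfrac{(n+1)\mathcal{L}^{n+1}(C)}{\mathcal{P}^\phi(C)}$, which is the assertion; a short check using the already-noted fact that for such a set every $\mathcal{C}^\phi_{n-k}(C,\cdot)$ is genuinely proportional to $\mathcal{C}^\phi_n(C,\cdot)$ confirms consistency with the sign hypotheses.

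The main obstacle I expect is the passage from the \emph{projected} hypothesis on $\mathcal{C}^\phi_{n-k}$ back to pointwise curvature information usable in the Heintze--Karcher/Maclaurin machinery: unlike in Theorem \ref{intro Al}, we do not know that $\bm H^\phi_{C,n-k}=\lambda\,\bm H^\phi_{C,n}$ fibrewise, only that their fibre integrals match, so the equality case of Jensen's inequality is not automatic and one must track carefully when each inequality in the Maclaurin chain is saturated. Handling the locus where some $\phi$-principal curvature is $+\infty$ (i.e.\ $N^\phi(C)\setminus\widetilde N^\phi_n(C)$, stratified by the sets $\widetilde N^\phi_d(C)$) is the delicate technical point: there one must argue, using the disintegration formula of Theorem \ref{thm:disint} and the sign of $\Theta^\phi_{n-k}(C,\cdot)$, that such points either carry no mass for the relevant measures or are ruled out entirely, so that the problem genuinely reduces to the Alexandrov-point set $\mathcal{A}(C)=\bm p(\widetilde N^\phi_n(C))\cap\partial^v C$ where Corollary \ref{theo: lower bound} applies. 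Once the reduction to $\mathcal{A}(C)$ and the equality discussion are in place, the rigidity statement follows immediately from Corollary \ref{theo: lower bound}, whose equality case already characterizes disjoint unions of Wulff shapes.
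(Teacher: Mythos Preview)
Your main line of argument has a gap: Corollary~\ref{theo: lower bound} requires the \emph{pointwise} estimate $\bm h^\phi_{C,1}(a)\ge n\mathcal P^\phi(C)/\bigl((n{+}1)\mathcal L^{n+1}(C)\bigr)$ for $\mathcal H^n$-a.e.\ $a\in\mathcal A(C)$, not merely a lower bound on $\int_{\mathcal A(C)}\bm h^\phi_{C,1}\,d\mathcal H^n$. Integrating the Maclaurin chain and passing a concave power through the integral via Jensen only controls global averages and gives no mechanism to upgrade to an a.e.\ bound, so the equality discussion you anticipate cannot be closed along that route. A related issue is your claim that the Minkowski--Hsiung identities by themselves ``pin down'' $\lambda$: those identities link adjacent total curvatures through the weighted integrals $\int [a\bullet \bm n^\phi(\eta)]\,J^\phi_C\,\bm H^\phi_{C,j}\,d\mathcal H^n$, and evaluating these already requires the pointwise information on $\bm H^\phi_{C,k}$ that you are trying to extract.

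What actually works is the mechanism you relegate to the final ``obstacle'' paragraph, and the paper makes it the whole argument. One simply re-runs Lemma~\ref{lem: constant mean curvature properties} with its first step modified via the partition \eqref{eq: partition} (Theorem~\ref{lem: graphical representability}): over $\bm p\bigl(\widetilde N^\phi(C)\setminus\widetilde N^\phi_n(C)\bigr)$ one has $\bm H^\phi_{C,0}=\bm 1_{\widetilde N^\phi_n(C)}=0$, hence $\mathcal C^\phi_n$ vanishes there, so $\mathcal C^\phi_{n-k}$ does too, and the \emph{extra} sign hypothesis $\Theta^\phi_{n-k}\ge 0$ then forces $\bm H^\phi_{C,k}=0$ $\mathcal H^n$-a.e.\ on $N^\phi(C)\setminus\widetilde N^\phi_n(C)$; over $\bm p\bigl(\widetilde N^\phi_n(C)\bigr)$ the fibres of $\bm p$ are finite and are singletons on $\mathcal A(C)$, so (via Lemma~\ref{lem: abs continuity c.m.}) the projected identity $\mathcal C^\phi_{n-k}=\lambda\,\mathcal C^\phi_n$ yields the genuinely pointwise identity $\bm h^\phi_{C,k}(a)=(k+1)\lambda$ on $\mathcal A(C)$. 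From that point the proof of Lemma~\ref{lem: constant mean curvature properties}(a) (Minkowski--Hsiung followed by pointwise Maclaurin on $\mathcal A(C)$) runs essentially verbatim and delivers the pointwise lower bound on $\bm h^\phi_{C,1}$ needed for Corollary~\ref{theo: lower bound}. In short: drop the Jensen step and promote the partition-plus-sign reduction from afterthought to the heart of the proof.
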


In the particular case where $C$ is a convex body (and condition \eqref{intro Alb eq} is automatically satisfied as pointed out above), Theorem \ref{intro Alb} has already been established in \cite[Theorem 2.43]{Hug99}. In the framework of convex bodies the single measure on the left side of the hypothesis $\mathcal{C}^\phi_{n-k}(C, \cdot) = \lambda \, \mathcal{C}^\phi_n(C, \cdot)$  can even be replaced by a non-negative linear combination of curvature measures $\mathcal{C}^\phi_{n-k}(C, \cdot)$ with $k\in \{1,\ldots,n\}$. Related stability results have been proved in \cite[Theorems 2.47 and 2.48]{Hug99}. In fact, the results in \cite[Section 2.7]{Hug99} completely establish (in generalized form) Conjecture 8.2 stated in \cite{MR4214340}.

Theorems \ref{intro Al} and \ref{intro Alb} can be seen as  measure-theoretic versions of the soap bubble theorem. Employing the notion of pointwise curvature in Alexandrov points, we  obtain the following differential-geometric version.

\begin{theorem}[\protect{cf.\ Corollary  \ref{Cor: Alexandrov}}]\label{intro Al 2}
Suppose that $ k \in \{1, \ldots , n\} $, $ \lambda \in \mathbf{R} \setminus \{0\}$ and $\varnothing\neq C \subset   \mathbf{R}^{n+1}$ is a set of positive reach with positive and finite volume such that $ \mathcal{H}^{n}(\partial C \setminus \partial^v C) =0$. If $ k = 1 $ we allow $ \lambda \in \mathbf{R}$. Assume that
\begin{equation}\label{intro Al 2 eq}
    \bm{h}^\phi_{C,i}(a) \geq 0 \quad \textrm{for $ i = 1, \ldots , k-1 $ and for $ \mathcal{H}^n $ a.e.\ $ a \in \mathcal{A}(C) $ and}
\end{equation}
\begin{equation*}
    \bm{h}^\phi_{C,k}(a) = \lambda \quad \textrm{for $ \mathcal{H}^n $ a.e.\ $ a \in\mathcal{A}(C)$.}
\end{equation*}
Then $ C $ is a finite union of rescaled and translated Wulff shapes of radius $ \frac{(n+1) \mathcal{L}^{n+1}(C)}{\mathcal{P}^\phi(C)}$, provided that $ \mathcal{H}^{n-k}\big[ \bm{p}(\widetilde{N}^\phi(C) \setminus \widetilde{N}^\phi_n(C))\big] =0  $.
\end{theorem}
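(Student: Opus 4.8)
The plan is to deduce this differential-geometric soap bubble theorem (Corollary \ref{Cor: Alexandrov}) from the measure-theoretic version, Theorem \ref{intro Al} (i.e.\ Theorem \ref{theo: Alexandrov}). The bridge is the identification, established in Corollary \ref{cor: postive reach and viscosity boundary}, between the pointwise mean curvature functions $\bm{h}^\phi_{C,k}$ on the Alexandrov set $\mathcal{A}(C)=\bm{p}(\widetilde{N}^\phi_n(C))\cap\partial^v C$ and the curvature functions $\bm{H}^\phi_{C,k}(a,\eta)$ on $\widetilde{N}^\phi_n(C)$, where $N^\phi(C,a)=\{\eta\}$. So the first step is to translate the pointwise hypotheses \eqref{intro Al 2 eq} into statements about the generalized curvature measures $\Theta^\phi_j(C,\cdot)$. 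The key point is that under the assumption $\mathcal{H}^n(\partial C\setminus\partial^v C)=0$ together with $\mathcal{H}^{n-k}[\bm{p}(\widetilde N^\phi(C)\setminus\widetilde N^\phi_n(C))]=0$, the part of the normal bundle where some principal curvature is infinite is negligible for the purpose of computing the curvature measures $\Theta^\phi_{n-i}(C,\cdot)$ for $i\le k$. Indeed, $\Theta^\phi_{n-i}(C,\cdot)$ is built from $\bm{H}^\phi_{C,n-(n-i)}=\bm{H}^\phi_{C,i}$, and on $\widetilde N^\phi_d(C)$ with $d<n$ the relevant symmetric functions of curvatures are controlled; the dimension/Jacobian analysis from Section \ref{sec: steiner formula} (the disintegration in Theorem \ref{thm:disint} and the vanishing of $J^\phi_C$ or of the relevant $\bm{H}^\phi_{C,i}$ on low-dimensional fibres) should show that $\widetilde N^\phi(C)\setminus\widetilde N^\phi_n(C)$ contributes nothing to $\Theta^\phi_{n-i}(C,\cdot)$ once its projection has vanishing $\mathcal{H}^{n-k}$-measure. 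This is the step I expect to require the most care.

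Granting that reduction, the second step is routine: on $\widetilde N^\phi_n(C)$, after projecting via $\bm{p}$ (which is injective on the fibres over $\partial^v C$), the measure $\Theta^\phi_{n-i}(C,\cdot)$ becomes, up to the positive density $\phi(\bm{n}^\phi(\eta))J^\phi_C(a,\eta)$, the integral of $\bm{H}^\phi_{C,i}(a,\eta)=\bm{h}^\phi_{C,i}(a)$ against $\mathcal{H}^n\restrict\mathcal{A}(C)$. Hence \eqref{intro Al 2 eq}, namely $\bm{h}^\phi_{C,i}\ge 0$ for $i=1,\dots,k-1$ a.e.\ on $\mathcal{A}(C)$, yields exactly that $\Theta^\phi_{n-i}(C,\cdot)$ is a non-negative measure for $i=1,\dots,k-1$, which is hypothesis \eqref{intro Al eq 1} of Theorem \ref{theo: Alexandrov}. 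Similarly $\bm{h}^\phi_{C,k}=\lambda$ a.e.\ on $\mathcal{A}(C)$ gives $\Theta^\phi_{n-k}(C,\cdot)=\lambda\,\Theta^\phi_n(C,\cdot)$, since $\Theta^\phi_n(C,\cdot)$ is, by definition, the same positive density integrated against $\bm{H}^\phi_{C,0}\equiv 1$ over $N^\phi(C)$, and again the low-dimensional part of the normal bundle is negligible. One must check the sign/degeneracy conventions: $\lambda\ne 0$ is assumed (with $\lambda\in\R$ allowed when $k=1$, matching the corresponding clause in Theorem \ref{theo: Alexandrov}), and $\Theta^\phi_n(C,\cdot)\ne 0$ because $C$ has positive volume, so the proportionality is non-degenerate.

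The final step is to invoke Theorem \ref{theo: Alexandrov} (Theorem \ref{intro Al}) directly: its hypotheses are now verified, so $C$ is a finite disjoint union of rescaled and translated Wulff shapes of radius $\frac{(n+1)\mathcal{L}^{n+1}(C)}{\mathcal{P}^\phi(C)}$, and $\lambda>0$ (when $k\ge 2$). This is exactly the asserted conclusion.

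I expect the main obstacle to be the first step: rigorously showing that the hypothesis $\mathcal{H}^{n-k}[\bm{p}(\widetilde N^\phi(C)\setminus\widetilde N^\phi_n(C))]=0$, combined with $\mathcal{H}^n(\partial C\setminus\partial^v C)=0$, is precisely strong enough to make the curvature measures $\Theta^\phi_{n-i}(C,\cdot)$, $i\le k$, concentrated on (the image of) $\widetilde N^\phi_n(C)$. The subtlety is that $\bm{H}^\phi_{C,i}$ on $\widetilde N^\phi_d(C)$ involves elementary symmetric functions $\sigma_\ell$ of the $n$ principal curvatures of which $n-d$ are infinite; the definition in Definition \ref{Def:finitecurv} handles this by an appropriate renormalization, and the point is that when $i$ exceeds the available ``finite codimension'' $d$, the contribution either vanishes or is governed by a Hausdorff measure of dimension strictly below $n$ on the relevant fibre, which the projection hypothesis controls. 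Threading the bookkeeping of Theorem \ref{thm:disint} together with the coarea/Jacobian structure of $N^\phi(C)$ is where the real work lies; once that is in place, the rest is a direct translation and an appeal to Theorem \ref{theo: Alexandrov}.
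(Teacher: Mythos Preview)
Your proposal is correct and follows essentially the same route as the paper. The paper packages your ``first step'' as Lemma \ref{Lem:abs} and Lemma \ref{lem:thetared} (both consequences of the disintegration Theorem \ref{thm:disint}, exactly as you anticipate), and then carries out your ``second step'' as Lemma \ref{thm: alexandorv points and mean convexity}, after which Theorem \ref{theo: Alexandrov} applies directly. One small correction: $\bm{H}^\phi_{C,0}=\bm{1}_{\widetilde N^\phi_n(C)}$, not identically $1$ on $N^\phi(C)$, but this is harmless since $\Theta^\phi_n(C,\cdot)$ is already supported on $\widetilde N^\phi_n(C)$.
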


\noindent The hypothesis $ \mathcal{H}^n(\partial C \setminus \partial^v C) = 0 $ is equivalent to require that $ \mathcal{P}(C) = \mathcal{H}^n(\partial C)$, where $\mathcal{P}(C)=\mathcal{H}^n(\partial^* C) $ is the Euclidean perimeter of $C$; see Corollary \ref{cor: postive reach and viscosity boundary}. Hence the condition $ \mathcal{P}(C) = \mathcal{H}^n(\partial C)$ is an alternative way to say in a geometric-measure theoretic sense that $ \partial C $ encloses $ C $. As mentioned in the first paragraph of this introduction, this is a fundamental prerequisite to obtain soap bubbles in arbitrary dimension and without any topological assumptions, from the hypothesis that one of the mean curvature functions is constant. The hypothesis $ \mathcal{H}^n(\partial C \setminus \partial^v C) =0$ implies that $ \mathcal{H}^n $ a.e.\ $ a \in \bm{p}(\widetilde{N}^\phi_n(C)) $ is an Alexandrov point (see again Corollary  \ref{cor: postive reach and viscosity boundary}). Based on  the disjoint  decomposition
\begin{equation*}
    \bm{p}(\widetilde{N}^\phi(C) ) = \bm{p}(\widetilde{N}^\phi_n(C)) \cup \bm{p}(\widetilde{N}^\phi(C) \setminus \widetilde{N}^\phi_n(C)) \quad \textrm{and} \quad  \bm{p}(\widetilde{N}^\phi_n(C)) \cap \bm{p}(\widetilde{N}^\phi(C) \setminus \widetilde{N}^\phi_n(C)) = \varnothing,
\end{equation*}
provided in Theorem \ref{lem: graphical representability},
the set $  \bm{p}(\widetilde{N}^\phi(C) \setminus \widetilde{N}^\phi_n(C))  $ can be seen as the set of singular points of $ \partial C $. Consequently, the hypothesis  $ \mathcal{H}^{n-k}\big[ \bm{p}(\widetilde{N}^\phi(C) \setminus \widetilde{N}^\phi_n(C))\big] =0  $ is (in general) a sharp assumption on the smallness of the singular set. For $k=1$ the example of the union of two congruent spherical caps shows that the condition on the Hausdorff measure cannot be relaxed.  On the other hand, for $k=n$ the examples of peaked spheres (see \cite{MR3053706} and \cite[Section 2 and Theorem 15]{MR3951441}), including as a  special case a set resembling an American football \cite[Section 4]{MR3951441}, demonstrate   that the condition on the Hausdorff measure is sharp. (The construction of corresponding examples for $1<k<n$ seems to be an interesting problem.)   In this sense, Theorem \ref{intro Al 2} can also be seen as a result on removable singularities.

 We remark that the class of sets of positive reach (with finite and positive volume) satisfying the assumption $ \mathcal{H}^n(\partial C \setminus \partial^v C) =0$ not only includes all sets that can be locally represented as epigraphs of semiconvex functions (in particular all convex bodies), see Lemma \ref{lem: semiconvex sets}, but it includes many other examples of sets whose boundaries are not  topological manifolds. One such example can be constructed by  reflecting around the $ x $-axis in $ \mathbf{R}^2 $ the set $\{ (x, f(x)) : x \in \mathbf{R} \} $, where $ f : [a, b] \rightarrow \mathbf{R}$ is a non-negative smooth function such that $ f(a) =f(b) =0 $,  $ \lim_{x \to a+}f'(x) = +\infty $, $ \lim_{x \to b-}f'(x) = - \infty $ and $\{f =0\}  $ is a Cantor set of $ \mathcal{L}^1 $-measure zero (in this case $ \partial C \setminus \partial^v C = \{f=0\} $).

\paragraph{} We conclude this introduction with a few comments on some lines of research, which are naturally related with the results of the present paper.

It is well known that the soap bubble theorem also holds for smooth  hypersurfaces in Riemannian manifolds of constant sectional curvature, where the bubbles are realized by geodesic spheres. This was  proved long ago by Alexandrov \cite{MR0143162} for the mean curvature case by means of his celebrated method of moving planes, and extended by Montiel--Ros in \cite{MR1173047} to the case of higher-order mean curvature functions employing the integral-geometric approach. Looking at more general Riemannian spaces, soap bubble theorems in warped product spaces have been the  subject of intensive research in the last two decades. In this direction a fundamental contribution is made by the work of Brendle in \cite{MR3090261}, where it is proved that a compact and embedded hypersurface with constant mean curvature in a suitable class of warped product spaces is a slice;  in a subsequent work  \cite{MR3080487} Brendle and Eichmair treat the case of constant higher order mean curvature hypersurfaces under special convexity hypothesis. In the smooth setting stability results for the aforementioned soap bubble theorems are currently subject of intensive research; see \cite{MR4285109} and \cite{ScheuerXia22} and the references therein.

While in these investigations the smoothness assumption is crucial, it is natural to aim at extensions of the results of the present paper to non-Euclidean ambient spaces in a non-smooth framework as well. In this respect, several important foundational investigations should be mentioned. Walter (see his survey of related work up to 1981 in \cite{MR0600626}), Kleinjohann \cite{MR0604257,MR0610214} and Bangert  \cite{MR0509593,MR0508842,MR0534228,MR0646321}  studied intensively notions of convexity, sets with the unique footpoint property and regularity properties of the associated normal bundles in general Riemannian spaces. In his PhD-thesis (1988)
Kohlmann (see also \cite{MR1621971}) considered Alexandrov's soap bubble problem for general convex sets in constant curvature spaces via curvature measures, but in the non-Euclidean case his methods did not allow to resolve the
important mean curvature case. Sets with positive reach and curvature measures have been intensively studied in Euclidean space (see \cite{MR3932153} and the works cited there).  In the Riemannian setting, the theory of curvature measures and its connection to valuation theory is currently developed; see, e.g., the recent contribution by Fu and Wannerer \cite{MR3945834}. It remains to be explored whether some of these curvature measures can be used for obtaining uniqueness results as considered in the present work. Important structural information about sets with positive reach in Riemannian spaces, such as upper curvature bounds and characterization results, has been derived by Lytchak \cite{MR2098470,MR2180048}, but a complete structural description of general sets with positive reach is not even available in Euclidean spaces so far. However, useful foundational results on distance functions, cut sets and curvatures in  Riemannian spaces (or, more specifically, in Cartan--Hadamard manifolds) can be found in the recent works by Kapovitch and Lytchak \cite{MR4277411} and by Ghomi and Spruck  \cite{MR435870}.

 The present work is mainly motivated by classical problems in differential geometry and the calculus of variations. However,  local Steiner formulas and differentials of basic geometric functionals of convex bodies, as considered here for more general classes of sets, also play a crucial role in the Brunn--Minkowski theory \cite{MR3155183} and its applications. These formulas naturally lead to the curvature measures, which are a major topic of the current investigation, but also to surface area measures, quermassintegrals, and to $L_p$, Orlicz and dual versions of these fundamental functionals and measures. We refer to the seminal work by Huang, Lutwak, Yang and Zhang \cite{MR3573332}, where several new measures are introduced and connections to classical geometric measures are explored. In \cite{MR3573332} then  all these measures are combined in the investigation of associated Minkowski problems, which have received much attention in recent years (see, e.g.,  \cite{MR3037788,MR3037788,MR3825606,MR3851743,MR3783409,MR4008522,MR3882970,MR4040624}).

\section{Preliminaries}\label{Section: preliminaries}

\subsection{Notation and basic facts}
In general, but with few exceptions explained below, we follow the notation and terminology of \cite{MR0257325} (see \cite[pp. 669-676]{MR0257325}). In particular we adopt the terminology from \cite[3.2.14]{MR0257325} when dealing with rectifiable sets.

If $ X $ is a topological space and $ S \subseteq X $, then we denote by $ \Int (S)$ the interior part of $ S $, by $ \partial S $ the topological boundary of $ S $  and by $ \Clos(S)$ the closure of $S$; moreover, the characteristic function of $ S $ is $ \bm{1}_S $.
 If $ Q \subseteq X \times Y $ and $ S \subseteq X $, we set $Q|S = \{ (x, y) \in Q: x \in S  \} $.
We denote by $ \bullet $ \index{-1@$\bullet$} a fixed scalar product on $ \R^{n+1} $ and by $ | \cdot | $ \index{-2@$\mid\cdot\mid$} its associated norm. Hence $\mathbf{S}^n = \{x \in \mathbf{R}^{n+1}: |x|=1\} $ is the Euclidean unit sphere.   The maps $\mathbf{p} : \mathbf{R}^{n+1} \times \mathbf{R}^{n+1} \rightarrow \mathbf{R}^{n+1} $ \index{03@$\mathbf{p}$} and $ \bm{q}: \mathbf{R}^{n+1} \times \mathbf{R}^{n+1} \rightarrow \mathbf{R}^{n+1} $ \index{04@$\mathbf{q}$} are the projection onto the first and the second component respectively, i.e.\ $\mathbf{p}(x, \eta) = x$ and $ \bm{q}(x, \eta) = \eta $.

If $ S \subseteq \mathbf{R}^p $ and $ a \in \mathbf{R}^p $, then we denote by  $ \Tan(S,a) $ and $ \Nor(S,a) $ the \emph{tangent and normal cone} of $ S $ at $ a $ (see \cite[3.1.21]{MR0257325}). Always following \cite{MR0257325} we use the symbol $ \Tan^m(\mathcal{H}^m\restrict S, a) $ for the cone of all $(\mathcal{H}^m\restrict S, m) $ \emph{approximate tangent vectors} at $ a $ (see \cite[3.2.16]{MR0257325}). For an $ (\mathcal{H}^m, m) $ rectifiable and $ \mathcal{H}^m $ measurable set $ S \subseteq \mathbf{R}^p $, the cone $ \Tan^m(\mathcal{H}^m\restrict S, a) $ is an $ m $-dimensional linear subspace for $ \mathcal{H}^m $ a.e.\ $a \in S $. Each Lipschitz function $ f : \mathbf{R}^p \rightarrow \mathbf{R}^q$ has at $ \mathcal{H}^m $ almost all points of $ a \in S $ an \emph{$ (\mathcal{H}^m\restrict S, m)$ approximate differential} $ \ap \Der f(a) : \Tan^m(\mathcal{H}^m\restrict S,a) \rightarrow \mathbf{R}^q $ (see \cite[3.2.16, 3.2.19]{MR0257325}).  If this approximate differential exists, for   $ k \in \{1, \ldots, m\}$ we define the \emph{$ (\mathcal{H}^m\restrict S, m)  $ approximate $ k $-th Jacobian} of $ f $ aat $a$ as
\begin{equation}\label{eq: approx jacobian}
     \ap J^S_k f(a) = \big\| \textstyle \bigwedge_{k} \ap \Der f(a) \big\| = \sup\{ | [\textstyle \bigwedge_{k} \ap \Der f(a)](\xi) | : \xi \in \bigwedge_k \Tan^m(\mathcal{H}^m\restrict S,a), \; |\xi|=1\},
\end{equation}
where $ \bigwedge_{k} \ap \Der f(a): \bigwedge_k \Tan^m(\mathcal{H}^m\restrict S,a) \rightarrow \bigwedge_k \mathbf{R}^q $ is the linear map induced by $ \ap \Der f(a)$ (see \cite[1.3.1]{MR0257325}). The norms $ |\cdot| $ on the right-side of \eqref{eq: approx jacobian}  denote the norm induced on $ \bigwedge_k \Tan^m(\mathcal{H}^m\restrict S,a) $ and $ \bigwedge_k \mathbf{R}^q $
by the inner products of $\Tan^m(\mathcal{H}^m\restrict S,a)$ and $ \mathbf{R}^q$, respectively (see \cite[1.7.5, 1.7.6]{MR0257325}). The approximate Jacobian of a Lipschitz map will be repeatedly used in this paper in applying the following version of  Federer's coarea formula for Lipschitz maps on rectifiable sets, for which we refer to \cite[pp.\ 300--301]{MR0467473}.

\begin{Lemma}[Federer]\label{coarea}
If $ W$ is an $ (\mathcal{H}^m, m)$ rectifiable $ \mathcal{H}^m$ measurable subset of $ \mathbf{R}^p$, $ f : W \rightarrow \mathbf{R^q}$ is a Lipschitzian map, $ k \in \{0,\ldots,m\}$ and $ S \subseteq \mathbf{R}^q$ is a countable union of Borel subsets of $\mathbf{R}^q$ with finite $ \mathcal{H}^k$ measure, then
\begin{equation*}
    \int_{W \cap f^{-1}(S)}\phi(x) \, \ap J_k^Wf(x)\, d\mathcal{H}^m(x) = \int_S \int_{W \cap f^{-1}(\{y\})}\phi(x)\,d\mathcal{H}^{m-k}(x)\, d\mathcal{H}^k(y)
\end{equation*}
for every $\mathcal{H}^m$ measurable function $\phi:W\to [0,\infty]$.
\end{Lemma}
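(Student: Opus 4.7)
The plan is to reduce the assertion to the classical Federer coarea formula for Lipschitz maps on subsets of Euclidean space, namely \cite[3.2.22]{MR0257325}, via the structural characterization of rectifiable sets. First, I would decompose $W$ up to an $\mathcal{H}^m$ null set as a countable disjoint union $W = N \sqcup \bigsqcup_{i\in\mathbf{N}} W_i$ with $\mathcal{H}^m(N) = 0$, where each $W_i$ is realized as the injective bi-Lipschitz image under a map $g_i : E_i \to \mathbf{R}^p$ of a Borel set $E_i \subseteq \mathbf{R}^m$. Such a decomposition arises from \cite[3.2.18]{MR0257325} after further countable refinement, using Rademacher's theorem to isolate points of differentiability at which $J_m g_i$ is bounded away from zero so that the restriction is bi-Lipschitz with uniform constants.

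Second, for $\mathcal{L}^m$ a.e.\ $u \in E_i$, the approximate tangent plane $\Tan^m(\mathcal{H}^m\restrict W, g_i(u))$ coincides with $\Der g_i(u)(\mathbf{R}^m)$, so the chain rule for approximate differentials yields $\ap \Der f(g_i(u)) \circ \Der g_i(u) = \Der(f \circ g_i)(u)$, and the Cauchy--Binet-type identity for the $k$-th Jacobian \cite[1.7.6]{MR0257325} gives
\begin{equation*}
\ap J^W_k f(g_i(u)) \cdot J_m g_i(u) = J_k(f \circ g_i)(u).
\end{equation*}
Applying the classical coarea formula \cite[3.2.22]{MR0257325} to the Lipschitz map $f \circ g_i : E_i \to \mathbf{R}^q$ with integrand $\phi \circ g_i$, the area formula for $g_i$ converts the left-hand side into $\int_{W_i \cap f^{-1}(S)} \phi \cdot \ap J_k^W f \, d\mathcal{H}^m$, while for $\mathcal{H}^k$ a.e.\ $y \in S$ the restriction of $g_i$ to the level set $(f \circ g_i)^{-1}(\{y\}) \cap E_i$ is again bi-Lipschitz onto $W_i \cap f^{-1}(\{y\})$, so the area formula in codimension converts the inner $\mathcal{H}^{m-k}$ integral on $E_i$ into the corresponding integral on $W_i \cap f^{-1}(\{y\})$.

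Third, summing the resulting equalities over $i \in \mathbf{N}$ by monotone convergence yields the identity for $W$; the discarded null set $N$ contributes zero on the left, and by the Eilenberg inequality \cite[2.10.25]{MR0257325} it also contributes zero to the iterated integral on the right for $\mathcal{H}^k$ a.e.\ $y$. The hypothesis that $S$ is a countable union of Borel subsets of finite $\mathcal{H}^k$ measure supplies the $\sigma$-finiteness needed to justify the Fubini-type exchange implicit in the outer integral against $\mathcal{H}^k \restrict S$.

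The main technical obstacle is the consistent matching of the intrinsic approximate Jacobian $\ap J_k^W f$, defined through the $(\mathcal{H}^m,m)$ approximate tangent structure of $W$, with the classical Jacobian of the composition $f \circ g_i$ on $E_i$; this identification rests on the rectifiability hypothesis, which is precisely what guarantees the existence of an $m$-dimensional approximate tangent plane at $\mathcal{H}^m$ a.e.\ point of $W$ and its agreement with the image of $\Der g_i$ under any admissible parametrization. Once this is in place, the classical Euclidean coarea formula does the rest.
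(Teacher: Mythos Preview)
The paper does not supply its own proof of this lemma; it is stated with attribution to Federer and a reference to \cite[pp.~300--301]{MR0467473}. Your outline follows the standard reduction strategy, but it contains a genuine error in the Jacobian step.

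The identity you claim,
\[
\ap J^W_k f(g_i(u)) \cdot J_m g_i(u) = J_k(f\circ g_i)(u),
\]
is false. Take $m=p=2$, $k=q=1$, $E_i=\mathbf{R}^2$, $g_i=\mathrm{diag}(1,2)$ (so $W_i=\mathbf{R}^2$ and $J_2 g_i=2$), and $f(x,y)=x$. Then $\ap J_1^W f\equiv 1$ and $J_1(f\circ g_i)\equiv 1$, so your identity reads $1\cdot 2=1$. The Cauchy--Binet material in \cite[1.7.6]{MR0257325} does not yield a bare product of this shape when $k<m$. The correct relation carries an additional factor, the $(m-k)$-Jacobian of $g_i$ restricted to $\ker\Der(f\circ g_i)(u)$ (equal to $2$ in the example); this is exactly the factor needed, via the area formula, to convert the $\mathcal{H}^{m-k}$-integral over the level set $(f\circ g_i)^{-1}(\{y\})\cap E_i$ into the $\mathcal{H}^{m-k}$-integral over $W_i\cap f^{-1}(\{y\})$. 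With that correction the two applications of area and coarea on $E_i$ do combine to give the formula on $W_i$.

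A second, smaller gap: the Euclidean coarea formula \cite[3.2.22]{MR0257325} that you invoke on $E_i$ takes the target to be $\mathbf{R}^k$, whereas here $f\circ g_i$ lands in $\mathbf{R}^q$ with possibly $k<q$, and $S$ is assumed only to have $\sigma$-finite $\mathcal{H}^k$ measure, not to be $(\mathcal{H}^k,k)$ rectifiable. To close this you must either parametrize the rectifiable part of $S$ as well (reducing to a map into $\mathbf{R}^k$) and argue separately that any purely $k$-unrectifiable portion of $S$ contributes zero to both sides, or appeal directly to the general statement on pp.~300--301 of \cite{MR0467473}, which is what the paper does.
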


Following Federer \cite[page 15]{MR0257325} we denote by $ \Lambda(n,m)$ the set of all increasing maps from $ \{1, \ldots , m \}$ into $ \{1, \ldots , n \}$. We now introduce the $ k $-th elementary symmetric functions. For $x=(x_1,\ldots,x_n)\in\R^n$ and $k\in\{1,\ldots,n\}$, we define \index{-4@$S_k(x)$}
\begin{equation}\label{eq: symmtric functions}
S_k(x):=\sum_{\lambda \in \Lambda(n,k)}x_{\lambda(1)}\cdots x_{\lambda(k)}.
\end{equation}
Then
$$\Gamma_k^\circ:=\{x\in\R^n:S_1(x)>0,\ldots,S_k(x)>0\}$$
is an open convex cone whose closure is the (pointed) closed convex cone \index{-5@$\Gamma_k$}
$$\Gamma_k:=\{x\in\R^n:S_1(x)\ge 0,\ldots,S_k(x)\ge 0\}$$
with apex $0$ (see \cite[Section 2, page 582]{MR1726702} or \cite[Proposition 1.3.2]{Salani}).

\medskip

For a vector $x$ with positive components the next lemma is well known. In the present more general form, it is in fact harder to find an explicit reference.

\begin{Lemma}\label{lem:Maclaurin}
Let $k\in\{1,\ldots,n\}$. If $x\in\Gamma_k$, then
$$
\left(\frac{S_i(x)}{\binom{n}{i}}\right)^{\frac{1}{i}}\ge
 \left(\frac{S_j(x)}{\binom{n}{j}}\right)^{\frac{1}{j}}\quad
 \text{for $1\le i\le j\le k$}.
$$
\end{Lemma}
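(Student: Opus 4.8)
The plan is to prove the generalized Maclaurin inequality by reducing it to the classical case (vectors with strictly positive components) via a density and continuity argument, and within the classical case to use the standard Newton-inequality bootstrapping. The key analytic facts are that $\Gamma_k$ is the closure of the open convex cone $\Gamma_k^\circ$, that each $S_i$ with $i\le k$ is continuous and non-negative on $\Gamma_k$, and that the functions $x\mapsto (S_i(x)/\binom{n}{i})^{1/i}$ are therefore continuous on $\Gamma_k$; hence it suffices to establish the inequality on a dense subset of $\Gamma_k$, and then pass to the limit.

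First I would record the classical Maclaurin inequality: if $y=(y_1,\ldots,y_n)\in\R^n$ has all components strictly positive, then $(S_1(y)/n)\ge (S_2(y)/\binom{n}{2})^{1/2}\ge\cdots\ge (S_n(y)/\binom{n}{n})^{1/n}$. This is standard and follows from the Newton inequalities $p_i^2\ge p_{i-1}p_{i+1}$ for the normalized elementary symmetric means $p_i:=S_i/\binom{n}{i}$, which in turn come from the fact that the polynomial $\sum_{i=0}^n\binom{n}{i}p_i t^{n-i}$ has only real roots, together with Rolle's theorem applied to its derivatives; the chain of inequalities $p_1\ge p_2^{1/2}\ge\cdots\ge p_n^{1/n}$ is then obtained by a short induction on the index, multiplying the Newton inequalities telescopically. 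For the general statement I only need the restricted chain $p_1^{1/1}\ge\cdots\ge p_k^{1/k}$ when $y$ has positive entries, which is immediate from the full chain. Since these are classical, I would cite e.g.\ \cite{MR1726702} or a standard reference rather than reproving them.

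Next I would handle a subtlety that prevents a naive limiting argument: a general $x\in\Gamma_k$ is a limit of points of $\Gamma_k^\circ$, but points of $\Gamma_k^\circ$ need not have all components positive (only $S_1(x),\ldots,S_k(x)>0$). To bridge this I would use the interlacing/real-rootedness structure: for $x\in\Gamma_k^\circ$ one has that the first $k$ of the Newton means are positive and the relevant Newton inequalities $p_{i-1}p_{i+1}\le p_i^2$ continue to hold for $1\le i\le k-1$ by the same derivative-of-polynomial argument (the polynomial $\prod_j(t+x_j)$ always has real roots, so all Newton inequalities among $p_0,\ldots,p_n$ hold regardless of signs), and then run the same telescoping induction using only the positivity of $p_1,\ldots,p_k$ to extract the chain $p_1\ge p_2^{1/2}\ge\cdots\ge p_k^{1/k}$ on $\Gamma_k^\circ$. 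Finally, continuity of each $p_i^{1/i}$ on all of $\Gamma_k$ (here I use $p_i\ge 0$ on $\Gamma_k$ and that $t\mapsto t^{1/i}$ is continuous on $[0,\infty)$) and the density $\Clos(\Gamma_k^\circ)=\Gamma_k$ upgrade the inequality from $\Gamma_k^\circ$ to $\Gamma_k$, completing the proof.

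The main obstacle is precisely the point just flagged: the cone $\Gamma_k$ contains vectors with negative components, so one cannot simply quote "Maclaurin for positive vectors" and pass to a limit. The cleanest fix is to observe that the Newton inequalities $p_{i-1}p_{i+1}\le p_i^2$ hold for \emph{every} real vector $x$ (since $\prod_j(t+x_j)$ is real-rooted, and differentiating preserves real-rootedness), and then note that on $\Gamma_k$ the means $p_1,\ldots,p_k$ are non-negative, which is exactly what is needed to chain them: from $p_{i}^2\ge p_{i-1}p_{i+1}\ge 0$ and $p_1,\ldots,p_k\ge0$ one deduces by induction $p_i^{i+1}\le p_{i-1}^i p_{i+1}$-type relations that telescope to $p_1\ge p_2^{1/2}\ge\cdots\ge p_k^{1/k}$; a little care is required when some $p_i$ vanishes (then all subsequent $p_j$, $j\ge i$, vanish as well by the Newton inequalities, and the chain holds trivially from that index on). With that observation in place the argument is routine, and no density argument is even strictly necessary, though phrasing it via continuity on $\Gamma_k$ and density of $\Gamma_k^\circ$ is a tidy alternative.
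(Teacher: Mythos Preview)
Your proposal is correct and follows essentially the same route as the paper: both arguments invoke Newton's inequalities $p_{i-1}p_{i+1}\le p_i^2$ valid for \emph{all} real vectors (the paper cites \cite{MR1033349} for this), then chain them telescopically on $\Gamma_k^\circ$ where $p_1,\ldots,p_k>0$, and finally pass to $\Gamma_k$ by the closure relation $\Gamma_k=\Clos(\Gamma_k^\circ)$ and continuity. Your additional remark that one could bypass the density step and argue directly on $\Gamma_k$ (handling vanishing $p_i$ separately) is a valid alternative, though the specific auxiliary relation you wrote, ``$p_i^{i+1}\le p_{i-1}^i p_{i+1}$'', has the inequality sign reversed---the telescoping product of Newton's inequalities actually yields $E_{r+1}^r\le E_r^{r+1}$, which is the desired Maclaurin step.
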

\begin{proof} See \cite[Proposition 1.3.3 (4)]{Salani}. We indicate an alternative argument here. First, Newton's inequality holds for any $x\in \R^n$, as shown in \cite{MR1033349}. Let $S_0(x):=1$ and
$E_r(x):=S_r(x)/\binom{n}{r}$ for $r=0,\ldots,k$.
If $x\in\Gamma_k^\circ$, then the asserted inequalities can be obtained (as usually) by repeated application of Newton's inequality $ E_\ell(x)E_{\ell+2}(x)\le E_\ell(x)^2 $ via
$$
\prod_{\ell=0}^{r-1}\left(E_\ell(x)E_{\ell+2}(x)\right)^{\ell+1}\le \prod_{\ell=1}^{r}E_\ell(x)^{2\ell}\quad \text{for } r\in\{1,\ldots,k-1\}.
$$
Since $\Gamma_k$ is the closure of the open convex cone $\Gamma_k^\circ$, the assertion for $x\in\Gamma_k$ follows by an obvious approximation argument.
\end{proof}

\subsection{Multivalued maps}\label{subsec: multivalued}

A map $ T $ defined on a set $ X $ is called \emph{$Y$-multivalued}, if $ T(x) $ is a subset of $ Y $ for every $ x \in X $. If $ T(x) $ is a singleton, with a little abuse of notation we denote by $ T(x) $ the unique element of the set $ T(x) \subseteq Y $. Suppose that $(X, \|\cdot \|)$ and $(Y, \|\cdot \|)$ are finite-dimensional normed
vectorspaces and $T$ is a $Y$-multivalued map such that $ T(x) \neq \varnothing $ for every $ x \in X $.
\begin{enumerate}
	\item  We say that $ T $ is \emph{weakly continuous at $ x\in X $} if and only if for every $ \epsilon > 0 $ there exists $ \delta > 0 $ such that
if $y\in X$ and $ \|y-x\| < \delta $, then
	\begin{equation*}
		T(y) \subseteq T(x) + \{v \in Y : \|v\| < \epsilon\} ;
	\end{equation*}
	if, additionally,  $ T(x)$ is a singleton, then we say that $ T $ is continuous at $ x $.
	\item  We say that \emph{$T$ is strongly
		differentiable at $x \in X$} if and only if $T(x)$ is a singleton and there exists a linear map $L : X \to Y$ such that for every $\varepsilon > 0$ 	there exists $\delta > 0$ such that if $y\in X$, $ \|y-x\| < \delta $ and $w \in T(y)$, then
	\begin{displaymath}
		\| w - T(x) - L(y-x)\| \leq \varepsilon \|y-x\|;
	\end{displaymath}
	cf.\ \cite[Definition 2.28]{kolasinski2021regularity}. The linear map $ L $ is unique (cf.\ \cite[Remark 2.29]{kolasinski2021regularity}) and we denote it by $ \Der T(x) $. Moreover we denote by $ \dmn \Der T $ the set of points $ x \in X $ at which $ T $ is strongly differentiable. In the following, we simply write ``differentiable'' when we actually mean ``strongly differentiable''.
\end{enumerate}

\noindent The following general fact on the Borel measurability of the differential of a multivalued map will be useful.

\begin{Lemma}\label{lem : Borel measurability differential}
Let $(X, \|\cdot \|)$ and $(Y, \|\cdot \|)$ be finite-dimensional normed
vectorspaces, and let $T$ be a $Y$-multivalued weakly continuous map such that $ T(x) \neq \varnothing $ for  $ x \in X $.

Then $\{x \in X : \textrm{$T(x)$ is a singleton}   \} $ and $ \dmn \Der T $ are Borel subsets of $ X $ and $ \Der T : \dmn \Der T \rightarrow \Hom(X,Y) $ is Borel measurable.
\end{Lemma}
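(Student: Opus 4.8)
The plan is to reduce everything to standard measurable-selection and measurability statements for real-valued functions, exploiting that weak continuity gives us enough semicontinuity to carry out countable manipulations. First I would fix countable dense subsets $\{y_i\}\subseteq X$ and $\{v_j\}\subseteq Y$ and rational $\varepsilon,\delta>0$, and rewrite the defining conditions for ``$T(x)$ is a singleton'' and for ``$T$ is (strongly) differentiable at $x$ with a prescribed candidate linear map'' purely in terms of inclusions of the form $T(y)\subseteq w+\{v:\|v\|<\varepsilon\}$. The key observation is that, by weak continuity, the set-valued map $x\mapsto T(x)$ has closed graph on a suitable sense and, more importantly, for each fixed open ball $U\subseteq Y$ the set $\{x: T(x)\subseteq U\}$ is related to $\{x: T(x)\cap U\neq\varnothing\}$ in a way that lets us express it with countable unions/intersections over the dense parameters. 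Concretely, I would show that $\{x\in X : T(x)\text{ is a singleton}\}$ equals
\begin{equation*}
\bigcap_{k\in\N}\ \bigcup_{i}\ \Big\{x\in X:\ T(x)\subseteq v_i+\{v:\|v\|<\tfrac1k\}\Big\},
\end{equation*}
and each set $\{x: T(x)\subseteq v_i+\{v:\|v\|<1/k\}\}$ is Borel because weak continuity makes it \emph{open} (if the inclusion holds at $x$ with strict inequality, it persists in a neighbourhood). Hence the singleton set is a countable intersection of open sets, in particular Borel.

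Next, on the Borel set $D:=\{x:T(x)\text{ is a singleton}\}$ the map $x\mapsto T(x)\in Y$ is an ordinary function, and I would check it is Borel measurable there: for an open ball $U\subseteq Y$, $\{x\in D: T(x)\in U\}=\{x\in D: T(x)\cap U\neq\varnothing\}$, and the latter can again be written using weak continuity and the countable dense families as a Borel set (the superlevel/inclusion sets are open or $G_\delta$, as above). With $T|_D$ Borel, I then encode strong differentiability. Fix a countable dense set $\{L_\ell\}$ in $\Hom(X,Y)$. For $x\in D$, strong differentiability with derivative $L$ is equivalent to: for every $k\in\N$ there is $m\in\N$ such that for all $i$ with $\|y_i-x\|<1/m$ and all $w\in T(y_i)$ one has $\|w-T(x)-L(y_i-x)\|\le \tfrac1k\|y_i-x\|$. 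Quantifying $L$ over the dense family and replacing the condition ``for all $w\in T(y_i)$'' by the inclusion $T(y_i)\subseteq T(x)+L(y_i-x)+\{v:\|v\|\le \tfrac1k\|y_i-x\|\}$, this exhibits $\dmn\Der T$ as a countable combination (intersections over $k$, unions over $m$, and a further countable union over the rational approximating sequence of candidate derivatives, with intersections over $i$) of sets each of which is Borel by the weak-continuity argument above together with Borel measurability of $T|_D$ and joint continuity of $(L,x,y)\mapsto \|w-T(x)-L(y-x)\|$ in the relevant variables. This proves $\dmn\Der T$ is Borel. Finally, to see that $\Der T:\dmn\Der T\to\Hom(X,Y)$ is Borel, I would note that on $\dmn\Der T$ the derivative is determined by the formula $\Der T(x)(u)=\lim_{t\to 0^+} t^{-1}(T(x+tu)-T(x))$ for $u$ ranging over a basis of $X$; each such limit, being a pointwise limit along $t\in\Q$ of Borel functions of $x$ (compositions of the Borel map $T|_D$ with continuous operations), is Borel, and assembling finitely many basis directions gives Borel measurability of $\Der T$.

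The main obstacle I expect is purely bookkeeping: carefully verifying that the various ``$T(y)\subseteq \text{open ball}$'' events are genuinely open (or at worst $G_\delta$) under \emph{weak} continuity — weak continuity controls $T(y)$ from inside a fixed neighbourhood of $T(x)$, which is exactly the right direction for inclusion-into-an-open-set statements, but one must be careful that the $\varepsilon$ in the definition of weak continuity is chosen \emph{after} $x$, so the persistence of a \emph{strict} inclusion under perturbation of $x$ needs the standard $\varepsilon/2$ trick. Once that is in place, everything else is a routine, if somewhat lengthy, reduction to countable Boolean operations on Borel sets and pointwise limits of Borel functions; I would present it compactly by isolating one lemma — ``for every open $U\subseteq Y$, $\{x:T(x)\subseteq U\}$ is open and $\{x:T(x)\cap U\neq\varnothing\}$ is Borel'' — and then deriving all three assertions from it.
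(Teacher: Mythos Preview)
Your treatment of the singleton set is essentially the same as the paper's, just rephrased: the paper observes that $x\mapsto\diam T(x)$ is upper semicontinuous (an immediate consequence of weak continuity) and writes the singleton locus as $\{\diam\circ T=0\}$, which is exactly what your $\bigcap_k\bigcup_i\{\ldots\}$ decomposition amounts to once one reads ``$T(x)\subseteq B(v_i,1/k)$'' as ``$\sup_{w\in T(x)}\|w-v_i\|<1/k$''.

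For $\dmn\Der T$ and the measurability of $\Der T$, however, the paper takes a genuinely different and much shorter route. Rather than discretising $\Hom(X,Y)$ and the space variable, the paper works in the product $U\times\Hom(X,Y)$ (where $U$ is the singleton locus) and shows directly, by a short contradiction argument using weak continuity, that
\[
C_{ij}=\Big\{(x,L)\in U\times\Hom(X,Y): \|w-T(x)-L(h)\|\le\tfrac1i\|h\|\ \text{for all }\|h\|<\tfrac1j,\ w\in T(x+h)\Big\}
\]
is relatively closed. Then the graph $G=\{(x,\Der T(x)):x\in\dmn\Der T\}=\bigcap_i\bigcup_j C_{ij}$ is Borel, and since $\pi_X|G$ is injective, Federer's result \cite[2.2.10]{MR0257325} on injective Borel maps yields simultaneously that $\dmn\Der T=\pi_X(G)$ is Borel and that $\Der T$ is Borel measurable. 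No discretisation of $L$, no dense subset of $X$, no selection theorem.

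Your sketch, by contrast, has two genuine gaps beyond bookkeeping. First, replacing the quantifier ``for all $y$ with $\|y-x\|<1/m$'' by ``for all $y_i$ in a countable dense set'' is not justified by weak continuity alone: weak continuity gives $T(y_i)\subseteq T(y)+B(0,\varepsilon)$ for $y_i$ near $y$, which is the wrong inclusion to pass the estimate from the $y_i$ to a general $y$. Second, your formula $\Der T(x)(u)=\lim_{t\to0^+}t^{-1}(T(x+tu)-T(x))$ cannot be interpreted as a pointwise limit of Borel functions built from $T|_D$, because $x+tu$ need not lie in $D$; there $T(x+tu)$ is a set, and you would first need a Borel selection from $T$ on all of $X$, which you have not produced. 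Both issues can be repaired, but the repairs are not routine, and the product-space argument sidesteps them entirely.
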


\begin{proof}
We define $ U = \{x \in X: \textrm{$T(x)$ is a singleton}\} $ and the function $ \diam : \bm{2}^Y \setminus \{\varnothing\} \rightarrow [0, \infty] $ by $ \diam S = \sup \{\|y_1 - y_2 \| : y_1, y_2 \in S\} $ for every $ S \in \bm{2}^Y \setminus \{\varnothing\} $. Noting that $ \diam \circ T : X \rightarrow [0, +\infty] $ is upper semicontinuous, we conclude that $ U = \{x \in X : \diam(T(x))=0    \} $ is a Borel subset of $ X $.

For  positive integers $ i, j\in\N $ we define
\begin{equation*}
	C_{ij} = \bigg\{ (x, L) \in U \times \Hom(X,Y) : \|w - T(x) - L(h)\| \leq \frac{1}{i}\| h \| \quad \textrm{for $\|h\| < \frac{1}{j} $ and $ w \in T(x+h) $} \bigg\}.
\end{equation*}
We prove that $ C_{ij} $ is relatively closed in $ U \times \Hom(X,Y) $. By contradiction assume that $C_{ij} $ is not closed. Then there exists $ (x_0, L_0)\in (U \times \Hom(X,Y)) \setminus C_{ij} $ and  a sequence $(x_k, L_k) \in C_{ij} $ converging to $(x_0, L_0) $. Hence that there exist $ h_0 \in X $ with $ \|h_0 \| < \frac{1}{j} $ and $ w_0 \in T(x_0 + h_0) $ such that
\begin{equation*}
	\| w_0- T(x_0) - L_0(h_0)\| > \frac{1}{i}\|h_0 \|.
\end{equation*}
We define $ h_k = x_0 + h_0 - x_k $ for $k\geq1 $ and  select $ k_0\geq 1 $ so that $ \|h_k \| < \frac{1}{j} $ for  $ k \geq k_0 $. Since $(x_k,L_k)\in C_{ij}$, $ x_0 + h_0 = x_k + h_k $ and $ w_0 \in T(x_k+ h_k) $ for  $ k \geq 1$, we infer that
 \begin{equation*}
 	\|w_0 - T(x_k)-L_k(h_k)\| \leq \frac{1}{i}\|h_k \| \qquad \textrm{for  $ k \geq k_0 $}.
 \end{equation*}
Noting that $ T(x_k) \to T(x_0)$ and $ h_k \to h_0 $ as $ k \to \infty $, we deduce that
\begin{equation*}
\|w_0 - T(x_0)-L_0(h_0)\| \leq \frac{1}{i}\|h_0 \|,
\end{equation*}
and we obtain a contradiction.

Let $G : =\{ (x, \Der T(x)) : x \in \dmn \Der T  \}$ and $ \pi_X : X \times \Hom(X,Y) \rightarrow X $, $ \pi_X(x, T) = x $ for every $(x,T)\in X \times \Hom(X,Y) $. Noting that
\begin{equation*}
	G = \bigcap_{i=1}^\infty \bigcup_{j=1}^\infty C_{ij},
\end{equation*}
we infer that $ G $ is a Borel subset of $ U \times \Hom(X,Y) $. Since $ \pi_X|G $ is injective, we obtain
from \cite[2.2.10, bottom of page 67]{MR0257325} that $\{x\in \dmn \Der T: \Der T(x)\in B\}=\pi_X(G\cap (X\times B))$
is a Borel set in $X$ if  $B\subseteq \Hom(X,Y)$ is a Borel set, which implies the remaining assertions.
\end{proof}

\begin{Remark}
	The case of single-valued continuous functions is treated in \cite[page 211]{MR0257325} with a similar proof.
\end{Remark}

The following elementary lemma will be useful in Section \ref{Section: positive reach}.

\begin{Lemma}\label{lem: elementary diff}
Let $U\subseteq \R^k$ be open, let $F:U\to\R^k$ be differentiable at $a\in U$, and assume that $\Der F(a):\R^k\to\R^k$ is invertible. Let $V\subseteq \R^k$ be open with $x=F(a)\in V$ and assume that $G:V\to\R^k$ is Lipschitz. Further, assume that $F(U)\subseteq V$ and $G\circ F=\Id_U$.  Then $G$ is differentiable at $x$ and $\Der G(x)=\Der F(a)^{-1}$.
\end{Lemma}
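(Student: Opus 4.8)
The plan is to establish differentiability of $G$ at $x=F(a)$ directly from the definition, using the invertibility of $\Der F(a)$ and the Lipschitz bound on $G$ to control the error terms. The candidate derivative is of course $M:=\Der F(a)^{-1}$, and we must show that for $y\in V$ near $x$ we have $G(y)-G(x)-M(y-x)=o(|y-x|)$.

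First I would record the Lipschitz constant: let $\Lip(G)=:L$, so $|G(y)-G(y')|\le L|y-y'|$ for all $y,y'\in V$. Next, using that $\Der F(a)$ is invertible, there is a constant $c>0$ with $|\Der F(a)(h)|\ge c|h|$ for all $h\in\R^k$, and by the definition of differentiability of $F$ at $a$ there is, for each $\varepsilon\in(0,c/2)$, a radius $\delta>0$ such that $|F(a+h)-x-\Der F(a)(h)|\le\varepsilon|h|$ whenever $|h|<\delta$; hence for such $h$,
\begin{equation*}
|F(a+h)-x|\ge |\Der F(a)(h)|-\varepsilon|h|\ge (c-\varepsilon)|h|\ge \tfrac{c}{2}|h|.
\end{equation*}
The key point extracted from this is a local lower Lipschitz bound for $F$ near $a$, which, since $G\circ F=\Id_U$ and $F(U)\subseteq V$, lets us write every point $y$ of $F(U)$ near $x$ as $y=F(a+h)$ with $|h|=|G(y)-G(x)|\le L|y-x|$, so in particular $|h|\to 0$ as $y\to x$. (Here one uses that $F$ maps a neighbourhood of $a$ onto a relatively open neighbourhood of $x$ inside $F(U)$ — more simply, we only ever need to verify the limit along sequences $y_m\to x$ with $y_m\in F(U)$, and set $y_m=F(a+h_m)$; the relation $G\circ F=\Id$ forces $h_m=G(y_m)-G(x)\to 0$, and conversely every such $y$ near $x$ arises this way because $G$ is defined on all of $V$ and $G(y)\in U$ need not hold — this is the one subtlety, addressed below.)

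The main obstacle is precisely that $G$ is a priori only assumed defined and Lipschitz on $V$, with no assumption that $G(V)\subseteq U$; so for a general $y\in V$ we cannot write $y=F(G(y))$. However, since $F$ is differentiable at $a$ and $\Der F(a)$ is invertible, $F$ is an open map near $a$ onto its image, and more to the point, for $y$ in a small enough ball around $x$ the point $a+M(y-x)$ lies in $U$ and $F(a+M(y-x))$ is within $o(|y-x|)$ of $y$; combining this with the Lipschitz estimate $|G(y)-G(F(a+M(y-x)))|\le L|y-F(a+M(y-x))|=o(|y-x|)$ and the identity $G(F(a+M(y-x)))=a+M(y-x)$ yields
\begin{equation*}
|G(y)-G(x)-M(y-x)|=|G(y)-a-M(y-x)|\le L\,|y-F(a+M(y-x))|=o(|y-x|),
\end{equation*}
where in the middle step we used $G(x)=G(F(a))=a$. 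This is exactly the claimed differentiability with $\Der G(x)=M=\Der F(a)^{-1}$. The estimate $|y-F(a+M(y-x))|=o(|y-x|)$ follows by setting $h=M(y-x)$, noting $|h|\le |M|\,|y-x|$, and applying the differentiability of $F$ at $a$: $|F(a+h)-x-\Der F(a)(h)|\le\varepsilon|h|$, while $\Der F(a)(h)=\Der F(a)M(y-x)=y-x$, so $|F(a+h)-y|\le\varepsilon|h|\le \varepsilon|M|\,|y-x|$, and $\varepsilon$ is arbitrary.
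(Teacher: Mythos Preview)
Your proof is correct and follows essentially the same approach as the paper's: both compare $G(y)$ with $G(F(a+h))=a+h$ for a suitably chosen $h$ (you take $h=M(y-x)$, the paper takes $h=tv$ with $y=x+t\Der F(a)(v)$, which is the same substitution) and bound the difference using the Lipschitz constant of $G$ together with the differentiability of $F$ at $a$. Your write-up has some unnecessary meandering in the first half (the attempt to write $y=F(G(y))$ before pivoting), but the final argument is clean and in fact establishes Fr\'echet differentiability directly rather than via directional derivatives as the paper does.
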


\begin{proof}
Let $v\in\R^k$. Let $\varepsilon>0$. Then there is some $\delta>0$ such that if $t\in\R$ with $0<|t|<\delta$, then $a+tv\in U$ and
$$
\left|\frac{F(a+tv)-F(a)}{t}-\Der F(a)(v) \right|\le \frac{\varepsilon}{\text{Lip(G)}}.
$$
Then we obtain
\begin{align*}
&\left|\frac{G(x+t\Der F(a)(v))-G(x)}{t}-v \right|\\
&\qquad =\left|\frac{G(F(a)+t\Der F(a)(v))-G(F(a))}{t}-\frac{G(F(a+tv))-G(F(a))}{t} \right|\\
&\qquad =\left|\frac{G(F(a+tv))-G(F(a)+t\Der F(a)(v))}{t}\right|\\
&\qquad \le \text{Lip}(G)\cdot \left|\frac{F(a+tv)-F(a)}{t}-\Der F(a)(v) \right|\le \varepsilon.
\end{align*}
Since $\Der F(a):\R^k\to\R^k$ is invertible, this shows that if $w\in\R^k$, then
$$
\lim_{t\to 0} \left|\frac{G(x+tw)-G(x)}{t}-\Der F(a)^{-1}(w) \right|=0,\\
$$
and hence $G$ is differentiable at $x$ with $\Der G(x)=\Der F(a)^{-1}$.
\end{proof}

\subsection{Norms and Wulff shapes.}

Let $ \phi $ \index{07@$\phi$} be a norm on $ \mathbf{R}^{n+1} $. We say that $ \phi $ is a \emph{$ \mathcal{C}^k $-norm} if and only if $ \phi \in \mathcal{C}^k(\mathbf{R}^{n+1} \setminus \{0\}) $. We say that $ \phi $ is \emph{uniformly convex} if and only if there exists a constant $ \gamma > 0 $ (ellipticity constant) such that the function $ \mathbf{R}^{n+1} \ni u \mapsto \phi(u) - \gamma |u| $ is convex. If $ \phi $ is a uniformly convex $ \mathcal{C}^2 $-norm then
\begin{equation*}
\Der^2\phi(u)(v,v) \geq \gamma |v|^2
\end{equation*}
for all $ u \in \mathbf{R}^{n+1}$ with $|u|=1 $ and for all $ v \in \mathbf{R}^{n+1} $ perpendicular to $ u $. In the following, a compact convex set with non-empty interior will be called a \emph{convex body}. The symmetric (with respect to the origin $o$) convex body $B=\{x\in \R^{n+1}:\phi(x)\le 1\}$ is the unit ball or \emph{gauge body} associated with $\phi$. Conversely, the \emph{gauge function} (norm) of $B$ \index{05@$g_B$} is just $\phi$, i.e.~
$$
g(B,x):=g_B(x):=\|x\|_B:=\min\{\lambda\ge 0: x\in\lambda B\}=\phi(x).
$$
For a compact convex set $\varnothing\neq K\subset \mathbf{R}^{n+1}$, we write $h_K=h(K,\cdot)$ for its \emph{support function}, which is defined by \index{06@$h_K$} $h(K,x):=h_K(x):=\max\{x\bullet z:z\in K\}$, and we denote by \index{-3@$K^\circ$} $K^\circ:=\{x\in\R^{n+1}: x\bullet y\le 1 \text{ for }y\in K\}$ the \emph{polar body} of $K$.
Note that $K^\circ$ is again a convex body if the origin $o$ is an interior point of $K$. Hence we have $$\phi=g_B=h_{B^\circ}$$ (see \cite[Section 1.7.2, page 53]{MR3155183}).

For any norm $ \phi $ we denote by $ \phi^\ast $ \index{08@$\phi^\ast$} \emph{the conjugate norm} of $ \phi $ which is defined by  $ \phi^\ast(u) = \sup \{ v \bullet u : \phi(v) = 1    \} $ for $u\in \mathbf{R}^{n+1}$. Then we also have $$\phi^\ast=h_B=g_{B^\circ}.$$
 It is well known that if $ \phi $ is a uniformly convex $ \mathcal{C}^2 $-norm then $ \phi^\ast $ is a uniformly convex $ \mathcal{C}^2 $-norm. In geometric terms, this is equivalent to the property that $B$ and $B^\circ$ both have a boundary of class $C^2$ and positive Gauss curvature everywhere (we then say that these bodies are of class $C^2_+$). In particular, the \emph{spherical image map (Gauss map)} $\bm{u}_B:\partial B\to \mathbf{S}^{n}$ is a diffeomorphism of class $C^1$ whose inverse is given by the restriction of $\nabla h_B$ to $\mathbf{S}^{n}$. We refer to  \cite[Lemma 2.32]{MR4160798} for this and other basic facts on $ \phi $ and $ \phi^\ast $ and to \cite[Section 2.5]{MR3155183} for the relations between smoothness properties of $B$ and $B^\circ$. These facts will be tacitly used throughout the paper.

We define the \emph{Wulff shape} (or \emph{Wulff crystal}) of $ \phi $ as \index{09@$\mathcal{W}^\phi$}
\begin{equation*}
\mathcal{W}^\phi = \{x \in \mathbf{R}^{n+1} : \phi^\ast(x)\leq 1\}=B^\circ.
\end{equation*}
Hence, if $ \phi $ is a uniformly convex $ \mathcal{C}^2 $-norm, then the Wulff shape of $ \phi $ is a uniformly convex set with $ \mathcal{C}^2_+ $ boundary. In this case the exterior unit normal (spherical image) map of $ \mathcal{W}^\phi $ is the map \index{15@$\bm{n}^\phi $}
\begin{equation}\label{eq: normal of Wulff shape}
\bm{n}^\phi : \partial \mathcal{W}^\phi \rightarrow \mathbf{S}^n;
\end{equation}
we remark (see \cite[2.32]{MR4160798} or \cite[Section 2.5]{MR3155183}) that $ \bm{n}^\phi=\bm{u}_{B^\circ} $ is a $ \mathcal{C}^1 $-diffeomorphism onto $ \mathbf{S}^n $ and
\begin{equation}\label{eq: normal wulff shape and gradient phi}
\nabla \phi(\bm{n}^\phi(x)) = x \quad \textrm{for $ x \in \partial \mathcal{W}^\phi $,} \qquad \bm{n}^\phi(\nabla \phi(u)) = u \quad \textrm{for $u \in \mathbf{S}^n $.}
\end{equation}
Since $ \Der (\nabla \phi)(u) \bullet u = 0 $ for $ u \in \mathbf{S}^n $, we notice that
\begin{equation*}
    \Tan(\mathbf{S}^n, u) = \Der (\nabla \phi)(u)[\Tan(\mathbf{S}^n, u)] = \Tan(\partial \mathcal{W}^\phi, \nabla \phi(u)).
\end{equation*}

Moreover, we have $\phi(\bm{n}^\phi(x))=\bm{n}^\phi(x)\bullet x$ for $x\in \partial \mathcal{W}^\phi $.  We also point out (cf.\ \cite[Lemma 2.32(f)]{MR4160798}) that the compositions of the gradient maps $\nabla \phi:\R^{n+1}\setminus\{0\}\to \partial B^\circ$ and $\nabla \phi^*:\R^{n+1}\setminus\{0\}\to \partial B$ satisfy the relations
\begin{equation}\label{eq:grad}
\nabla\phi^*\circ\nabla\phi|_{\partial B}=\Id_{\partial B}\qquad\text{and}\qquad \nabla\phi\circ\nabla\phi^*|_{\partial B^\circ}=\Id_{\partial B^\circ}.
\end{equation}

\subsection{Distance function and normal bundle}\label{sec:distnbperi}

\paragraph{Warning.} In this paper we occasionally refer to \cite{kolasinski2021regularity}. However, notice that in this paper we use the same symbols with a different meaning (the roles of $\phi$ and $\phi^*$ are changed). Hence the definitions below have to be compared carefully with those given in \cite[Sections 1 and 2]{kolasinski2021regularity}.

\paragraph{Convention.}
If $ \phi $ is the Euclidean norm, then the dependence on $ \phi $ is omitted in all the symbols introduced below.

\bigskip

Let $ \varnothing\neq A\subseteq \mathbf{R}^{n+1} $ be a closed set, and let $ \phi $ be a uniformly convex $ \mathcal{C}^2 $-norm on $ \mathbf{R}^{n+1} $.
The \emph{$\phi $-distance function}  $ \bm{\bm{\delta}}^\phi_A : \mathbf{R}^{n+1} \rightarrow \mathbf{R} $ \index{D1@$\bm{\bm{\delta}}^\phi_A$} is defined by
\begin{equation}\label{eq: distance function}
	\bm{\bm{\delta}}^\phi_A(x) = \min\{ \phi^\ast(x-c) : c \in A  \}=\min\{\lambda\ge 0:x\in A+\lambda B^\circ\} \qquad \textrm{for $ x \in \mathbf{R}^{n+1} $}.
\end{equation}
Next we define the level and sublevel sets at distance $r>0$ with respect to $\bm{\delta}^\phi_A$ by \index{11@$S^\phi(A,r)$} \index{10@$B^\phi(A,r)$}
\begin{equation*}
	S^\phi(A,r) = \{x \in  \mathbf{R}^{n+1}: \bm{\delta}^\phi_A(x) = r   \} \qquad \textrm{and} \qquad 	B^\phi(A,r) = \{x \in  \mathbf{R}^{n+1}: \bm{\delta}^\phi_A(x) \leq r   \}.
\end{equation*}
For $a\in\R^{n+1}$ we set $B^\phi(a,r)=B^\phi(\{a\},r)=a+rB^\circ$. Moreover,  an open neighborhood of $A$ is defined by    \index{12@$U^\phi(A,r)$} $U^\phi(A,r)=\{x\in\R^{n+1}:\delta_A^\phi(x)<r\}=\Int(B^\phi(A,r))$, and again we set $U^\phi(a,r)=U(\{a\},r)$. Clearly, $\bm{\delta}^\phi_A$ is a Lipschitz map; moreover it is a classical fact that $ -\bm{\delta}^\phi_A $ is semiconvex on $ \mathbf{R}^{n+1} \setminus B^\phi(A, r)$ for $ r > 0 $; (cf.\ \cite[Lemma 2.41(b)]{kolasinski2021regularity} and the references therein).

The \emph{nearest $ \phi $-projection} $ \bm{\xi}^\phi_A : \mathbf{R}^{n+1}  \rightarrow \bm{2}^{A} $  \index{D2@$\bm{\xi}^\phi_A$} is the $A$-multivalued map defined by
\begin{displaymath}
\bm{\xi}^\phi_A(x) = \{c \in A: \bm{\delta}^\phi_A(x) = \phi^\ast(x-c)\}  \qquad \textrm{for  $ x\in \mathbf{R}^{n+1} $.}
\end{displaymath}
This is a weakly continuous map by \cite[Lemma 2.41(f)]{kolasinski2021regularity}. By $\Unp^\phi(A) $ \index{D5@$\Unp^\phi(A)$} we denote the set of all $ x \in \mathbf{R}^{n+1} \setminus A $ such that there exists a unique point $ c \in A $ with $ \phi^\ast(x-c) = \bm{\delta}^\phi_A(x) $, i.e., $\Unp^\phi(A)=\{x\in \mathbf{R}^{n+1} \setminus A: \mathcal{H}^0(\bm{\xi}^\phi_A(x))=1\} $. For $x\in \Unp^\phi(A)$ we simply write $\xi^\phi_A(x)=c$ if $\xi^\phi_A(x)=\{c\}$.  Notice that $ \Unp^\phi(A) $ is a Borel subset of $ \mathbf{R}^{n+1} $ by Lemma \ref{lem : Borel measurability differential} (see \cite[Lemma 3.12]{MR1782274} for a more general fact). It is well known that $ \mathbf{R}^{n+1} \setminus (A \cup \Unp^\phi(A))$ equals the set of points in $ \mathbf{R}^{n+1} \setminus A $ where $ \bm{\delta}^\phi_A $ is not differentiable (cf.\ \cite[Lemma 2.41(c)]{kolasinski2021regularity}. Moreover, $ \mathbf{R}^{n+1} \setminus (A \cup \Unp^\phi(A))$ can be covered outside a set of $ \mathcal{H}^n$ measure zero by a countable union of $ n $-dimensional graphs of $ \mathcal{C}^2$-functions; for the proof of this result  one can proceed as in the Euclidean case which is treated in \cite{Hajlasz2022}.

The  $ \phi $-Cahn--Hoffman map of $ A $ is the $ \partial \mathcal{W}^\phi $-multivalued function $ \bm{\nu}^\phi_A: \mathbf{R}^{n+1} \setminus A \rightarrow \bm{2}^{\partial \mathcal{W}^\phi} $ defined by \index{D3@$\bm{\nu}^\phi_A$}
\begin{equation}\label{eq: nu function}
\bm{\nu}^\phi_A(x) =\bm{\delta}_A^\phi(x)^{-1} (x - \bm{\xi}_A^\phi(x)) \qquad \textrm{for $ x \in \mathbf{R}^{n+1} \setminus A $.}
\end{equation}
Next we introduce the map $ \bm{\psi}_A^\phi : \mathbf{R}^{n+1} \setminus A \rightarrow \mathbf{2}^A \times \bm{2}^{\partial \mathcal{W}^\phi} $ by \index{D4@$\bm{\psi}_A^\phi$}
\begin{equation*}
\bm{\psi}_A^\phi(x) = (\bm{\xi}_A^\phi(x), \bm{\nu}_A^\phi(x)) \qquad \textrm{for $ x \in \mathbf{R}^{n+1} \setminus A $.}
\end{equation*}
Recall the relations
\begin{equation}\label{eq: gradient and normal}
\nabla \bm{\delta}^\phi_A(x) = \nabla \phi^\ast(x - \bm{\xi}^\phi_A(x)) \in \partial \mathcal{W}^{\phi^\ast}\qquad \textrm{and} \qquad   \nabla \phi(\nabla \bm{\delta}^\phi_A(x)) =  \bm{\nu}^\phi_A(x)\in \partial \mathcal{W}^\phi
\end{equation}
for $ x \in \Unp^\phi(A) $, cf.\ \cite[Lemma 2.41(c)]{kolasinski2021regularity} (but recall that the notation and in particular the roles of $\phi$ and $\phi^\ast$ are changed in comparison with \cite{kolasinski2021regularity}). The equivalence of these two relations can be seen from \eqref{eq:grad}. It follows from \cite[Lemma 2.32]{MR4160798} or from \eqref{eq: gradient and normal} and basic properties of strictly convex bodies that $\bm{\nu}^\phi_A(x) \bullet \nabla \bm{\delta}^\phi_A(x) = \phi(\nabla \bm{\delta}^\phi_A(x)) = 1 $ for $ x \in \Unp^\phi(A) $.

The \emph{$\phi $-unit normal bundle of $ A$} is defined by \index{N1@$N^\phi(A)$}
\begin{equation*}
	N^\phi(A) = \{ (x, \eta )  \in A\times \partial \mathcal{W}^\phi:  \bm{\delta}_A^\phi(x + r\eta) = r\; \textrm{for some $ r > 0 $}  \}=\{\bm{\psi}_A^\phi(z) : z\in \Unp^\phi(A)\},
\end{equation*}
and we set \index{N2@$N^\phi(A,x)$}
$$
N^\phi(A,x) = \{\eta\in \partial \mathcal{W}^\phi : (x,\eta)\in N^\phi(A)\}\qquad\text{ for $x\in A$ } .
$$
We recall (cf.\ \cite[Lemma 5.2]{MR4160798}) that $ N^{\phi}(A) $ is a Borel subset of $ \mathbf{R}^{n+1} \times \mathbf{R}^{n+1} $ and it can be covered up to a set of $ \mathcal{H}^n$ measure zero by a countable union of $ n $-dimensional graphs of $ \mathcal{C}^1 $-functions; moreover we have
\begin{equation}\label{eq: phi normal vs euclidean normal}
N^\phi(A) = \{(a, \nabla \phi(u)) : (a, u)\in N(A)\}.
\end{equation}
The {\em  $\phi$-reach function} of $ A $ is the upper semicontinuous  (see \cite[Lemma 2.35]{kolasinski2021regularity}) function $\bm{r}_{A}^{\phi}: N^{\phi}(A) \rightarrow (0, +
\infty]$ given by \index{G1@$\bm{r}_{A}^{\phi}$}
\begin{equation}\label{eq: reach function}
	\bm{r}_{A}^{\phi}(a,\eta) = \sup \bigl\{ s>0 : \bm{\delta}^\phi_A(a + s\eta) = s \bigr\}
	\qquad \text{for $(a,\eta)\in N^{\phi}(A) $},
\end{equation}
and \emph{the $ \phi $-cut locus of $ A $} is given by \index{D6@$\Cut^{\phi}(A)$}
\begin{equation*}
	\Cut^{\phi}(A) = \bigl\{ a + \bm{r}^\phi_A(a, \eta)\eta : (a, \eta) \in
	N^{\phi}(A) \bigr\}.
\end{equation*}
The strict convexity of $ \phi $ implies that $ \mathbf{R}^{n+1} \setminus (A \cup \Unp^\phi(A)) \subseteq \Cut^\phi(A)$; see Lemma 2.41 (c) and Remark 4.1   in  \cite{kolasinski2021regularity} as well as further references given there. Moreover $ \Cut^\phi(A)$ is always contained in the closure of $ \mathbf{R}^{n+1} \setminus (A \cup \Unp^\phi(A))$; cf.\ \cite[Theorem 3B]{MR1434446}. We recall that (cf.\ \cite[Remark 5.10]{MR4160798})
\begin{equation}\label{eq: cut locus}
    \mathcal{L}^{n+1}(\Cut^\phi(A)) =0.
\end{equation}
On the other hand, the closure of $ \mathbf{R}^{n+1} \setminus (A \cup \Unp^\phi(A))$ might have non-empty interior even if $ A $ is the closure of the complement of a convex body with $ \mathcal{C}^{1,1} $-boundary (see \cite{MR4279967} for this and other critical examples).  If $ A $ is convex, then $ \Cut^\phi(A) = \varnothing $. A related function which will be useful in the sequel is defined by
\index{G2@$\bm{\rho}^\phi_A$}
\begin{equation*}
	\bm{\rho}^\phi_A(x) = \sup\{ s \geq 0 :  \bm{\delta}^\phi_A(a + s (x - a)) = s \bm{\delta}^\phi_A(x)  \} \qquad \textrm{for $ x \in \mathbf{R}^{n+1} \setminus A $ and $ a \in \bm{\xi}^\phi_A(x) $.}
\end{equation*}
This definition does not depend on the choice of $ a  \in \bm{\xi}^\phi_A(x) $ and the function $\bm{\rho}^\phi_A : \mathbf{R}^{n+1} \setminus A \rightarrow [1, +\infty] $ is upper semicontinuous; cf.\ \cite[Lemma  2.33]{kolasinski2021regularity}. Notice that $  \{ x : \bm{\rho}^\phi_A(x)> 1  \} \subseteq \Unp^\phi(A) $  (see Lemma 2.33 and Remark 4.1 in  \cite{kolasinski2021regularity} and
\begin{equation}\label{eq : r and rho}
\bm{r}^\phi_A(a,\eta) = r \bm{\rho}^\phi_A(a+r\eta) \qquad \textrm{for  $ (a,\eta) \in N^\phi(A) $ and $ 0 < r < \bm{r}^\phi_A(a,\eta) $},
\end{equation}
as shown in  \cite[Lemma 2.35]{kolasinski2021regularity}.

The following two results from \cite{kolasinski2021regularity}, which we recall here for the ease of the reader, plays an important role in the next section. The norm $ \phi $ is always  assumed to be uniformly convex and $ \mathcal{C}^2 $ in $ \mathbf{R}^{n+1} \setminus \{0\}$.

\begin{Lemma}[\protect{cf.\ \cite[Corollary 3.10]{kolasinski2021regularity}}]
	\label{theo: projectionLipschitz}
Let $\varnothing\neq A \subseteq \mathbf{R}^{n+1}$ be closed,
	$1 < \lambda < \infty$, $0 < s < t < \infty$, and
	\begin{displaymath}
		A_{\lambda,s,t} = \bigl\{ x \in \mathbf{R}^{n+1} \setminus A: \bm{\rho}^\phi_A(x)\geq \lambda, \; s \leq \bm{\delta}^\phi_A(x) \leq t \bigr\} \,.
	\end{displaymath}
Then $ \bm{\xi}_{A}^{\phi} | A_{\lambda,s,t} $ is Lipschitz continuous.
\end{Lemma}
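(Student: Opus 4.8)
**Proof proposal for Lemma \ref{theo: projectionLipschitz} (Lipschitz continuity of $\bm{\xi}^\phi_A$ on $A_{\lambda,s,t}$).**

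The plan is to exploit the semiconcavity of $\bm{\delta}^\phi_A$ (equivalently, semiconvexity of $-\bm{\delta}^\phi_A$) away from $A$ together with the uniform convexity of $\phi^\ast$, which forces a quantitative monotonicity of the gradient map, and then to transfer this to a Lipschitz bound for the projection $\bm{\xi}^\phi_A$ via the identity $\bm{\xi}^\phi_A(x)=x-\bm{\delta}^\phi_A(x)\,\nabla\phi^\ast\!\big(\nabla\bm{\delta}^\phi_A(x)\big)^{-1}$-type relation coming from \eqref{eq: gradient and normal}. More precisely, for $x\in\Unp^\phi(A)$ with $a=\bm{\xi}^\phi_A(x)$ we have $x-a=\bm{\delta}^\phi_A(x)\,\eta$ with $\eta=\bm{\nu}^\phi_A(x)$, and $\nabla\bm{\delta}^\phi_A(x)=\nabla\phi^\ast(x-a)$; the condition $\bm{\rho}^\phi_A(x)\ge\lambda>1$ says that the whole segment from $a$ out to distance $\lambda\bm{\delta}^\phi_A(x)$ along $\eta$ still realizes the distance, i.e.\ $(a,\eta)$ sits comfortably inside the normal bundle with reach bounded below in a scale-invariant way.

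First I would fix $x,y\in A_{\lambda,s,t}$ with $a=\bm{\xi}^\phi_A(x)$, $b=\bm{\xi}^\phi_A(y)$ and write down the two basic inequalities coming from the definition of nearest point: $\phi^\ast(x-a)\le\phi^\ast(x-b)$ and $\phi^\ast(y-b)\le\phi^\ast(y-a)$. The point $\bm{\rho}^\phi_A(x)\ge\lambda$ lets one do better: the ball $B^\phi(a,\lambda\,\bm{\delta}^\phi_A(x))=a+\lambda\bm{\delta}^\phi_A(x)B^\circ$ is contained in $B^\phi(A,\lambda\bm{\delta}^\phi_A(x))$ and touches $A$ only "toward" $a$, so one gets an interior-ball type exclusion: $b$ lies outside the open ball $U^\phi(x',\bm{\delta}^\phi_A(x))$ where $x'=a+\lambda\bm{\delta}^\phi_A(x)\eta$ is the pushed-out center (and symmetrically with $x,y$ interchanged). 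Combining the touching-ball conditions at both $x$ and $y$, and using that $\phi^\ast\in C^2$ is uniformly convex with ellipticity constant $\gamma^\ast>0$, a second-order Taylor expansion of $\phi^\ast$ around $x-a$ and $y-b$ yields a quadratic lower bound of the form $\gamma^\ast\,c(\lambda,s,t)\,|a-b|^2\le (\text{linear terms})\le C(\lambda,s,t)\,|x-y|\,|a-b|$, where the constant degeneration as $\lambda\downarrow 1$ or $s\downarrow 0$ is controlled by the hypotheses $\lambda>1$ and $0<s\le\bm{\delta}^\phi_A\le t$. Dividing by $|a-b|$ gives $|a-b|\le \tilde C(\lambda,s,t)\,|x-y|$, i.e.\ the desired Lipschitz estimate. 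An alternative, perhaps cleaner route is: $\nabla\bm{\delta}^\phi_A$ is $L$-Lipschitz on $A_{\lambda,s,t}$ (this is exactly the content of the semiconcavity estimate plus the lower reach bound, and is how \cite{kolasinski2021regularity} organizes it), and then $\bm{\xi}^\phi_A(x)=x-\bm{\delta}^\phi_A(x)\,\nabla\phi^\ast\!\big(\nabla\bm{\delta}^\phi_A(x)\big)$ — using \eqref{eq:grad} to invert — exhibits $\bm{\xi}^\phi_A$ as a composition and product of Lipschitz maps on $A_{\lambda,s,t}$ (note $\nabla\phi^\ast$ is $C^1$ hence locally Lipschitz, and $\nabla\bm{\delta}^\phi_A$ takes values in the compact set $\partial\mathcal{W}^{\phi^\ast}$), which immediately gives the conclusion.

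The main obstacle is obtaining the Lipschitz bound for $\nabla\bm{\delta}^\phi_A$ (or equivalently the $C^{1,1}$-type regularity of $\bm{\delta}^\phi_A$) on the set $A_{\lambda,s,t}$ with constants depending only on $\lambda,s,t$: one must quantify how the scale-invariant reach condition $\bm{\rho}^\phi_A\ge\lambda$ together with $s\le\bm{\delta}^\phi_A\le t$ produces a uniform interior-ball condition, and then combine the resulting one-sided (semiconcavity) second-order bound with the one-sided bound obtained from the pushed-out touching balls to upgrade to a genuine two-sided Hessian bound. The anisotropy enters only through the fixed $C^2$ body $B^\circ=\mathcal{W}^\phi$ and its uniformly convex conjugate, so the constants $\gamma^\ast$ and $\sup_{\mathbf{S}^n}\|\Der^2\phi^\ast\|$ are harmless fixed quantities; the real care is in tracking the dependence on $\lambda-1$ and on $s$. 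Once that uniform $C^{1,1}$ estimate is in hand, the passage to $\bm{\xi}^\phi_A$ via \eqref{eq: gradient and normal} and \eqref{eq:grad} is routine, and the relation \eqref{eq : r and rho} between $\bm{r}^\phi_A$ and $\bm{\rho}^\phi_A$ guarantees that the set $A_{\lambda,s,t}$ is exactly the right neighborhood on which everything is uniform.
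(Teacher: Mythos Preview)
The paper does not prove this lemma at all; it is merely quoted verbatim from \cite[Corollary 3.10]{kolasinski2021regularity} as one of two external results ``recall[ed] here for the ease of the reader.'' There is therefore no paper proof to compare your proposal against.

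Regarding the proposal itself, a few points. First, a typo: the formula should read $\bm{\xi}^\phi_A(x)=x-\bm{\delta}^\phi_A(x)\,\nabla\phi\big(\nabla\bm{\delta}^\phi_A(x)\big)$, with $\nabla\phi$ rather than $\nabla\phi^\ast$ (since $\nabla\bm{\delta}^\phi_A(x)\in\partial\mathcal{W}^{\phi^\ast}=\partial B$ and $\nabla\phi$ maps this to $\partial\mathcal{W}^\phi$, which is where $\bm{\nu}^\phi_A$ lives). Second, in your geometric description you write that the ball $B^\phi(a,\lambda\bm{\delta}^\phi_A(x))$ centered at $a$ ``touches $A$ only toward $a$''; this is wrong, since $a\in A$ is its center. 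The relevant excluding ball is the one centered at the pushed-out point $x'=a+\lambda\bm{\delta}^\phi_A(x)\eta$, and its radius is $\lambda\bm{\delta}^\phi_A(x)$ (not $\bm{\delta}^\phi_A(x)$ as you wrote). Third, and most substantively: you correctly identify that the crux is upgrading the one-sided semiconcavity bound to a two-sided Lipschitz bound on $\nabla\bm{\delta}^\phi_A$, but your proposal does not actually carry this out --- it only names the obstacle. The set $A_{\lambda,s,t}$ need not be connected or even locally path-connected within itself, so a local $C^{1,1}$ estimate would not immediately give the global Lipschitz bound you claim; the argument in \cite{kolasinski2021regularity} proceeds more directly via a quantitative monotonicity inequality for the (multivalued) projection, which is what your first approach gestures at but does not execute.
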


Before we can state the next result we need to recall from \cite{kolasinski2021regularity} the reach-type function  $ \arl{A}{\phi} : N^\phi(A) \rightarrow [0, +\infty]  $ defined by \index{G3@$\arl{A}{\phi}$}
\begin{equation*}
   \arl{A}{\phi}(a, \eta) = \sup\Bigg\{  \sigma r: \sigma > 1,\, 0 < r < \ar{A}{\phi}(a, \eta), \, \lim_{\rho \to 0+}\frac{\mathcal{L}^{n+1}(A_\sigma \cap B(a+r\eta, \rho))}{\mathcal{L}^{n+1}(B(a+r\eta, \rho))} = 1   \Bigg\} \cup \{0\},
\end{equation*}
where $ A_\sigma = \{\bm{\rho}^\phi_A \geq \sigma \}$. Notice that $ \arl{A}{\phi}(a,\eta) \leq \ar{A}{\phi}(a, \eta) $ for every $(a, \eta) \in N^\phi(A)$; see \cite[Remark 4.10]{kolasinski2021regularity}. This function plays a central role in \cite{kolasinski2021regularity} in the study of the structure of the set $ \dmn \Der \bm{\nu}^\phi_A $ which coincides with the set of twice differentiability points of $ \bm{\delta}^\phi_A $ by \cite[Lemma 2.41(e)]{kolasinski2021regularity}).

\begin{Lemma}[\protect{cf.\ \cite[Theorem  1.5]{kolasinski2021regularity}}]\label{theo: distance twice diff}
If $ \varnothing\neq A \subseteq \mathbf{R}^{n+1}$ is a closed set, then
\begin{equation*}
    \mathcal{L}^{n+1}\big( \mathbf{R}^{n+1} \setminus (A \cup \dmn \Der \bm{\nu}^\phi_A)\big) =0
\end{equation*}
and
       \begin{equation*}
\{ a + r \eta: 0 < r < \ar{A}{\phi}(a, \eta) \} \subseteq \dmn(\Der \bm{\nu}^\phi_A) \quad  \quad \textrm{for $\mathcal{H}^{n}$ almost all $(a, \eta) \in N^\phi(A) $}.
\end{equation*}
The function $ \arl{A}{\phi}$ is Borel measurable. Moreover, if $ a + s \eta \in \dmn \Der \bm{\nu}^\phi_A $  for some $ s\in (0, \arl{A}{\phi}(a, \eta))$, then  $ a + r \eta \in \dmn \Der \bm{\nu}^\phi_A $ for all $r\in ( 0 , \arl{A}{\phi}(a, \eta)) $. Finally,
\begin{equation*}
    \arl{A}{\phi}(a, \eta) = \ar{A}{\phi}(a, \eta) \qquad \textrm{for $ \mathcal{H}^n $ a.e.\ $(a, \eta) \in N^\phi(A) $.}
\end{equation*}
\end{Lemma}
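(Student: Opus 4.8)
The final statement to prove is Lemma~\ref{theo: distance twice diff}, which is cited from \cite[Theorem 1.5]{kolasinski2021regularity}; accordingly my proof proposal is really a plan for how one \emph{reconstructs} this result from the tools assembled in the excerpt, rather than a fully independent argument.

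The plan is to proceed in four stages. \textbf{First}, I would establish the almost-everywhere twice differentiability of $\bm{\delta}^\phi_A$ outside $A$. The key input is the classical fact recorded in the excerpt that $-\bm{\delta}^\phi_A$ is semiconvex on $\R^{n+1}\setminus B^\phi(A,r)$ for every $r>0$; by Alexandrov's theorem a semiconvex function is twice differentiable $\mathcal{L}^{n+1}$-almost everywhere. Exhausting $\R^{n+1}\setminus A$ by the sets $\R^{n+1}\setminus B^\phi(A,1/j)$ for $j\in\N$ and using that the twice differentiability points of $\bm{\delta}^\phi_A$ coincide with $\dmn\Der\bm{\nu}^\phi_A$ (stated in the excerpt via \cite[Lemma 2.41(e)]{kolasinski2021regularity}), one concludes $\mathcal{L}^{n+1}(\R^{n+1}\setminus(A\cup\dmn\Der\bm{\nu}^\phi_A))=0$. \textbf{Second}, I would upgrade this to the fibrewise statement: for $\mathcal{H}^n$-a.e.\ $(a,\eta)\in N^\phi(A)$ the whole open segment $\{a+r\eta:0<r<\ar{A}{\phi}(a,\eta)\}$ lies in $\dmn\Der\bm{\nu}^\phi_A$. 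The natural mechanism is a coarea/Fubini argument: parametrize a neighbourhood of the relevant part of $\R^{n+1}\setminus A$ by the ``polar'' map $(a,\eta,r)\mapsto a+r\eta$ on $N^\phi(A)\times(0,\infty)$, use that this map is (locally) bi-Lipschitz on the regions $A_{\lambda,s,t}$ where $\bm{\xi}^\phi_A$ is Lipschitz (Lemma~\ref{theo: projectionLipschitz}), apply Lemma~\ref{coarea} (Federer's coarea formula) to transport the null set from Step~1 to a set that is $\mathcal{H}^n$-null in $N^\phi(A)$ after integrating out $r$, and then invoke a propagation property of twice differentiability along the radial direction inside the reach.

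\textbf{Third}, the heart of the matter — and what I expect to be the main obstacle — is the radial propagation statement: if $a+s\eta\in\dmn\Der\bm{\nu}^\phi_A$ for some $s\in(0,\arl{A}{\phi}(a,\eta))$, then $a+r\eta\in\dmn\Der\bm{\nu}^\phi_A$ for \emph{all} $r\in(0,\arl{A}{\phi}(a,\eta))$. Here the function $\arl{A}{\phi}$ is exactly engineered for this purpose: its defining condition, that $a+r\eta$ be a Lebesgue density point of $A_\sigma=\{\bm{\rho}^\phi_A\geq\sigma\}$ for suitable $\sigma>1$, is what permits one to compare the differential of $\bm{\nu}^\phi_A$ at two different radii. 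Concretely, along a fixed normal ray the maps $\bm{\nu}^\phi_A(a+r\eta)$ are essentially $r$-independent (the Cahn--Hoffman direction does not change), and the Jacobi-field–type relation $\kappa^\phi_{A,i}(a,\eta)=\chi^\phi_{A,i}(a+r\eta)/(1-r\chi^\phi_{A,i}(a+r\eta))$ expresses the differential at radius $r$ as an explicit rational transformation of the differential at another radius; the density condition guarantees that this transformation, computed in an approximate-differential sense, actually is a strong differential. One must show that strong differentiability at one radius (on a positive-density set of nearby normal rays) forces it at every smaller radius — this is delicate because $\bm{\nu}^\phi_A$ is only Lipschitz on the density-good sets, not globally, so one has to combine Lemma~\ref{lem: elementary diff} (differentiability of Lipschitz inverses) applied to the forward map $a+s\eta\mapsto a+r\eta$ with a careful density-point bookkeeping. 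The Borel measurability of $\arl{A}{\phi}$ follows from Lemma~\ref{lem : Borel measurability differential} together with standard measurability of the density function defining it.

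\textbf{Fourth}, the coincidence $\arl{A}{\phi}(a,\eta)=\ar{A}{\phi}(a,\eta)$ for $\mathcal{H}^n$-a.e.\ $(a,\eta)$. Since $\arl{A}{\phi}\leq\ar{A}{\phi}$ always (\cite[Remark 4.10]{kolasinski2021regularity}), only the reverse a.e.\ inequality is needed; this is obtained by noting that $\mathcal{L}^{n+1}(\Cut^\phi(A))=0$ (equation~\eqref{eq: cut locus}) means that for almost every normal ray the set $\{a+r\eta:0<r<\ar{A}{\phi}(a,\eta)\}$ has full density from inside $\Unp^\phi(A)$ at each of its points, and combining this with the monotonicity of $\bm{\rho}^\phi_A$ along the ray one checks that the supremum defining $\arl{A}{\phi}$ is not strictly smaller than $\ar{A}{\phi}$ outside an $\mathcal{H}^n$-null set of $(a,\eta)$. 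Throughout, the passage between the Euclidean normal bundle $N(A)$ and $N^\phi(A)$ via the $C^1$-diffeomorphism $\nabla\phi$ in \eqref{eq: phi normal vs euclidean normal} lets one, if convenient, reduce certain measure-theoretic steps to the Euclidean case where the analogous results are established in \cite{MR2031455} and \cite{Hajlasz2022}.
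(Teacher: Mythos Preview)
The paper does not supply a proof of this lemma: it is stated verbatim as a quotation of \cite[Theorem~1.5]{kolasinski2021regularity}, with no accompanying \texttt{proof} environment. You correctly identified this and offered a reconstruction sketch rather than a comparison target; there is therefore no ``paper's own proof'' against which to evaluate your proposal. Your four-stage outline (Alexandrov a.e.\ twice differentiability via semiconcavity, coarea transfer to $N^\phi(A)$, radial propagation using the density definition of $\arl{A}{\phi}$, and the a.e.\ equality $\arl{A}{\phi}=\ar{A}{\phi}$ from $\mathcal{L}^{n+1}(\Cut^\phi(A))=0$) is a reasonable high-level roadmap consistent with the machinery the paper imports from \cite{kolasinski2021regularity}, but verifying it would require consulting that reference directly rather than the present paper.
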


\begin{Lemma}\label{lem: distance twice diff}
Suppose $ \varnothing\neq A \subseteq \mathbf{R}^{n+1}$ is a closed set,  $ \xi : \mathbf{R}^{n+1} \rightarrow \mathbf{R}^{n+1} $ is an arbitrary function such that $ \xi(y)\in \bm{\xi}^\phi_A(y) $ for $ y \in \mathbf{R}^{n+1} $ and
\begin{equation*}
    u(y) = \frac{\nabla \phi^\ast(y-\xi(y))}{|\nabla \phi^\ast(y-\xi(y))|} \qquad \textrm{for $
	y \in \mathbf{R}^{n+1} \setminus A $}.
\end{equation*}
 For $ x \in \dmn \Der \bm{\nu}^\phi_A $, $ r = \bm{\delta}^\phi_A(x)$ and $ T = \Tan(S^\phi(A,r), x) $ the following  statements hold.
\begin{enumerate}
    \item[{\rm (a)}] $ \bm{\rho}^\phi_A(x) >1 $, $  \im \Der \bm{\nu}_A^\phi(x) \subseteq T  $ and $ \Der \bm{\nu}_A^\phi(x)(\bm{\nu}^\phi_A(x)) =0  $.
    \item[{\rm (b)}]  The maps  $\Der(\nabla \phi)(u(x))|T $ and $ \Der u(x)|T $ are self-adjoint (with respect to the underlying scalar product $\bullet$) automorphisms of $ T $,
    \begin{equation*}
\Der \bm{\nu}^\phi_A(x) = \Der(\nabla \phi)(u(x)) \circ \Der u(x) \quad \textrm{and}  \quad        \Der u(x)(\tau_1) \bullet \tau_2 = \frac{\Der^2\bm{\delta}_{A}^{\phi}(x)(\tau_1,\tau_2)}{|\nabla \bm{\delta}_{A}^{\phi}(x) (x)|}
	\quad \textrm{for $ \tau_1, \tau_2 \in T$}.
    \end{equation*}
	\item[{\rm (c)}] There is a basis $ \tau_1, \ldots , \tau_{n} $ of $ T $ of eigenvectors of $ \Der \bm{\nu}^\phi_A(x)| T $ and the corresponding  eigenvalues $ \chi_{1} \leq \ldots \leq \chi_{n} $ of $\Der \bm{\nu}^\phi_A(x) $ are real numbers
        such that
        \begin{equation}\label{boundsonchi}
            \frac{1}{(1-\bm{\rho}^\phi_A(x))r}\leq \chi_{i} \leq \frac{1}{r} \,.
        \end{equation}
        \end{enumerate}
 \end{Lemma}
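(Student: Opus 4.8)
The plan is to peel off the five assertions one at a time, reducing (a) and (b) to chain‑rule identities for the factorisation $\bm{\nu}^\phi_A=\nabla\phi\circ\nabla\bm{\delta}^\phi_A$ and (c) to an eigenvalue comparison for the Hessian of $\bm{\delta}^\phi_A$. I begin with the groundwork. Put $a:=\xi(x)$ and $\eta:=\bm{\nu}^\phi_A(x)$; since $x\in\dmn\Der\bm{\nu}^\phi_A$, the value $\bm{\nu}^\phi_A(x)$ is a singleton, so $x\in\Unp^\phi(A)$, $a=\bm{\xi}^\phi_A(x)$, $x=a+r\eta$ and $(a,\eta)\in N^\phi(A)$; moreover $\bm{\delta}^\phi_A$ is twice differentiable at $x$, $\bm{\rho}^\phi_A(x)>1$, and $\bm{\delta}^\phi_A$ is of class $C^{1,1}$ on a neighbourhood $V\subseteq\Unp^\phi(A)$ of $x$ on which $\bm{\xi}^\phi_A$ is single‑valued — I will take these facts from \cite{kolasinski2021regularity} (cf.\ \cite[Lemma 2.41, Remark 4.1]{kolasinski2021regularity} together with Lemma \ref{theo: projectionLipschitz}). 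Writing $\zeta:=\nabla\bm{\delta}^\phi_A(x)$, equation \eqref{eq: gradient and normal} gives, on $V$, that $\bm{\nu}^\phi_A=\nabla\phi\circ\nabla\bm{\delta}^\phi_A$ and, since $\xi|V=\bm{\xi}^\phi_A|V$ and $\nabla\phi^\ast$ is $0$‑homogeneous, also $u=\nabla\bm{\delta}^\phi_A/|\nabla\bm{\delta}^\phi_A|$; thus $\zeta=\nabla\phi^\ast(\eta)\in\partial\mathcal{W}^{\phi^\ast}$ with $\phi(\zeta)=1$, $\nabla\phi(\zeta)=\eta$, $u(x)=\zeta/|\zeta|$ and $\zeta\bullet\eta=1$. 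Since $\zeta\neq0$, $S^\phi(A,r)$ is a $C^1$ hypersurface near $x$, hence $T=\zeta^\perp$; by \eqref{eq: normal wulff shape and gradient phi} this coincides with $\Tan(\partial\mathcal{W}^\phi,\eta)=\Tan(\mathbf{S}^n,u(x))$, and $\mathbf{R}^{n+1}=T\oplus\mathbf{R}\eta$ because $\zeta\bullet\eta\neq0$.

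For (a): along the open segment $\{a+s\eta:0<s<\bm{r}^\phi_A(a,\eta)\}$, which contains $r$ in its interior because $\bm{r}^\phi_A(a,\eta)=\bm{\rho}^\phi_A(x)\,r>r$ by \eqref{eq : r and rho}, the map $\bm{\nu}^\phi_A$ is constantly equal to $\eta$; differentiating at $x$ in the direction $\eta$ yields $\Der\bm{\nu}^\phi_A(x)(\eta)=0$, and $\im\Der\bm{\nu}^\phi_A(x)\subseteq\Tan(\partial\mathcal{W}^\phi,\eta)=T$ since $\bm{\nu}^\phi_A$ maps into $\partial\mathcal{W}^\phi$. For (b): the chain rule applied to $\bm{\nu}^\phi_A=\nabla\phi\circ u$ on $V$ gives $\Der\bm{\nu}^\phi_A(x)=\Der(\nabla\phi)(u(x))\circ\Der u(x)$; differentiating $u=\nabla\bm{\delta}^\phi_A/|\nabla\bm{\delta}^\phi_A|$ and discarding the rank‑one term (legitimate since $\tau_1,\tau_2\perp\zeta$) gives $\Der u(x)(\tau_1)\bullet\tau_2=\Der^2\bm{\delta}^\phi_A(x)(\tau_1,\tau_2)/|\nabla\bm{\delta}^\phi_A(x)|$, so $\im\Der u(x)\subseteq\Tan(\mathbf{S}^n,u(x))=T$ and $\Der u(x)|T$ is self‑adjoint by symmetry of the Hessian. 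Finally $\Der(\nabla\phi)(u(x))=\Der^2\phi(u(x))$ is $\bullet$‑symmetric, leaves $T=\Tan(\mathbf{S}^n,u(x))$ invariant by the identity recorded right after \eqref{eq: normal wulff shape and gradient phi}, and is positive definite on $T$ (as $|u(x)|=1$ and $T\perp u(x)$) by uniform convexity of $\phi$; hence $\Der(\nabla\phi)(u(x))|T$ and $\Der u(x)|T$ are self‑adjoint maps of $T$ into itself, the former moreover positive definite.

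For the diagonalisation in (c): by (b), $\Der\bm{\nu}^\phi_A(x)|T=B\circ C$ with $B:=\Der(\nabla\phi)(u(x))|T$ self‑adjoint positive definite and $C:=\Der u(x)|T$ self‑adjoint (both for $\bullet$); consequently $\Der\bm{\nu}^\phi_A(x)|T$ is self‑adjoint for the inner product $\langle v,w\rangle:=B^{-1}v\bullet w$, so it admits a basis $\tau_1,\dots,\tau_n$ of $T$ of eigenvectors with real eigenvalues $\chi_1\le\dots\le\chi_n$ equal to the critical values of $v\mapsto(Cv\bullet v)/(B^{-1}v\bullet v)$ on $T\setminus\{0\}$. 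Differentiating $\nabla\phi^\ast\circ\nabla\phi=\phi^{-1}\Id$ at $\zeta$ (this relation follows from \eqref{eq:grad} by $0$‑homogeneity) gives the duality $\Der^2\phi^\ast(\eta)\circ\Der^2\phi(\zeta)=\Id-\zeta\otimes\eta$ (with $(\zeta\otimes\eta)w=(\eta\bullet w)\zeta$); using this and $\Der^2\phi(\zeta)\zeta=0$ one checks $B^{-1}v\bullet v=|\zeta|^{-1}\Der^2\phi^\ast(\eta)(v,v)$ for $v\in T$, whence $(Cv\bullet v)/(B^{-1}v\bullet v)=\Der^2\bm{\delta}^\phi_A(x)(v,v)/\Der^2\phi^\ast(\eta)(v,v)$, where $\Der^2\phi^\ast(\eta)|_{T\times T}$ is positive definite (it is $\ge0$ by convexity and its kernel $\mathbf{R}\eta$ meets $T$ trivially). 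Thus $\chi_i=\Der^2\bm{\delta}^\phi_A(x)(\tau_i,\tau_i)/\Der^2\phi^\ast(\eta)(\tau_i,\tau_i)$, and it remains only to bound this Rayleigh quotient.

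The bounds \eqref{boundsonchi} are the substantive point, and I would obtain them from two touching‑function comparisons. Upper bound: $\bm{\delta}^\phi_A\le\phi^\ast({\cdot}-a)$ on $\mathbf{R}^{n+1}$, with equality and equal gradients at $x$, hence $\Der^2\bm{\delta}^\phi_A(x)\le\Der^2\phi^\ast(x-a)=r^{-1}\Der^2\phi^\ast(\eta)$ as quadratic forms, and dividing on $T$ by $\Der^2\phi^\ast(\eta)(v,v)>0$ yields $\chi_i\le1/r$. Lower bound: for $s\in(r,\bm{r}^\phi_A(a,\eta))$ put $z_s:=a+s\eta$; since $\bm{\delta}^\phi_A(z_s)=s$, every $c\in A$ satisfies $\phi^\ast(z_s-c)\ge s$, so $\bm{\delta}^\phi_A(y)\ge s-\phi^\ast(y-z_s)$ for all $y$, again with equality and equal gradients at $x$, hence $\Der^2\bm{\delta}^\phi_A(x)\ge-\Der^2\phi^\ast(x-z_s)=-(s-r)^{-1}\Der^2\phi^\ast(\eta)$, so $\chi_i\ge-1/(s-r)$; letting $s\uparrow\bm{r}^\phi_A(a,\eta)=\bm{\rho}^\phi_A(x)\,r$ (and reading the bound as $\chi_i\ge0$ when $\bm{r}^\phi_A(a,\eta)=+\infty$) gives $\chi_i\ge-1/((\bm{\rho}^\phi_A(x)-1)r)=1/((1-\bm{\rho}^\phi_A(x))r)$. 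The main obstacles I anticipate are, first, extracting from \cite{kolasinski2021regularity} the precise statements that $\bm{\rho}^\phi_A(x)>1$ and that $\bm{\delta}^\phi_A$ is $C^{1,1}$ near $x$ (so that the chain rules and the identity $\bm{\nu}^\phi_A=\nabla\phi\circ\nabla\bm{\delta}^\phi_A$ are available at $x$), and second, the anisotropic bookkeeping in (c): unlike the Euclidean case, the planes $T=\zeta^\perp$, $\eta^\perp$ and $\im\Der^2\bm{\delta}^\phi_A(x)$ differ, and the $\phi$–$\phi^\ast$ duality must be used with care to reduce the eigenvalue analysis to the genuinely symmetric pair $\Der^2\bm{\delta}^\phi_A(x)|_T$, $\Der^2\phi^\ast(\eta)|_T$; once that normalisation is done, the comparisons are those of the classical smooth theory.
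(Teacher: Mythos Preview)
The paper does not supply a proof of this lemma: it is stated without proof in Section~2.4 among the results recalled from \cite{kolasinski2021regularity}, so there is no ``paper's own proof'' to compare against. Your reconstruction is correct and follows the natural route (chain rule for the factorisation $\bm{\nu}^\phi_A=\nabla\phi\circ u$, then eigenvalue bounds via touching barriers $\phi^\ast(\cdot-a)$ from above and $s-\phi^\ast(\cdot-z_s)$ from below); the duality computation $B^{-1}v\bullet v=|\zeta|^{-1}\Der^2\phi^\ast(\eta)(v,v)$ on $T$ and the resulting Rayleigh-quotient bounds are accurate.

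One point to tighten: your claim that $\bm{\delta}^\phi_A$ is $C^{1,1}$ on a full \emph{neighbourhood} $V\subseteq\Unp^\phi(A)$ of $x$ is not generally available, since $\{\bm{\rho}^\phi_A>1\}$ need not be open and Lemma~\ref{theo: projectionLipschitz} only gives Lipschitz control on the closed sets $A_{\lambda,s,t}$. Fortunately you do not need it. The identity $u=\bm{n}^\phi\circ\bm{\nu}^\phi_A$ holds as an identity of \emph{multivalued} maps on all of $\mathbf{R}^{n+1}\setminus A$ (by \eqref{eq: gradient and normal} and $0$-homogeneity), so strong differentiability of $u$ at $x$ and the chain rule $\Der\bm{\nu}^\phi_A(x)=\Der(\nabla\phi)(u(x))\circ\Der u(x)$ follow directly from strong differentiability of $\bm{\nu}^\phi_A$ together with smoothness of the diffeomorphism $\bm{n}^\phi$; the link between $\Der u(x)$ and $\Der^2\bm{\delta}^\phi_A(x)$ then comes from the pointwise second-order expansion of $\bm{\delta}^\phi_A$ at $x$, which is exactly what $x\in\dmn\Der\bm{\nu}^\phi_A$ encodes by \cite[Lemma~2.41(e)]{kolasinski2021regularity}. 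With this adjustment your argument goes through unchanged.
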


\medskip
\subsection{Boundaries and perimeter}
Let $ A \subseteq \mathbf{R}^{n+1} $ and $ a \in A$. For $x\in\R^{n+1}$, $u\in \bS^n$ and $r>0$, we define the open halfspace through $x$ with (inner and outer, respectively) normal $u$ by $H^+(x,u):=\{z\in\R^{n+1}:(z-x)\bullet u> 0\}$ and $H^-(x,u):=\{z\in\R^{n+1}:(z-x)\bullet u< 0\}$.
Following \cite[Section 4.5.5]{MR0257325}, we say that a vector $ u \in \mathbf{S}^n$ is an \emph{exterior normal of $ A $ at $ a $} if
\begin{equation*}
    \lim_{r \to 0+}\frac{\mathcal{L}^{n+1}\big( H^+(a,u)\cap  U(a,r) \cap A\big)}{r^{n+1}} =0
\end{equation*}
and
\begin{equation*}
    \lim_{r \to 0+}\frac{\mathcal{L}^{n+1}\big(H^-(a,u)\cap   U(a,r) \setminus A\big)}{r^{n+1}} =0.
\end{equation*}
Clearly, in this definition $U(a,r)$ can be replaced by $B(a,r)$ and the open halfspaces can be replaced by the corresponding closed halfspaces.  Recall also from \cite[Section 4.5.5]{MR0257325} that if $ u$ and $ v $ are exterior unit normals of $ A$ at $ a $,  then $ u = v $. The set of points where the exterior normal of $ A $ exists is denoted by \index{B1@$\partial^m A$} $\partial^m A$; we define $ \bm{n}(A, \cdot) : \partial^m A \rightarrow \mathbf{S}^n$ to be the Euclidean  exterior normal map of $ A $. We  extend this definition by $\bm{n}(A, x)=0$ for $x\notin \partial^m A$.  Notice the equality $ \bm{n}(\mathcal{W}^\phi, \cdot) = \bm{n}^\phi $ on $\partial W^\phi$.

For an $ \mathcal{L}^{n+1}$ measurable set $ A \subseteq \mathbf{R}^{n+1}$ one can also consider the \index{B2@$\partial^\ast A$} \emph{essential boundary} $ \partial^\ast A $; see \cite[Definition 3.60]{MR1857292}. Recalling the notions of \emph{approximate discontinuity set} $ S_u $ and \emph{approximate jump set} $ J_u $ of a function $ u \in L^1_{\loc}(\mathbf{R}^{n+1})$, see \cite[Definitions 3.63 and 3.67]{MR1857292}, we notice that if $ A \subseteq \mathbf{R}^{n+1}$ is an $ \mathcal{L}^{n+1}$ measurable set, then $ \partial^\ast A = S_{\bm{1}_A} $ and $ \partial^m A = J_{\bm{1}_A}$, and it follows from \cite[Proposition 3.64]{MR1857292} and \cite[Proposition 3.69]{MR1857292} that $ \partial^m A $ and $ \partial^\ast A$ are Borel subsets of $ \mathbf{R}^{n+1}$, $ \bm{n}(A, \cdot) $ is a Borel function and
\begin{equation*}
    \partial^m A \subseteq \partial^\ast A.
\end{equation*}
Employing an argument similar to \cite[Lemma 5.1]{MR3978264}, one can still prove that if $ A \subseteq \mathbf{R}^{n+1}$ is an arbitrary set, then $ \partial^m A $ is a Borel subset of $ \mathbf{R}^{n+1}$ and $ \bm{n}(A, \cdot) $ is a Borel function.

We recall that  an $ \mathcal{L}^{n+1}$ measurable subset $ A $ of $ \mathbf{R}^{n+1}$ is a set of locally finite perimeter in $ \mathbf{R}^{n+1}$ if   the characteristic function $ \bm{1}_A$ is a function of locally bounded first variation (see \cite[Chapter 3]{MR1857292}). If $ A \subseteq \mathbf{R}^{n+1}$ is a set of finite perimeter, we denote by $ \mathcal{F}A $ the {\em reduced boundary} of $ A $ (see \cite[3.54]{MR1857292}). An important result of De Giorgi, see \cite[Theorem 3.59]{MR1857292}, implies that
\begin{equation*}
    \mathcal{F}A \subseteq \partial^m A.
\end{equation*}

A result of Federer (see \cite[Theorem 3.61]{MR1857292}) yields that if $ A $ is a set of  locally finite perimeter, then
\begin{equation}\label{eq: reduced an essential boundary}
    \mathcal{H}^n(\partial^\ast A \setminus \mathcal{F} A) =0.
\end{equation}
Another result of Federer (see \cite[Theorem 4.5.11]{MR0257325}) implies that if $ A \subseteq \mathbf{R}^{n+1}$  and $ \mathcal{H}^n(K \cap \partial A) < \infty$ for every compact set $ K \subset \mathbf{R}^{n+1}$, then $ A $ is a set of  locally finite perimeter.

\begin{Definition}
Let $A\subset\R^{n+1}$ be a Borel set with locally finite perimeter, and let $\phi $ be a uniformly convex $ \mathcal{C}^2 $-norm on $ \mathbf{R}^{n+1} $.
The \emph{$ \phi$-perimeter}  of $A$ is the Radon measure $\mathcal{P}^\phi(A, \cdot)$ supported in $ \partial A $ such that
\begin{equation*}
    \mathcal{P}^\phi(A, S) = \int_{S \cap \partial^mA}\phi(\bm{n}(A,x))\, d\mathcal{H}^n(x) \qquad \textrm{for Borel sets $ S \subseteq \mathbf{R}^{n+1} $.}
\end{equation*}
The total measure is denoted by \index{P1@$\mathcal{P}^\phi(A)$} $ \mathcal{P}^\phi(A, \mathbf{R}^{n+1}) = \mathcal{P}^\phi(A, \partial^m A) = \mathcal{P}^\phi(A) \in [0,\infty]$.
\end{Definition}

\medskip

Clearly, we have $\mathcal{P}^\phi(A)>0$ if and only if $\mathcal{H}^n(\partial^m A)>0$.

\medskip

The following lemma will be needed in Section \ref{Section: positive reach}. We refer to this section and the references provided there, for the definition and the basic facts concerning sets  of positive reach.

\begin{Lemma}\label{lem:finite perimeter and positive reach}
\begin{enumerate}
    \item[{\rm (a)}] \label{lem:finite perimeter and positive reach:1} If $A\subset\R^{n+1}$ is a Borel set of locally finite perimeter such that $ 0< \mathcal{L}^{n+1}(A) < \infty$, then $\mathcal{H}^n(\mathcal{F} A)>0$.
    \item[{\rm (b)}] \label{lem:finite perimeter and positive reach:2} If $A\subseteq \mathbf{R}^{n+1}$ is a set of positive reach, then $ \mathcal{H}^n(K \cap \partial A) < \infty$ for every compact set $ K \subseteq \mathbf{R}^{n+1}$, and consequently $ A $ is a set of locally finite perimeter.
\end{enumerate}
\end{Lemma}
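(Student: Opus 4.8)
\emph{Part (a).} I would argue by contradiction. If $\mathcal{H}^n(\mathcal{F}A)=0$, then, since $A$ has locally finite perimeter, De Giorgi's structure theorem (\cite[Theorem 3.59]{MR1857292}) gives $|D\bm{1}_A|=\mathcal{H}^n\restrict\mathcal{F}A=0$; hence the locally $BV$ function $\bm{1}_A$ has vanishing distributional derivative and, as $\R^{n+1}$ is connected, is $\mathcal{L}^{n+1}$-a.e.\ constant, so that $\mathcal{L}^{n+1}(A)\in\{0,+\infty\}$, contradicting $0<\mathcal{L}^{n+1}(A)<\infty$. (Equivalently, since $\mathcal{L}^{n+1}(\R^{n+1}\setminus A)=+\infty$, one may quote the isoperimetric inequality $\mathcal{L}^{n+1}(A)^{n/(n+1)}\le c_{n}\,\mathcal{H}^n(\mathcal{F}A)$ to conclude at once.)

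\emph{Part (b).} The strategy is to exhibit $\partial A$ as a Lipschitz image of a level set of $\bm{\delta}^\phi_A$ which is a $C^{1,1}$-hypersurface, and then to invoke Federer's criterion recalled above (\cite[Theorem 4.5.11]{MR0257325}). Fix $0<\epsilon<\rho_0$, where $\rho_0>0$ is chosen below the reach of $A$; then every point at $\phi$-distance less than $\rho_0$ from $A$ lies in $\Unp^\phi(A)$ and $\bm{r}^\phi_A\ge\rho_0$ on $N^\phi(A)$ (Definition \ref{def: reach}), whence by \eqref{eq : r and rho} one has $\bm{\rho}^\phi_A\ge\rho_0/\epsilon'$ on $S^\phi(A,\epsilon')$ for each $\epsilon'\in(0,\rho_0)$, so Lemma \ref{theo: projectionLipschitz} shows that $\bm{\xi}^\phi_A$ is Lipschitz on a suitable open shell around $S^\phi(A,\epsilon)$. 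Consequently $\nabla\bm{\delta}^\phi_A=\nabla\phi^\ast\circ(\mathrm{id}-\bm{\xi}^\phi_A)$ (see \eqref{eq: gradient and normal}) is Lipschitz on that shell, and since $\nabla\bm{\delta}^\phi_A\in\partial\mathcal{W}^{\phi^\ast}$ keeps $|\nabla\bm{\delta}^\phi_A|$ bounded away from $0$, the implicit function theorem shows that $S^\phi(A,\epsilon)$ is an embedded $C^{1,1}$-hypersurface; in particular $\mathcal{H}^n(S^\phi(A,\epsilon)\cap K)<\infty$ for every compact $K\subseteq\R^{n+1}$.

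Next I would establish the key inclusion $\partial A\subseteq\mathbf{p}(N^\phi(A))$. Given $x\in\partial A$, choose $y_k\in\R^{n+1}\setminus A$ with $y_k\to x$; for $k$ large $\bm{\delta}^\phi_A(y_k)<\rho_0$, so $y_k\in\Unp^\phi(A)$, $\bm{\psi}_A^\phi(y_k)=(\bm{\xi}^\phi_A(y_k),\bm{\nu}^\phi_A(y_k))\in N^\phi(A)$ with $\bm{\xi}^\phi_A(y_k)\to x$, while along a subsequence $\bm{\nu}^\phi_A(y_k)\to\eta\in\partial\mathcal{W}^\phi$ by compactness. Since $N^\phi(A)$ is closed for sets of positive reach (e.g.\ because $\bm{\psi}_A^\phi$ maps the closed level set $S^\phi(A,\rho_0/2)$ homeomorphically onto $N^\phi(A)$, with inverse $(a,\eta)\mapsto a+\tfrac{\rho_0}{2}\eta$), we obtain $(x,\eta)\in N^\phi(A)$, and then $\bm{r}^\phi_A(x,\eta)\ge\rho_0>\epsilon$ forces $x+\epsilon\eta\in S^\phi(A,\epsilon)$ with $\bm{\xi}^\phi_A(x+\epsilon\eta)=x$. (Alternatively, one may use the classical fact that tangent cones of sets of positive reach are convex, so the normal cone at any $x\in\partial A$ is non-trivial and hence $N^\phi(A,x)\ne\varnothing$.) Finally, for a compact $K$ the set $K'=S^\phi(A,\epsilon)\cap(\bm{\xi}^\phi_A)^{-1}(K)$ is compact (points of $S^\phi(A,\epsilon)$ lie at $\phi^\ast$-distance $\epsilon$ from their nearest point), is contained in the shell on which $\bm{\xi}^\phi_A$ is Lipschitz, and satisfies $\partial A\cap K\subseteq\bm{\xi}^\phi_A(K')$; since a Lipschitz map increases $\mathcal{H}^n$ by at most a power of its Lipschitz constant, $\mathcal{H}^n(\partial A\cap K)<\infty$, and Federer's criterion then yields that $A$ has locally finite perimeter.

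I expect the main obstacle to be the inclusion $\partial A\subseteq\mathbf{p}(N^\phi(A))$ — the existence of an exterior $\phi$-normal at \emph{every} boundary point — together with the careful verification that $\bm{\xi}^\phi_A$ is genuinely Lipschitz on a neighbourhood of the chosen level set. Both rest on unpacking the definition of positive reach and on the regularity results of \cite{kolasinski2021regularity} recalled above, and require some care in the anisotropic setting, since the corresponding classical Euclidean statements of Federer cannot simply be quoted.
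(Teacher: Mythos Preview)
Your proposal is correct and follows essentially the same strategy as the paper: for (a) the isoperimetric inequality (which you mention as an alternative) is exactly what the paper invokes, and for (b) both arguments realise $\partial A$ as a Lipschitz image of a level hypersurface of the distance function and then appeal to Federer's criterion.

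The one noteworthy difference is that the paper works entirely in the \emph{Euclidean} setting for (b): it fixes $0<r<\reach(A)$, notes that $S(A,r)$ is a closed $\mathcal{C}^1$-hypersurface and that $\bm{\xi}_A|S(A,r)$ is Lipschitz with image $\partial A$, and then bounds $\mathcal{H}^n(\partial A\cap B(0,s))$ by $\Lip(\bm{\xi}_A)^n\,\mathcal{H}^n(S(A,r)\cap B(0,s+r))$. This sidesteps precisely the obstacle you flagged: in the Euclidean case the Lipschitz property of $\bm{\xi}_A$ and the equality $\bm{\xi}_A(S(A,r))=\partial A$ are classical facts from Federer's original work, so no careful unpacking of the anisotropic regularity results is needed. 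Your anisotropic route is valid (and your justification of $\partial A\subseteq\bm{p}(N^\phi(A))$ is more explicit than the paper's implicit use of the corresponding Euclidean fact), but it is more work than necessary for this particular lemma, since the conclusion is norm-independent.
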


\begin{proof}
Noting \cite[Theorem 3.59]{MR1857292} and \cite[Theorem 3.36]{MR1857292}, the  statement in (a) directly follows from the isoperimetric inequality in \cite[Theorem 3.46]{MR1857292}.


We now prove (b). We fix $ 0 < r < \reach(A)$ and note that $ S(A,r)$ is a closed $ \mathcal{C}^{1}$-hypersurface and $ \xi : =\bm{\xi}_A | S(A,r)$ is a Lipschitz map with $ \xi(S(A,r)) = \partial A$. Since $ \partial A \cap{B}(0,s) \subseteq \xi (\xi^{-1}(\partial A \cap {B}(0,s)))$ and $\xi^{-1}(\partial A \cap {B}(0,s)) \subseteq S(A,r) \cap {B}(0, r+s) $ for  $ s > 0 $, we infer that
\begin{equation*}
    \mathcal{H}^n({B}(0,s) \cap \partial A) \leq \Lip(\xi)^n \mathcal{H}^n(S(A,r) \cap {B}(0, s+r)) \quad \textrm{for  $ s > 0$}.
\end{equation*}
The right-hand side is evidently finite, since $ S(A,r) \cap {B}(0, s+r)$ is a compact subset of the closed $ \mathcal{C}^1$-hypersurface $ S(A,r)$.
\end{proof}
It will be sometimes useful to consider another notion of boundary: if $ A \subseteq \mathbf{R}^{n+1}$ is a closed set,  then we define the \emph{viscosity boundary} of $ A $ by \index{B3@$\partial^{v}A$}
\begin{equation*}
	\partial^{v}A =   \{a\in\partial A : \mathcal{H}^0(N(A,a)=1\}.
\end{equation*}
This is precisely the set of boundary points $a\in \partial A$ for which there is a unique (outer unit normal) \textbf{v}ector $u\in\mathbf{S}^n$ with $(a,u)\in N(A) $.
For  $s > 0$ we also define $ \partial^v_sA$ to be the set of points $ a \in \partial^v A$ such that there exists a closed  Euclidean ball $ B $ of radius $ s $ such that $ B \subseteq A$ and $ a \in \partial B$. We set \index{B4@$\partial^{v}_+A$}
\begin{equation*}
    \partial^v_+A = \bigcup_{s >0} \partial^v_sA.
\end{equation*}
\begin{Remark}\label{rmk: viscosity boundary}
We notice that $\partial^v_+ A \subseteq \partial^m A \cap \bm{p}(N(A)) \subseteq \partial^v A$ and
\begin{equation*}
N(A,a) = \{   \bm{n}(A,a) \} \qquad \textrm{for $ a \in \partial^m A \cap \bm{p}(N(A))$.}
\end{equation*}
Moreover, $ N(\mathbf{R}^{n+1} \setminus \Int(A), a) = \{-\bm{n}(A,a)\}$ for every $ a \in \partial^v_+A$. Finally, if $ s > 0 $ then  $ \partial^v_s A $ is a closed subset of $ \partial A $  and $  B(a - s \bm{n}(A,a), s) \subseteq A $ for every $ a \in \partial^v_s A$.
\end{Remark}

\medskip

The following lemma (or rather the consequence of it discussed in Remark \ref{rem:lipnorm}) will be relevant in the special case of sets with positive reach in Section \ref{sec:5.3} (see Remark \ref{rem:5.13}).

\begin{Lemma}\label{lem:lipbound}
Let $A \subseteq \mathbf{R}^{n+1}$ be a closed set and $x,y\in\partial A$. Let $0<r<s/2$. Suppose that  $u,v \in \mathbf{S}^n$ are such that
\begin{equation*}
   {B}(x - ru, r) \subseteq A,\quad {U}(x+su,s)\cap A=\emptyset, \quad {B}(y - rv, r) \subseteq A,\quad{U}(y+sv,s)\cap A=\emptyset.
\end{equation*}
Then
$$
|u-v| \leq \max \left\{  \frac{2(s - 2r)}{r(s-r)}, \sqrt{\frac{2}{r(s-r)}}\right\} |x-y|.
$$
\end{Lemma}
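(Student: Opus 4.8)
The plan is to extract two symmetric quadratic inequalities from the four ball conditions and add them, the addition being precisely what kills the otherwise sign-indefinite inner-product terms. First I would write $w:=y-x$, put $\beta:=|x-y|=|w|$ and $\alpha:=|u-v|\ge 0$, and record the identity $u\bullet v=1-\tfrac12\alpha^2$, which holds because $|u|=|v|=1$. Since $B(y-rv,r)\subseteq A$ while the open ball $U(x+su,s)$ is disjoint from $A$, the closed ball $B(y-rv,r)$ and the open ball $U(x+su,s)$ are disjoint, so the distance between their centres is at least the sum of the radii:
\[
|w-rv-su|=|(y-rv)-(x+su)|\ge r+s .
\]
Squaring this, substituting $|u|=|v|=1$ and $u\bullet v=1-\tfrac12\alpha^2$, and cancelling the terms that involve only $r$ and $s$, I would obtain
\[
rs\,\alpha^2\ \le\ |w|^2-2r\,(w\bullet v)-2s\,(w\bullet u).
\]
Exchanging the roles of $x$ and $y$ --- now using $B(x-ru,r)\subseteq A$ and $U(y+sv,s)\cap A=\emptyset$, which gives $|w+ru+sv|\ge r+s$ --- the same computation produces
\[
rs\,\alpha^2\ \le\ |w|^2+2r\,(w\bullet u)+2s\,(w\bullet v).
\]

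Next I would add the two inequalities. The cross terms combine into $2(r-s)\,w\bullet(u-v)$, and since $r<s$ (indeed $r<s/2$) the Cauchy--Schwarz inequality gives $(r-s)\,w\bullet(u-v)\le(s-r)\,|w|\,|u-v|=(s-r)\beta\alpha$, whence
\[
rs\,\alpha^2\ \le\ \beta^2+(s-r)\,\beta\,\alpha ,
\]
that is, $rs\,\alpha^2-(s-r)\beta\alpha-\beta^2\le 0$. Since the discriminant of this quadratic in $\alpha$ is $(s-r)^2\beta^2+4rs\,\beta^2=(s+r)^2\beta^2\ge 0$, the number $\alpha$ lies between the two real roots, hence is at most the larger one, which equals $\dfrac{(s-r)\beta+(s+r)\beta}{2rs}=\dfrac{\beta}{r}$. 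Therefore $|u-v|\le\dfrac1r\,|x-y|$.

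Finally I would note that $\tfrac1r$ does not exceed the constant in the statement: with $t:=s/r\in(2,\infty)$ that constant equals $\tfrac1r\max\bigl\{\tfrac{2(t-2)}{t-1},\ \sqrt{\tfrac{2}{t-1}}\bigr\}$, and the maximum is $\ge 1$ because $\tfrac{2(t-2)}{t-1}\ge 1$ for $t\ge 3$ while $\sqrt{\tfrac{2}{t-1}}\ge 1$ for $t\le 3$. This gives the asserted bound; in fact the argument yields the sharp constant $1/r$, which is attained when $A$ is a Euclidean ball of radius $r$. The computation is short and elementary, and the only points that need attention are the recognition that the right geometric input is the \emph{disjointness of the interior ball at one point with the exterior ball at the other} (which forces the centre-distance bound $r+s$, not merely $s$), and the observation that it is the \emph{sum} of the two symmetric inequalities --- not either one by itself --- that removes the sign-indefinite terms $w\bullet u$ and $w\bullet v$. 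Beyond that I anticipate no genuine obstacle.
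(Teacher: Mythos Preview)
Your proof is correct and in fact yields the sharp constant $1/r$, which is stronger than the bound stated in the lemma. The approach is genuinely different from the paper's. The paper does not use the disjointness of the inner ball at $y$ with the outer ball at $x$; instead it first uses only that the single point $y\in A$ lies outside $U(x+su,s)$ to get $(y-x)\bullet u\le |y-x|^2/(2s)$, and then applies the analogous inequality to the auxiliary point $x-ru+rv\in B(x-ru,r)\subseteq A$, which lies outside $U(y+sv,s)$. After symmetrizing and summing, the paper arrives at the quadratic
\[
r(s-r)\,\alpha^2\ \le\ \beta^2+(s-2r)\,\beta\alpha,
\]
rather than your $rs\,\alpha^2\le \beta^2+(s-r)\,\beta\alpha$. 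The paper then finishes not by solving its quadratic but by a case split comparing the two right-hand terms, which produces the cruder constant in the statement. Amusingly, the discriminant of the paper's quadratic is also a perfect square (namely $s^2\beta^2$), and solving it directly would again give $\alpha\le\beta/r$; the paper notes this only in the convex limit $s=\infty$ (Remark~\ref{rem:convexcase}). Your route---observing that the \emph{whole} inner ball at one point is disjoint from the outer ball at the other, so the centre distance is at least $r+s$---is both shorter and immediately delivers the sharp bound for all $r<s/2$, with the added benefit that the discriminant computation $(s-r)^2+4rs=(s+r)^2$ is transparently recognizable.
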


\begin{proof}
Since $y\in A$, we have $y\notin{U}(x+su,s)$, hence $|y-x-su|^2\ge s^2$, which yields
$$|y-x|^2+s^2-2s(y-x)\bullet u\ge s^2$$ or
\begin{equation}\label{ref1a1}
(y-x)\bullet u\le \frac{|y-x|^2}{2s}.
\end{equation}
By symmetry, we also have
\begin{equation}\label{ref1a2}
(x-y)\bullet v\le \frac{|x-y|^2}{2s}.
\end{equation}
Noting that $x - ru + rv \in {B}(x-ru, r) \subseteq A$, we conclude
from \eqref{ref1a2} that
\begin{flalign*}
    (x-ru+rv-y) \bullet v & \leq \frac{|x-y + r(v-u)|^2}{2s} \\
    & \leq \frac{1}{2s}|x-y|^2 + \frac{r^2}{2s}|u-v|^2 + \frac{r}{s}(x-y) \bullet (v-u).
\end{flalign*}
Exchanging $ x$ and $ y $ (and using \eqref{ref1a1}), we also get
\begin{equation*}
   (y-rv+ru-x) \bullet u \leq  \frac{1}{2s}|x-y|^2 + \frac{r^2}{2s}|u-v|^2 + \frac{r}{s}(y-x) \bullet (u-v).
\end{equation*}
Now we sum the last two inequalities to obtain
\begin{equation*}
    (x-y) \bullet (v-u) + r|v-u|^2 \leq \frac{1}{s}|x-y|^2 + \frac{r^2}{s}|u-v|^2 + \frac{2r}{s}(x-y) \bullet (v-u),
\end{equation*}
and we infer
\begin{equation}\label{ref:rem}
    r\Big( 1 - \frac{r}{s}\Big)|u-v|^2 \leq \frac{1}{s}|x-y|^2 + \Big(1 - \frac{2r}{s}\Big) |x-y||u-v|.
\end{equation}
If $\frac{1}{s}|x-y|^2  \leq  \big(1 - \frac{2r}{s}\big) |x-y||u-v|$, then
\begin{equation*}
    |u-v|^2 \leq \frac{2(s-2r)}{r(s-r)}|x-y||u-v|.
\end{equation*}
If $\frac{1}{s}|x-y|^2  \geq  \big(1 - \frac{2r}{s}\big) |x-y||u-v|$,  then
\begin{equation*}
    |u-v|^2 \leq \frac{2}{r(s-r)}|x-y|^2,
\end{equation*}
which yields the asserted upper bound.
\end{proof}

\begin{Remark}\label{rem:convexcase}
For convex bodies, Lemma \ref{lem:lipbound} is provided in \cite[Lemma 1.28]{Hug99} (see also \cite[Lemma 2.1]{MR1416712} for a less explicit statement and the literature cited there). In this special case, it can be seen from \eqref{ref:rem}  that the Lipschitz constant is bounded from above by $1/r$ (with $s=\infty$).
\end{Remark}

\begin{Remark}\label{rem:lipnorm}
For a closed set $A\subset\R^{n+1}$ and $r,s>0$, let
$X_{r,s}(A)$ denote the set of all $a\in\partial A$ such that ${B}(a - ru, r) \subseteq A$ and ${U}(a+su,s)\cap A=\emptyset$  for some $u\in\mathbf{S}^n$. Then $X_{r,s}(A)\subset\partial^m A$, for any $a\in X_{r,s}(A)$ the unit vector $u$ is equal to $\bm{n}(A,a)$ (and uniquely determined)  and
$\{\bm{n}(A,a)\}=N(A,a)\cap \mathbf{S}^n$. If $0<r\le s/4$, then Lemma \ref{lem:lipbound} yields that $\bm{n}(A,\cdot)|X_{r,s}(A)$ is Lipschitz continuous with Lipschitz constant bounded from above by $3/r$, since
$$
\frac{2(s-2r)}{r(s-r)}\le \frac{2s}{r(s-s/4)}=\frac{2s}{r\frac{3}{4}s}=\frac{8}{3}\frac{1}{r}<\frac{3}{r}
$$
and
$$
\sqrt{\frac{2}{r(s-r)}}\le \sqrt{\frac{2}{r\frac{3}{4}s}}=\sqrt{\frac{8}{3}\frac{1}{r2r}}=  \frac{2}{\sqrt{3}}\frac{1}{r}<\frac{2}{r}.
$$
\end{Remark}

\section{A Steiner-type formula for arbitrary closed sets}\label{sec: steiner formula}

Throughout this section, we assume  that $ \phi $ is a uniformly convex
$ \mathcal{C}^2 $-norm.  Recalling Lemma \ref{theo: distance twice diff} we start by introducing the following definition.

\subsection{Normal bundle and curvatures}

We start introducing the principal curvature of the level sets $ S^\phi(A,r) $ of the distance function $ \bm{\delta}^\phi_A$ taking the eigenvalues of the normal vector field $ \bm{\nu}^\phi_A $ defined in equation \eqref{eq: nu function}.
\begin{Definition}
	Suppose  $\varnothing\neq A \subseteq \mathbf{R}^{n+1} $ is closed, $ x \in \dmn (\Der \bm{\nu}^\phi_A) $  and $ r = \bm{\delta}^\phi_A(x) $. Then the eigenvalues (counted with their algebraic multiplicities) of $ \Der \bm{\nu}^\phi_A(x)| \Tan(S^\phi(A,r), x) $ are denoted by
	\index{O1@$\rchi^\phi_{A,i}$}
	\begin{equation*}
		\rchi^\phi_{A,1}(x) \leq \ldots \leq \rchi_{A,n}^\phi(x).
	\end{equation*}
\end{Definition}

\begin{Lemma}\label{rem: borel meas of chi}
The set $ \dmn (\Der \bm{\nu}^\phi_A)\subseteq  \Unp^\phi(A) $ is a Borel subset of $ \mathbf{R}^{n+1} $ and the functions $ \rchi^\phi_{A,i} : \dmn (\Der \bm{\nu}^\phi_A) \rightarrow \mathbf{R} $ are Borel functions for $ i\in\{1,\ldots,n\}$.
\end{Lemma}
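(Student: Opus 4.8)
The plan is to reduce the statement to an application of Lemma \ref{lem : Borel measurability differential} together with the classical Borel measurability of the eigenvalue functions of a symmetric (or merely real) matrix, and to handle the tangent-plane dependence by trivializing it via a Borel choice of orthonormal frame.

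First I would recall that by definition $\dmn(\Der\bm{\nu}^\phi_A)$ is the set of points where the $\partial\mathcal{W}^\phi$-multivalued map $\bm{\nu}^\phi_A$ is strongly differentiable; since $\bm{\nu}^\phi_A$ is weakly continuous on $\mathbf{R}^{n+1}\setminus A$ (this follows from the weak continuity of $\bm{\xi}^\phi_A$ recalled in Section \ref{sec:distnbperi} and the continuity of $\bm{\delta}^\phi_A$ away from $A$, or directly from \eqref{eq: gradient and normal}), Lemma \ref{lem : Borel measurability differential} applies and yields that $\dmn(\Der\bm{\nu}^\phi_A)$ is a Borel subset of $\mathbf{R}^{n+1}$ and that $\Der\bm{\nu}^\phi_A:\dmn(\Der\bm{\nu}^\phi_A)\to\Hom(\mathbf{R}^{n+1},\mathbf{R}^{n+1})$ is Borel measurable. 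The inclusion $\dmn(\Der\bm{\nu}^\phi_A)\subseteq\Unp^\phi(A)$ is immediate, since strong differentiability of $\bm{\nu}^\phi_A$ at $x$ in particular requires $\bm{\nu}^\phi_A(x)$ to be a singleton, which forces $\bm{\xi}^\phi_A(x)$ to be a singleton as well.

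Next I would address the eigenvalues. The only subtlety is that $\rchi^\phi_{A,i}(x)$ are the eigenvalues of the restriction $\Der\bm{\nu}^\phi_A(x)|\Tan(S^\phi(A,r),x)$ to the varying $n$-plane $T(x):=\Tan(S^\phi(A,r),x)$ with $r=\bm{\delta}^\phi_A(x)$, not of the full map on $\mathbf{R}^{n+1}$. By Lemma \ref{lem: distance twice diff}(a) we have $\im\Der\bm{\nu}^\phi_A(x)\subseteq T(x)$ and $\Der\bm{\nu}^\phi_A(x)(\bm{\nu}^\phi_A(x))=0$; moreover $T(x)=\ker(\nabla\bm{\delta}^\phi_A(x)^\perp)$ is the orthogonal complement of $\nabla\bm{\delta}^\phi_A(x)=\nabla\phi^\ast(x-\xi^\phi_A(x))$ (up to normalization), and this normal direction depends in a Borel way on $x$ via the Borel map $x\mapsto\bm{\xi}^\phi_A(x)$ on $\Unp^\phi(A)$ together with the continuous map $\nabla\phi^\ast$. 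Hence the full endomorphism $\Der\bm{\nu}^\phi_A(x)$ has eigenvalue $0$ on the line spanned by $\bm{\nu}^\phi_A(x)$ and its remaining $n$ eigenvalues (with multiplicity) coincide with $\rchi^\phi_{A,1}(x)\le\cdots\le\rchi^\phi_{A,n}(x)$. It therefore suffices to show that the ordered nonzero-complement eigenvalues of a Borel family of endomorphisms are Borel, which I would do either by noting that the coefficients of the characteristic polynomial $\det(\Der\bm{\nu}^\phi_A(x)-t\,\Id)$ are polynomials in the (Borel) matrix entries, factoring out the guaranteed factor $t$, and invoking the standard fact that the roots of a polynomial, listed in increasing order (they are real here by Lemma \ref{lem: distance twice diff}(c)), depend continuously — hence Borel-measurably — on the coefficients; alternatively, one can select a Borel orthonormal frame $e_1(x),\ldots,e_n(x)$ of $T(x)$ (using measurable-selection / Gram--Schmidt applied to a Borel frame of the orthogonal complement of $\nabla\bm{\delta}^\phi_A(x)$) and observe that the matrix of $\Der\bm{\nu}^\phi_A(x)|T(x)$ in this frame has Borel entries, reducing to the $n\times n$ case.

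The main obstacle — really the only place requiring care — is the measurability of the ordered eigenvalue functions, i.e.\ making precise that "eigenvalues depend Borel-measurably on the matrix." I expect this to be routine: the increasing rearrangement of the roots of a monic real polynomial with real roots is a continuous function of the coefficient vector, and precomposing with the Borel map $x\mapsto\Der\bm{\nu}^\phi_A(x)$ (and with the Borel construction of $T(x)$ and an adapted frame) preserves Borel measurability. Once this is in place, the assertion that each $\rchi^\phi_{A,i}:\dmn(\Der\bm{\nu}^\phi_A)\to\mathbf{R}$ is Borel follows immediately, completing the proof.
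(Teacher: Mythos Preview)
Your proof is correct and follows essentially the same approach as the paper: apply Lemma~\ref{lem : Borel measurability differential} to obtain Borel measurability of the domain and of the derivative, then use continuity of ordered eigenvalues as functions of the matrix. The only difference is that the paper works with $\Der\bm{\xi}^\phi_A$ rather than $\Der\bm{\nu}^\phi_A$; since $\Der\bm{\xi}^\phi_A(x)$ has all eigenvalues non-negative (by \eqref{boundsonchi}), the extra eigenvalue $0$ coming from the $\bm{\nu}^\phi_A(x)$-direction is automatically the smallest one $\lambda_0$, so the remaining ordered eigenvalues satisfy $\lambda_i = 1 - \bm{\delta}^\phi_A(x)\,\rchi^\phi_{A,n+1-i}(x)$ and one recovers the $\rchi^\phi_{A,i}$ without needing your characteristic-polynomial division or frame selection.
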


\begin{proof}
	Let $ \mathcal{X} $ be the set of all  $ \varphi \in \Hom(\mathbf{R}^{n+1}, \mathbf{R}^{n+1}) $ with real eigenvalues. For each $ \varphi \in   \mathcal{X}  $ we define $ \lambda_0(\varphi) \leq \ldots \leq \lambda_n(\varphi) $ to be the eigenvalues of $ \varphi $ counted with their algebraic multiplicity, and then we define the map $ \lambda : \mathcal{X} \rightarrow \mathbf{R}^{n+1} $ by
	\begin{equation*}
		\lambda(\varphi) = (\lambda_0(\varphi), \ldots , \lambda_n(\varphi)) \qquad \textrm{for $ \varphi \in\mathcal{X} $.}
	\end{equation*}
We observe that  $\mathcal{X}$ is a Borel set and $ \lambda $ is a continuous map by \cite[Theorem A]{MR884486}. Moreover we notice that $ \dmn (\Der \bm{\nu}^\phi_A )=\dmn (\Der \bm{\xi}^\phi_ A) $ and that this is a Borel subset of $\mathbf{R}^{n+1}$  by Lemma \ref{lem : Borel measurability differential}. For each $ x \in \dmn (\Der \bm{\xi}^\phi_A) $, we have $ \Der \bm{\xi}^\phi_A(x)(\bm{\nu}^\phi_A(x)) =0 $ and  $\bm{\delta}^\phi_A(x)\cdot  \bm{\nu}^\phi_A(x)=x-\bm{\xi}_A^\phi(x)  $, hence
\begin{equation*}
	\lambda_0(\Der \bm{\xi}^\phi_A(x)) =0 \qquad \textrm{and} \qquad  \lambda_i(\Der \bm{\xi}^\phi_A(x)) = 1 - \bm{\delta}^\phi_A(x)\rchi^\phi_{A, n+1-i}(x) \geq 0 \quad \textrm{for $ i = 1, \ldots , n $,}
\end{equation*}
where also  \eqref{boundsonchi} was used.
Since the map $ \Der \bm{\xi}^\phi_A : \dmn (\Der \bm{\xi}^\phi_A) \rightarrow \mathcal{X} $ is a Borel function, we obtain the assertion.
\end{proof}

\begin{Remark}[\protect{cf.\ \cite[Lemmas 2.41 and  2.44]{kolasinski2021regularity}}]\label{rem: tangent ot level sets}
Suppose $\varnothing\neq A \subseteq \mathbf{R}^{n+1} $ is closed, $ x \in \Unp^\phi(A) $, $ r = \bm{\delta}^\phi_A(x) $, $ 0 < t < 1 $ and $ y = \bm{\xi}^\phi_A(x) + tr \bm{\nu}^\phi_A(x) = \bm{\xi}^\phi_A(x) + t(x-\bm{\xi}^\phi_A(x)) $. Then $ y \in \Unp^\phi(A) $,
	\begin{equation*}
	\Tan(S^\phi(A,r) , x) = \{v \in \mathbf{R}^{n+1} : v \bullet \nabla \bm{\delta}^\phi_A(x) =0\},
	\end{equation*}
\begin{equation*}
	\nabla \bm{\delta}^\phi_A(x) = \nabla \bm{\delta}^\phi_A(y) \qquad \textrm{and} \qquad  \Tan(S^\phi(A,r) , x) = \Tan(S^\phi(A,tr) , y).
\end{equation*}
\end{Remark}

\begin{Remark}\label{rem: tangent level sets and Wulff shapes}
	For  $(a,\eta)\in N^\phi(A) $ and  $ 0 < r < \bm{r}^\phi_A(a,\eta) $ we have
	\begin{equation*}
		\Tan(S^\phi(A,r), a + r\eta) = \Tan(\partial \mathcal{W}^\phi, \eta).
	\end{equation*}
Setting $ u = \frac{\nabla \bm{\delta}^\phi_A(a+ r\eta)}{|\nabla\bm{\delta}^\phi_A(a+ r\eta)|} $, this assertion follows from Remark \ref{rem: tangent ot level sets}, noting that (see \eqref{eq: normal wulff shape and gradient phi} and \eqref{eq: gradient and normal})
	\begin{equation*}
		\nabla \phi(u) = \nabla \phi(\nabla\bm{\delta}^\phi_A(a+ r\eta)) = \eta, \qquad \bm{n}^\phi(\eta) = u.
	\end{equation*}
\end{Remark}

\medskip

\begin{Lemma}\label{lem: existence of curvatures}
	Suppose  $\varnothing\neq A \subseteq \mathbf{R}^{n+1} $ is a closed set, $(a,\eta)\in N^\phi(A) $, $ 0 < r < s < \bm{r}^\phi_A(a,\eta) $ so that $ a + r \eta, a+s\eta \in \dmn \Der \bm{\nu}^\phi_K $ and $ \tau_1, \ldots , \tau_n \in \Tan(\partial \mathcal{W}^\phi, \eta) $.
	
	Then $ \Der \bm{\nu}^\phi_A(a+r\eta)\tau_i = \chi^\phi_{A,i}(a+r\eta)\tau_i $ for $ i = 1, \ldots , n $ if and only if $ \Der \bm{\nu}^\phi_A(a+s\eta)\tau_i = \chi^\phi_{A,i}(a+s\eta)\tau_i $ for  $ i = 1, \ldots , n $, in which case it holds that
	\begin{equation*}
	\frac{\rchi^\phi_{A,i}(a + r \eta)}{1-r\rchi^\phi_{A,i}(a + r \eta)} = \frac{\rchi^\phi_{A,i}(a + s \eta)}{1-s\rchi^\phi_{A,i}(a + s \eta)} \quad \textrm{for $ i = 1, \ldots , n $.}
	\end{equation*}
\end{Lemma}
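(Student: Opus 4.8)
The plan is to transfer the whole question to the $n$-dimensional linear space $T:=\Tan(\partial\mathcal{W}^\phi,\eta)$, establish a single operator identity on $T$, and then extract the eigenvalue relation by linear algebra. Write $x_r:=a+r\eta$ and $x_s:=a+s\eta$. Along the normal ray one has $\bm{\xi}^\phi_A(a+t\eta)=a$ and $\bm{\nu}^\phi_A(a+t\eta)=\eta$ for $0<t<\bm{r}^\phi_A(a,\eta)$, while $\dmn\Der\bm{\xi}^\phi_A=\dmn\Der\bm{\nu}^\phi_A$ contains both $x_r$ and $x_s$ by assumption. By Remarks~\ref{rem: tangent ot level sets} and~\ref{rem: tangent level sets and Wulff shapes}, $\Tan(S^\phi(A,r),x_r)=\Tan(S^\phi(A,s),x_s)=T$, and by Lemma~\ref{lem: distance twice diff}(a) the maps $\Der\bm{\nu}^\phi_A(x_r)$ and $\Der\bm{\nu}^\phi_A(x_s)$ kill $\eta$ and have image in $T$; I would denote their restrictions to $T$ by $N_r$ and $N_s$. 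Differentiating $\bm{\xi}^\phi_A=\Id-\bm{\delta}^\phi_A\,\bm{\nu}^\phi_A$ at $x_r$ and $x_s$ (using Remark~\ref{rem: tangent ot level sets} and Lemma~\ref{lem: distance twice diff}(a)) then gives $\Der\bm{\xi}^\phi_A(x_r)|T=\Id_T-rN_r$, $\Der\bm{\xi}^\phi_A(x_s)|T=\Id_T-sN_s$, and $\Der\bm{\xi}^\phi_A(x_r)\eta=\Der\bm{\xi}^\phi_A(x_s)\eta=0$; in particular both differentials of $\bm{\xi}^\phi_A$ map $\mathbf{R}^{n+1}$ into $T$.

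The core of the argument is the identity $N_r\circ(\Id_T-(s-r)N_s)=N_s$ on $T$. To obtain it, I would introduce, for $z\notin A$ close to $x_s$, the point $g(z):=c_z+\tfrac rs(z-c_z)$, where $c_z\in\bm{\xi}^\phi_A(z)$ is an arbitrary nearest point of $A$ to $z$; by strict convexity of $\phi^\ast$ (the mechanism behind Remark~\ref{rem: tangent ot level sets}), $c_z$ is the \emph{unique} nearest point of $A$ to $g(z)$, so $g(z)\in\Unp^\phi(A)$ with $\bm{\delta}^\phi_A(g(z))=\tfrac rs\bm{\delta}^\phi_A(z)$ and $\bm{\nu}^\phi_A(g(z))=\bm{\delta}^\phi_A(z)^{-1}(z-c_z)\in\bm{\nu}^\phi_A(z)$, and moreover $g(x_s)=x_r$. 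The strong differentiability of $\bm{\xi}^\phi_A$ at $x_s$ shows that $g$ is differentiable at $x_s$ with $\Der g(x_s)=\tfrac rs\Id+(1-\tfrac rs)\Der\bm{\xi}^\phi_A(x_s)$ (the same linear map for every selection $z\mapsto c_z$), so in particular $|g(z)-x_r|\le C|z-x_s|$ near $x_s$. I would then feed the admissible vector $\bm{\nu}^\phi_A(g(z))$ both into the strong-differentiability inequality for $\bm{\nu}^\phi_A$ at $x_r$, evaluated at the nearby point $g(z)$, and into that at $x_s$, evaluated at $z$ (legitimate since $\bm{\nu}^\phi_A(g(z))\in\bm{\nu}^\phi_A(z)$); subtracting the two, using the linear control on $g$, and sending the error to $0$ while $z-x_s$ sweeps a full neighbourhood of the origin, yields $\Der\bm{\nu}^\phi_A(x_r)\circ\Der g(x_s)=\Der\bm{\nu}^\phi_A(x_s)$ on $\mathbf{R}^{n+1}$, which restricted to $T$ is exactly the claimed identity. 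This ``chain rule'' step is the main obstacle: because $\bm{\nu}^\phi_A$ is set-valued and $\Unp^\phi(A)$ need not be a neighbourhood of $x_r$ or $x_s$, one cannot compose differentials directly and must instead play the two strong-differentiability estimates off against each other along the single curve $z\mapsto g(z)$, using crucially that $\bm{\nu}^\phi_A(g(z))$ is simultaneously an admissible competitor at $g(z)$ and at $z$.

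Finally I would solve for $N_s$ and read off the eigenvalues. By \eqref{boundsonchi} at $x_r$ and $r\,\bm{\rho}^\phi_A(x_r)=\bm{r}^\phi_A(a,\eta)>s$ from \eqref{eq : r and rho}, the eigenvalues $\rchi^\phi_{A,i}(x_r)$ of $N_r$ lie in the interval $\bigl[\tfrac{1}{(1-\bm{\rho}^\phi_A(x_r))r},\tfrac1r\bigr]$, on which $1+(s-r)t>0$; hence $\Id_T+(s-r)N_r$ is invertible and the identity rearranges to $N_s=(\Id_T+(s-r)N_r)^{-1}N_r$, and symmetrically $N_r=(\Id_T-(s-r)N_s)^{-1}N_s$ (the operator $\Id_T-(s-r)N_s$ being invertible, as its eigenvalues $1-(s-r)\rchi^\phi_{A,i}(x_s)$ are at least $r/s>0$). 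Thus $N_s$ is a rational function of $N_r$ and conversely, so $(\tau_1,\dots,\tau_n)$ is an ordered eigenbasis of $N_r$ if and only if it is one of $N_s$; and since $t\mapsto t/(1+(s-r)t)$ is increasing on the interval above (its pole lies to the left of the left endpoint, by the same kind of estimate), the spectral order is preserved and $\rchi^\phi_{A,i}(a+s\eta)=\rchi^\phi_{A,i}(a+r\eta)/\bigl(1+(s-r)\rchi^\phi_{A,i}(a+r\eta)\bigr)$. A direct computation rewrites this last relation as the asserted equality $\rchi^\phi_{A,i}(a+r\eta)/(1-r\rchi^\phi_{A,i}(a+r\eta))=\rchi^\phi_{A,i}(a+s\eta)/(1-s\rchi^\phi_{A,i}(a+s\eta))$, understood in $(-\infty,+\infty]$ (both sides equal $+\infty$ precisely when $\rchi^\phi_{A,i}(a+r\eta)=1/r$). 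The only remaining work is the routine invertibility and monotonicity bookkeeping just sketched.
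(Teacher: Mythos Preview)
Your proposal is correct and follows essentially the same route as the paper: both introduce the retraction $g(z)=\xi(z)+\tfrac{r}{s}(z-\xi(z))$ (the paper writes $t=r/s$), use strict convexity of $\phi^\ast$ to get $\bm{\nu}^\phi_A(g(z))\in\bm{\nu}^\phi_A(z)$, differentiate to obtain the operator identity $N_r\circ(\Id_T-(s-r)N_s)=N_s$, and then invoke the curvature bounds \eqref{boundsonchi} together with \eqref{eq : r and rho} for invertibility.

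Two minor points of comparison. First, the paper is terser about the multivalued chain rule (it simply ``differentiates'' the identity $\nu(\xi(z)+t(z-\xi(z)))=\nu(z)$), whereas you correctly flag and handle the issue by playing the two strong-differentiability inequalities against each other; this is more careful but amounts to the same computation. Second, for the ``if and only if'' the paper transfers one eigenbasis to the other and then matches eigenspaces by a dimension count, while you observe that $N_s=(\Id_T+(s-r)N_r)^{-1}N_r$ is a rational function of $N_r$ (and symmetrically), which is a slightly slicker way to conclude that the eigenbases coincide and the order is preserved by monotonicity of $t\mapsto t/(1+(s-r)t)$.
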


\begin{proof}
	We define $ x = a + r\eta $, $ y = a + s\eta $ and $ t = \frac{r}{s} \in (0,1)$. We notice that $\bm{\xi}^\phi_A $ is differentiable at $ y $ and
\begin{equation*}
\Der\bm{\xi}^\phi_A(y)|\Tan(\partial \mathcal{W}^\phi, \eta) = \textrm{Id}_{\Tan(\partial \mathcal{W}^\phi, \eta)} - s \Der \bm{\nu}^\phi_A(y)|\Tan(\partial \mathcal{W}^\phi, \eta).
\end{equation*}
Let $ \xi : \mathbf{R}^{n+1} \setminus A \rightarrow A $  be such that $ \xi(z) \in \bm{\xi}^\phi_A(z) $ for $ z \in \mathbf{R}^{n+1} \setminus A $. Then define  $ \nu : \mathbf{R}^{n+1} \setminus A \rightarrow \partial \mathcal{W}^\phi $ by $ \nu(z) = \bm{\delta}^\phi_A(z)^{-1}(z - \xi(z)) $ for $ z \in \mathbf{R}^{n+1} \setminus A $. It follows from the strict convexity of $ \phi $ (see \cite[Remark 2.17]{kolasinski2021regularity}) that
	\begin{equation*}
		\nu(\xi(z) + t(z-\xi(z)) ) = \nu(z) \qquad \textrm{for $ z \in \mathbf{R}^{n+1} \setminus A $}.
	\end{equation*}
Differentiating this equality in $ y $, we obtain
	\begin{equation*}
		\Der \nu(x) \circ [  \Der \xi(y) + t (\textrm{Id}_{\mathbf{R}^{n+1}} - \Der \xi(y)) ] = \Der \nu(y).
	\end{equation*}
Assume now that $ \Der \nu(y)\tau_i = \rchi^\phi_{A,i}(y)\tau_i $ for  $ i = 1, \ldots , n $. Then
$$
\Der \bm{\xi}_A^\phi(y) \tau_i=\tau_i-s\cdot \Der  \bm{\nu}_A^\phi(y) \tau_i=(1-s\chi_{A,i}^\phi(y))\tau_i,
$$
and hence we get
\begin{equation*}
\rchi^\phi_{A,i}(y)\tau_i = [1 - (s-r)\rchi^\phi_{A,i}(y)]\Der \nu(x)\tau_i \qquad \textrm{for $ i = 1, \ldots , n $.}
\end{equation*}
Note that by \eqref{eq : r and rho} we have $\bm{\rho}_A^\phi(y) s-s>r-s$, and hence by the lower bound in \eqref{boundsonchi} we get
$$
1 + (r-s)\rchi^\phi_{A,i}(y) \ge 1+(r-s)\frac{1}{(1-\bm{\rho}_A^\phi(y) )s}=1-\frac{r-s}{\bm{\rho}_A^\phi(y) s-s}>0,
$$
that is, $1 - (s-r)\rchi^\phi_{A,i}(y) > 0 $ for  $ i = 1, \ldots , n $. We conclude that
\begin{equation*}
	\Der \nu(x)\tau_i = \rchi^\phi_{A,i}(x)\tau_i, \qquad  \rchi^\phi_{A,i}(x) = \frac{\rchi^\phi_{A,i}(y)}{1 - (s-r)\rchi^\phi_{A,i}(y)}
\end{equation*}
and
\begin{equation*}
\frac{\rchi^\phi_{A,i}(x)}{1-r\rchi^\phi_{A,i}(x)} = \frac{\rchi^\phi_{A,i}(y)}{1-s\rchi^\phi_{A,i}(y)}
\end{equation*}
for  $ i = 1, \ldots , n $ (where this common ratio may be infinite).

The last paragraph shows in particular that $ \Der \nu(x)|\Tan(\partial \mathcal{W}^\phi, \eta) $ and $ \Der \nu(y)|\Tan(\partial \mathcal{W}^\phi, \eta) $ have the same number $ k $ of distinct eigenvalues. Denoting by $ N_1(x), \ldots , N_k(x) $ and $N_1(y), \ldots , N_k(y) $ the eigenspaces of $ \Der \nu(x)|\Tan(\partial \mathcal{W}^\phi, \eta) $ and $ \Der \nu(y)|\Tan(\partial \mathcal{W}^\phi, \eta) $ respectively, we can also derive from the last paragraph the inclusions $ N_i(y) \subseteq N_i(x) $ for  $ i = 1, \ldots , k $. Since
\begin{equation*}
N_1(y) \oplus \cdots \oplus N_k(y) = \Tan(\partial \mathcal{W}^\phi, \eta) = N_1(x) \oplus \cdots \oplus N_k(x),
\end{equation*}
we conclude that $ N_i(y) = N_i(x) $ for $ i = 1, \ldots , n $ and the proof is completed.
\end{proof}

\begin{Definition}\label{def:curv2.8}
Suppose $\varnothing\neq A \subseteq \mathbf{R}^{n+1} $ is closed.	We define \index{N3@$\widetilde{N}^\phi(A) $}
	\begin{equation*}
		\widetilde{N}^\phi(A) = \{ (a,\eta)\in N^\phi(A) : \textrm{$a  + r\eta  \in \dmn(\Der \bm{\nu}^\phi_A) $ for some $r\in ( 0 , \arl{A}{\phi}(a, \eta)) $}\}
	\end{equation*}
	and \index{O2@$\kappa_{A,i}^\phi$}
	\begin{equation*}
		\kappa_{A,i}^\phi(a, \eta) = \frac{\rchi^\phi_{A,i}(a + r \eta)}{1-r\rchi^\phi_{A,i}(a + r \eta)}\in (-\infty,\infty]
	\end{equation*}
	for $(a, \eta) \in \widetilde{N}^\phi(A) $, $ 0 < r < \arl{A}{\phi}(a, \eta) $ with $ a+ r \eta \in \dmn \Der \bm{\nu}^\phi_A $ and $ i = 1, \ldots , n $. The numbers $\kappa_{A,i}^\phi(a, \eta)$, $i\in\{1,\ldots,n\}$, are called \emph{anisotropic (with respect to $\phi$) generalized curvatures of $A$ at $(a,\eta)$} or \emph{generalized $\phi$-curvatures of $A$ at $(a,\eta)$}.
\end{Definition}

\begin{Remark}\label{rem: principla curvatures basic rem}
	Lemma \ref{lem: existence of curvatures} demonstrates that the definition of $ \kappa^\phi_{A,i}(a, \eta) $ does not depend on the choice of $ r $ and Lemma \ref{theo: distance twice diff} implies that $ \mathcal{H}^n(N^\phi(A) \setminus \widetilde{N}^\phi(A) ) =0 $. Moreover, Lemma \ref{theo: distance twice diff} ensures that all points of the open segment $ \{a + r \eta : 0 < r < \arl{A}{\phi}(a, \eta) \} $ are points of differentiability of $ \bm{\nu}^\phi_A $ for every $ (a, \eta) \in \widetilde{N}^\phi(A)$; in other words,
	\begin{equation*}
	    \widetilde{N}^\phi(A) = \{(a, \eta) \in N^\phi(A) : \arl{A}{\phi}(a, \eta) > 0,\,  a + r\eta \in \dmn \Der \bm{\nu}^\phi_A \, \textrm{for every $r\in (  0 , \arl{A}{\phi}(a, \eta) ) $}\}.
	\end{equation*}
Combining Lemma \ref{rem: borel meas of chi} and Lemma \ref{theo: distance twice diff} it follows that $ \widetilde{N}^\phi(A) $	is a Borel subset of $ N^\phi(A)$. Moreover by Lemma \ref{rem: borel meas of chi} we deduce that the functions $ \kappa^\phi_{A,i} $ are Borel measurable. Since the eigenvalues $\rchi^\phi_{A,i}(a + r \eta)$, $i=1,\ldots,n$, are arranged in increasing order, we also have $-\infty <\kappa_{A,1}^\phi(a, \eta)\le \ldots\le \kappa_{A,n}^\phi(a, \eta)\le \infty$.
\end{Remark}

\begin{Remark}\label{eq: curvatures and reach}
Using \eqref{eq : r and rho},  Lemma \ref{theo: distance twice diff}, Lemma \ref{lem: distance twice diff} (a), \eqref{boundsonchi} and Definition \ref{def:curv2.8}, we obtain that
	\begin{equation*}
		-\frac{1}{\bm{r}^\phi_A(a,\eta)} \le \kappa^\phi_{A,i}(a,\eta) \leq + \infty
	\end{equation*}
	for  $(a,\eta)\in \widetilde{N}^\phi(A) $ and $ i = 1, \ldots , n $.
\end{Remark}

\begin{Lemma}\label{lem: tangent of normal bundle}
Let  $\varnothing\neq A \subseteq \mathbf{R}^{n+1} $ be closed. Suppose $ \tau_i : \widetilde{N}^\phi(A)\rightarrow \mathbf{R}^{n+1} $, for $ i = 1, \ldots , n $, are defined so that $ \tau_1(a,\eta), \ldots, \tau_{n}(a,\eta) $ form a basis of $\Tan(\partial \mathcal{W}^\phi, \eta)$ with
\begin{equation}\label{eq:hneu1}
	\Der \bm{\nu}^\phi_A(a+ r \eta)(\tau_i(a,\eta))= \rchi^\phi_{A,i}(a + r \eta)\tau_i(a,\eta) \qquad \textrm{for $ i = 1, \dots , n $ and $ 0 < r < \arl{A}{\phi}(a,\eta) $}.
\end{equation}
Let $ \zeta_i : \widetilde{N}^\phi(A) \rightarrow \mathbf{R}^{n+1} \times \mathbf{R}^{n+1} $, for $ i = 1, \ldots , n $,  be defined so that
 \begin{equation}\label{eq:hneu2}
 \zeta_i(a,\eta) =
 \begin{cases}
 (\tau_i(a,\eta),  \kappa^\phi_{A,i}(a, \eta)\tau_i(a,\eta)), & \textrm{if $ \kappa^\phi_{A,i}(a,\eta) < \infty $},\\
 	(0, \tau_i(a,\eta)), & \textrm{if $\kappa^\phi_{A,i}(a, \eta) = + \infty$.}
 \end{cases}
 \end{equation}

Let $ W \subseteq N^\phi(A) $ be an $ \mathcal{H}^n $ measurable set  with $ \mathcal{H}^n(W) < \infty $.
Then, for $ \mathcal{H}^n $ almost all $(a,\eta)\in W $, the set $\Tan^n(\mathcal{H}^n\restrict W, (a,\eta)) $ is an $ n $-dimensional linear subspace and $\zeta_1(a,\eta), \ldots , \zeta_n(a,\eta) $ form a basis of $\Tan^n(\mathcal{H}^n\restrict W, (a,\eta)) $. Moreover,
\begin{equation*}
	\ap J^W_n\bm{p}(a,\eta) = \frac{|\tau_1(a,\eta) \wedge \ldots \wedge \tau_n(a,\eta)|}{|\zeta_1(a,\eta) \wedge \ldots \wedge \zeta_n(a,\eta)|}\, \bm{1}_{\widetilde{N}^\phi_n(A)}(a,\eta)
\end{equation*}
for $ \mathcal{H}^n $ almost all $(a,\eta)\in W $.
\end{Lemma}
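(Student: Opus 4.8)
The plan is to parametrize $N^\phi(A)$ locally by lifting $n$-dimensional $\mathcal{C}^1$-graphs through the map $\bm{\psi}^\phi_A$ restricted to a level set $S^\phi(A,r)$, and then to compute the approximate tangent space of $W$ at $\mathcal{H}^n$-a.e.\ point. First I would fix a small $r>0$ and note, using Lemma \ref{theo: distance twice diff} and Remark \ref{rem: principla curvatures basic rem}, that for $\mathcal{H}^n$-a.e.\ $(a,\eta)\in W$ the point $x=a+r\eta$ lies in $\dmn\Der\bm{\nu}^\phi_A$ with $r<\arl{A}{\phi}(a,\eta)$, and that $\bm{\xi}^\phi_A$ is differentiable at $x$ as well. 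Applying Lemma \ref{theo: projectionLipschitz}, the map $\bm{\xi}^\phi_A$ is Lipschitz on a suitable set $A_{\lambda,s,t}$ containing a relatively open neighbourhood of $x$ in $S^\phi(A,r)$, so the image $\bm{\psi}^\phi_A(S^\phi(A,r))\subseteq N^\phi(A)$ is a Lipschitz image of an $n$-rectifiable set; together with the known covering of $N^\phi(A)$ by $\mathcal{C}^1$-graphs (see \eqref{eq: phi normal vs euclidean normal} and the surrounding discussion), this gives that $\Tan^n(\mathcal{H}^n\restrict W,(a,\eta))$ is an $n$-dimensional linear subspace for $\mathcal{H}^n$-a.e.\ $(a,\eta)\in W$, and it coincides with the image under $\ap\Der(\bm{\psi}^\phi_A|S^\phi(A,r))(x)$ of $\Tan^n(\mathcal{H}^n\restrict S^\phi(A,r),x)=\Tan(S^\phi(A,r),x)=\Tan(\partial\mathcal{W}^\phi,\eta)$ (using Remark \ref{rem: tangent level sets and Wulff shapes}).

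Next I would identify this tangent space concretely via the basis $\tau_1(a,\eta),\ldots,\tau_n(a,\eta)$ of eigenvectors. For $\tau=\tau_i(a,\eta)$ one has, directly from the definitions of $\bm{\psi}^\phi_A=(\bm{\xi}^\phi_A,\bm{\nu}^\phi_A)$ and the eigenrelation \eqref{eq:hneu1}, that
\begin{equation*}
\Der\bm{\psi}^\phi_A(x)(\tau)=\big(\tau-r\,\rchi^\phi_{A,i}(x)\tau,\ \rchi^\phi_{A,i}(x)\tau\big)=\big((1-r\rchi^\phi_{A,i}(x))\tau,\ \rchi^\phi_{A,i}(x)\tau\big),
\end{equation*}
since $\bm{\xi}^\phi_A=x-r\bm{\nu}^\phi_A$ near $x$ along $S^\phi(A,r)$ when $r$ is fixed. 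When $\kappa^\phi_{A,i}(a,\eta)<\infty$, i.e.\ $1-r\rchi^\phi_{A,i}(x)\neq 0$, dividing by $(1-r\rchi^\phi_{A,i}(x))$ shows that $\Der\bm{\psi}^\phi_A(x)(\tau)$ is a nonzero multiple of $(\tau,\kappa^\phi_{A,i}(a,\eta)\tau)=\zeta_i(a,\eta)$. When $\kappa^\phi_{A,i}(a,\eta)=+\infty$, i.e.\ $1-r\rchi^\phi_{A,i}(x)=0$ (equivalently $r\rchi^\phi_{A,i}(x)=1$, so $\rchi^\phi_{A,i}(x)=1/r\neq 0$ by \eqref{boundsonchi}), the first component vanishes and $\Der\bm{\psi}^\phi_A(x)(\tau)$ is a nonzero multiple of $(0,\tau)=\zeta_i(a,\eta)$. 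In both cases $\zeta_i(a,\eta)$ spans $\Der\bm{\psi}^\phi_A(x)(\R\tau_i)$; since the $\tau_i$ are a basis of $\Tan(\partial\mathcal{W}^\phi,\eta)$ and $\ap\Der(\bm{\psi}^\phi_A|S^\phi(A,r))(x)$ is injective (its first component already is, as $\bm{\xi}^\phi_A$ restricted to $S^\phi(A,r)$ is locally bi-Lipschitz onto $\partial A$ near $x$ — or one uses Lemma \ref{lem: elementary diff}), the vectors $\zeta_1(a,\eta),\ldots,\zeta_n(a,\eta)$ are linearly independent and form a basis of $\Tan^n(\mathcal{H}^n\restrict W,(a,\eta))$, as claimed.

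Finally, for the Jacobian identity I would compute $\ap J^W_n\bm{p}(a,\eta)=\|\bigwedge_n\ap\Der\bm{p}(a,\eta)\|$, where $\bm{p}(a,\eta)=a$ is the projection onto the first component. On the basis $\zeta_i$ one has $\ap\Der\bm{p}(a,\eta)(\zeta_i(a,\eta))=\tau_i(a,\eta)$ if $\kappa^\phi_{A,i}(a,\eta)<\infty$, and $\ap\Der\bm{p}(a,\eta)(\zeta_i(a,\eta))=0$ if $\kappa^\phi_{A,i}(a,\eta)=+\infty$. Hence $\bigwedge_n\ap\Der\bm{p}(a,\eta)$ annihilates $\zeta_1\wedge\cdots\wedge\zeta_n$ unless all $\kappa^\phi_{A,i}(a,\eta)$ are finite — that is, unless $(a,\eta)\in\widetilde{N}^\phi_n(A)$ — explaining the indicator $\bm{1}_{\widetilde{N}^\phi_n(A)}$. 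On $\widetilde{N}^\phi_n(A)$, by the standard formula for the norm of the $n$-th exterior power of a linear map between $n$-dimensional inner product spaces (see \cite[1.7.5, 1.7.6]{MR0257325}), $\|\bigwedge_n\ap\Der\bm{p}(a,\eta)\|$ equals the ratio of the volume of the parallelepiped spanned by the images $\tau_1(a,\eta),\ldots,\tau_n(a,\eta)$ to that spanned by $\zeta_1(a,\eta),\ldots,\zeta_n(a,\eta)$, i.e.\ $|\tau_1(a,\eta)\wedge\cdots\wedge\tau_n(a,\eta)|/|\zeta_1(a,\eta)\wedge\cdots\wedge\zeta_n(a,\eta)|$. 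Combining the two cases gives the asserted formula. The main obstacle I expect is the careful justification that $\Tan^n(\mathcal{H}^n\restrict W,(a,\eta))$ is genuinely $n$-dimensional and is exactly the span of the $\zeta_i$ for $\mathcal{H}^n$-a.e.\ point — this requires marrying the $\mathcal{C}^1$-rectifiability of $N^\phi(A)$ with the Lipschitz parametrization from Lemma \ref{theo: projectionLipschitz} and checking that the "bad" set where $\arl{A}{\phi}(a,\eta)=0$ or differentiability fails is $\mathcal{H}^n$-null, all of which is supplied by Lemma \ref{theo: distance twice diff} but needs to be assembled with care.
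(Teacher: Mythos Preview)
Your approach is essentially the same as the paper's: parametrize pieces of $N^\phi(A)$ by the bi-Lipschitz map $\bm{\psi}^\phi_A$ restricted to (subsets of) level sets $S^\phi(A,r)$, compute $\Der\bm{\psi}^\phi_A(a+r\eta)$ on the eigenbasis $\tau_i$, and read off the tangent space and the Jacobian of $\bm{p}$. Two points deserve correction.

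First, you cannot fix a single $r>0$ that works for $\mathcal{H}^n$-a.e.\ $(a,\eta)\in W$: the condition $r<\arl{A}{\phi}(a,\eta)$ depends on the point. The paper handles this exactly as you suspect in your final paragraph, by stratifying $W$ into the sets $W_r=\{(a,\eta)\in W:\bm{r}^\phi_A(a,\eta)=\arl{A}{\phi}(a,\eta)\ge\lambda r\}$ for a fixed $\lambda>1$, working on each $W_r$ separately (where $\bm{\psi}^\phi_A|M_r$ with $M_r=\{a+r\eta:(a,\eta)\in W_r\}$ is bi-Lipschitz by Lemma~\ref{theo: projectionLipschitz}), and then exhausting $W$ up to a null set by letting $r_i\searrow 0$. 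This is the ``careful assembly'' you allude to.

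Second, your parenthetical justification for the injectivity of $\Der\bm{\psi}^\phi_A(x)$ is wrong: $\bm{\xi}^\phi_A|S^\phi(A,r)$ is \emph{not} locally bi-Lipschitz onto $\partial A$ in general, precisely because $\Der\bm{\xi}^\phi_A(x)(\tau_i)=(1-r\rchi^\phi_{A,i}(x))\tau_i=0$ whenever $\kappa^\phi_{A,i}(a,\eta)=+\infty$. The correct (and simpler) reason is that $\bm{\psi}^\phi_A|M_r$ itself is bi-Lipschitz with explicit inverse $(a,\eta)\mapsto a+r\eta$; alternatively, your own computation already shows each $\tau_i$ maps to a nonzero multiple of $\zeta_i$, and the $\zeta_i$ are linearly independent by inspection of their first and second components.
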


\begin{proof}
Assume that $ W \subseteq N^\phi(A) $ is $ \mathcal{H}^n $ measurable with $ \mathcal{H}^n(W)< \infty $ and $ \lambda > 1 $. For $ r > 0 $ we define
\begin{equation*}
W_r = \{(a,\eta)\in W:  \bm{r}^\phi_A(a,\eta)=\arl{A}{\phi}(a,\eta) \geq \lambda r  \},
\end{equation*}
which is an $ \mathcal{H}^n $ measurable subset of $ W $. Furthermore, we denote by $ W^\ast_r $ the set of all $(a,\eta)\in W_r $ such that $ \Tan^n(\mathcal{H}^n\restrict W, (a,\eta)) $ is  an $ n $-dimensional linear subspace  and $ \Tan(\mathcal{H}^n \restrict W, (a,\eta))= \Tan^n(\mathcal{H}^n \restrict W_r, (a,\eta)) $. It follows from \cite[3.2.19]{MR0257325} that $ \mathcal{H}^n(W_r \setminus W^\ast_r) =0 $. Moreover, by the  coarea formula  there exists $ J \subseteq (0,\infty) $ with $ \mathcal{H}^1(J) =0 $ such that $ \mathcal{H}^n(S^\phi(A,r) \setminus \Unp^\phi(A)) =0 $ for  $ r \notin J $.

We fix $ r > 0 $, $ r \notin J $, and  define
\begin{equation*}
	M_r = \{a + r\eta: (a,\eta) \in W_r   \}.
\end{equation*}
It follows from \eqref{eq : r and rho} that $ M_r \subseteq \{x\in S^\phi(A,r)  : \bm{\rho}^\phi_A(x)\geq \lambda    \} $ (see also \eqref{eq:frho1}); moreover, $ M_r $ is $ \mathcal{H}^n $ measurable. By Lemma \ref{theo: projectionLipschitz} the function $ \bm{\psi}^\phi_A|M_r $ is Lipschitz; moreover, we notice that $ \bm{\psi}^\phi_A(M_r) = W_r $ and $ (\bm{\psi}^\phi_A|M_r)^{-1}(a,\eta ) = a + r\eta $ for $ (a,\eta)\in W_r $. We denote by $M^\ast_r $ the set of all $ x \in M_r $ such that $ \Tan(S^\phi(A,r), x) $ is an $ n $-dimensional linear subspace and $ \Tan^n(\mathcal{H}^n \restrict M_r,x) = \Tan(S^\phi(A,r), x) $. It follows from Remark \ref{rem: tangent ot level sets} and \cite[3.2.19]{MR0257325} that $ \mathcal{H}^n(M_r \setminus M^\ast_r) =0 $. We conclude that
\begin{equation*}
\mathcal{H}^n(W_r \setminus (W^\ast_r \cap \bm{\psi}^\phi_A(M^\ast_r)))=0.
\end{equation*}
Further, if $(a,\eta)\in (W^\ast_r \cap \bm{\psi}^\phi_A(M^\ast_r)) \cap \widetilde{N}^\phi(A) $ it follows from \cite[Lemma B.2]{MR4117503} and Remark \ref{rem: tangent level sets and Wulff shapes} that  $ \{\tau_1(a,\eta), \ldots , \tau_n(a,\eta) \}$ is a basis of $ \Tan(S^\phi(A,r), a + r\eta) $,
\begin{equation*}
\Der \bm{\psi}^\phi_A(a+r\eta)[\Tan(S^\phi(A,r), a+r\eta)] = \Tan^n(\mathcal{H}^n \restrict W, (a,\eta))
\end{equation*}
and
\[
\Der \bm{\psi}^\phi_A(a+r\eta)(\tau_i(a,\eta)) =
\begin{cases}
	\frac{1}{1 + r \kappa^\phi_{A,i}(a,\eta)}\zeta_i(a,\eta), & \textrm{if $ \kappa^\phi_{A,i}(a,\eta) < \infty $},\\
	\frac{1}{r}\zeta_i(a,\eta), & \textrm{if $\kappa^\phi_{A,i}(a, \eta) = + \infty$.}
\end{cases}
\]
This proves that $\{\zeta_1(a,\eta), \ldots , \zeta_n(a,\eta)\}  $ is a basis of $\Tan^n(\mathcal{H}^n \restrict W, (a,\eta))$ for $ \mathcal{H}^n $ a.e.\ $(a,\eta)\in W_r $ and for every $ r \notin J $.

Since $ W \setminus \bigcup_{r > 0}W_r $ has $\mathcal{H}^n$ measure zero and $ W_r \subseteq W_s $ for $ 0 < s < r $, there exists a sequence $ r_i \searrow 0 $, $r_i \notin J$, so that $ W \setminus \bigcup_{i=1}^\infty W_{r_i} $ has $\mathcal{H}^n$ measure zero, which completes the proof.
\end{proof}

\medskip

\begin{Remark}\label{rem:altdescreigen}
The existence of a basis $ \tau_1(a,\eta), \ldots, \tau_{n}(a,\eta) $ of $\Tan(\partial \mathcal{W}^\phi, \eta)$ such that \eqref{eq:hneu1} is satisfied is based on Lemma \ref{lem: distance twice diff} (c).

We now provide an alternative but equivalent description which leads to some additional information that will be useful in the proof of Theorem \ref{thm:disint}.
Let $(a,\eta)\in 	\widetilde{N}^\phi(A) $, $r\in (0,\arl{A}{\phi}(a,\eta))$ and $x=a+r\eta$. Let $ u : \mathbf{R}^{n+1} \setminus A \rightarrow \mathbf{S}^n$ be the map defined in Lemma \ref{lem: distance twice diff} and notice that $ u(x) = \bm{n}^\phi(\eta)$. Hence we have $\Tan(\partial \mathcal{W}^\phi, \eta)=u(x)^\perp =\{z\in\R^{n+1}:z\bullet u(x)=0\}$. We define a symmetric bilinear form (i.e.\ an inner product)
\begin{equation*}
    B_\eta : \Tan(\partial \mathcal{W}^\phi, \eta) \times \Tan(\partial \mathcal{W}^\phi, \eta) \rightarrow \mathbf{R}
\end{equation*}
setting
$$
B_\eta(\tau, \sigma) = \Der \bm{n}^\phi(\eta)(\tau)\bullet \sigma=[\Der(\nabla\phi)(u(x))|u(x)^\perp]^{-1}(\tau)\bullet \sigma
$$
for $\tau,\sigma\in \Tan(\partial \mathcal{W}^\phi, \eta)=u(x)^\perp$.
Using Lemma \ref{lem: distance twice diff}(b), we see that
$$
B_\eta(	\Der \bm{\nu}^\phi_A(x)(\tau),\sigma)=\Der u(x)(\tau)\bullet \sigma = \tau \bullet \Der u(x)(\sigma) = B_\eta(\tau, \Der \bm{\nu}_A^\phi(x)(\sigma))\quad\text{for } \tau,\sigma\in u(x)^\perp.
$$
This shows that $\Der \bm{\nu}^\phi_A(x)|u(x)^\perp$ is self-adjoint with respect to $B_\eta$. Hence there is an orthonormal basis $\tau_1(a,\eta),\ldots,\tau_n(a,\eta)$ of $u(x)^\perp$ with respect to $B_\eta$ consisting of eigenvectors of $\Der \bm{\nu}^\phi_A(x)$. Henceforth we can assume that $\tau_1(a,\eta),\ldots,\tau_n(a,\eta)$ are chosen in this way.

We now consider the natural extension of the inner product $ B_\eta $ to the product space $ \Tan(\partial \mathcal{W}^\phi, \eta) \times \Tan(\partial \mathcal{W}^\phi, \eta)=\bm{n}^\phi(\eta)^\perp\times\bm{n}^\phi(\eta) ^\perp $ given by
\begin{equation*}
    \overline{B}_\eta((\tau_1, \sigma_1), (\tau_2, \sigma_2)) = B_\eta(\tau_1, \sigma_1) + B_\eta(\tau_2, \sigma_2)
\end{equation*}
for $(\tau_1, \sigma_1), (\tau_2, \sigma_2) \in \Tan(\partial \mathcal{W}^\phi, \eta) \times \Tan(\partial \mathcal{W}^\phi, \eta)$ and for every $ \eta \in \partial \mathcal{W}^\phi $. With respect to this inner product, the linearly independent vectors $\zeta_1(a,\eta),\ldots,\zeta_n(a,\eta)$ from \eqref{eq:hneu2} are pairwise orthogonal. Let  $ |\cdot|_\eta $ denote the norm induced by  $\overline{B}_\eta$  on $\bigwedge_m(\bm{n}^\phi(\eta)^\perp\times \bm{n}^\phi(\eta)^\perp)$, for $m\in\{1,\ldots,n\}$. Then using continuity and compactness one can show that
\begin{align*}
   & 0<c : = \inf\left\{|A|_\eta:A\in \mbox{$\bigwedge_m$}(\bm{n}^\phi(\eta)^\perp\times\bm{n}^\phi(\eta) ^\perp)\,,|A|=1\,,\eta\in \partial \mathcal{W}^\phi,\, m \in\{ 1, \ldots , n\}\right\}\\
    &\qquad \quad  \le \sup\left\{|A|_\eta:A\in \mbox{$\bigwedge_m$}(\bm{n}^\phi(\eta)^\perp\times\bm{n}^\phi(\eta) ^\perp)\,,|A|=1\,,\eta\in \partial \mathcal{W}^\phi,\, m \in\{ 1, \ldots , n\}\right \} =:C<\infty.
\end{align*}
Thus we obtain the inequality
\begin{align}\label{eq:dlate1}
    \left|\zeta_1(a,\eta)\wedge\cdots\wedge \zeta_n(a,\eta)\right|&\ge \frac{1}{C}
      \left|\zeta_1(a,\eta)\wedge\cdots\wedge \zeta_n(a,\eta)\right|_\eta\nonumber\\
      &=\frac{1}{C}
      \left|\zeta_1(a,\eta)\wedge\cdots\wedge \zeta_m(a,\eta)\right|_\eta\cdot
       \left|\zeta_{m+1}(a,\eta)\wedge\cdots\wedge \zeta_n(a,\eta)\right|_\eta\nonumber\\
      &\ge \frac{c^2}{C}
      \left|\zeta_1(a,\eta)\wedge\cdots\wedge \zeta_m(a,\eta)\right| \cdot
       \left|\zeta_{m+1}(a,\eta)\wedge\cdots\wedge \zeta_n(a,\eta)\right| ,
\end{align}
which will be used in the proof of Theorem \ref{thm:disint}.
\end{Remark}

\paragraph{}In the following, it will be useful to distinguish how many of the generalized curvatures are finite for a given $(a,\eta)\in\widetilde{N}^\phi(A)$.

\begin{Definition}\label{Def:finitecurv}
Let $\varnothing\neq A \subseteq \mathbf{R}^{n+1} $ be closed.	We define \index{N4@$\widetilde{N}^\phi_d(A)$}
	\begin{equation*}
		\widetilde{N}^\phi_d(A) =   \{ (a,\eta)\in \widetilde{N}^\phi(A)    : \kappa^\phi_{A, d}(a,\eta) < \infty ,  \kappa^\phi_{A, d+1}(a,\eta) = \infty  \} \quad \textrm{for $ d \in\{ 1, \ldots , n-1\} $,}
	\end{equation*}
	\begin{equation*}
	   	\widetilde{N}^\phi_0(A) =  \{ (a,\eta) \in \widetilde{N}^\phi(A)   : \kappa^\phi_{A, 1}(a,\eta) = \infty  \}
	\end{equation*}
	and
	\begin{equation*}
	    	\widetilde{N}^\phi_n(A) = \{ (a,\eta) \in\widetilde{N}^\phi(A)   : \kappa^\phi_{A, n}(a,\eta) < \infty  \}.
	\end{equation*}
For $a\in A$ we set $ \widetilde{N}^\phi_d(A,a) = \{\eta:(a,\eta)\in\widetilde{N}^\phi_d(A)  \}$. Moreover, for   $ j \in\{0, \ldots , n\}  $ we define  the map $ E^\phi_{A,j} : \widetilde{N}^\phi(A) \rightarrow \mathbf{R} $ by \index{O3@$E^\phi_{A,j}$}
\[
E^\phi_{A,j}(a,\eta) =
\begin{cases}
	\displaystyle{\sum_{\lambda \in \Lambda(d, j)}\,\kappa^\phi_{A,\lambda(1)}(a,\eta)} \cdots \kappa^\phi_{A, \lambda(j)}(a, \eta), & \textrm{if $(a,\eta) \in \widetilde{N}^\phi_d(A) $ and $ d \geq j $},\\
	0 , & \textrm{if $(a,\eta) \in \widetilde{N}^\phi_d(A) $ and $ d < j $},
\end{cases}
\]
and for $ j =0 $ this means that $ E^\phi_{A,0} \equiv 1 $. Finally, for  $ r \in\{0, \ldots , n\} $ we define the \emph{$ r$-th $ \phi $-mean curvature of $ A $} as \index{O4@$\bm{H}^\phi_{A,r}$}
\begin{equation*}
	\bm{H}^\phi_{A,r} =\sum_{j=0}^{r}  E^\phi_{A,j}\, \bm{1}_{\widetilde{N}^\phi_{j+n-r}(A)}=\sum_{i=0}^{r}  E^\phi_{A,r-i}\, \bm{1}_{\widetilde{N}^\phi_{n-i}(A)}.
\end{equation*}
\end{Definition}

\begin{Remark}\label{remcurvdef}
The sets $\widetilde{N}^\phi_d(A)$ and functions $\bm{H}^\phi_{A,r}$ introduced in Definition \ref{Def:finitecurv} are Borel measurable (see Remark \ref{rem: principla curvatures basic rem}). In particular, by definition we have  $\bm{H}^\phi_{A,0}=\bm{1}_{\widetilde{N}^\phi_{n}(A)}$.
\end{Remark}

The next result will be used repeatedly in Section \ref{Section: positive reach},  in the special case of sets with positive reach. We prepare it by
recalling a fact from linear algebra, which can be easily deduced from the standard spectral theorem (cf.\ \cite[Remark 2.25]{MR4160798}).

\begin{Remark}\label{rem: diagonalization}
Suppose $ X $ is an $ n $-dimensional Hilbert space, $ M_1 , M_2 \in \Hom(X,X) $ are self-adjoint and $ M_1 $ is positive definite. Then there exist $ C \in \Hom(X,X) $ self-adjoint and positive definite, $ n $ real numbers $ \lambda_1 \leq \ldots \leq \lambda_n $ and an orthonormal basis $ v_1, \ldots , v_n $ of $ X $ such that $ C \circ C = M_1 $ and
\begin{equation*}
(M_1 \circ M_2)(C(v_i)) = \lambda_i C(v_i) \qquad \textrm{for $ i = 1, \ldots , n $}.
\end{equation*}
\end{Remark}

\begin{Lemma}\label{lem suff condition for finite curvature}
For every closed set $ \varnothing \neq A \subseteq \mathbf{R}^{n+1}$ the following statements hold.
\begin{enumerate}[{\rm (a)}]
\item\label{lem suff condition to finite curvature 0} Suppose $ s,r > 0 $, $ x \in \dmn(\Der \bm{\nu}^\phi_A) \cap S^\phi(A,r) $ and $ V $ is an open neighbourhood of $ x $ in $ \mathbf{R}^{n+1}$ such that $ U^\phi(x-s\bm{\nu}^\phi_A(x), s) \cap S^\phi(A,r) \cap V = \varnothing $. Then
\begin{equation*}
    \chi^\phi_{A,i}(x) \leq \frac{1}{s} \qquad \textrm{for  $ i=1, \ldots , n$.}
\end{equation*}
    \item\label{lem suff condition to finite curvature 1} If $(a, \eta) \in \widetilde{N}^\phi(A)$, $ a \in \partial^v_+ A $ and $ r > 0 $ such that $ U^\phi(a-r\eta, r) \subseteq \Int A $, then $(a, \eta) \in \widetilde{N}^\phi_n(A)$ and
    \begin{equation*}
        \kappa^\phi_{A,n}(a, \eta) \leq \frac{1}{r}.
    \end{equation*}
    \item\label{lem suff condition to finite curvature 2} Suppose $ a \in A^{(n)} \setminus \partial^v A $, $(a, \eta) \in \widetilde{N}^\phi(A)$,  $ W \subseteq \mathbf{R}^{n+1} $ is an open set with $ a \in W $ and  $ f : W \cap (a +\eta^\perp) \rightarrow \mathbf{R}$ is a function such that $ f(a)=0$, $ f $ is continuous at $ a $ and $ \graph(f) \subseteq A $.  Then $(a, \eta) \in \widetilde{N}^\phi_n(A)$.
\end{enumerate}
\end{Lemma}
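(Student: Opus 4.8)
The three statements are all local assertions about the eigenvalues $\chi^\phi_{A,i}$ of $\Der\bm{\nu}^\phi_A$ and how they transfer to the generalized curvatures $\kappa^\phi_{A,i}$ via the formula in Definition \ref{def:curv2.8}. My plan is to prove (a) first by a second-order comparison argument, and then derive (b) and (c) as consequences by producing, for an appropriate small parameter $r$, a level set of a nearby distance function that is locally trapped on one side of a translated Wulff ball, so that (a) applies.

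For part (a): fix $x\in\dmn(\Der\bm{\nu}^\phi_A)\cap S^\phi(A,r)$ and write $u=\bm{n}^\phi(\bm{\nu}^\phi_A(x))$ for the Euclidean unit normal direction, $T=\Tan(S^\phi(A,r),x)$. The hypothesis says that near $x$ the level set $S^\phi(A,r)$ stays outside the open $\phi^\ast$-ball $U^\phi(x-s\bm{\nu}^\phi_A(x),s)=\{z:\phi^\ast(z-x+s\bm{\nu}^\phi_A(x))<s\}$, which is interiorly tangent to $S^\phi(A,r)$ at $x$ (its boundary passes through $x$ with the same tangent plane $T$, since $\nabla\phi^\ast$ at the relevant direction is parallel to $\nabla\bm\delta^\phi_A(x)$). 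The boundary sphere of that Wulff ball is a smooth hypersurface whose $\phi$-principal curvatures at $x$ all equal $1/s$ (this is the scaling behaviour of the Wulff shape, and it is exactly the statement that a translated, rescaled Wulff shape $x-s\bm{\nu}+s\mathcal W^{\phi^\ast}$ has $\phi$-curvature $1/s$; compare Remark \ref{eq: curvatures and reach} applied to such a ball). Since $S^\phi(A,r)$ lies on the outer side of this sphere near $x$ and both are twice differentiable at $x$ (the sphere smoothly, the level set because $x\in\dmn\Der\bm{\nu}^\phi_A$, using Lemma \ref{lem: distance twice diff}), a standard comparison of second fundamental forms at an interior-contact point gives that the second fundamental form of $S^\phi(A,r)$ at $x$, measured against the normal $\bm{\nu}^\phi_A(x)$, is bounded above by that of the sphere. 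Concretely, using the identity $\Der u(x)(\tau_1)\bullet\tau_2=\Der^2\bm\delta^\phi_A(x)(\tau_1,\tau_2)/|\nabla\bm\delta^\phi_A(x)|$ from Lemma \ref{lem: distance twice diff}(b) and expanding $\bm\delta^\phi_A$ and the Wulff-distance to second order along $T$, one gets $\Der u(x)\le (1/s)\,\mathrm{Id}_T$ in the bilinear-form sense $B_\eta$ of Remark \ref{rem:altdescreigen}; since $\Der\bm{\nu}^\phi_A(x)=\Der(\nabla\phi)(u(x))\circ\Der u(x)$ and $\Der(\nabla\phi)(u(x))|T$ is self-adjoint positive with respect to $\bullet$ and is exactly the inverse of the form defining $B_\eta$, the eigenvalues of $\Der\bm{\nu}^\phi_A(x)|T$ are the $B_\eta$-eigenvalues of $\Der u(x)$, hence all $\le 1/s$. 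This yields $\chi^\phi_{A,i}(x)\le 1/s$ for all $i$.

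For part (b): let $(a,\eta)\in\widetilde N^\phi(A)$ with $a\in\partial^v_+A$ and $U^\phi(a-r\eta,r)\subseteq\Int A$. Pick any small $t\in(0,\arl{A}{\phi}(a,\eta))$ with $x:=a+t\eta\in\dmn\Der\bm{\nu}^\phi_A$; then $\bm\delta^\phi_A(x)=t$. The point is that the translated Wulff ball $a-r\eta+r\mathcal W^{\phi^\ast}=U^\phi(a-r\eta,r)$ sits inside $\Int A$ and touches $\partial A$ at $a$, so $\bm\delta^\phi_A\ge$ the $\phi^\ast$-distance to this ball near $x$; equivalently, scaling, one checks $U^\phi(x-(r+t)\bm{\nu}^\phi_A(x),r+t)\cap S^\phi(A,t)=\varnothing$ in a neighbourhood of $x$ — because the larger ball of radius $r+t$ centred appropriately is contained in $B^\phi(A,t)$ and has $x$ on its boundary. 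Applying (a) with $s=r+t$ gives $\chi^\phi_{A,i}(x)\le 1/(r+t)$, whence by the curvature formula $\kappa^\phi_{A,i}(a,\eta)=\chi^\phi_{A,i}(x)/(1-t\chi^\phi_{A,i}(x))\le 1/r<\infty$ (the map $\alpha\mapsto \alpha/(1-t\alpha)$ is increasing on $(-\infty,1/t)$), so in particular $\kappa^\phi_{A,n}(a,\eta)\le 1/r$ and $(a,\eta)\in\widetilde N^\phi_n(A)$.

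For part (c): here $a\in A^{(n)}\setminus\partial^v A$ and there is a function $f$ on a neighbourhood of $a$ in the hyperplane $a+\eta^\perp$, continuous at $a$ with $f(a)=0$ and $\graph(f)\subseteq A$. The strategy is the same in spirit but we only get an upper bound on a one-sided "barrier": for small $t>0$ with $x=a+t\eta\in\dmn\Der\bm{\nu}^\phi_A$, the graph of $f$ gives a hypersurface through $a$ that is contained in $A$, hence $B^\phi(A,t)$ contains a neighbourhood-in-$A$ of this graph thickened by $t$; continuity of $f$ at $a$ lets one, for any $\varepsilon>0$, insert a Wulff ball of radius $s=s(\varepsilon)$ (one can take $s$ as large as one likes by shrinking the neighbourhood, because $f$ is $o(1)$, i.e.\ the graph is flatter than any fixed sphere near $a$) that is interiorly tangent at $x$ to $S^\phi(A,t)$ from inside $B^\phi(A,t)$. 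Applying (a) with that $s$ gives $\chi^\phi_{A,i}(x)\le 1/s$ for every $i$, and letting $s\to\infty$ forces $\chi^\phi_{A,i}(x)\le 0$, in particular all $\chi^\phi_{A,i}(x)<1/t$, so $\kappa^\phi_{A,i}(a,\eta)=\chi^\phi_{A,i}(x)/(1-t\chi^\phi_{A,i}(x))<\infty$ for all $i$ and hence $(a,\eta)\in\widetilde N^\phi_n(A)$. (That $x\in S^\phi(A,t)$ uses $\bm\delta^\phi_A(a+t\eta)=t$, valid for $t<\bm r^\phi_A(a,\eta)$; and that such a point of twice-differentiability exists for arbitrarily small $t$ is Lemma \ref{theo: distance twice diff} together with $(a,\eta)\in\widetilde N^\phi(A)$.)

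The main obstacle I anticipate is making the "interior-tangent Wulff ball" comparison in (a) fully rigorous at the level of the second fundamental forms: one must be careful that the contact is genuinely interior (the ball on the correct side), that the tangent planes agree — which requires identifying $\nabla\bm\delta^\phi_A(x)$ with the outward $\phi^\ast$-normal of the ball at $x$ via \eqref{eq: gradient and normal} — and that the elementary comparison "if $g\le h$ near $x$, $g(x)=h(x)$, both twice differentiable, then $\Der^2 g(x)\le\Der^2 h(x)$ on the common tangent space" is applied to the right pair of functions (the height functions over $T$, or the two distance functions). Translating this Hessian inequality through the chain rule $\Der\bm{\nu}^\phi_A=\Der(\nabla\phi)\circ\Der u$ and through the change of inner product from $\bullet$ to $B_\eta$ (Remark \ref{rem:altdescreigen}) is where the bookkeeping is delicate, but it is exactly the computation already set up in Lemma \ref{lem: distance twice diff}. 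For (b) and (c) the remaining subtlety is purely geometric: verifying that the stated inclusions ($U^\phi(a-r\eta,r)\subseteq\Int A$, resp.\ $\graph(f)\subseteq A$) really do produce, after the scaling $a\leadsto x=a+t\eta$, a Wulff ball of the claimed radius that is disjoint from $S^\phi(A,t)$ in a neighbourhood of $x$ — this is where one uses that $\bm\delta^\phi_A$ is $1$-Lipschitz for $\phi^\ast$ and that distance to a smaller set dominates distance to a larger one.
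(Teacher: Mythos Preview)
Parts (a) and (b) are correct and follow essentially the paper's route: the second-order comparison of $S^\phi(A,r)$ with the tangent Wulff sphere, passed through the $B_\eta$-self-adjointness of $\Der\bm{\nu}^\phi_A(x)$ (the paper does the same via the square-root trick of Remark~\ref{rem: diagonalization}), and then the scaling $r\mapsto r+t$ for~(b).

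Part (c) contains a genuine gap. Your claim that ``one can take $s$ as large as one likes by shrinking the neighbourhood, because $f$ is $o(1)$, i.e.\ the graph is flatter than any fixed sphere'' is false: continuity of $f$ at $a$ with $f(a)=0$ is a zeroth-order condition and gives no flatness whatsoever (``flatter than a sphere of radius $s$'' would require a second-order bound like $|f(b)|\lesssim |b-a|^2/s$). A concrete counterexample to your conclusion $\chi^\phi_{A,i}(x)\le 0$ is $A$ the Euclidean circle of radius $R$ in~$\R^2$ with $\eta$ the outward normal at $a$: all hypotheses of~(c) hold, yet $\chi^\phi_{A,1}(a+t\eta)=1/(R+t)>0$ and $\kappa^\phi_{A,1}(a,\eta)=1/R>0$. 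Notice that your argument nowhere uses the hypothesis $a\in A^{(n)}\setminus\partial^v A$; this is exactly the missing ingredient. That hypothesis forces $N^\phi(A,a)=\{\eta,-\eta\}$ (since $\dim\Dis(A,a)=1$ and $\mathcal{H}^0(N(A,a))\neq 1$, there are exactly two antipodal normals), so there exists a \emph{fixed} $s>0$ with $U^\phi(a-s\eta,s)\cap A=\varnothing$. The role of $\graph(f)\subseteq A$ is then not to supply flatness but, together with the bound $g\le f$ coming from that ball (where $g$ parametrises its upper boundary near $a$), to obstruct the translated level set $(S^\phi(A,r)\cap V)-r\eta$ from entering $U^\phi(a-s\eta,s)$: if $z$ were such an intersection point, the point $\pi(z)+f(\pi(z))\eta\in A$ would lie strictly between $z$ and $z+r\eta$, contradicting $\bm{\delta}^\phi_A(z+r\eta)=r$. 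Part~(a) then gives $\chi^\phi_{A,n}(a+r\eta)\le 1/s$ and hence $\kappa^\phi_{A,n}(a,\eta)\le 1/(s-r)<\infty$, which is all that is claimed.
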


\begin{proof} \ref{lem suff condition to finite curvature 0}
 Choose a function $ \xi : \mathbf{R}^{n+1} \rightarrow \mathbf{R}^{n+1}$ with $ \xi(y) \in \bm{\xi}^\phi_A(y) $ for every $ y \in \mathbf{R}^{n+1}$ and define
\begin{equation*}
    u(y) = \frac{\nabla \phi^\ast(y - \xi(y))}{|\nabla \phi^\ast(y - \xi(y))|  } \qquad \textrm{for $ y \in \mathbf{R}^{n+1} \setminus A $}.
\end{equation*}
Note that \eqref{eq: gradient and normal} and Remark \ref{rem: tangent ot level sets} yield $ u(x) = \frac{\nabla \bm{\delta}^\phi_A(x)}{|\nabla \bm{\delta}^\phi_A(x)|}$. Define the linear subspace $ T = \Tan(S^\phi(A,r), x)$ and the open convex set $ U = U^\phi(x-s\bm{\nu}^\phi_A(x), s)$. Since $ x \in \dmn(\Der \bm{\nu}^\phi_A) $, recalling \cite[Lemma 2.44]{kolasinski2021regularity}, we deduce that $ u(x) \perp  T =\Tan(\partial  U, x) $ and there exist a relatively open subset $W$ of $ x + T $ with $ x \in W $ and a continuous function $ f : W \rightarrow \mathbf{R}$ that is pointwise twice differentiable at $ x $ such that $ f(x)=0$, $ \Der f(x)=0$,
\begin{equation*}
    \{ b - f(b)u(x): b \in W\} \subseteq S^\phi(A,r) \cap V \quad \textrm{and} \quad \Der^2f(x) = \frac{1}{|\nabla \bm{\delta}^\phi_A(x)|}\Der^2 \bm{\delta}^\phi_A(x)|(T \times T).
\end{equation*}
Let $ g : W \rightarrow \mathbf{R}$ be the smooth function such that $ g(x) =0 $, $ \Der g(x)=0$ and $\{  b - g(b)u(x): b \in W\} \subseteq \partial U $. Since $ u(x) \bullet \bm{\nu}^\phi_A(x) = |\nabla \bm{\delta}^\phi_A(x)|^{-1}>0$ and $ U \cap S^\phi(A,r) \cap V = \varnothing $, we infer that $ f(b) \leq g(b) $ for  $ b \in W $. Let $ v : \partial U \rightarrow \mathbf{S}^n$ be the exterior unit normal of $ \partial U$ and notice that $v(x) = u(x) $. From \cite[Lemma 2.45]{kolasinski2021regularity} and Lemma \ref{lem: distance twice diff}(2b) we infer that
\begin{equation*}
    \Der u(x)(\tau) \bullet \tau = \Der^2f(x)(\tau, \tau) \leq \Der^2 g(x)(\tau, \tau) =  \Der v(x)(\tau) \bullet \tau
\end{equation*}
for  $ \tau \in T$. Recalling again Lemma \ref{lem: distance twice diff}(2b),  we can apply Remark \ref{rem: diagonalization} with the automorphisms $ M_1 = \Der(\nabla \phi)(u(x))|T$ and $ M_2 = \Der u(x)|T $ on $T$ (and the induced scalar product of $\R^{n+1}$) to infer the existence of a selfadjoint linear map $ C : T \rightarrow T$ and an orthonormal basis $ \tau_1, \ldots , \tau_n $ of $ T $ such that $ \Der(\nabla \phi)(u(x))|T = C \circ C $ and $ C(\tau_1), \ldots , C(\tau_n)$ is a basis of eigenvectors of $ \Der \bm{\nu}^\phi_A(x)|T $, i.e.\ $ \Der \bm{\nu}^\phi_A(x)(C(\tau_i)) = \chi^\phi_{A,i}(x)\, C(\tau_i) $ for $ i \in \{1, \ldots , n \}$. Employing \cite[Lemma 2.33]{MR4160798}, we notice that
\begin{flalign*}
    s^{-1}|C(\tau)|^2 & = (C^{-1} \circ \Der(\nabla \phi \circ v)(x) \circ C)(\tau) \bullet \tau  = (C^{-1} \circ \Der(\nabla \phi)(v(x)) \circ \Der v(x) \circ C)(\tau) \bullet \tau \\
    & = (C \circ \Der v(x) \circ C)(\tau) \bullet \tau = \Der v(x)( C(\tau)) \bullet C(\tau) \\
   & \geq \Der u(x)(C(\tau)) \bullet C(\tau)  = C^{-1} \circ \Der(\nabla \phi \circ u)(x) \circ C)(\tau) \bullet \tau\\
   & = (C^{-1} \circ \Der \bm{\nu}^\phi_A(x) \circ C)(\tau) \bullet \tau
\end{flalign*}
for $ \tau \in T $. Evaluating this inequality at $ \tau = \tau_i $ for $ i = 1, \ldots, n $ we obtain the conclusion.

\medskip

\ref{lem suff condition to finite curvature 1}
 Choose $  0 < s < \arl{A}{\phi}(a, \eta) $. We observe that $ S^\phi(A,s) \cap  U^\phi(a-r\eta,r+s) = \varnothing$ and $ a+s\eta \in S^\phi(A, s) \cap \partial \mathbf{B}^\phi(a-r\eta,r+s)$. Since $a + s \eta \in \dmn \Der \bm{\nu}^\phi_A $ and $ \bm{\nu}^\phi_A(a+s\eta) = \eta $, we can apply \ref{lem suff condition to finite curvature 0} with $ x $ and $ s $ replaced by $ a + s \eta $ and $ r + s $ respectively,  to conclude that
\begin{equation*}
    \chi^\phi_{A,n}(a+s\eta) \leq \frac{1}{r+s} \quad \textrm{and} \quad \kappa^\phi_{A,n}(a,\eta) = \frac{\chi^\phi_{A,n}(a+s\eta)}{1-s\chi^\phi_{A,n}(a+s\eta)} \leq \frac{\frac{1}{r+s}}{1-\frac{s}{r+s}}=\frac{1}{r} < +\infty.
\end{equation*}

\medskip

\ref{lem suff condition to finite curvature 2}
 We may assume that $ a=0 $, and  we denote by $ \pi $ the orthogonal projection onto $ \eta^\perp$. We notice that there is some $ s > 0 $ such that $ U^\phi(-s\eta, s) \cap A = \varnothing$. Then we choose $  0 < r < \inf\{s, \arl{A}{\phi}(a, \eta) \}$ and we notice that $ U^\phi(r\eta, r) \cap A = \varnothing $. Set $U_0=\pi(U^\phi(-s\eta, s))$ and let $ g : U_0 \rightarrow \mathbf{R}$ be the continuous function such that $ g(0)=0$, \begin{equation*}
    \{b + g(b) \eta : b \in U_0\} \subseteq \partial U^\phi(-s\eta, s) \quad \textrm{and} \quad U^\phi(-s\eta, s) \subseteq \{b + t \eta: t < g(b), \; b \in U_0\}.
\end{equation*}
Since $ U^\phi(-s\eta,s) \cap A = \varnothing $ it follows from the continuity of $ f $ at $ 0$ that there is $ \epsilon > 0 $ such that  $   g(b)\leq f(b) \leq \frac{r}{2} $ for every $ b \in \eta^\perp$ with $ |b| < \epsilon $. We can assume that $ W =   \{b \in \eta^\perp: |b|< \epsilon \} \subseteq U_0$. Since $ r \eta \in \dmn \Der \bm{\nu}^\phi_A $ it follows from \cite[Lemma 2.44]{kolasinski2021regularity} that there exists an open neighbourhood $ V $ of $ r \eta $ in $ \mathbf{R}^{n+1}$ such that $ S^\phi(A,r) \cap V$ is equal to the graph of a continuous function. Therefore we can choose $ V $ small enough so that $ \pi(V) \subseteq W $ and
\begin{equation*}
    S^\phi(A,r) \cap V \subseteq \Big\{y : |(y-r\eta) \bullet \eta | < \frac{r}{4}\cdot |\eta|^2\Big\}.
\end{equation*}
We claim that $ [(S^\phi(A,r) \cap V ) - r\eta] \cap U^\phi(-s\eta, s) = \varnothing $. In fact, assume by contradiction that there is some $ z \in [(S^\phi(A,r) \cap V ) - r\eta] \cap U^\phi(-s\eta, s)$. Then we obtain that
\begin{equation*}
| z \bullet \eta | < \frac{r}{4}\cdot |\eta|^2  \quad \textrm{and} \quad
\frac{z \bullet \eta}{|\eta|^2} < g(\pi(z)) \leq f(\pi(z)) \leq \frac{r}{2} <   \frac{(z + r\eta) \bullet \eta}{|\eta|^2}.
\end{equation*}
It follows that $ \pi(z) + f(\pi(z))\eta $ lies in the open segment joining $ z $ with $ z + r\eta $. Since $\pi(z) + f(\pi(z))\eta \in A  $ and $ z + r\eta \in S^\phi(A,r)$, it follows that $r =\bm{\delta}^\phi_A(z + r\eta) < r$, which is a contradiction. Now we can use \ref{lem suff condition to finite curvature 0} to conclude that
\begin{equation*}
    \rchi^\phi_{A,n}(r\eta) \leq \frac{1}{s} \quad \textrm{and} \quad \kappa^\phi_{A,n}(a, \eta) \leq \frac{1}{s-r} < \infty.
\end{equation*}
\end{proof}

\begin{Remark}\label{rmk: curvature positive reach}
It follows from Lemma \ref{lem suff condition for finite curvature} that $ (N^\phi(A)| \partial^v_+A) \setminus \widetilde{N}_n^\phi(A) \subseteq N^\phi(A) \setminus \widetilde{N}^\phi(A)$. In particular,
\begin{equation*}
    \mathcal{H}^n\big((N^\phi(A)| \partial^v_+A) \setminus \widetilde{N}_n^\phi(A)\big) =0.
\end{equation*}
\end{Remark}

\subsection{Steiner-type formula and disintegration of Lebesgue measure}

The following result extends the Steiner-type formula from \cite{MR1782274} (see also the literature cited there such as \cite{MR0534512}) to the anisotropic setting.

\begin{Theorem}[Steiner-type formula for closed sets]\label{theo: Steiner closed}
Let	$\varnothing\neq A \subseteq \mathbf{R}^{n+1} $ be a closed set, let $ \tau_1, \ldots , \tau_n$ and $ \zeta_1, \ldots, \zeta_n $ be functions satisfying the hypotheses in Lemma \ref{lem: tangent of normal bundle} and let $ J  $ be the function defined on $ \mathcal{H}^n $ almost all of $ N^\phi(A) $ by
\begin{equation*}
	J(a,\eta) = \frac{|\tau_1(a,\eta) \wedge \ldots \wedge \tau_n(a,\eta)|}{|\zeta_1(a,\eta) \wedge \ldots \wedge \zeta_n(a,\eta) |}\in (0,\infty) \qquad \textrm{for $ \mathcal{H}^n $ a.e.\ $(a,\eta)\in N^\phi(A) $.}
\end{equation*}
Then the following statements hold.
\begin{enumerate}[{\rm (a)}]
	\item\label{theo: Steiner closed 1} For $ 0 < s < \infty $ and $ 0 < t < s $ let
	\begin{equation*}
S^s_t =  \{x\in S^\phi(A,t) : \rho^\phi_A(x) \geq s/t\} \qquad \textrm{and} \qquad  N_s = \{(a,\eta)\in N^\phi(A) : \bm{r}^\phi_A(a,\eta)\geq s  \}.
\end{equation*}
Then  $ \mathcal{H}^n(N_s \cap (B \times \partial\mathcal{W}^\phi)) < \infty $ for every compact set $ B \subset \mathbf{R}^{n+1} $ and $ \bm{\psi}^\phi_A|S^s_t $ is a bi-lipschitzian homeomorphism with $ \bm{\psi}^\phi_A [S^s_t]= N_s $ and $ (\bm{\psi}^\phi_A |S^s_t)^{-1}(a,\eta)= a + t\eta $ for each $(a,\eta)\in N_s $.
\item\label{theo: Steiner closed 2}  If $ \tau_1', \ldots, \tau'_n, \zeta'_1, \ldots , \zeta'_n $ is another set of functions satisfying the hypotheses of Lemma \ref{lem: tangent of normal bundle}, then
\begin{equation*}
	J(a,\eta) =  \frac{|\tau'_1(a,\eta) \wedge \ldots \wedge \tau'_n(a,\eta)|}{|\zeta'_1(a,\eta) \wedge \ldots \wedge \zeta'_n(a,\eta) |}
\end{equation*}
for $ \mathcal{H}^n $ a.e.\ $(a,\eta)\in N^\phi(A) $. Moreover $ J $ is $ \mathcal{H}^n\restrict N^\phi(A) $ measurable.
\item\label{theo: Steiner closed 3} If $ \rho > 0 $ and $ B \subset \mathbf{R}^{n+1} $ is compact, then
\begin{equation}\label{theo: Steiner closed 3.1}
	\int_{N^\phi(A) \cap (B \times \partial \mathcal{W}^\phi)}\inf\{\rho, \bm{r}^\phi_A\}^{j+1}\cdot J \cdot |\bm{H}^\phi_{A,j}|\, d\mathcal{H}^n < \infty
\end{equation}
for   $ j = 0, \ldots , n $ and
\begin{flalign}\label{theo: Steiner closed formula}
		& \int_{B^\phi(A, \rho) \setminus A} (\varphi \circ \bm{\psi}^\phi_A)\, d\mathcal{L}^{n+1} \notag \\
		&  \qquad = \sum_{j=0}^n \frac{1}{j+1} \int_{N^\phi(A)}\varphi(a,\eta)\,\phi(\bm{n}^\phi(\eta))\, J(a,\eta)\,  \inf\{ \rho, \bm{r}^\phi_A(a,\eta)  \}^{j+1}\bm{H}^\phi_{A,j}(a,\eta)\,  d\mathcal{H}^n(a,\eta)
	\end{flalign}
for every bounded Borel function $ \varphi: \mathbf{R}^{n+1} \times \mathbf{R}^{n+1} \rightarrow \mathbf{R} $ with compact support.
\end{enumerate}
\end{Theorem}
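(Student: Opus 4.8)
The plan is to reduce the whole statement to a single application of Federer's area formula (Lemma~\ref{coarea} with $k=m=n+1$) to the parametrization $F(a,\eta,r)=a+r\eta$ of a neighbourhood of $A$ by the normal bundle, after establishing the bi-Lipschitz correspondence of part~(a) and the well-definedness of $J$ of part~(b).

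\textbf{Part (a).} The set $S^s_t=\{x\in S^\phi(A,t):\bm\rho^\phi_A(x)\ge s/t\}$ is relatively closed by upper semicontinuity of $\bm\rho^\phi_A$, and $S^s_t\subseteq\Unp^\phi(A)$, so $\bm\psi^\phi_A$ is single valued there; by Lemma~\ref{theo: projectionLipschitz} (with $\lambda=s/t$) the map $\bm\xi^\phi_A|S^s_t$, and hence $x\mapsto t^{-1}(x-\bm\xi^\phi_A(x))=\bm\nu^\phi_A(x)$ on $S^s_t$, is Lipschitz. Using $\bm r^\phi_A(a,\eta)=r\,\bm\rho^\phi_A(a+r\eta)$ from \eqref{eq : r and rho}, one checks that for $(a,\eta)\in N_s$ one has $a+t\eta\in S^s_t$ with $\bm\psi^\phi_A(a+t\eta)=(a,\eta)$, and that $x\in S^s_t$ implies $\bm\psi^\phi_A(x)\in N_s$; so $\bm\psi^\phi_A|S^s_t$ is a bijection onto $N_s$ whose inverse $(a,\eta)\mapsto a+t\eta$ is affine and Lipschitz, i.e.\ a bi-Lipschitz homeomorphism with the stated inverse. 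For the local finiteness of $\mathcal H^n(N_s\cap(B\times\partial\mathcal W^\phi))$, observe that this set is the bi-Lipschitz image of $\{x\in S^s_t:\bm\xi^\phi_A(x)\in B\}$, a bounded subset of $S^\phi(A,t)=\partial B^\phi(A,t)$ at each point $x$ of which one has an interior tangent $\phi^\ast$-ball $B^\phi(\bm\xi^\phi_A(x),t)$ and an exterior tangent $\phi^\ast$-ball of radius $\ge s-t$ (centred at $\bm\xi^\phi_A(x)+\bm\rho^\phi_A(x)\,t\,\bm\nu^\phi_A(x)$); this uniform two-sided ball condition forces the set to lie in a $C^{1,1}$-hypersurface, hence to have finite $\mathcal H^n$-measure.

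\textbf{Part (b).} By Lemma~\ref{lem: tangent of normal bundle}, for $\mathcal H^n$-a.e.\ $(a,\eta)$ the $\tau_i(a,\eta)$ form a basis of $\Tan(\partial\mathcal W^\phi,\eta)$ and the $\zeta_i(a,\eta)$ a basis of $\Tan^n(\mathcal H^n\restrict N^\phi(A),(a,\eta))$, so $J(a,\eta)\in(0,\infty)$. The independence of $J$ from the chosen frames follows because, for fixed $(a,\eta)$, the $\zeta_i$ are the images of the $\tau_i$ under the single linear map $\Phi_{(a,\eta)}\colon\Tan(\partial\mathcal W^\phi,\eta)\to\mathbf R^{n+1}\times\mathbf R^{n+1}$ acting as $\tau\mapsto(\tau,\kappa^\phi_{A,i}(a,\eta)\tau)$ on the $\rchi^\phi_{A,i}$-eigenspace of $\Der\bm\nu^\phi_A$ when $\kappa^\phi_{A,i}(a,\eta)<\infty$ and as $\tau\mapsto(0,\tau)$ on the eigenspaces with infinite curvature; since the eigenspace decomposition and the curvatures are intrinsic to $(a,\eta)$ (independent of the base point by Lemma~\ref{lem: existence of curvatures}), the ratio $|\tau_1\wedge\dots\wedge\tau_n|/|\zeta_1\wedge\dots\wedge\zeta_n|$ equals the reciprocal of the $n$-dimensional Jacobian of $\Phi_{(a,\eta)}$ and is thus basis-free. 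For measurability, on $\widetilde N^\phi_n(A)$ we have $J=\ap J^W_n\bm p$ by Lemma~\ref{lem: tangent of normal bundle}, which is $\mathcal H^n\restrict N^\phi(A)$-measurable; in general a Borel measurable choice of eigenframe (Lemma~\ref{rem: borel meas of chi} and the continuity of the eigenvalue map, cf.\ \cite{MR884486}) yields a Borel representative of $J$.

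\textbf{Part (c).} For $F(a,\eta,r)=a+r\eta$ on $N^\phi(A)\times(0,\infty)$, using the basis $\zeta_1(a,\eta),\dots,\zeta_n(a,\eta),\partial_r$ of the approximate tangent space one computes $\Der F(\zeta_i)=(1+r\kappa^\phi_{A,i})\tau_i$ or $r\tau_i$ according to whether $\kappa^\phi_{A,i}$ is finite or infinite, and $\Der F(\partial_r)=\eta$; together with $\Tan(\partial\mathcal W^\phi,\eta)=\bm n^\phi(\eta)^\perp$ and $\eta\bullet\bm n^\phi(\eta)=\phi(\bm n^\phi(\eta))$ this yields
\[
\ap J_{n+1}F(a,\eta,r)=\phi(\bm n^\phi(\eta))\,J(a,\eta)\,P_r(a,\eta),\qquad P_r(a,\eta):=\prod_{\kappa^\phi_{A,i}(a,\eta)<\infty}(1+r\kappa^\phi_{A,i}(a,\eta))\prod_{\kappa^\phi_{A,i}(a,\eta)=\infty}r,
\]
where $\ap J_{n+1}F$ is the approximate $(n+1)$-Jacobian with respect to $\mathcal H^{n+1}\restrict(N^\phi(A)\times(0,\infty))$. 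Since $\Cut^\phi(A)$ is $\mathcal L^{n+1}$-negligible by \eqref{eq: cut locus} and $\bm\psi^\phi_A$ is single valued on $\Unp^\phi(A)$, the map $F$ restricts to an $\mathcal L^{n+1}$-a.e.\ injection of $W_\rho:=\{(a,\eta,r):(a,\eta)\in N^\phi(A),\,0<r<\inf\{\rho,\bm r^\phi_A(a,\eta)\}\}$ onto $B^\phi(A,\rho)\setminus A$ with $\bm\psi^\phi_A\circ F$ equal to the projection $(a,\eta,r)\mapsto(a,\eta)$ on $W_\rho$. Applying the area formula (Lemma~\ref{coarea} with $k=m=n+1$, after restricting to large balls and passing to the limit for general $g$), Fubini for $\mathcal H^{n+1}\restrict(N^\phi(A)\times(0,\infty))=(\mathcal H^n\restrict N^\phi(A))\otimes\mathcal L^1$, and the Jacobian above, one obtains for every nonnegative Borel $g$
\[
\int_{B^\phi(A,\rho)\setminus A}(g\circ\bm\psi^\phi_A)\,d\mathcal L^{n+1}=\int_{N^\phi(A)}g(a,\eta)\,\phi(\bm n^\phi(\eta))\,J(a,\eta)\int_0^{\inf\{\rho,\bm r^\phi_A(a,\eta)\}}P_r(a,\eta)\,dr\,d\mathcal H^n(a,\eta).
\]
On each stratum $\widetilde N^\phi_d(A)$ one has $P_r=\sum_{j=0}^d r^{n-d+j}E^\phi_{A,j}$, so matching the definitions of $E^\phi_{A,j}$ and $\bm H^\phi_{A,\ell}$ in Definition~\ref{Def:finitecurv} gives $\int_0^R P_r\,dr=\sum_{\ell=0}^n\frac{R^{\ell+1}}{\ell+1}\bm H^\phi_{A,\ell}$; substituting this and writing $\varphi=\varphi^+-\varphi^-$ yields \eqref{theo: Steiner closed formula}. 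For \eqref{theo: Steiner closed 3.1}, note $P_r>0$ for $0<r<\bm r^\phi_A$ (Remark~\ref{eq: curvatures and reach}), so the degree-$(n+1)$ polynomial $r\mapsto\int_0^r P_s\,ds$ is nonnegative on $(0,\bm r^\phi_A)$; stratifying $N^\phi(A)$ by dyadic ranges of $\bm r^\phi_A$ and recovering, on each range, the coefficients $\tfrac1{\ell+1}\bm H^\phi_{A,\ell}$ by Vandermonde interpolation at nodes comparable to $\bm r^\phi_A$ gives the pointwise estimate $\inf\{\rho,\bm r^\phi_A\}^{\ell+1}|\bm H^\phi_{A,\ell}|\le C(n)\int_0^{\inf\{\rho,\bm r^\phi_A\}}P_s\,ds$; multiplying by $J$ and applying the displayed formula with $g=\bm 1_{B\times\partial\mathcal W^\phi}$ (whose left side is $\le\mathcal L^{n+1}(B+\rho B^\circ)<\infty$, and whose right side dominates a positive multiple of $\int_{N^\phi(A)\cap(B\times\partial\mathcal W^\phi)}J\int_0^{\inf\{\rho,\bm r^\phi_A\}}P_s\,ds\,d\mathcal H^n$, since $\phi\circ\bm n^\phi$ is bounded below on the compact set $\partial\mathcal W^\phi$) gives \eqref{theo: Steiner closed 3.1}.

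\textbf{Main obstacle.} The delicate step is the change of variables in part~(c): matching the geometric parametrization with the abstract area formula on the merely rectifiable set $N^\phi(A)$, justifying the product-measure Fubini step, and above all computing the Jacobian uniformly across the strata $\widetilde N^\phi_d(A)$ where principal curvatures degenerate to $+\infty$ — which is exactly what forces the use of the two-component vectors $\zeta_i$ of Lemma~\ref{lem: tangent of normal bundle} instead of ordinary tangent frames. The second subtle point is upgrading the signed identity \eqref{theo: Steiner closed formula} to the absolute integrability \eqref{theo: Steiner closed 3.1}, which relies on positivity of $P_r$ below the reach together with the interpolation-by-strata argument.
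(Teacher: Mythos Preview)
Your proposal is correct and follows the same overall architecture as the paper --- bi-Lipschitz parametrisation by $(a,\eta)\mapsto a+t\eta$ in part~(a), then the area formula applied to $F(a,\eta,r)=a+r\eta$ on $N^\phi(A)\times(0,\infty)$ in part~(c) --- but three of the auxiliary arguments differ and the third is a genuine simplification.

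For the local finiteness in~(a) the paper does not use a two-sided ball condition; it applies coarea $\int_0^\infty\mathcal H^n(S^\phi(A,r)\cap B_s)\,dr<\infty$ to select one $t\in(0,s)$ with $\mathcal H^n(S^\phi(A,t)\cap B_s)<\infty$ and transports this via the bi-Lipschitz $\bm\psi^\phi_A|S^s_t$; this is shorter than your geometric route. For the well-definedness in~(b) the paper does not invoke a frame-independent linear map $\Phi_{(a,\eta)}$ but instead uses the identity $\ap J_n^{N_{s'}}f_s\cdot\bm 1_{\widetilde N^\phi_d(A)}=J\,s^{n-d}\prod_j(1+s\kappa^\phi_{A,j})$ and inverts it to express $J$ as an explicit measurable function of the approximate Jacobian of $f_s$ and the principal curvatures; your $\Phi$-argument is conceptually cleaner, while the paper's gives measurability of $J$ for free (your measurability sketch via a Borel eigenframe selection is the weakest link, though easily repaired by the same inversion). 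The interesting divergence is the integrability estimate~\eqref{theo: Steiner closed 3.1}: the paper splits $\kappa^\phi_{A,i}=\kappa_i^++\kappa_i^-$ on each stratum $\widetilde N^\phi_d(A)$ and invokes a combinatorial inequality from~\cite{MR2031455} to obtain $\sum_j\frac{\delta^{n-d+j+1}}{n-d+j+1}E^\phi_{A,j}\ge\bar c(l,d,n)\,\delta^{n-d+l+1}|E^\phi_{A,l}|$, whereas your Vandermonde-interpolation argument --- evaluating the increasing polynomial $Q(R)=\int_0^R P_s\,ds=\sum_\ell\frac{R^{\ell+1}}{\ell+1}\bm H^\phi_{A,\ell}$ at nodes $t_iR_0$ with $R_0=\inf\{\rho,\bm r^\phi_A\}$ and bounding $|Q(t_iR_0)|\le Q(R_0)$ by monotonicity --- yields $R_0^{\ell+1}|\bm H^\phi_{A,\ell}|\le C(n)\,Q(R_0)$ directly, without the $\kappa^\pm$ splitting or the cited lemma (and the dyadic stratification you mention is in fact unnecessary, since the rescaling already makes the Vandermonde constant depend only on $n$). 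This is a genuine streamlining of the paper's argument.
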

\begin{proof}
Fix $ s > 0 $ and a compact set $ B \subset \mathbf{R}^{n+1} $. We define $ B_t = \{x\in\R^{n+1} : \bm{\delta}^\phi_B(x)\leq t\} $ for  $t > 0 $. One can easily check that $ \bm{\psi}^\phi_A [S^s_t]= N_s $, $ \bm{\psi}^\phi_A|S^s_t $ is injective and $ (\bm{\psi}^\phi_A |S^s_t)^{-1}(a,\eta)= a + t\eta $ for  $(a,\eta)\in N_s $ and  $0 < t < s $. Consequently $ \bm{\psi}^\phi_A|S^s_t $ is a bi-lipschitzian homeomorphism by Lemma \ref{theo: projectionLipschitz} for  $ 0 < t < s $. Moreover, we notice that
\begin{equation*}
N_s\cap (B \times \partial \mathcal{W}^\phi) \subseteq \bm{\psi}_A^\phi(S^s_t \cap B_t)\subseteq \bm{\psi}_A^\phi(S^s_t \cap B_s) \qquad \textrm{for   $ 0 < t < s $}
\end{equation*}
and infer that
\begin{equation}\label{theo: Steiner closed 3.2}
		\mathcal{H}^n(N_s \cap (B \times \partial \mathcal{W}^\phi)) \leq \Lip(\bm{\psi}^\phi_A|S^s_{t})^n\, \mathcal{H}^n(S^\phi(A,t)\cap B_s)  \qquad \textrm{for  $ 0 < t < s $}.
\end{equation}
Using the coarea formula, we get
\begin{equation*}
	\int_{0}^\infty \mathcal{H}^n(S^\phi(A,r)\cap B_s)\, dr =\int_{B_s} J_1 \bm{\delta}^\phi_A(x)\, d\mathcal{L}^{n+1}(x) < \infty,
\end{equation*}
whence we infer that $\mathcal{H}^n(S^\phi(A,r)\cap B_s) < \infty $ for $\mathcal{L}^1 $ a.e.\ $ r > 0 $. Consequently, there exists some $t\in ( 0, s) $ so that $\mathcal{H}^n(S^\phi(A,t)\cap B_s) < \infty $, and hence \eqref{theo: Steiner closed 3.2} implies that
\begin{equation*}
	\mathcal{H}^n(N_s \cap (B \times \partial \mathcal{W}^\phi)) < \infty.
\end{equation*}
This proves \ref{theo: Steiner closed 1}.

For each $ 0 < s \le s' $ we define $ f_s : N_{s'} \rightarrow \mathbf{R}^{n+1} $ by $ f_s(a,\eta) = a + s\eta  $ for $(a,\eta)\in N_{s'} $. We apply Lemma \ref{lem: tangent of normal bundle} to compute
	\begin{equation}\label{theo: Steiner closed: formula 0}
		\ap J_n^{N_{s'}}f_s(a,\eta) \cdot\bm{1}_{\widetilde{N}^\phi_d(A)}(a,\eta) = J(a,\eta) s^{n-d} \Bigg(\prod_{j=1}^{d}(1 + s \kappa^\phi_{A,j}(a,\eta))\Bigg)\, \bm{1}_{\widetilde{N}^\phi_d(A)}(a,\eta)
	\end{equation}
	for $ \mathcal{H}^n $ a.e.\ $ (a,\eta)\in N_{s} $. Since $ 1 + s \kappa^\phi_{A,i}(a,\eta) > 0 $ for $ i = 1, \ldots , n $ and for $ \mathcal{H}^n $ a.e.\ $(a,\eta)\in N_{2s} $ by Remark \ref{eq: curvatures and reach}, we conclude that
	\begin{equation*}
		J(a,\eta) = \ap J_n^{N_{2s}}f_s(a,\eta)\sum_{d=0}^ns^{d-n} \Bigg(\prod_{j=1}^{d}(1 + s \kappa^\phi_{A,j}(a,\eta))\Bigg)^{-1}\, \bm{1}_{\widetilde{N}^\phi_d(A)}(a,\eta)
	\end{equation*}
	for $ \mathcal{H}^n $ a.e.\ $(a,\eta)\in N_{2s} $ and for every $ s > 0 $. Noting that the right-hand side of the last equation does not depend on the choice of $ \tau_1, \ldots , \tau_n, \zeta_1, \ldots , \zeta_n $ and  defines an $ \mathcal{H}^n \restrict N^\phi(A) $-measurable function, we obtain \ref{theo: Steiner closed 2}.

We fix $ \rho > 0 $. Then we define
\begin{equation*}
\delta(a,\eta) = \inf\{ \rho, \bm{r}^\phi_A(a,\eta)  \} \qquad \textrm{for $(a,\eta)\in N^\phi(A) $,}
\end{equation*}
\begin{equation*}
\Omega =\{(a,\eta,t): (a,\eta) \in N^\phi(A), \; 0 < t < \bm{r}^\phi_A(a,\eta)   \}
\end{equation*}
and the bijective (locally) Lipschitz map
	\begin{equation*}
		f : \Omega \rightarrow \mathbf{R}^{n+1} \setminus(A \cup \Cut^\phi(A)), \qquad f(a,\eta,t) = a + t \eta \quad \textrm{for $(a,\eta,t)\in \Omega $.}
	\end{equation*}
We choose an arbitrary sequence $ s_i \to 0 + $ and define
\begin{equation*}
	\Omega_i =\{(a,\eta,t): (a,\eta) \in N_{s_i}, \; 0 < t < \bm{r}^\phi_A(a,\eta)   \} \qquad \textrm{for $ i \geq 1 $}.
\end{equation*}
Notice that $ \Omega = \bigcup_{i=1}^\infty \Omega_i $,
	\begin{equation*}
		\Tan^{n+1}(\mathcal{H}^{n+1}\restrict \Omega_i, (a,\eta,t)) = \Tan^n(\mathcal{H}^n\restrict N_{s_i}, (a,\eta))\times \mathbf{R} \qquad \textrm{for $\mathcal{H}^{n+1}$ a.e. $(a,\eta,t) \in \Omega_i $}
	\end{equation*}
and $ \Tan^n(\mathcal{H}^n\restrict N_{s_i}, (a,\eta)) $ is an $ n $-dimensional linear subspace for $ \mathcal{H}^{n} $ a.e.\ $(a,\eta)\in N_{s_i} $ by \cite[3.2.19]{MR0257325}.
We  apply again Lemma \ref{lem: tangent of normal bundle},  for $ \mathcal{H}^{n+1} $ a.e.\ $(a,\eta,t)\in \Omega_i $ and for every $ i \geq 1 $, to compute
	\begin{flalign}\label{theo: Steiner closed: jacobian}
		&  \bm{1}_{\widetilde{N}^\phi_d(A)}(a,\eta)\cdot\ap J_{n+1}^{\Omega_i}f(a,\eta,t) \notag \\
		& \quad =  \bm{1}_{\widetilde{N}^\phi_d(A)}(a,\eta)\cdot\frac{|\tau_1(a,\eta) \wedge \ldots \wedge \tau_n(a,\eta)\wedge \eta|}{|\zeta_1(a,\eta) \wedge \ldots \wedge \zeta_n(a,\eta)|} t^{n-d}\left(\prod_{j=1}^{d}(1 + t \kappa^\phi_{A,j}(a,\eta))\right)\notag  \\
		& \quad =   \bm{1}_{\widetilde{N}^\phi_d(A)}(a,\eta)\cdot  (\bm{n}^\phi(\eta) \bullet \eta)\frac{|\tau_1(a,\eta) \wedge \ldots \wedge \tau_n(a,\eta) \wedge \bm{n}^\phi(\eta) |}{|\zeta_1(a,\eta) \wedge \ldots \wedge \zeta_n(a,\eta)|} t^{n-d} \left(\prod_{j=1}^{d}(1 + t \kappa^\phi_{A,j}(a,\eta))\right) \notag \\
		& \quad =  \bm{1}_{\widetilde{N}^\phi_d(A)}(a,\eta)\cdot \phi(\bm{n}^\phi(\eta))J(a,\eta) t^{n-d} \prod_{j=1}^{d}(1 + t \kappa^\phi_{A,j}(a,\eta)).
		\end{flalign}
Note that
	\begin{equation*}
	B^\phi(A, \rho) \setminus \big(A \cup \Cut^\phi(A)\big) = f\big(\{(a,\eta,t)\in \Omega: 0 < t \leq \rho    \}\big)
	\end{equation*}
and recall that $ \mathcal{L}^{n+1}(\Cut^\phi(A)) =0 $.  Let $ h : \mathbf{R}^{n+1} \rightarrow [0,\infty)$ be a Borel function. Then we employ the monotone convergence theorem, the coarea formula, Fubini's theorem and \eqref{theo: Steiner closed: jacobian} to get
	\begin{flalign}\label{theo: Steiner closed: formula 1}
		& \int_{B^\phi(A, \rho) \setminus A} h(x)\, d\mathcal{L}^{n+1}(x) \notag \\
		& \qquad = \lim_{i \to \infty} \sum_{d=0}^n\int_{N_{s_i} \cap \widetilde{N}^\phi_d(A)} \int_{0}^{\delta(a, \eta)} h(a + t \eta)\, \ap J^{\Omega_i}_{n+1} f(a, \eta, t) \,dt\, d\mathcal{H}^{n}(a, \eta) \notag \\
		& \qquad = \sum_{d=0}^n\int_{\widetilde{N}^\phi_d(A)}  \phi(\bm{n}^\phi(\eta))\,J(a,\eta)\bigg(\int_{0}^{\delta(a, \eta)}h(a + t \eta)\,t^{n-d}\, \prod_{j=1}^{d}(1 + t \kappa^\phi_{A,j}(a,\eta)) \,dt\bigg) \, d\mathcal{H}^{n}(a, \eta).
	\end{flalign}
	We notice the equality
	\begin{equation}\label{theo: Steiner closed: formula 1'}
t^{n-d} \prod_{j=1}^{d}(1 + t \kappa^\phi_{A,j}(a,\eta)) =	   \sum_{j=0}^d t^{n-d+j}  E^\phi_{A,j}(a,\eta)
	\end{equation}
for $(a,\eta) \in \widetilde{N}^\phi_d(A) $ and $ t > 0 $. If $ h(x) = (\varphi \circ \bm{\psi}^\phi_A)(x) $ for every $ x \in \Unp^\phi(A) $ for some Borel function $ \varphi : \mathbf{R}^{n+1} \times \mathbf{R}^{n+1} \rightarrow [0, +\infty)$, then $ h(a + t\eta) = \varphi(a, \eta) $ for every $ (a, \eta, t) \in \Omega $ and we obtain from \eqref{theo: Steiner closed: formula 1} and \eqref{theo: Steiner closed: formula 1'} that
	\begin{flalign}\label{theo: Steiner closed formula 2}
&\int_{B^\phi(A, \rho) \setminus A} (\varphi \circ \bm{\psi}^\phi_A)\, d\mathcal{L}^{n+1}\notag \\
&\qquad =\sum_{d=0}^n\int_{\widetilde{N}_d^\phi(A)}\varphi(a,\eta)  \phi(\bm{n}^\phi(\eta))\,J(a,\eta)\, \sum_{j=0}^d \frac{\delta(a,\eta)^{n-d+j+1}}{n-d+j+1}\, \,E^\phi_{A,j}(a,\eta)\;  d\mathcal{H}^n(a,\eta).
	\end{flalign}
 If $ B \subset \mathbf{R}^{n+1} $ is a compact set, we can choose $ \varphi = \bm{1}_{B \times \partial \mathcal{W}^\phi} $ in \eqref{theo: Steiner closed formula 2} to infer that
\begin{equation}\label{theo: Steiner closed: integrability estimate II}
\int_{\widetilde{N}_d^\phi(A) \cap (B \times \partial \mathcal{W}^\phi)} \phi(\bm{n}^\phi(\eta))\,J(a,\eta)\, \sum_{j=0}^d \frac{\delta(a,\eta)^{n-d+j+1}}{n-d+j+1}\, \,E^\phi_{A,j}(a,\eta)\;  d\mathcal{H}^n(a,\eta) < \infty
\end{equation}
for  $ d =0, \ldots , n $.

We now proceed as in \cite{MR2031455} to prove that for  $ d = 0, \ldots , n $ and $ l = 0, \ldots , d $ there exists a constant $ \bar{c}(l,d,n) > 0 $ so that
\begin{equation}\label{theo: Steiner closed: integrability estimate I}
\sum_{j=0}^d \frac{\delta(a,\eta)^{n-d+j+1}}{n-d+j+1}\, \,E^\phi_{A,j}(a,\eta) \geq \bar{c}(l,d,n)\, \delta(a,u)^{n-d+l+1} \,|E^\phi_{A,l}(a,\eta)|\ge 0
\end{equation}
for  $ (a,\eta) \in \widetilde{N}^\phi_d(A) $. For $ i =1, \ldots , d $ and $ (a,\eta)\in \widetilde{N}^\phi_d(A) $ we define $ \kappa^+_i(a,\eta) = \sup\{\kappa^\phi_{A,i}(a,\eta), 0\} $ and $ \kappa_i^-(a,\eta) = \inf \{\kappa^\phi_{A,i}(a,\eta), 0\} $. We set
\begin{equation*}
E_j^+(a,\eta) = \sum_{\lambda \in \Lambda(d,j)}\;\prod_{h=1}^j\kappa^+_{\lambda(h)}(a,\eta), \qquad E_j^-(a,\eta) = \sum_{\lambda \in \Lambda(d,j)}\;\prod_{h=1}^j\kappa^-_{\lambda(h)}(a,\eta)
\end{equation*}
for $ (a,\eta)\in \widetilde{N}^\phi_d(A) $ and $ j =0, \ldots , d $ (notice $E_0^+ \equiv 1 $ and $E^0_- \equiv 1 $). We set $ k = n-d+1 $. For $(a,\eta)\in \widetilde{N}^\phi_d(A) $ we observe that
\begin{equation*}
E^\phi_{A,j}(a,\eta) = \sum_{l=0}^{j}E_l^-(a,\eta)E^+_{j-l}(a,\eta) \qquad \textrm{for $ j =0, \ldots , d $}
\end{equation*}
and, noting that $ E_s^+(a,\eta) E^-_l(a,\eta) =0 $ for $ s > d-l $ and $ -1 \leq \delta(a,u)\kappa^-_i(a,\eta) \leq 0 $ for $ i = 1, \ldots , d $ by Remark \ref{eq: curvatures and reach}, we employ \cite[Lemma 2.4]{MR2031455} to conclude
\begin{flalign*}
\sum_{j=0}^d \frac{\delta(a,\eta)^{k+j}}{k+j}\, \,E^\phi_{A,j}(a,\eta) & = \sum_{j=0}^d \sum_{l=0}^j \frac{\delta(a,\eta)^{k+j}}{k+j}\, E_l^-(a,\eta)E^+_{j-l}(a,\eta)\\
& = \sum_{l=0}^d \sum_{s=0}^{d-l} \frac{\delta(a,\eta)^{k+l+s}}{k+l+s}\, E_l^-(a,\eta)E^+_{s}(a,\eta)\\
& = \sum_{s=0}^d E^+_{s}(a,\eta)\delta(a,\eta)^{k+s} \left( \sum_{l=0}^{d} \frac{\delta(a,\eta)^{l}}{k+l+s}\, E_l^-(a,\eta)\right) \\
& \geq \sum_{s=0}^d E^+_{s}(a,\eta)\delta(a,\eta)^{k+s} c(d,n,s),
\end{flalign*}
where $ c(d,n,s) $ is a positive constant depending only on $ s $, $ d $ and $ n $. Since
\begin{equation*}
\sup_{(a,\eta) \in \widetilde{N}^\phi_d(A)} \delta(a,\eta)^l|E^-_l(a,\eta)| \leq
\binom{d }{ l} \qquad \textrm{for $ l =0, \ldots , d $,}
\end{equation*}
we conclude that
\begin{flalign*}
\sum_{j=0}^d \frac{\delta(a,\eta)^{k+j}}{k+j}\, \,E^\phi_{A,j}(a,\eta) & \geq \sum_{s=0}^d E^+_{s}(a,\eta)\delta(a,\eta)^{k+s} c(d,n,s)\binom{d}{l-s}^{-1}|E^-_{l-s}(a,\eta)|\delta(a,\eta)^{l-s}\\
&  \geq \delta(a,\eta)^{k+l} \bar{c}(l,d,n) \sum_{s=0}^d E^+_{s}(a,\eta)|E^-_{l-s}(a,\eta)| \\
& \ge \delta(a,\eta)^{k+l} \bar{c}(l,d,n) |E^\phi_{A,l}(a,\eta)|
\end{flalign*}
for $(a,\eta)\in \widetilde{N}^\phi_d(A) $ and $ l =0, \ldots , d $, where $ \bar{c}(l,d,n) = \min \big\{ c(d,n,s)\binom{d }{ l-s}^{-1} : s =0, \ldots , d    \big\} > 0 $.

Let $ B \subseteq \mathbf{R}^{n+1} $ be a compact set. It follows from  \eqref{theo: Steiner closed: integrability estimate II} and \eqref{theo: Steiner closed: integrability estimate I} that
\begin{equation}\label{integrbound1}
	\int_{\widetilde{N}_d^\phi(A) \cap (B \times \partial \mathcal{W}^\phi)} \phi(\bm{n}^\phi(\eta))\,J(a,\eta)\,  \delta(a,\eta)^{n-d+l+1} \,|E^\phi_{A,l}(a,\eta)|\;  d\mathcal{H}^n(a,\eta) < \infty
\end{equation}
for  $ l =0, \ldots , d $ and $ d =0, \ldots , n $.
Since
\begin{flalign*}
\sum_{i=0}^n \frac{\delta(a,\eta)^{i+1}}{i+1}|\bm{H}^\phi_{A,i}(a,\eta)|\le \sum_{d=0}^n\sum_{j=0}^d \frac{\delta(a,\eta)^{n-d+j+1}}{n-d+j+1}\, \,|E^\phi_{A,j}(a,\eta)|\, \bm{1}_{\widetilde{N}^\phi_d(A)}(a,\eta) ,
\end{flalign*}
where equality holds if the absolute values are omitted on both sides of this equation,
we obtain  \eqref{theo: Steiner closed 3.1} from \eqref{integrbound1}. In addition, we
conclude from \eqref{theo: Steiner closed formula 2} that
\begin{flalign}
		& \int_{\{x\in\R^{n+1} : 0 < \bm{\delta}^\phi_A(x) \leq \rho\}} (\varphi \circ \bm{\psi}^\phi_A)\, d\mathcal{L}^{n+1} \notag \\
	& \qquad = \int_{N^\phi(A)} \varphi(a,\eta)\,\phi(\bm{n}^\phi(\eta))\,J(a,\eta)\,  \sum_{r=0}^n \frac{\delta(a,\eta)^{r+1}}{r+1}\bm{H}^\phi_{A,r}(a,\eta)\;  d\mathcal{H}^n(a,\eta)\notag \\
	&  \qquad =\sum_{r=0}^n \frac{1}{r+1}\int_{N^\phi(A)} \varphi(a,\eta)\,\phi(\bm{n}^\phi(\eta))\,J(a,\eta)\,   \delta(a,\eta)^{r+1}\, \bm{H}^\phi_{A,r}(a,\eta)\;  d\mathcal{H}^n(a,\eta)\label{startag}
\end{flalign}
for every bounded Borel function $ \varphi: \mathbf{R}^{n+1} \times \mathbf{R}^{n+1} \rightarrow \mathbf{R} $ with compact support, where we use the integrability property in \eqref{theo: Steiner closed 3.1} to obtain the equality in \eqref{startag}.
\end{proof}

It is convenient to introduce the following function.

\begin{Definition}\label{def: jacobian}
For every non-empty closed set $ A \subseteq \mathbf{R}^{n+1} $, we denote by \index{O6@$J^\phi_A $}  $ J^\phi_A  $ the $ \mathcal{H}^n \restrict N^\phi(A) $ measurable function $ J $ introduced in Theorem \ref{theo: Steiner closed}.
\end{Definition}

The  arguments in the proof of Theorem \ref{theo: Steiner closed} readily provide the following change-of-variable-type formula. This formula plays a central role in the proof of Theorem \ref{theo: heintze karcher}.

\begin{Corollary}[Disintegration of Lebesgue measure]\label{theo: change of variable}
Let $ A $ be a closed set, and let $ h : \mathbf{R}^{n+1} \rightarrow \mathbf{R}$ be a non-negative Borel function. Then
\begin{flalign*}
		&\int_{\mathbf{R}^{n+1} \setminus A} h(x)\, d\mathcal{L}^{n+1}(x)\\
		& \qquad = \sum_{d=0}^{n}\int_{\widetilde{N}^\phi_d(A)}\phi(\bm{n}^\phi(\eta))\, J^\phi_A(a,\eta) \int_{0}^{\bm{r}^\phi_A(a,\eta)}
		h(a + t\eta)\,t^{n-d} \,\prod_{j=1}^{d}(1 + t \kappa^\phi_{A,j}(a,\eta))\, dt\, d\mathcal{H}^n(a,\eta).
		\end{flalign*}
\end{Corollary}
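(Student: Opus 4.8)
The plan is to extract the disintegration formula directly from the proof of Theorem \ref{theo: Steiner closed}, where equation \eqref{theo: Steiner closed: formula 1} already gives the key identity for integrands of the form $h = \varphi \circ \bm{\psi}^\phi_A$, but integrated only over $B^\phi(A,\rho) \setminus A$. First I would observe that $\mathbf{R}^{n+1} \setminus A = \bigcup_{\rho > 0} B^\phi(A,\rho)$ up to the set $\Cut^\phi(A)$, which has $\mathcal{L}^{n+1}$-measure zero by \eqref{eq: cut locus}; more precisely, $\mathbf{R}^{n+1} \setminus (A \cup \Cut^\phi(A)) = f(\Omega)$ where $\Omega = \{(a,\eta,t) : (a,\eta) \in N^\phi(A),\, 0 < t < \bm{r}^\phi_A(a,\eta)\}$ and $f(a,\eta,t) = a + t\eta$, exactly as set up in the proof of Theorem \ref{theo: Steiner closed}.

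The core computation is already done: applying the coarea formula to $f$ on the exhausting sets $\Omega_i = \{(a,\eta,t) : (a,\eta) \in N_{s_i},\, 0 < t < \bm{r}^\phi_A(a,\eta)\}$, using the Jacobian formula \eqref{theo: Steiner closed: jacobian}, namely
\begin{equation*}
\bm{1}_{\widetilde{N}^\phi_d(A)}(a,\eta) \cdot \ap J_{n+1}^{\Omega_i} f(a,\eta,t) = \bm{1}_{\widetilde{N}^\phi_d(A)}(a,\eta)\cdot \phi(\bm{n}^\phi(\eta))\, J^\phi_A(a,\eta)\, t^{n-d} \prod_{j=1}^{d}(1 + t\kappa^\phi_{A,j}(a,\eta)),
\end{equation*}
together with Fubini's theorem and the monotone convergence theorem, gives for any non-negative Borel $h$ that
\begin{equation*}
\int_{\mathbf{R}^{n+1}\setminus A} h \, d\mathcal{L}^{n+1} = \lim_{i\to\infty} \sum_{d=0}^n \int_{N_{s_i} \cap \widetilde{N}^\phi_d(A)} \int_0^{\bm{r}^\phi_A(a,\eta)} h(a+t\eta)\, \ap J^{\Omega_i}_{n+1} f(a,\eta,t)\, dt\, d\mathcal{H}^n(a,\eta),
\end{equation*}
which, after substituting the Jacobian and passing to the limit (the integrands are non-negative and the sets $N_{s_i}$ increase to $N^\phi(A)$ up to an $\mathcal{H}^n$-null set as $s_i \searrow 0$), yields precisely the claimed identity. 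The only point requiring care is that here the upper limit of the inner integral is the full reach $\bm{r}^\phi_A(a,\eta)$ rather than the truncated $\delta(a,\eta) = \inf\{\rho, \bm{r}^\phi_A(a,\eta)\}$; this is harmless since we are no longer restricting to a tubular neighbourhood, and the monotone convergence theorem handles the fact that the inner integral may be $+\infty$.

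The main obstacle — really the only substantive one — is ensuring the measurability and the validity of the coarea formula applied to $f$ on each $\Omega_i$: one needs $\Omega_i$ to be $(\mathcal{H}^{n+1}, n+1)$-rectifiable and $\mathcal{H}^{n+1}$-measurable with $\Tan^{n+1}(\mathcal{H}^{n+1}\restrict \Omega_i, (a,\eta,t)) = \Tan^n(\mathcal{H}^n\restrict N_{s_i},(a,\eta)) \times \mathbf{R}$ for $\mathcal{H}^{n+1}$-a.e.\ point, and $\Tan^n(\mathcal{H}^n\restrict N_{s_i}, (a,\eta))$ to be an $n$-dimensional linear subspace $\mathcal{H}^n$-a.e. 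All of this is exactly what was verified in the proof of Theorem \ref{theo: Steiner closed} (using \cite[3.2.19]{MR0257325} and Lemma \ref{lem: tangent of normal bundle} for the Jacobian), so the proof reduces to quoting that argument with $h$ general instead of $h = \varphi \circ \bm{\psi}^\phi_A$ and with $\rho = \infty$. Hence the proof is short: reproduce the chain \eqref{theo: Steiner closed: formula 1} without the truncation at $\rho$, invoke $\mathcal{L}^{n+1}(\Cut^\phi(A)) = 0$ to cover $\mathbf{R}^{n+1}\setminus A$, and conclude by monotone convergence in $i$.
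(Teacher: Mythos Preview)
Your proposal is correct and follows essentially the same approach as the paper. Note, however, that equation \eqref{theo: Steiner closed: formula 1} is already stated for an arbitrary non-negative Borel function $h$ (not only for $h = \varphi \circ \bm{\psi}^\phi_A$, which is the specialization made \emph{after} \eqref{theo: Steiner closed: formula 1}); hence the paper's proof is even shorter than you outline: simply quote \eqref{theo: Steiner closed: formula 1} as it stands and let $\rho \to +\infty$ via monotone convergence, with no need to re-run the coarea computation without truncation.
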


\begin{proof}
During the proof of Theorem \ref{theo: Steiner closed} we have proved (see eq.\ \eqref{theo: Steiner closed: formula 1}) that
	\begin{flalign*}
		&\int_{B^\phi(A, \rho)\setminus A} h(x)\, d\mathcal{L}^{n+1}(x)\\
		& = \sum_{d=0}^{n}\int_{\widetilde{N}^\phi_d(A)} \phi(\bm{n}^\phi(\eta))\, J^\phi_A(a,\eta) \int_{0}^{\inf\{\rho, \bm{r}^\phi_A(a,\eta)\}}h(a + t\eta)\,t^{n-d}\,\prod_{j=1}^{d}(1 + t \kappa^\phi_{A,j}(a,\eta))\, dt\, d\mathcal{H}^n(a,\eta)
	\end{flalign*}
	for every $ \rho > 0 $. The conclusion now follows letting $ \rho \to +\infty $.
\end{proof}

\subsection{A Heintze--Karcher inequality for closed sets}\label{sec:HKI}

The following theorem provides a very general version of a Heintze--Karcher inequality under minimal assumptions on a closed set and its complement. Several consequences will be derived for sets with positive reach in Section \ref{Section: positive reach}.

We start with a lemma.

\begin{Lemma}\label{theo: positive reach}
	Suppose $ A \subseteq \mathbf{R}^{n+1} $ is a closed set, $ s_0 > 0 $ and $ \bm{r}^\phi_A(a,u) \geq s_0 $ for $ \mathcal{H}^n $ a.e.\ $(a,u)\in N^\phi(A) $. Then $ \{ x \in \mathbf{R}^{n+1} : \bm{\delta}^\phi_A(x) < s_0   \} \subseteq \Unp^\phi(A) $.
\end{Lemma}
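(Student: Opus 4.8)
\emph{Proof strategy.} The plan is to derive this pointwise inclusion from an \emph{almost everywhere} version --- obtained via the disintegration formula of Corollary \ref{theo: change of variable} --- and then to upgrade it using the upper semicontinuity of $\bm{\rho}^\phi_A$. First, I would fix $x_0\in\R^{n+1}\setminus A$ with $r_0:=\bm{\delta}^\phi_A(x_0)<s_0$ and record the following elementary ``everywhere'' mechanism: if $x\in\R^{n+1}\setminus A$, $t:=\bm{\delta}^\phi_A(x)<s_0$, $a\in\bm{\xi}^\phi_A(x)$ and $\eta:=t^{-1}(x-a)$, then $\eta\in\partial\mathcal{W}^\phi$, $(a,\eta)\in N^\phi(A)$ and $x=a+t\eta$; if in addition $\bm{r}^\phi_A(a,\eta)\ge s_0>t$, then \eqref{eq : r and rho} gives $\bm{r}^\phi_A(a,\eta)=t\,\bm{\rho}^\phi_A(x)$, hence $\bm{\rho}^\phi_A(x)\ge s_0/t>1$ and therefore $x\in\Unp^\phi(A)$. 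The one obstruction is that the hypothesis only supplies $\bm{r}^\phi_A\ge s_0$ for $\mathcal{H}^n$-a.e.\ $(a,\eta)\in N^\phi(A)$, so it cannot be invoked for the particular normal attached to the given point $x_0$.

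To overcome this, I would transfer the a.e.\ bound on $N^\phi(A)$ to a full-$\mathcal{L}^{n+1}$-measure statement in $\R^{n+1}$. Put $E=\{(a,\eta)\in N^\phi(A):\bm{r}^\phi_A(a,\eta)<s_0\}$, a Borel set with $\mathcal{H}^n(E)=0$, and $G=\{x\in\Unp^\phi(A):\bm{r}^\phi_A(\bm{\xi}^\phi_A(x),\bm{\nu}^\phi_A(x))<s_0\}$, which is Borel because $\bm{\xi}^\phi_A$ and $\bm{\nu}^\phi_A$ are single-valued and continuous on $\Unp^\phi(A)$. Since for $(a,\eta)\in N^\phi(A)$ and $0<t<\bm{r}^\phi_A(a,\eta)$ the point $a+t\eta$ lies in $\Unp^\phi(A)\setminus\Cut^\phi(A)$ with $\bm{\xi}^\phi_A(a+t\eta)=a$ and $\bm{\nu}^\phi_A(a+t\eta)=\eta$, one has $\bm{1}_G(a+t\eta)=\bm{1}_E(a,\eta)$; feeding $h=\bm{1}_G$ into Corollary \ref{theo: change of variable} then makes its right-hand side collapse into a sum of integrals over $\widetilde{N}^\phi_d(A)\cap E$, each vanishing because $\mathcal{H}^n(E)=0$, so $\mathcal{L}^{n+1}(G)=0$. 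Combining this with $\mathcal{L}^{n+1}(\Cut^\phi(A))=0$ (see \eqref{eq: cut locus}), with $(\R^{n+1}\setminus A)\setminus\Unp^\phi(A)\subseteq\Cut^\phi(A)$, and with the mechanism above applied to the points of $\Unp^\phi(A)\setminus(G\cup\Cut^\phi(A))$, I would conclude that $\bm{\rho}^\phi_A(x)\ge s_0/\bm{\delta}^\phi_A(x)$ for $\mathcal{L}^{n+1}$-a.e.\ $x$ with $0<\bm{\delta}^\phi_A(x)<s_0$.

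Finally, I would upgrade this to every point. Pick $\varepsilon>0$ small enough that $B(x_0,\varepsilon)\cap A=\varnothing$ and $\bm{\delta}^\phi_A<s_0$ on $B(x_0,\varepsilon)$ (possible as $\bm{\delta}^\phi_A$ is continuous and $\bm{\delta}^\phi_A(x_0)=r_0<s_0$); then the set of $x\in B(x_0,\varepsilon)$ with $\bm{\rho}^\phi_A(x)\ge s_0/\bm{\delta}^\phi_A(x)$ has full $\mathcal{L}^{n+1}$-measure in $B(x_0,\varepsilon)$, hence is dense there, and letting $x\to x_0$ along it (using the continuity of $\bm{\delta}^\phi_A$) gives $\limsup_{x\to x_0}\bm{\rho}^\phi_A(x)\ge s_0/r_0$. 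As $\bm{\rho}^\phi_A$ is upper semicontinuous on the open set $\R^{n+1}\setminus A$, this forces $\bm{\rho}^\phi_A(x_0)\ge s_0/r_0>1$, and so $x_0\in\Unp^\phi(A)$ since $\{x:\bm{\rho}^\phi_A(x)>1\}\subseteq\Unp^\phi(A)$. I expect the main obstacle to be precisely this passage from the $\mathcal{H}^n$-a.e.\ hypothesis on the normal bundle to a conclusion valid at the prescribed point $x_0$; it is handled by the disintegration detour (which turns the $\mathcal{H}^n$-null exceptional set of normals into an $\mathcal{L}^{n+1}$-null exceptional set of points) together with the fact that, on the \emph{open} set $\R^{n+1}\setminus A$, upper semicontinuity of $\bm{\rho}^\phi_A$ recovers a pointwise lower bound from an a.e.\ one.
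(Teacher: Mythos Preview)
Your proof is correct and shares the paper's overall strategy --- turn the $\mathcal{H}^n$-a.e.\ hypothesis on $N^\phi(A)$ into an $\mathcal{L}^{n+1}$-a.e.\ statement on $\R^{n+1}\setminus A$, then upgrade to every point via upper semicontinuity --- but the two steps are executed with different tools. For the transfer step, you invoke the disintegration formula (Corollary~\ref{theo: change of variable}); the paper instead argues directly that the locally Lipschitz map $f(a,\eta,t)=a+t\eta$ sends the $\mathcal{H}^{n+1}$-null set $\{(a,\eta,t):\bm{r}^\phi_A(a,\eta)<s_0,\ 0<t<s\}$ to an $\mathcal{L}^{n+1}$-null set, and combines this with $\mathcal{L}^{n+1}(\Cut^\phi(A))=0$ to deduce that $f(\Omega_s)=\{a+t\eta:\bm{r}^\phi_A(a,\eta)\ge s_0,\ 0<t<s\}$ is dense in $\{0<\bm{\delta}^\phi_A<s\}$ for each $s<s_0$. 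For the upgrade, you work on the ambient side, using the upper semicontinuity of $\bm{\rho}^\phi_A$ on $\R^{n+1}\setminus A$ together with~\eqref{eq : r and rho}; the paper instead works on the normal-bundle side: given $x$ with $0<\bm{\delta}^\phi_A(x)<s$, it approximates $x$ by points $a_i+t_i\eta_i\in f(\Omega_s)$, extracts a subsequential limit $(a,\eta,t)$, and uses the upper semicontinuity of $\bm{r}^\phi_A$ to obtain $\bm{r}^\phi_A(a,\eta)\ge s_0>t$, hence $x=a+t\eta\in\Unp^\phi(A)$. The paper's route is more self-contained (it avoids the Steiner-type machinery altogether), whereas yours is a clean reuse of Corollary~\ref{theo: change of variable} and of the relation between $\bm{r}^\phi_A$ and $\bm{\rho}^\phi_A$.
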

\begin{proof}
Let $ 0 < s < s_0 $. Define $\Omega_s^\ast = \{ (a,\eta,t) : (a,\eta)\in N^\phi(A), \; 0 < t < \inf\{s, \bm{r}^\phi_A(a,\eta)\}   \} $ and the bijective map
\begin{equation*}
f : \Omega^\ast_s \rightarrow \{ x : 0 < \bm{\delta}^\phi_A(x) < s    \} \setminus \Cut^\phi(A), \qquad f(a,\eta,t) = a + t\eta.
\end{equation*}
Moreover we define $ \Omega_s = \Omega^\ast_s \cap \{  (a,\eta,t) : \bm{r}^\phi_A(a,\eta)\geq s_0 \} $ and we notice that the hypothesis implies
\begin{equation*}
\mathcal{H}^{n+1}(\Omega^\ast_s \setminus \Omega_s) =0.
\end{equation*}
Consequently $ \mathcal{L}^{n+1}(f(\Omega^\ast_s) \setminus f(\Omega_s)) =0 $ and, recalling that $ \mathcal{L}^{n+1}(\Cut^\phi(A)) =0 $, we conclude that $ f(\Omega_s) $ is dense in $\{x : 0 < \bm{\delta}^\phi_A(x) < s\} $. We choose now $ x \in \mathbf{R}^{n+1} $ so that $ 0 < \bm{\delta}^\phi_A(x) < s $ and a sequence $(a_i, \eta_i, t_i)\in\Omega_s $ so that $ a_i+ t_i\eta_i \to x $. Up to subsequences we can assume that there exist $ a \in A $, $ \eta \in \partial \mathcal{W}^\phi $ and $ 0 \leq t \leq s $ so that $ a_i \to a $, $ \eta_i \to \eta $ and $ t_i \to t $. Therefore $ x = a + t\eta $ and
\begin{equation*}
0 < \bm{\delta}^\phi_A(x) = \lim_{i \to \infty}\bm{\delta}^\phi_A(a_i + t_i \eta_i) = \lim_{i \to \infty} t_i = t.
\end{equation*}
It follows that $(a,\eta)\in N^\phi(A) $ and the upper semicontinuity of $ \bm{r}^\phi_A $ implies that $ \bm{r}^\phi_A(a,\eta)\geq s_0 > s \geq t $ and $ x \in \Unp^\phi(A)  $. In conclusion, we have proved that $\{ x : \bm{\delta}^\phi_A(x) < s    \} \subseteq \Unp^\phi(A)$ for every $ 0 < s < s_0 $.
\end{proof}

\begin{Theorem}\label{theo: heintze karcher}
	Let $ C \subset \mathbf{R}^{n+1} $ be a closed set with $ 0 < \mathcal{L}^{n+1}(\Int(C)) < \infty $. Let $ K = \mathbf{R}^{n+1} \setminus \Int(C)$ and assume that
	\begin{equation}\label{eq:negmeancurv}
		\sum_{i=1}^n \kappa^\phi_{K,i}(a,\eta) \leq 0 \qquad \textrm{for $ \mathcal{H}^n $ a.e.\ $(a,\eta)\in N^\phi(K) $.}
	\end{equation}
	 Then
	\begin{equation}\label{theo: heintze karcher: inequality}
		(n+1)\mathcal{L}^{n+1}(\Int(C)) \leq n	\int_{\widetilde{N}^\phi_n(K)} J^\phi_K(a,\eta)\frac{\phi(\bm{n}^\phi(\eta))}{|\bm{H}^\phi_{K,1}(a,\eta)|}\, d\mathcal{H}^n(a,\eta).
	\end{equation}
If  equality holds  in \eqref{theo: heintze karcher: inequality} and there exists $ q < \infty $ so that $|\bm{H}^\phi_{K,1}(a,\eta)| \leq q $ for $ \mathcal{H}^n $ a.e.\ $(a,\eta)\in \widetilde{N}^\phi_n(K) $, then there are   $ N\in\mathbb{N} $, $ c_1, \ldots , c_N \in \mathbf{R}^{n+1} $ and $ \rho_1, \ldots , \rho_N \geq \frac{n}{q} $ such that
	\begin{equation*}
	 \Int(C) = \bigcup_{i=1}^N \Int (c_i + \rho_i \mathcal{W}^\phi), \qquad \Int\big(c_i + \rho_i \mathcal{W}^\phi\big) \cap \Int\big(c_j + \rho_j \mathcal{W}^\phi\big) = \varnothing \quad \textrm{for $ i \neq j $.}
	\end{equation*}

\end{Theorem}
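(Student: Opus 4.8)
The plan is to apply the disintegration formula (Corollary \ref{theo: change of variable}) to the set $K=\R^{n+1}\setminus\Int(C)$ with the integrand $h=\bm{1}_{\Int(C)}$, and then bound the resulting one-dimensional integrals using the mean convexity hypothesis \eqref{eq:negmeancurv}. First I would observe that, since $0<\mathcal{L}^{n+1}(\Int(C))<\infty$, a point $x\in\Int(C)$ with $\bm{\delta}^\phi_K(x)=0$ cannot contribute, so only points at positive $\phi$-distance from $K$ matter; writing a generic $x\in\Int(C)\setminus K$ in the form $x=a+t\eta$ with $(a,\eta)\in N^\phi(K)$ and $0<t<\bm{r}^\phi_K(a,\eta)$, the indicator $\bm{1}_{\Int(C)}(a+t\eta)$ equals $1$ for $t$ in (essentially) the full interval $(0,\bm{r}^\phi_K(a,\eta))$, because the open segment from $a$ into the normal direction $\eta$ of $K$ stays in $\Int(C)$ up to the cut locus. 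Thus Corollary \ref{theo: change of variable} gives
\[
\mathcal{L}^{n+1}(\Int(C))=\sum_{d=0}^{n}\int_{\widetilde{N}^\phi_d(K)}\phi(\bm{n}^\phi(\eta))\,J^\phi_K(a,\eta)\int_0^{\bm{r}^\phi_K(a,\eta)}t^{n-d}\prod_{j=1}^{d}\bigl(1+t\kappa^\phi_{K,j}(a,\eta)\bigr)\,dt\,d\mathcal{H}^n(a,\eta).
\]

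The heart of the matter is a pointwise inequality for each inner integral. For $(a,\eta)\in\widetilde{N}^\phi_d(K)$ with $d<n$, I would show that $\sum_{j=1}^d\kappa^\phi_{K,j}(a,\eta)\le 0$ (this uses \eqref{eq:negmeancurv} together with the fact that the curvatures in the complementary directions are $+\infty$, so the finite ones must sum to something $\le 0$ — more precisely one argues via \eqref{theo: Steiner closed: formula 1'} and a limiting argument, or directly from the definition of $\widetilde N^\phi_d$). Then the Maclaurin-type inequality of Lemma \ref{lem:Maclaurin} applied to the vector $(\kappa^\phi_{K,1},\ldots,\kappa^\phi_{K,d})$, which lies in $\Gamma_d$ precisely because of the non-negativity of the relevant symmetric functions forced by the AM bound, forces the polynomial $t\mapsto t^{n-d}\prod_{j=1}^d(1+t\kappa^\phi_{K,j})$ to have an integral over $(0,\bm{r}^\phi_K)$ that is at most the contribution one would get in the limiting ``spherical cap'' configuration; crucially, when $d<n$ this integral is \emph{strictly} dominated and in the equality case such $(a,\eta)$ must have $\mathcal{H}^n$-measure zero, which is why only the $d=n$ term survives. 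For $d=n$, writing $h_1=\bm{H}^\phi_{K,1}=\sum_{j=1}^n\kappa^\phi_{K,j}\le 0$, one computes
\[
\int_0^{\bm{r}^\phi_K(a,\eta)}t^{n-n}\prod_{j=1}^n\bigl(1+t\kappa^\phi_{K,j}(a,\eta)\bigr)\,dt
\le \int_0^{-n/h_1}\Bigl(1+\tfrac{t\,h_1}{n}\Bigr)^{n}\,dt=\frac{-n}{(n+1)h_1}=\frac{n}{(n+1)|\bm{H}^\phi_{K,1}(a,\eta)|},
\]
where the first inequality is the AM–GM estimate $\prod(1+t\kappa_j)\le(1+th_1/n)^n$ valid on the range where all factors are non-negative, combined with the fact that $\bm{r}^\phi_K(a,\eta)$ cannot exceed $-n/h_1$ (otherwise some factor becomes negative, contradicting that $t^{n}\prod(1+t\kappa_j)$ stays the correct Jacobian sign, i.e.\ positive, along the whole normal segment — this is where Remark \ref{eq: curvatures and reach} and the positivity of the Jacobian are used). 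Substituting into the disintegration formula yields \eqref{theo: heintze karcher: inequality}.

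For the equality discussion, assuming $|\bm{H}^\phi_{K,1}|\le q$ and that equality holds in \eqref{theo: heintze karcher: inequality}, I would deduce that: (i) $\widetilde{N}^\phi_d(K)$ has $\mathcal{H}^n$-measure zero for $d<n$ (no ``low-dimensional'' curvature behaviour), (ii) for $\mathcal{H}^n$-a.e.\ $(a,\eta)\in\widetilde N^\phi_n(K)$ all the AM–GM inequalities are equalities, forcing $\kappa^\phi_{K,1}(a,\eta)=\cdots=\kappa^\phi_{K,n}(a,\eta)=\bm{H}^\phi_{K,1}(a,\eta)/n=:-1/\rho(a,\eta)$ with $\rho(a,\eta)=n/|\bm{H}^\phi_{K,1}(a,\eta)|\ge n/q$, and (iii) $\bm{r}^\phi_K(a,\eta)=\rho(a,\eta)$ for a.e.\ such $(a,\eta)$. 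From (ii) the level set $S^\phi(K,t)$ through $a+t\eta$ is, infinitesimally, a rescaled copy of $\partial\mathcal{W}^\phi$; integrating the shape operator identity $\Der\bm{\nu}^\phi_K=$ (scalar)$\cdot\Der(\nabla\phi)\circ\cdots$ along the normal segments — equivalently, using that $\bm{\xi}^\phi_K$ contracts each normal segment to a single point because $\bm{r}^\phi_K(a,\eta)$ is the \emph{constant} $\rho(a,\eta)$ along the piece of $N^\phi(K)$ reaching a given cut point — one shows that the connected components of $\Int(C)$ are exactly interiors of translates $c_i+\rho_i\mathcal{W}^\phi$ with $\rho_i\ge n/q$; finiteness of the volume bounds the number $N$ of components, and the boundedness of each $\rho_i$ from below together with disjointness of distinct components gives the stated decomposition with pairwise disjoint interiors. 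The main obstacle I expect is step (iii) and the passage from the pointwise ``umbilic'' condition to the global statement that $C$ is a union of Wulff shapes: one must rule out that the cut locus cuts a would-be Wulff shape prematurely or glues pieces of different radii, and this requires carefully exploiting the rigidity in the inequality $\bm{r}^\phi_K\le -n/h_1$ together with the bi-Lipschitz structure of $\bm{\psi}^\phi_K$ from Theorem \ref{theo: Steiner closed}\ref{theo: Steiner closed 1} to propagate the constancy of $\rho$ across $N^\phi(K)$.
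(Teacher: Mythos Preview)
Your plan for the inequality is essentially the paper's argument (disintegration via Corollary~\ref{theo: change of variable}, then AM--GM on the $d=n$ stratum), but your handling of the strata $\widetilde N^\phi_d(K)$ with $d<n$ is confused. You write that for such $(a,\eta)$ ``the curvatures in the complementary directions are $+\infty$, so the finite ones must sum to something $\le 0$''. This is backwards: if $\kappa^\phi_{K,d+1}(a,\eta)=\cdots=\kappa^\phi_{K,n}(a,\eta)=+\infty$ then $\sum_{i=1}^n\kappa^\phi_{K,i}(a,\eta)=+\infty$, which \emph{violates} hypothesis~\eqref{eq:negmeancurv}. Hence \eqref{eq:negmeancurv} already forces $\mathcal H^n\bigl(N^\phi(K)\setminus\widetilde N^\phi_n(K)\bigr)=0$, and only the $d=n$ term appears from the start; there is no need for Maclaurin or any further bound on the lower strata. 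The paper states this in one line. Your $d=n$ computation is then exactly the paper's: $\bm r^\phi_K(a,\eta)\le -n/\bm H^\phi_{K,1}(a,\eta)$ from Remark~\ref{eq: curvatures and reach}, then AM--GM, then the elementary integral.

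The equality case has a genuine gap. You correctly extract the pointwise consequences (all $\kappa^\phi_{K,i}$ equal and $\bm r^\phi_K=-n/\bm H^\phi_{K,1}\ge n/q$ a.e.), but your plan to pass to the global Wulff-shape decomposition by ``integrating the shape operator identity'' and ``propagating constancy of $\rho$ via the bi-Lipschitz structure of $\bm\psi^\phi_K$'' is not a workable argument: at this stage $K$ has no a~priori regularity, $N^\phi(K)$ need not be connected, and nothing prevents the cut locus from being wild. The paper's route is different and cleaner. First, the a.e.\ lower bound $\bm r^\phi_K\ge n/q$ is upgraded to $\reach^\phi(K)\ge n/q$ via Lemma~\ref{theo: positive reach} (an argument you do not mention). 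Once $K$ has positive $\phi$-reach, each level set $S^\phi(K,r)$ for $0<r<n/q$ is a closed $\mathcal C^{1,1}$-hypersurface, and the umbilicity $\kappa^\phi_{K,1}=\cdots=\kappa^\phi_{K,n}$ transfers to equality of the $\chi^\phi_{K,i}$ on $S^\phi(K,r)$. At that point one invokes an external rigidity lemma (\cite[Lemma 3.2]{MR4160798}) characterising anisotropically umbilic $\mathcal C^{1,1}$-hypersurfaces as Wulff spheres, and then lets $r\downarrow 0$ to recover $\Int(C)$ as a finite disjoint union of open Wulff shapes. The missing ingredient in your outline is precisely this two-step regularisation: Lemma~\ref{theo: positive reach} to get positive reach, then a known rigidity result on the resulting smooth level sets.
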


\begin{proof}
Note that \eqref{eq:negmeancurv} implies that $\mathcal{H}^n(N^\phi(K)\setminus\widetilde{N}^\phi_n(K))=0 $.

	For the proof we may assume that $ |\bm{H}^\phi_{K,1}(a,\eta)| > 0 $ for $ \mathcal{H}^n $ a.e.\ $(a,\eta)\in \widetilde{N}^\phi_n(K) $, since otherwise the inequality \eqref{theo: heintze karcher: inequality} is obviously true (with strict inequality).  By Remark \ref{eq: curvatures and reach} we infer that
	\begin{equation*}
		\bm{r}^\phi_K(a,\eta) \leq -\frac{n}{\bm{H}^\phi_{K,1}(a,\eta)} \qquad \textrm{for $ \mathcal{H}^n $ a.e.\ $(a,\eta)\in \widetilde{N}^\phi_n(K) $}
	\end{equation*}
	and $ 1 + t \kappa^\phi_{K,i}(a,\eta) >0 $ for $ \mathcal{H}^n $ a.e.\ $ (a,\eta)\in  \widetilde{N}^\phi_n(K) $,  $ 0 < t < \bm{r}^\phi_K(a,\eta) $ and $i=1,\ldots,n$. Employing the change of variable formula in Corollary  \ref{theo: change of variable} (with $ h \equiv 1 $) and the classical arithmetic-geometric mean inequality, we can estimate
	\begin{flalign}
		\mathcal{L}^{n+1}(\Int(C)) & = \int_{\widetilde{N}^\phi_n(K)}\phi(\bm{n}^\phi(\eta))J^\phi_K(a,\eta)\,  \int_{0}^{\bm{r}^\phi_K(a,\eta)} \prod_{j=1}^{n}(1 + t \kappa^\phi_{K,j}(a,\eta))\, dt\, d\mathcal{H}^n(a,\eta)\notag\\
		& \leq \int_{\widetilde{N}^\phi_n(K)}\phi(\bm{n}^\phi(\eta))J^\phi_K(a,\eta)\,  \int_{0}^{\bm{r}^\phi_K(a,\eta)} \Big( 1 + \frac{t}{n}\bm{H}^\phi_{K,1}(a,\eta)\Big)^n\, dt\, d\mathcal{H}^n(a,\eta) \notag\\
		& \leq \int_{\widetilde{N}^\phi_n(K)}\phi(\bm{n}^\phi(\eta))J^\phi_K(a,\eta) \int_{0}^{-\frac{n}{\bm{H}^\phi_{K,1}(a,\eta)}} \Big( 1 + \frac{t}{n}\bm{H}^\phi_{K,1}(a,\eta)\Big)^n\, dt\, d\mathcal{H}^n(a,\eta)\notag\\
		& = \frac{n}{n+1}\int_{\widetilde{N}^\phi_n(K)}J^\phi_K(a,\eta)\, \frac{\phi(\bm{n}^\phi(\eta)) }{|\bm{H}^\phi_{K,1}(a,\eta)|}\, d\mathcal{H}^n(a,\eta).\label{eq:turn}
	\end{flalign}
	
	We discuss now the equality case.  We assume that $|\bm{H}^\phi_{K,1}(a,\eta)| \leq q $ for $ \mathcal{H}^n $ a.e.\ $(a,\eta)\in \widetilde{N}^\phi_n(K) $. If \eqref{theo: heintze karcher: inequality} holds with equality, then the inequalities in the derivation of \eqref{eq:turn} become equalities. In particular, we deduce that
	\begin{equation}\label{eq:4a}
		\bm{r}^\phi_K(a,\eta) = -\frac{n}{\bm{H}^\phi_{K,1}(a,\eta)} \geq \frac{n}{q} \qquad \textrm{for $ \mathcal{H}^n $ a.e.\ $(a,\eta)\in \widetilde{N}^\phi_n(K) $}
	\end{equation}
	and the  condition
	\begin{equation}\label{eq:4b}
		\kappa^\phi_{K,1}(a,\eta) = \ldots = \kappa^\phi_{K,n}(a,\eta) \qquad \textrm{for $ \mathcal{H}^n $ a.e.\ $(a,\eta)\in \widetilde{N}^\phi_n(K) $}.
	\end{equation}
	Consequently, we infer from Lemma \ref{theo: positive reach} that $ \{ x \in \mathbf{R}^{n+1} : \bm{\delta}^\phi_K(x) < \frac{n}{q}  \} \subseteq \Unp^\phi(K) $, which means that $ \reach^\phi(K) \geq \frac{n}{q} $ (see Definition \ref{def: reach}). We define $ E_r = \{x \in \mathbf{R}^{n+1}: \bm{\delta}^\phi_K(x)\geq r\} $ for  $ r > 0 $ and   notice that $ \partial E_r = S^\phi(K,r) $ for  $ r > 0 $. We fix now $ 0 < r < \frac{n}{q} $. It follows that $ \partial E_r  $ is a closed $ \mathcal{C}^{1,1} $-hypersurface  by \cite[Corollary 5.8]{MR4160798} and $\rchi^\phi_{K,1},  \ldots,   \rchi^\phi_{K,n} $ are the anisotropic principal curvatures of $ \partial E_r $ with respect to the anisotropic normal $ \bm{\nu}^\phi_K|\partial E_r $ (which points towards $C$). It follows from \eqref{eq:4a} and \eqref{eq:4b} that
	\begin{equation*}
		\rchi^\phi_{K,1}(x) = \ldots = \rchi^\phi_{K,n}(x)= \frac{1}{r - \bm{r}^\phi_K(\bm{\xi}^\phi_K(x), \bm{\nu}^\phi_K(x))} \geq \frac{1}{r - \frac{n}{q}} = \Big( \frac{qr-n}{q}\Big)^{-1}
	\end{equation*}
	for $ \mathcal{H}^n $ a.e.\ $x \in \partial E_r $; in particular, $\rchi^\phi_{K,i}(x) < 0 $ for  $ i = 1, \ldots , n $ and $ \mathcal{H}^n $ a.e.\ $x \in \partial E_r $. Hence, an application of \cite[Lemma 3.2]{MR4160798}\footnote{Notice that the last line of \cite[Lemma 3.2]{MR4160798}  contains a typo: one should replace the equality $ M = \partial \mathbf{B}^F(a, |\lambda|^{-1}) $ with $ M = \partial \mathbf{B}^{F^\ast}(a, |\lambda|^{-1}) $, which is what the proof shows.},  to each of the at most countably many connected components of $\partial E_r$ shows that there exist at most countably many points  $ c_1, c_2, \ldots  \in \mathbf{R}^{n+1} $ and numbers $ \lambda_1, \lambda_2 \ldots  \geq \frac{n-qr}{q} $ so that
	\begin{equation*}
		 E_r = \bigcup_{i=1}^N (c_i + \lambda_i  \mathcal{W}^\phi) \qquad \textrm{and}\qquad 	(c_i + \lambda_i \mathcal{W}^\phi) \cap (c_j + \lambda_j  \mathcal{W}^\phi) = \varnothing \quad \textrm{for  $ i \neq j $,}
	\end{equation*}
where $ N \in \mathbf{N} \cup \{\infty\} $. Since $ \mathcal{L}^{n+1}(\Int(C)) < \infty $, it follows that $ N < \infty $.  If $ i \in \{ 1,\ldots,N\} $ and $ z \in \partial \mathcal{W}^\phi $, we have
\begin{equation*}
	\frac{\nabla \bm{\delta}^\phi_K(c_i + \lambda_i z)}{|\nabla \bm{\delta}^\phi_K(c_i + \lambda_i z)|}= - \bm{n}^\phi(z)
\end{equation*}
and, noting \eqref{eq: normal wulff shape and gradient phi} and \eqref{eq: gradient and normal}, we conclude
\begin{equation*}
	z = \nabla \phi(\bm{n}^\phi(z)) = - \nabla \phi(\nabla \bm{\delta}^\phi_K(c_i + \lambda_i z)) = - \bm{\nu}^\phi_K(c_i + \lambda_i z).
\end{equation*}
For $ 0 \leq s < r $ we define the bilipschitz homeomorphism $ f_s : \partial E_r \rightarrow \partial E_{r-s} $ by $ f_s(x) = x - s \bm{\nu}^\phi_K(x) $ for $ x \in \partial E_r $. Then we get
\begin{equation*}
f_s(c_i + \lambda_i \partial \mathcal{W}^\phi) = c_i + (\lambda_i + s)\partial \mathcal{W}^\phi \quad \textrm{for $ i \geq 1 $,} \qquad \partial E_{r-s} = \bigcup_{i=1}^N \big(c_i + (\lambda_i +s) \partial \mathcal{W}^\phi\big)
\end{equation*}
and
	\begin{equation*}
\big(c_i + (\lambda_i + s) \partial \mathcal{W}^\phi) \cap \big(c_j + (\lambda_j +s)  \partial \mathcal{W}^\phi\big) = \varnothing \quad \textrm{for  $ i \neq j $}.
\end{equation*}
Consequently, for $ 0 \leq s < r $,
\begin{equation*}
	E_{r-s} = \bigcup_{i=1}^N \big(c_i + (\lambda_i +s) \mathcal{W}^\phi\big), \qquad \big(c_i + (\lambda_i + s) \mathcal{W}^\phi) \cap \big(c_j + (\lambda_j +s)  \mathcal{W}^\phi\big) = \varnothing \quad \textrm{for every $ i \neq j $}.
\end{equation*}
and the proof is complete.
\end{proof}

\subsection{A general disintegration formula}\label{subsec: disintegration}

For the next definition we need to recall that if $ A $ is a closed set in $ \mathbf{R}^{n+1}$ and $ a \in A$, then the set
\begin{equation*}
  \Dis(A,a) : =  \{u \in \mathbf{R}^{n+1} : \bm{\delta}_A(a+u) = |u| \}
\end{equation*}
is a closed convex set (see \cite[Theorem 4.8 (2)]{MR0110078} or \cite{MR4012808}). If $ X $ is a convex set, then $ \dim X$ denotes the dimension of the affine hull of $ X $.

\begin{Definition}\label{def: stratification}
    Let $\varnothing\neq A \subseteq \mathbf{R}^{n+1}$ be a closed set and $ i \in \{0, \ldots , n+1\}$. We define the $ i$-th stratum of $ A $ as \index{B5@$A^{(i)}$}
    \begin{equation*}
        A^{(i)} =  \{ a\in A  : \dim \Dis(A,a) = n+1-i \}.
    \end{equation*}
\end{Definition}
\begin{Remark}
Evidently, we have $ A = \bigcup_{i=0}^{n+1}A^{(i)}$ and $ A^{(n+1)} = A \setminus \bm{p}(N(A))$. Noting that $  N(A,a) = \{  u/|u|\in \bS^n : u \in \Dis(A,a) \setminus \{0\} \} $ for every $ a \in \bm{p}(N(A))$, we can easily deduce from \eqref{eq: phi normal vs euclidean normal} that
\begin{equation*}
   A^{(i)} = \{ a \in A : 0 < \mathcal{H}^{n-i}(N^\phi(A,a)) < \infty \} \qquad \textrm{for $ i =0, \ldots , n$.}
\end{equation*}
We recall that $ A^{(i)}$ is a countably $i$-rectifiable Borel set which can be covered outside a set of $ \mathcal{H}^i$-measure zero by a countable union of $ \mathcal{C}^2$-submanifolds of dimension $ i $, for  $ i =0, \ldots, n $; see \cite{MR4012808}.
\end{Remark}

\begin{Lemma}\label{lem: stratification and curvature}
If $\varnothing\neq A \subseteq \mathbf{R}^{n+1}$ is a closed set, $ m \in\{0, \ldots , n\}$ and $ S \subseteq \mathbf{R}^{n+1}$ is a countable union of Borel subsets with finite $ \mathcal{H}^m$ measure, then
\begin{equation}\label{lem: stratification and curvature: 1}
    \mathcal{H}^n \big((N^\phi(A)|A^{(j)}) \setminus \textstyle\bigcup_{l=0}^j \widetilde{N}^\phi_l(A)\big) =0 \qquad \textrm{for $ j =0, \ldots , n $}
\end{equation}
and
\begin{equation}\label{lem: stratification and curvature: 2}
    \mathcal{H}^n \big((N^\phi(A)|A^{(j)} \cap S) \setminus \textstyle\bigcup_{l=0}^{m-1} \widetilde{N}^\phi_l(A)\big) =0 \qquad \textrm{for $ j >m $.}
\end{equation}
\end{Lemma}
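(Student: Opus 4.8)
The plan is to derive both identities from Federer's coarea formula (Lemma~\ref{coarea}), applied to the first projection $\bm{p}$ restricted to the $(\mathcal{H}^n,n)$ rectifiable set $N^\phi(A)$, combined with the description of the approximate tangent spaces of $N^\phi(A)$ supplied by Lemma~\ref{lem: tangent of normal bundle}. Since $N^\phi(A)$ can be covered up to an $\mathcal{H}^n$ null set by countably many $n$-dimensional $\mathcal{C}^1$-graphs, it is $\sigma$-finite with respect to $\mathcal{H}^n$; hence it suffices to prove both statements with $N^\phi(A)$ replaced by an arbitrary $\mathcal{H}^n$ measurable set $W\subseteq N^\phi(A)$ with $\mathcal{H}^n(W)<\infty$.

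So I would fix such a $W$ and choose functions $\tau_i,\zeta_i$ ($i=1,\dots,n$) as in Lemma~\ref{lem: tangent of normal bundle}. Then for $\mathcal{H}^n$ a.e.\ $(a,\eta)\in W$ the vectors $\zeta_1(a,\eta),\dots,\zeta_n(a,\eta)$ form a basis of the $n$-dimensional subspace $\Tan^n(\mathcal{H}^n\restrict W,(a,\eta))$; moreover, using $\mathcal{H}^n(N^\phi(A)\setminus\widetilde{N}^\phi(A))=0$ from Remark~\ref{rem: principla curvatures basic rem}, we have $(a,\eta)\in\widetilde{N}^\phi(A)=\bigcup_{d=0}^n\widetilde{N}^\phi_d(A)$, which is a disjoint decomposition by Definition~\ref{Def:finitecurv} and the ordering $\kappa^\phi_{A,1}\le\cdots\le\kappa^\phi_{A,n}$. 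Since $\bm{p}$ is linear, $\ap\Der\bm{p}(a,\eta)$ is the restriction of $\bm{p}$ to that tangent subspace, and by \eqref{eq:hneu2} it sends $\zeta_i(a,\eta)$ to $\tau_i(a,\eta)$ when $\kappa^\phi_{A,i}(a,\eta)<\infty$ and to $0$ otherwise. As $\tau_1(a,\eta),\dots,\tau_n(a,\eta)$ is a basis of $\Tan(\partial\mathcal{W}^\phi,\eta)$, the rank of $\ap\Der\bm{p}(a,\eta)$ equals the number of finite generalized $\phi$-curvatures of $A$ at $(a,\eta)$, and therefore, for every $k\in\{0,\dots,n\}$,
\begin{equation*}
\ap J^W_k\bm{p}(a,\eta)=0\quad\Longleftrightarrow\quad(a,\eta)\in\bigcup_{l=0}^{k-1}\widetilde{N}^\phi_l(A)\qquad\text{for $\mathcal{H}^n$ a.e.\ $(a,\eta)\in W$.}
\end{equation*}
I expect the careful translation of this rank condition into membership in the strata $\widetilde{N}^\phi_l(A)$ to be the main point to get right; the rest is bookkeeping with the coarea formula and elementary properties of Hausdorff measures.

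To prove \eqref{lem: stratification and curvature: 1} for $j\in\{0,\dots,n-1\}$, I would apply Lemma~\ref{coarea} with $m=n$, $f=\bm{p}$, $k=j+1$, $S=A^{(j)}$ and $\varphi\equiv1$. Since $A^{(j)}$ is a Borel set which is countably $j$-rectifiable, we have $\mathcal{H}^{j+1}(A^{(j)})=0$, so $A^{(j)}$ is an admissible target set and the right-hand side of the coarea formula vanishes. Hence $\ap J^W_{j+1}\bm{p}=0$ for $\mathcal{H}^n$ a.e.\ $(a,\eta)\in W|A^{(j)}$, and the displayed equivalence (with $k=j+1$) gives that $\mathcal{H}^n$ almost every such point lies in $\bigcup_{l=0}^{j}\widetilde{N}^\phi_l(A)$; exhausting $N^\phi(A)$ by countably many finite-measure pieces $W$ yields \eqref{lem: stratification and curvature: 1}. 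The case $j=n$ is immediate, since $\bigcup_{l=0}^n\widetilde{N}^\phi_l(A)=\widetilde{N}^\phi(A)$ and $\mathcal{H}^n(N^\phi(A)\setminus\widetilde{N}^\phi(A))=0$.

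For \eqref{lem: stratification and curvature: 2}, I would fix $j>m$ and apply Lemma~\ref{coarea} with $f=\bm{p}$, $k=m$, $\varphi\equiv1$ and target set $A^{(j)}\cap S$, which is again a countable union of Borel sets of finite $\mathcal{H}^m$ measure. For each $a\in A^{(j)}$ we have $\mathcal{H}^{n-j}(N^\phi(A,a))<\infty$ and hence $\mathcal{H}^{n-m}(N^\phi(A,a))=0$, because $n-m>n-j$; since the fiber of $W$ over $a$ is a subset of $\{a\}\times N^\phi(A,a)$, every inner integral on the right-hand side of the coarea formula vanishes, so $\ap J^W_m\bm{p}=0$ for $\mathcal{H}^n$ a.e.\ $(a,\eta)\in W|(A^{(j)}\cap S)$. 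By the displayed equivalence (now with $k=m$), $\mathcal{H}^n$ almost every such point lies in $\bigcup_{l=0}^{m-1}\widetilde{N}^\phi_l(A)$, and exhausting $N^\phi(A)$ by finite-measure pieces $W$ completes the proof.
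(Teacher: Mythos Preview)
Your proposal is correct and follows essentially the same route as the paper: both arguments localize to finite-measure pieces of $N^\phi(A)$, apply the coarea formula (Lemma~\ref{coarea}) to $\bm{p}$, and use Lemma~\ref{lem: tangent of normal bundle} to translate vanishing of $\ap J^W_k\bm{p}$ into the condition that at most $k-1$ of the generalized $\phi$-curvatures are finite. The only cosmetic differences are that the paper works with the specific exhausting sets $N_s=\{\bm{r}^\phi_A\ge s\}$ rather than abstract $W$'s, phrases the conclusion as $\kappa^\phi_{A,j+1}=+\infty$ rather than via your rank equivalence, and handles the case $m=0$ of \eqref{lem: stratification and curvature: 2} by a direct counting argument instead of running the coarea formula with $k=0$.
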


\begin{proof}
Recall that $ N_s =  \{(a,\eta)\in N^\phi(A)  : \bm{r}^\phi_A(a,\eta) \geq s \}$, for every $s>0$,  has finite $ \mathcal{H}^n$ measure on each bounded set (see  Theorem \ref{theo: Steiner closed})   and therefore is $(\mathcal{H}^n, n)$ rectifiable.

Fix $ j \in \{0, \ldots , n-1\} $ and notice that $ \mathcal{H}^{j+1}(A^{(j)}) =0$. Therefore we can apply Lemma \ref{coarea} to conclude that
\begin{equation*}
  \int_{N_s|A^{(j)}} \ap J_{j+1}^{N_s}\bm{p}(a, \eta) \, d\mathcal{H}^n(a,\eta) =   \int_{A^{(j)}}\mathcal{H}^{n-1-j}(N_s|\{a\})\, d\mathcal{H}^{j+1}(a) =0
\end{equation*}
for every $ s > 0 $. It follows that $  \ap J_{j+1}^{N_s}\bm{p}(a, \eta)=0 $ for $ \mathcal{H}^n$ a.e.\ $ (a,\eta) \in N_s|A^{(j)} $ and  $ s > 0$. We infer from Lemma \ref{lem: tangent of normal bundle} that
\begin{equation*}\label{eq:latecurvinf}
    \kappa^\phi_{A,j+1}(a,\eta) = + \infty \qquad \textrm{for $ \mathcal{H}^n$ a.e.\ $ (a,\eta) \in N^\phi(A)|A^{(j)} $,}
\end{equation*}
which is precisely the assertion in \eqref{lem: stratification and curvature: 1}.

If $ m =0 $, then $ S $ is a countable set and $ \mathcal{H}^n(N^\phi(A,a))=0 $ for every $ a \in A^{(j)} $ and for every $ j \geq 1 $. This implies \eqref{lem: stratification and curvature: 2} for $ m=0$. Fix now $ j > m \geq 1$. Noting that $ \mathcal{H}^{n-m}(N^\phi(A,a)) =0$ for  $ a \in A^{(j)}$, we apply again the coarea formula in Lemma \ref{coarea} to obtain
\begin{equation*}
  \int_{N_s|S \cap A^{(j)}}\ap J^{N_s}_m\bm{p}(a, \eta) \, d\mathcal{H}^n(a,\eta) =   \int_{S \cap A^{(j)}}\mathcal{H}^{n-m}(N_s|\{a\})\, d\mathcal{H}^{m}(a) =0
\end{equation*}
for  $ s > 0$. As above this implies that $ \kappa^\phi_{A, m}(a,\eta) = + \infty$ for $ \mathcal{H}^n$ a.e.\ $ (a,\eta) \in N^\phi(A)|A^{(j)} \cap S $, which is equivalent to the assertion in \eqref{lem: stratification and curvature: 2}.
\end{proof}

\begin{Remark}\label{rmk: stratification}
If $ A $, $ m $ and $ S $ are as in Lemma \ref{lem: stratification and curvature} then
\begin{equation*}
    \bm{H}^\phi_{n-m}(a,\eta) = \bm{1}_{\widetilde{N}^\phi_m(A)}(a, \eta) \qquad \textrm{for $ \mathcal{H}^n$ a.e.\ $ (a,\eta) \in N^\phi(A)|A^{(m)}$},
\end{equation*}
\begin{equation*}
    \bm{H}^\phi_{n-m}(a,\eta) = 0 \qquad \textrm{for $ \mathcal{H}^n$ a.e.\ $ (a,\eta) \in N^\phi(A)|A^{(j)}$ and    $ j < m $}
\end{equation*}
and
\begin{equation*}
    \bm{H}^\phi_{n-m}(a,\eta) = 0 \qquad \textrm{for $ \mathcal{H}^n$ a.e.\ $ (a,\eta) \in N^\phi(A)|(A^{(j)} \cap S)$ and  $ j > m $.}
\end{equation*}
\end{Remark}

\medskip

\begin{Lemma}\label{lem: exterior normal basic properties closed}
	For every closed set $ A \subseteq \mathbf{R}^{n+1} $ the following statements hold.
	\begin{enumerate}[{\rm (a)}]
	    \item \label{lem: exterior normal basic properties closed: a} $	\mathcal{H}^0(N^\phi(A,a)) \in \{1,2\} $ for  $ a \in \bm{p}(\widetilde{N}^\phi_n(A)) $.
	   \item \label{lem: exterior normal basic properties closed: b} $  \mathcal{H}^m \left(     \big\{  a\in A^{(m)}  : \mathcal{H}^{n-m}\big(N^\phi(A,a) \setminus  \widetilde{N}^\phi_m(A,a)\big) > 0\big\}\right)=0 $ for $ m \in \{0, \ldots , n\} $.
	    \item \label{lem: exterior normal basic properties closed: c} $\mathcal{H}^n\big(\bm{p}\big[N^\phi(A) \setminus \widetilde{N}_n^\phi(A)\big]\big) =0   $; in particular,   $ \mathcal{H}^0(N^\phi(A,a) )\in \{1,2\}$ for $ \mathcal{H}^n $ a.e.\ $ a \in \bm{p}(N^\phi(A)) $.
	    \item  \label{lem: exterior normal basic properties closed: d} $	N^\phi(A,a) = - N^\phi(A,a)  $ for  $ a \in A $ with $ \mathcal{H}^0(N^\phi(A,a)) =2 $.
	    \item \label{lem: exterior normal basic properties closed: e} If $ X =  \bm{p}(N^\phi(A) \setminus \widetilde{N}_n^\phi(A)) \cap \bm{p}(\widetilde{N}^\phi_n(A))$, then $ \mathcal{H}^n(N^\phi(A)|X) =0  $.
	\end{enumerate}
\end{Lemma}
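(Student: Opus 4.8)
I will treat \textbf{(d)} and \textbf{(a)} first, since these are essentially facts about the convex geometry of the fibres, then \textbf{(b)} by a coarea argument, and finally read off \textbf{(c)} and \textbf{(e)}. Throughout I use that $N^\phi(A,a)=\{\nabla\phi(u):u\in N(A,a)\cap\mathbf{S}^n\}$, where $N(A,a)\cap\mathbf{S}^n$ is the radial projection onto $\mathbf{S}^n$ of the closed convex set $\Dis(A,a)\ni o$, that $\dim\Dis(A,a)=n+1-i$ on $A^{(i)}$ (equivalently $A^{(i)}=\{a:0<\mathcal{H}^{n-i}(N^\phi(A,a))<\infty\}$), and that $\nabla\phi|_{\mathbf{S}^n}$ is an odd $\mathcal{C}^1$-diffeomorphism onto $\partial\mathcal{W}^\phi$. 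For \textbf{(d)}: if $\mathcal{H}^0(N^\phi(A,a))=2$ then $\dim\Dis(A,a)$ is neither $0$ (else the fibre is empty) nor $\ge 2$ (else, being the radial image of a convex set of dimension $\ge 2$, the fibre is infinite), so $\dim\Dis(A,a)=1$; then $\Dis(A,a)$ is a segment on a line through $o$ whose radial image has two points, hence it straddles $o$, so $N(A,a)\cap\mathbf{S}^n=\{u,-u\}$ and $N^\phi(A,a)=\{\nabla\phi(u),-\nabla\phi(u)\}=-N^\phi(A,a)$ by oddness of $\nabla\phi$. For \textbf{(a)} it suffices to prove $\bm{p}(\widetilde{N}^\phi_n(A))\subseteq A^{(n)}$, for then $\dim\Dis(A,a)=1$ on $\bm{p}(\widetilde{N}^\phi_n(A))$ and, as above, the fibre has one or two points. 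Suppose $(a,\eta)\in\widetilde{N}^\phi_n(A)$ but, for contradiction, $\dim\Dis(A,a)\ge 2$. Then $N(A,a)\cap\mathbf{S}^n$ is a spherically convex set of positive dimension, so through $\bm{n}^\phi(\eta)$ there is a nondegenerate geodesic arc in it; composing with $\nabla\phi$ gives a nondegenerate $\mathcal{C}^1$ arc $s\mapsto\eta(s)$ in $N^\phi(A,a)$ with $\eta(0)=\eta$. Choosing $r\in(0,\arl{A}{\phi}(a,\eta))$ small enough that $r<\bm{r}^\phi_A(a,\eta(s))$ for $s$ near $0$ (this uses that $(a,\eta)\in\widetilde{N}^\phi(A)$ controls the geometry of $\bm{\delta}^\phi_A$ near $a$ via Lemma \ref{theo: distance twice diff} and \cite{kolasinski2021regularity}), the curve $\gamma(s):=a+r\eta(s)$ lies in $S^\phi(A,r)\cap\Unp^\phi(A)$ for $s$ near $0$ with $\gamma(0)=x_0:=a+r\eta\in\dmn\Der\bm{\nu}^\phi_A$ and $\bm{\xi}^\phi_A\circ\gamma\equiv a$. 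Strong differentiability of $\bm{\xi}^\phi_A$ at $x_0$ forces $\Der\bm{\xi}^\phi_A(x_0)(\dot\gamma(0^+))=0$, while $\bm{\delta}^\phi_A\circ\gamma\equiv r$ forces $0\ne\dot\gamma(0^+)\in T:=\Tan(S^\phi(A,r),x_0)$; since $\Der\bm{\xi}^\phi_A(x_0)$ maps $T$ into itself with eigenvalues $1-r\rchi^\phi_{A,i}(x_0)\ge 0$ by \eqref{boundsonchi}, this map is singular, so $\rchi^\phi_{A,n}(x_0)=1/r$ and hence $\kappa^\phi_{A,n}(a,\eta)=+\infty$, contradicting $(a,\eta)\in\widetilde{N}^\phi_n(A)$. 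This proves \textbf{(a)}.

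For \textbf{(b)}, write $d(a,\eta)$ for the number of finite $\phi$-curvatures, so $(a,\eta)\in\widetilde{N}^\phi_{d(a,\eta)}(A)$. Extending the computation in the proof of Lemma \ref{lem: tangent of normal bundle} (where $\Der\bm{\psi}^\phi_A(a+r\eta)$ sends $\tau_i$ to a positive multiple of $\zeta_i$, and $\bm{p}(\zeta_i)$ equals $\tau_i$ or $0$ according as $\kappa^\phi_{A,i}(a,\eta)<\infty$ or $=+\infty$), one obtains that for $\mathcal{H}^n$-a.e.\ $(a,\eta)\in N_s$ the approximate Jacobian $\ap J_m^{N_s}\bm{p}(a,\eta)$ is positive when $\kappa^\phi_{A,m}(a,\eta)<\infty$ and vanishes when $\kappa^\phi_{A,m}(a,\eta)=+\infty$. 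Fix $m$ and a Borel set $S'$ with $\mathcal{H}^m(S')<\infty$. Applying Lemma \ref{coarea} to $\bm{p}$ on $N_s$ with integrand $\bm{1}_{\{\kappa^\phi_{A,m}=+\infty\}}$ — on which $\ap J_m^{N_s}\bm{p}=0$ $\mathcal{H}^n$-a.e.\ — shows that $\{\eta\in N_s|\{a\}:\kappa^\phi_{A,m}(a,\eta)=+\infty\}$ is $\mathcal{H}^{n-m}$-null for $\mathcal{H}^m$-a.e.\ $a\in A^{(m)}\cap S'$. On the other hand, Lemma \ref{lem: stratification and curvature}\,(1) together with $\mathcal{H}^n(N^\phi(A)\setminus\widetilde{N}^\phi(A))=0$ shows that $(N^\phi(A)|A^{(m)})\setminus\bigcup_{l=0}^m\widetilde{N}^\phi_l(A)$ is $\mathcal{H}^n$-null, so by Lemma \ref{coarea} (now integrating the indicator of this null set) its fibre slices over $a$ are $\mathcal{H}^{n-m}$-null for $\mathcal{H}^m$-a.e.\ $a\in A^{(m)}$. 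Combining the two and letting $s\downarrow 0$ along a sequence while exhausting $A^{(m)}$ by sets of finite $\mathcal{H}^m$ measure, we conclude that for $\mathcal{H}^m$-a.e.\ $a\in A^{(m)}$ every $\eta\in N^\phi(A,a)$ off an $\mathcal{H}^{n-m}$-null set satisfies $d(a,\eta)=m$, i.e.\ $\eta\in\widetilde{N}^\phi_m(A,a)$, which is \textbf{(b)}.

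For \textbf{(c)}, decompose $\bm{p}(N^\phi(A)\setminus\widetilde{N}^\phi_n(A))=\bigcup_{m=0}^n\big(\bm{p}(N^\phi(A)\setminus\widetilde{N}^\phi_n(A))\cap A^{(m)}\big)$; for $m<n$ this piece lies in the countably $m$-rectifiable set $A^{(m)}$, which has $\mathcal{H}^n$-measure zero, and for $m=n$ it equals $\{a\in A^{(n)}:N^\phi(A,a)\not\subseteq\widetilde{N}^\phi_n(A,a)\}$, which is $\mathcal{H}^n$-null by \textbf{(b)}; hence $\mathcal{H}^n(\bm{p}(N^\phi(A)\setminus\widetilde{N}^\phi_n(A)))=0$. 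The ``in particular'' follows since $\mathcal{H}^n$-a.e.\ $a\in\bm{p}(N^\phi(A))=\bigcup_{i=0}^nA^{(i)}$ lies in $A^{(n)}$ (the lower strata being $\mathcal{H}^n$-null), where $\dim\Dis(A,a)=1$ gives $\mathcal{H}^0(N^\phi(A,a))\in\{1,2\}$ as in \textbf{(a)}/\textbf{(d)}. For \textbf{(e)}, put $X=\bm{p}(N^\phi(A)\setminus\widetilde{N}^\phi_n(A))\cap\bm{p}(\widetilde{N}^\phi_n(A))$; by \textbf{(c)} $\mathcal{H}^n(X)=0$, and by \textbf{(a)} $X\subseteq A^{(n)}$. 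Write $N^\phi(A)|X=G\cup B$ with $G=(N^\phi(A)|X)\cap\widetilde{N}^\phi_n(A)$ and $B=(N^\phi(A)|X)\setminus\widetilde{N}^\phi_n(A)$. The area formula (the case $k=n$ of Lemma \ref{coarea}) for the Lipschitz map $\bm{p}$ on the $(\mathcal{H}^n,n)$-rectifiable set $N_s$ gives $\int_{G\cap N_s}\ap J_n^{N_s}\bm{p}\,d\mathcal{H}^n=\int_X\#(G\cap N_s\cap\bm{p}^{-1}(y))\,d\mathcal{H}^n(y)=0$ since $\mathcal{H}^n(X)=0$; as $\ap J_n^{N_s}\bm{p}=J^\phi_A\in(0,\infty)$ $\mathcal{H}^n$-a.e.\ on $\widetilde{N}^\phi_n(A)\supseteq G$ by Lemma \ref{lem: tangent of normal bundle} and Definition \ref{def: jacobian}, this forces $\mathcal{H}^n(G\cap N_s)=0$, and letting $s\downarrow 0$, $\mathcal{H}^n(G)=0$. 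Finally, if $a\in X$ then $N^\phi(A,a)$ has at most two elements by \textbf{(a)} and contains one in $\widetilde{N}^\phi_n(A)$ and one not in it, hence exactly two elements $\{\eta,-\eta\}$ by \textbf{(d)}, so $B$ is the image of $G$ under the isometry $(a,\zeta)\mapsto(a,-\zeta)$ of $\R^{n+1}\times\R^{n+1}$ and $\mathcal{H}^n(B)=\mathcal{H}^n(G)=0$; thus $\mathcal{H}^n(N^\phi(A)|X)=0$.

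The hard part is \textbf{(a)}, i.e.\ the \emph{pointwise} inclusion $\bm{p}(\widetilde{N}^\phi_n(A))\subseteq A^{(n)}$: one must produce, over a point whose distance-set has dimension $\ge 2$, a nondegenerate curve in the normal fibre that is genuinely collapsed by $\bm{\xi}^\phi_A$, and guarantee enough reach along this curve that it lies inside a single level set $S^\phi(A,r)$ at a point where $\bm{\delta}^\phi_A$ is twice differentiable, so that the vanishing kernel direction can be converted into the identity $\rchi^\phi_{A,n}=1/r$; I expect the delicate point there to be the control of $\bm{r}^\phi_A$ along the fibre near $\eta$ (only upper semicontinuity of $\bm{r}^\phi_A$ is available a priori, so one has to exploit the local structure of $\bm{\delta}^\phi_A$ near $a$). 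Parts \textbf{(b)}, \textbf{(c)}, \textbf{(e)} are then a fairly mechanical combination of the stratification lemma, the Jacobian identity behind Lemma \ref{lem: tangent of normal bundle}, and the coarea/area formulas.
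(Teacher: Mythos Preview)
Your treatment of (b)--(e) is essentially the paper's: the same coarea arguments on $N_s$, the same use of Lemma~\ref{lem: stratification and curvature} for the stratification, and the same isometry trick via (a)+(d) to handle the ``bad'' half of $N^\phi(A)|X$ in (e). No disagreement there.

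The genuine issue is (a). Your curve argument needs, for some fixed $r\in(0,\arl{A}{\phi}(a,\eta))$, that $a\in\bm{\xi}^\phi_A(a+r\eta(s))$ for $s$ near $0$; equivalently $\bm{r}^\phi_A(a,\eta(s))\ge r$. You cite ``Lemma~\ref{theo: distance twice diff} and \cite{kolasinski2021regularity}'' for this, but neither gives a \emph{lower} bound on $\bm{r}^\phi_A$ along nearby fibre directions---$\bm{r}^\phi_A$ is only upper semicontinuous, and twice differentiability of $\bm{\delta}^\phi_A$ at $x_0$ does not prevent $\bm{\delta}^\phi_A(a+r\eta(s))<r$ (indeed, the first-order term in the Taylor expansion along $s\mapsto a+r\eta(s)$ vanishes because $\nabla\phi^\ast(\eta)\perp\Tan(\partial\mathcal{W}^\phi,\eta)$, but the second-order term can be strictly negative). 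If $\bm{r}^\phi_A(a,\eta(s))\to 0$, your curve never sits in $S^\phi(A,r)$ near $x_0$ and the kernel-direction computation cannot be carried out. You flag this yourself at the end, but it is not a technicality: without this control the argument does not close.

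The paper avoids the curve entirely. From $(a,\eta)\in\widetilde{N}^\phi_n(A)$ one has $1-r\rchi^\phi_{A,i}(a+r\eta)>0$ for all $i$, so $\Der\bm{\xi}^\phi_A(a+r\eta)|\Tan(S^\phi(A,r),a+r\eta)$ is \emph{injective}; its image is therefore all of $\Tan(\partial\mathcal{W}^\phi,\eta)$. Since $\bm{\xi}^\phi_A$ maps into $A$ and is (strongly) differentiable at $a+r\eta$, the standard tangent-cone inclusion \cite[3.1.21]{MR0257325} gives
\[
\Tan(\partial\mathcal{W}^\phi,\eta)=\Der\bm{\xi}^\phi_A(a+r\eta)\big[\Tan(S^\phi(A,r),a+r\eta)\big]\subseteq\Tan(A,a),
\]
hence $N(A,a)\subseteq\Nor(A,a)\subseteq\Nor(\partial\mathcal{W}^\phi,\eta)$, which is one-dimensional. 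This is exactly the contrapositive of what you attempt, but done linearly-algebraically (full rank $\Rightarrow$ image is everything) rather than geometrically (produce a curve collapsed by $\bm{\xi}^\phi_A$); the advantage is that no lower control on $\bm{r}^\phi_A$ along the fibre is needed.
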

\begin{proof}
(a) Let $ (a,\eta)\in \widetilde{N}^\phi_n(A) $ and $ 0 < r <  \arl{A}{\phi}(a,\eta) $. Then $ 1 - r \chi^\phi_{A,i}(a+r\eta) > 0 $ for  $ i = 1, \ldots , n $ and, since these numbers are the eigenvalues of $ \Der \bm{\xi}^\phi_{A}(a+ r\eta)|\Tan(S^\phi(A, r), a + r\eta) $,  we conclude (noting Remark \ref{rem: tangent level sets and Wulff shapes} and \cite[3.1.21]{MR0257325})
\begin{equation*}
	\Tan(\partial \mathcal{W}^\phi, \eta) = \Der \bm{\xi}^\phi_{A}(a+ r\eta)[\Tan(S^\phi(A, r), a + r\eta)	] \subseteq \Tan(A,a).
\end{equation*}
Since $ N(A,a) \subseteq \Nor(A,a) \subseteq \Nor(\partial \mathcal{W}^\phi, \eta) $ and $ \dim \Nor(\partial \mathcal{W}^\phi, \eta) = 1 $, it follows that $ 	\mathcal{H}^0(N^\phi(A,a)) \in \{1,2\} $ and \ref{lem: exterior normal basic properties closed: a} is proved.

\medskip

(b) First, let $ m \in \{1, \ldots , n\} $, define $ P = \bigcup_{i=m}^n\widetilde{N}^\phi_i(A)$ and recall that $ A^{(m)} $ is a countably $m$-rectifiable set (see above). For $ s > 0 $ define $ N_s $ as in Theorem  \ref{theo: Steiner closed} and set $W_s = [N_s  \setminus P]|A^{(m)}$, which is an  $(\mathcal{H}^n, n)$ rectifiable set. Noting from Lemma \ref{lem: tangent of normal bundle} that $  \ap J^{W_s}_m \bm{p}(a,\eta) =0  $ for $ \mathcal{H}^n $ a.e.\ $(a,\eta) \in W_s $,
we conclude from Lemma \ref{coarea} that
\begin{equation*}
\int_{A^{(m)}} \mathcal{H}^{n-m}\big((N_s|\{x\}) \setminus (P|\{x\})\big)\, d\mathcal{H}^m(x)=0,
\end{equation*}
and thus we infer that
$$
\mathcal{H}^m\left(      \left\{ x \in A^{(m)}:  \mathcal{H}^{n-m}((N_s|\{x\}) \setminus P|\{x\})>0 \right\}\right)=0   .
$$
Furthermore, we have
\begin{equation*}
      \{ x\in  A^{(m)} :  \mathcal{H}^{n-m}(N^\phi(A,x) \setminus (P|\{x\}))>0 \} = \bigcup_{i=1}^\infty  \{ x\in A^{(m)}  :  \mathcal{H}^{n-m}((N_{s_i}|\{x\}) \setminus (P|\{x\}))>0 \}
\end{equation*}
for any positive sequence $ s_i \searrow 0 $. Since also $\mathcal{H}^n(N^\phi(A,a)\setminus \widetilde{N}^\phi(A,a))=0$ for $a\in A^{(0)}$, we obtain that
\begin{equation}\label{eq:eins}
\mathcal{H}^m \left( \left\{  a\in A^{(m)}  : \mathcal{H}^{n-m}\bigg(N^\phi(A,a) \setminus \bigcup_{i=m}^n \widetilde{N}^\phi_i(A,a)\bigg) > 0\right\}\right)=0
\end{equation}
for $ m \in \{0, \ldots , n\}$. Now let $m\in \{0,\ldots,n-1\}$. Since by Lemma  \ref{lem: stratification and curvature} it holds that
\begin{equation*}
    \mathcal{H}^n \bigg( \big(N^\phi(A)|A\big) \cap \bigcup_{i=m+1}^n \widetilde{N}^\phi_i(A)\bigg) =0,
\end{equation*}
an application of the coarea formula in Lemma \ref{coarea} yields that
\begin{equation*}
\int_{A^{(m)}}\mathcal{H}^{n-m} \left(N_s|\{x\}\cap\bigcup _{i=m+1}^n \widetilde{N}^\phi_i(A,x)\right)\, d\mathcal{H}^m(x)=0 \qquad \textrm{for every $ s > 0 $,}
\end{equation*}
whence, as above, we infer
\begin{equation}\label{eq:zwei}
\mathcal{H}^m \left( \left\{  a\in A^{(m)}  : \mathcal{H}^{n-m}\bigg(N^\phi(A,a) \cap \bigcup_{i=m+1}^n \widetilde{N}^\phi_i(A,a)\bigg) > 0\right\}\right)=0 .
\end{equation}
Now the assertion follows from \eqref{eq:eins} and \eqref{eq:zwei}.

\medskip

\ref{lem: exterior normal basic properties closed: c} Since $ \bm{p}(N^\phi(A)) = \bigcup_{i=0}^n A^{(i)}$, $ \mathcal{H}^n(A^{(j)}) =0 $ for  $ j\in\{0,\ldots,n-1\} $ and $ \bm{p}\big[N^\phi(A) \setminus \widetilde{N}_n^\phi(A)\big] = \bm{p}(N^\phi(A)) \cap \{a : \mathcal{H}^0(N^\phi(A,a) \setminus \widetilde{N}^\phi_n(A,a)) >0 \} $, we conclude that \ref{lem: exterior normal basic properties closed: c} directly follows from \ref{lem: exterior normal basic properties closed: b} with $ m = n $.

\medskip

\ref{lem: exterior normal basic properties closed: d} The cone $ \{ tu : t > 0, \, u \in N(A,a)   \} $ is convex for  $ a \in \bm{p}(N^\phi(A)) $. Consequently, we have $N(A,a) = - N(A,a) $ for  $ a \in A $ with $ \mathcal{H}^0(N(A,a)) =2 $. Since $ \nabla \phi(N(A,a)) = N^\phi(A,a) $ for  $ a \in \bm{p}(N^\phi(A))$, we conclude that $N^\phi(A,a) = - N^\phi(A,a) $ for $ a \in A $ with $ \mathcal{H}^0(N^\phi(A,a)) =2 $.

\medskip

\ref{lem: exterior normal basic properties closed: e} Finally, let $ X $ be the set defined in \ref{lem: exterior normal basic properties closed: e}. Employing again the sets $ N_s $ as defined in Theorem \ref{theo: Steiner closed}, we notice that $ \ap J^{N_s}_n\bm{p}(a,u) > 0 $ for $ \mathcal{H}^n $ a.e.\ $(a,u) \in \widetilde{N}^\phi_n(A) $ and, by the coarea formula and \ref{lem: exterior normal basic properties closed: c}, we get
\begin{equation*}
    \int_{(N_s \cap \widetilde{N}^\phi_n(A))|X}\ap J^{N_s}_n\bm{p}(a,\eta)\, d\mathcal{H}^n(a,\eta) =0
\end{equation*}
for every $ s > 0 $. It follows that $ \mathcal{H}^n(\widetilde{N}^\phi_n(A)|X) =0$. Since by \ref{lem: exterior normal basic properties closed: a} and \ref{lem: exterior normal basic properties closed: d} we have that
\begin{equation*}
    N^\phi(A)|X = (\widetilde{N}^\phi_n(A)| X) \cup \{   (a, -\eta) : (a,\eta)\in \widetilde{N}^\phi_n(A)| X\},
\end{equation*}
we obtain \ref{lem: exterior normal basic properties closed: e}.
\end{proof}

\begin{Remark}\label{rem normal cone convex}
If $ A \subseteq \mathbf{R}^{n+1} $ is a convex body (with non-empty interior), then $\mathcal{H}^0(N^\phi(A,a)) = 1 $ for every $ a \in \bm{p}(\widetilde{N}_n^\phi(A)) $.
\end{Remark}

\medskip

We now prove a very general disintegration formula. This result, which is of independent interest, plays a key role in the proof of Corollary \ref{Cor: Alexandrov} through Lemma \ref{Lem:abs} and the subsequent Lemma \ref{lem:thetared}.

\begin{Theorem}\label{thm:disint}
Let $\varnothing\neq A \subseteq \mathbf{R}^{n+1}$ be a closed set and   $m\in\{0,\ldots,n\}$. Then there exists a positive real-valued $ \mathcal{H}^n \restrict N^\phi(A)$ measurable function $ \rho^\phi_{A,m}$ on $ N^\phi(A) $ such that the following statements hold.
\begin{enumerate}[{\rm (a)}]
\item $  0< \rho^\phi_{A,m}(a, \eta) \leq c  $ for $ \mathcal{H}^n $ a.e.\ $(a, \eta) \in N^\phi(A) $, where $ c $ depends only on $ \phi $ and $ n $.
\item If $m\in\{0,n\}$ or if $ \phi $ is the Euclidean norm, then $ \rho^\phi_{A,m}(a, \eta) = 1 $ for $ \mathcal{H}^n $ a.e.\ $ (a, \eta) \in N(A) $.
    \item For every Borel set $ B \subseteq \mathbf{R}^{n+1}$ with $ \sigma $-finite $\mathcal{H}^m $ measure it holds that
\begin{equation}\label{thm:disint eq1}
\bm{H}^\phi_{A, n-m}(a, \eta)\, \bm{1}_{N^\phi(A)|B}(a, \eta) =  \bm{1}_{\widetilde{N}^\phi_m(A)|(A^{(m)} \cap B)}(a, \eta)
\end{equation}
for $ \mathcal{H}^n$ a.e.\ $(a, \eta) \in N^\phi(A)|B $   and
\begin{flalign*}
  & \int_{ N^\phi(A)| B}\mathbf{1}_D(a,\eta)  \phi(\bm{n}^\phi(\eta))\,J^\phi_A(a,\eta)\, \bm{H}^\phi_{A,n-m}(a,\eta)\, d\mathcal{H}^n(a,\eta)\\
  & \qquad = \int_{B \cap A^{(m)}}\int_{N^\phi(A,a)}\mathbf{1}_D(a,\eta)\phi(\bm{n}^\phi(\eta)) \, \rho^\phi_{A,m}(a, \eta)\,  d\mathcal{H}^{n-m}(\eta)\, d\mathcal{H}^m(a)
\end{flalign*}
for every Borel set $ D \subseteq \R^{n+1}\times\R^{n+1}$.
\end{enumerate}
\end{Theorem}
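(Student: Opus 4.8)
The plan is to reduce everything to a double application of the coarea formula (Lemma \ref{coarea}) for the projection $\bm{p}$, once restricted to the ``good'' part $\widetilde{N}^\phi_n(A)$ of the normal bundle where all curvatures are finite, and once in the form given in Lemma \ref{lem: tangent of normal bundle} that relates $\ap J^W_m\bm{p}$ to the wedge quotient $|\zeta_1\wedge\cdots\wedge\zeta_n|/|\tau_1\wedge\cdots\wedge\tau_n|$. First I would fix $s>0$ and work on the set $N_s=\{(a,\eta)\in N^\phi(A):\bm{r}^\phi_A(a,\eta)\ge s\}$, which by Theorem \ref{theo: Steiner closed}\ref{theo: Steiner closed 1} is $(\mathcal{H}^n,n)$ rectifiable with locally finite $\mathcal{H}^n$ measure; the final statement then follows by taking a sequence $s_i\searrow0$ since $\mathcal{H}^n(N^\phi(A)\setminus\bigcup_i N_{s_i})=0$. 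On $N_s$, Lemma \ref{lem: tangent of normal bundle} identifies, for $\mathcal{H}^n$-a.e.\ $(a,\eta)$, the approximate tangent space $\Tan^n(\mathcal{H}^n\restrict N_s,(a,\eta))$ as $\lin\{\zeta_1,\ldots,\zeta_n\}$, and gives $\ap J^{N_s}_n\bm{p}(a,\eta)=\big(|\tau_1\wedge\cdots\wedge\tau_n|/|\zeta_1\wedge\cdots\wedge\zeta_n|\big)\bm{1}_{\widetilde N^\phi_n(A)}$. For the general $m$-th Jacobian on the stratum $A^{(m)}$, the key point is that on $\widetilde N^\phi_m(A)$ exactly $m$ of the $\zeta_i$ are of the form $(\tau_i,\kappa_i\tau_i)$ (finite curvature directions) and $n-m$ are of the form $(0,\tau_i)$; the map $\bm{p}$ kills the latter and is an isomorphism onto $\lin\{\tau_i:\kappa_i<\infty\}$ on the former. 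Thus $\ap J^{N_s}_m\bm{p}(a,\eta)$ vanishes off $\bigcup_{l=0}^m\widetilde N^\phi_l(A)$ (this is essentially Lemma \ref{lem: stratification and curvature}), and on $\widetilde N^\phi_m(A)$ it equals the $m$-dimensional wedge quotient restricted to the finite-curvature block.

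Next I would establish the pointwise identity \eqref{thm:disint eq1}. By Remark \ref{rmk: stratification}, for $\mathcal{H}^n$-a.e.\ $(a,\eta)\in N^\phi(A)|A^{(m)}$ we have $\bm{H}^\phi_{A,n-m}(a,\eta)=\bm{1}_{\widetilde N^\phi_m(A)}(a,\eta)$, and for $(a,\eta)\in N^\phi(A)|A^{(j)}$ with $j<m$ or (after intersecting with the $\sigma$-finite-$\mathcal{H}^m$ set $B$) $j>m$ we have $\bm{H}^\phi_{A,n-m}(a,\eta)=0$ for $\mathcal{H}^n$-a.e.\ such $(a,\eta)$; combining these with $\bm{p}(N^\phi(A))=\bigcup_{i=0}^n A^{(i)}$ and the fact that $A^{(j)}$ has $\sigma$-finite $\mathcal{H}^m$ measure only for $j\le m$ gives \eqref{thm:disint eq1}. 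Then the left-hand side of the displayed integral identity is supported, up to $\mathcal{H}^n$-null sets, on $\widetilde N^\phi_m(A)|(A^{(m)}\cap B)$. I would now apply Lemma \ref{coarea} with $W=N_s|(A^{(m)}\cap B)$, $f=\bm{p}$, $k=m$, and the integrand $\mathbf{1}_D\,\phi(\bm{n}^\phi(\eta))\,J^\phi_A(a,\eta)/\ap J^{N_s}_m\bm{p}(a,\eta)$ on the set where the Jacobian is positive. This rewrites the curvature integral over $N^\phi(A)|B$ as an iterated integral $\int_{A^{(m)}\cap B}\int_{N^\phi(A,a)}\mathbf{1}_D\,\phi(\bm{n}^\phi(\eta))\,\rho^\phi_{A,m}(a,\eta)\,d\mathcal{H}^{n-m}(\eta)\,d\mathcal{H}^m(a)$, where
\[
\rho^\phi_{A,m}(a,\eta)=\frac{J^\phi_A(a,\eta)}{\ap J^{N_s}_m\bm{p}(a,\eta)}=\frac{|\zeta_1\wedge\cdots\wedge\zeta_n|}{|\tau_1\wedge\cdots\wedge\tau_n|}\cdot\frac{|\tau_{\sigma(1)}\wedge\cdots\wedge\tau_{\sigma(m)}|}{|\zeta_{\sigma(1)}\wedge\cdots\wedge\zeta_{\sigma(m)}|}
\]
with $\sigma$ indexing the $m$ finite-curvature directions; crucially this ratio is independent of $s$ (by Lemma \ref{lem: existence of curvatures} and Theorem \ref{theo: Steiner closed}\ref{theo: Steiner closed 2}, both $J^\phi_A$ and the tangential structure are intrinsic), so the $\rho^\phi_{A,m}$ patch together consistently over the $N_{s_i}$.

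The remaining tasks are to verify the three itemized properties of $\rho^\phi_{A,m}$. For (a), the uniform bound: using the alternative orthonormal-frame description in Remark \ref{rem:altdescreigen}, the vectors $\zeta_1,\ldots,\zeta_n$ are pairwise orthogonal with respect to the inner product $\overline B_\eta$, and inequality \eqref{eq:dlate1} there gives two-sided comparison constants $0<c\le C<\infty$ depending only on $\phi$ and $n$ between $\overline B_\eta$-norms and Euclidean norms of wedge products of $\zeta$'s; since a quotient $|\zeta_1\wedge\cdots\wedge\zeta_n|/(|\zeta_{\sigma(1)}\wedge\cdots\wedge\zeta_{\sigma(m)}|\,|\tau_{\text{rest}}\wedge\cdots|)$-type expression is then pinched between $c^2/C$ and $C/c^2$ — together with the fact that the $\tau_i$ can be taken $B_\eta$-orthonormal, so $|\tau_1\wedge\cdots\wedge\tau_n|$ and the sub-wedges are likewise uniformly controlled — one gets $0<\rho^\phi_{A,m}\le c(\phi,n)$. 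For (b): if $m=n$ then $\sigma=(1,\ldots,n)$ and the quotient is identically $1$; if $m=0$ the convention $\bm{H}^\phi_{A,n}=\bm{1}_{\widetilde N^\phi_0(A)}$ and the fact that $J^\phi_A$ on $\widetilde N^\phi_0(A)$ already matches the ``$m=0$'' normalization gives $\rho^\phi_{A,0}=1$; and if $\phi$ is Euclidean, then $B_\eta$ is (a multiple of) the standard inner product on $\eta^\perp$, so $\tau$'s and $\zeta$'s are genuinely Euclidean-orthogonal and all wedge quotients collapse, yielding $\rho_{A,m}=1$ on $N(A)$. Measurability of $\rho^\phi_{A,m}$ follows from the measurability of $J^\phi_A$ (Theorem \ref{theo: Steiner closed}\ref{theo: Steiner closed 2}), of $\ap J^{N_s}_m\bm{p}$, and of the curvature functions (Remark \ref{rem: principla curvatures basic rem}).

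\textbf{Main obstacle.} The delicate point is not the coarea bookkeeping but the \emph{compatibility and boundedness of the density factor} $\rho^\phi_{A,m}$ in the anisotropic case: one must show that the ratio $J^\phi_A/\ap J^{N_s}_m\bm{p}$ does not depend on $s$ nor on the choice of eigenframe, and that it is bounded above by a constant depending only on $\phi$ and $n$ — uniformly over all closed sets $A$, all strata, and all directions $\eta\in\partial\mathcal{W}^\phi$. This is precisely what the auxiliary inner products $B_\eta,\overline B_\eta$ and the compactness estimate \eqref{eq:dlate1} in Remark \ref{rem:altdescreigen} are designed to handle, by trading the Euclidean geometry of the wedge products for the $\phi$-adapted geometry in which $\zeta_1,\ldots,\zeta_n$ are orthogonal; without that device the wedge quotients could a priori degenerate as some $\kappa^\phi_{A,i}\to\infty$, and disentangling the finite-curvature block from the infinite-curvature block in a scale-invariant way is the heart of the argument.
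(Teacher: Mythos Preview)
Your overall strategy matches the paper's proof: both establish \eqref{thm:disint eq1} via Remark \ref{rmk: stratification}, both define $\rho^\phi_{A,m}=J^\phi_A/\ap J^{N_s}_m\bm{p}$ on $\widetilde N^\phi_m(A)$, both apply the coarea formula (Lemma \ref{coarea}) on $N_s$ and pass to the limit, and both use the $\overline B_\eta$-orthogonality from Remark \ref{rem:altdescreigen} together with \eqref{eq:dlate1} to obtain the uniform bound in (a).

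There is, however, one genuine error in your formula. You claim that on $\widetilde N^\phi_m(A)$ the $m$-Jacobian equals the simple wedge quotient
\[
\ap J^{N_s}_m\bm{p}(a,\eta)\;=\;\frac{|\tau_{\sigma(1)}\wedge\cdots\wedge\tau_{\sigma(m)}|}{|\zeta_{\sigma(1)}\wedge\cdots\wedge\zeta_{\sigma(m)}|},
\]
and consequently write an explicit product formula for $\rho^\phi_{A,m}$. This identification fails in the anisotropic case: since $\zeta_1,\ldots,\zeta_n$ are only $\overline B_\eta$-orthogonal, not Euclidean-orthogonal, the supremum in $\|\bigwedge_m(\bm{p}|T(a,\eta))\|$ need not be achieved at $\zeta_{\sigma(1)}\wedge\cdots\wedge\zeta_{\sigma(m)}/|\zeta_{\sigma(1)}\wedge\cdots\wedge\zeta_{\sigma(m)}|$. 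The paper handles this by keeping $\rho^\phi_{A,m}=J^\phi_A/\|\bigwedge_m(\bm{p}|T)\|$ as an abstract ratio, proving only the one-sided inequality (their \eqref{thm:disint eq}) needed for positivity, and then using \eqref{eq:dlate1} for the upper bound via
\[
\rho^\phi_{A,m}(a,\eta)\le \frac{|\tau_1\wedge\cdots\wedge\tau_n|}{|\zeta_1\wedge\cdots\wedge\zeta_n|}\cdot\frac{|\zeta_1\wedge\cdots\wedge\zeta_m|}{|\tau_1\wedge\cdots\wedge\tau_m|}\le \frac{C}{c^2}\cdot\frac{|\tau_{m+1}\wedge\cdots\wedge\tau_n|}{|\zeta_{m+1}\wedge\cdots\wedge\zeta_n|}=\frac{C}{c^2}.
\]
The equality you wrote is exactly what the paper establishes \emph{only} in the Euclidean case (last paragraph of their proof), by showing that when the $\zeta_i$ are Euclidean-orthogonal the supremum is indeed attained at the canonical $m$-block; this is precisely how (b) for the Euclidean norm is verified. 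So your explicit formula for $\rho$ is wrong in general, though the ingredients you list are the right ones and the argument is easily repaired by working with $\|\bigwedge_m(\bm{p}|T)\|$ directly rather than the claimed wedge quotient. (Incidentally, your displayed equation also has the first factor inverted relative to $J^\phi_A=|\tau_1\wedge\cdots\wedge\tau_n|/|\zeta_1\wedge\cdots\wedge\zeta_n|$.)
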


 \begin{proof}
 Since $ N^\phi(A)|B=\bigcup_{j=0}^n ( N^\phi(A)|(B \cap A^{(j)})) $, the equality in \eqref{thm:disint eq1} follows from Remark \ref{rmk: stratification}.

 Since the case $m=0$ is easy to check directly, we assume that $m\ge 1$ in the following.

Let $ \tau_i : \widetilde{N}^\phi(A) \rightarrow \mathbf{R}^{n+1}$ and $ \zeta_i : \widetilde{N}^\phi(A) \rightarrow \mathbf{R}^{n+1} \times \mathbf{R}^{n+1}$ be the maps defined in Lemma \ref{lem: tangent of normal bundle} for $ i = 1, \ldots , n$.
For  $(a, \eta) \in \widetilde{N}^\phi(A) $ we define $ T(a, \eta)$ to be the linear space generated by $ \zeta_1(a, \eta), \ldots , \zeta_n(a, \eta)$ and we notice that
\begin{align}\label{thm:disint eq}
1\ge \big\| \bm{p}| T(a, \eta) \|^m & \ge \big\| {\textstyle\bigwedge_m} \big(\bm{p} |T(a, \eta)\big) \big\| \notag \\
&\geq \bigg|{\textstyle\bigwedge_m} \bm{p} \bigg( \frac{\zeta_1(a,\eta) \wedge \ldots \wedge \zeta_m(a,\eta)}{|\zeta_1(a,\eta) \wedge \ldots \wedge \zeta_m(a,\eta)|}\bigg)\bigg|
    = \frac{|\tau_1(a,\eta) \wedge \ldots \wedge \tau_m(a,\eta)|}{| \zeta_1(a,\eta) \wedge \ldots \wedge \zeta_m(a,\eta)|} > 0
\end{align}
for  $(a, \eta) \in \widetilde{N}^\phi_m(A) $. Therefore we define
\[
\varrho^\phi_{A,m}(a,\eta) =
\begin{cases}\displaystyle{
    \frac{J^\phi_A(a,\eta)}{\big\|{\textstyle\bigwedge_m} (\bm{p} |T(a, \eta))\big\|} }& \textrm{for $(a, \eta) \in \widetilde{N}^\phi_m(A)$},\\[2ex]
    1 & \textrm{for $(a, \eta) \in N^\phi(A) \setminus \widetilde{N}^\phi_m(A)$}.
\end{cases}
\]
Notice that $ \rho^\phi_{A,n}(a, \eta) = 1 $ for $ \mathcal{H}^n $ a.e.\ $ (a, \eta) \in N^\phi(A) $ by Lemma \ref{lem: tangent of normal bundle}.

If $m\in\{1,\ldots,n-1\}$, we combine
\begin{equation*}
    | \tau_1(a, \eta) \wedge \ldots \wedge \tau_n(a, \eta)| \leq | \tau_1(a, \eta) \wedge \ldots \wedge \tau_m(a, \eta)|\cdot | \tau_{m+1}(a, \eta) \wedge \ldots \wedge \tau_n(a, \eta)|
\end{equation*}
(see \cite[1.7.5]{MR0257325}) with the estimate in \eqref{eq:dlate1} from Remark \ref{rem:altdescreigen} to get
\begin{flalign*}
\rho^\phi_{A,m}(a, \eta) & \leq \frac{ | \tau_1(a, \eta) \wedge \ldots \wedge \tau_n(a, \eta)|}{ | \zeta_1(a, \eta) \wedge \ldots \wedge \zeta_n(a, \eta)|} \cdot \frac{ | \zeta_1(a, \eta) \wedge \ldots \wedge \zeta_m(a, \eta)|}{ | \tau_1(a, \eta) \wedge \ldots \wedge \tau_m(a, \eta)|} \\
& \leq \frac{C}{c^2}\cdot \frac{| \tau_{m+1}(a, \eta) \wedge \ldots \wedge \tau_n(a, \eta)|}{|\zeta_{m+1}(a, \eta) \wedge \ldots \wedge \zeta_n(a, \eta)|} =  \frac{C}{c^2}
\end{flalign*}
for $(a, \eta) \in \widetilde{N}^\phi_m(A)$, which provides the required finite upper bound with constants $0<c\le C<\infty$ depending only on $n,\phi$.

Let $ N_s $  be the set defined in Theorem \ref{theo: Steiner closed} for $ s > 0 $. By Lemma \ref{lem: tangent of normal bundle},  we have  $ \ap J^{N_s}_m \bm{p}(a, \eta) = \big\|{\textstyle\bigwedge_m} \big(\bm{p} |T(a, \eta)\big)\big\|$ for $ \mathcal{H}^n$ a.e.\ $(a, \eta) \in N_s  $, for   $ s > 0$.
An application of the coarea formula shows that
\begin{align*}
&\int_{ N_s| (B\cap A^{(m)})}  \phi(\bm{n}^\phi(\eta))\,J^\phi_A(a,\eta)\, \mathbf{1}_{D}(a,\eta)\,\mathbf{1}_{\widetilde{N}^\phi_m(A)}(a,\eta)\, d\mathcal{H}^n(a,\eta)\notag\\
&\qquad=\int_{B\cap A^{(m)}}\int_{N_s\cap \bm{p}^{-1}(\{a\})}\phi(\bm{n}^\phi(\eta))\varrho_{A,m}^\phi(a,\eta)\mathbf{1}_{\widetilde{N}^\phi_m(A)}(a,\eta)\,\mathbf{1}_{D}(a,\eta)\, d\mathcal{H}^{n-m}(a,\eta)\, d\mathcal{H}^m(a)
\end{align*}
for  $ s > 0 $. Applying the monotone convergence theorem in combination with Lemma \ref{lem: exterior normal basic properties closed} (b) and \eqref{thm:disint eq1}, we obtain
\begin{align*}
&\int_{ N^\phi(A)| B}\mathbf{1}_D(a,\eta)  \phi(\bm{n}^\phi(\eta))\,J^\phi_A(a,\eta)\, \bm{H}^\phi_{A,n-m}(a,\eta)\,  d\mathcal{H}^n(a,\eta)\\
&\qquad =\int_{\widetilde{N}^\phi_m(A)| (B\cap A^{(m)})}  \phi(\bm{n}^\phi(\eta))\,J^\phi_A(a,\eta)\,  \mathbf{1}_{D}(a,\eta)\, d\mathcal{H}^n(a,\eta)\notag\\
&\qquad=\int_{B\cap A^{(m)}}\int_{\widetilde{N}^\phi_m(A,a)}\phi(\bm{n}^\phi(\eta))\varrho_{A,m}^\phi(a,\eta)\, \mathbf{1}_{D}(a,\eta)\, d\mathcal{H}^{n-m}(\eta)\, d\mathcal{H}^m(a)\\
&\qquad=\int_{B\cap A^{(m)}}\int_{N^\phi(A,a)}\phi(\bm{n}^\phi(\eta))\varrho_{A,m}^\phi(a,\eta)\, \mathbf{1}_{D}(a,\eta)\, d\mathcal{H}^{n-m}(\eta)\, d\mathcal{H}^m(a).
\end{align*}

Finally, suppose that $\phi$ is the Euclidean norm. Then $ \{\zeta_{\lambda(1)}(a, \eta) \wedge \ldots \wedge \zeta_{\lambda(m)}(a, \eta) : \lambda \in \Lambda(n,m) \}$ is an orthogonal basis of $ \bigwedge_m T(a, \eta) $ for every $(a, \eta) \in \widetilde{N}(A)$. Fix now $(a, \eta) \in \widetilde{N}^\phi_m(A)$ and $ \xi \in \bigwedge_m T(a, \eta)$ such that
\begin{equation*}
    |\xi| = 1 \qquad \textrm{and} \qquad \xi = \sum_{\lambda \in \Lambda(n,m)} c_\lambda \cdot \frac{\zeta_{\lambda(1)}(a, \eta) \wedge \ldots \wedge \zeta_{\lambda(m)}(a, \eta)}{|\zeta_{\lambda(1)}(a, \eta) \wedge \ldots \wedge \zeta_{\lambda(m)}(a, \eta)|}.
\end{equation*}
It follows from the orthogonality that $ |c_\lambda| \leq 1 $ for every $ \lambda \in \Lambda(n,m)$. Therefore, denoting with $ \lambda_0 \in \Lambda(n,m)$ the map such that $ \lambda_0(i) = i$ for every $ i \in \{1, \ldots , m\}$, we notice that
\begin{equation*}
   {\textstyle \bigwedge_m} \bm{p}(\xi)  = c_{\lambda_0} \frac{\tau_1(a, \eta) \wedge \ldots \wedge \tau_m(a, \eta)}{| \zeta_1(a, \eta) \wedge \ldots \wedge \zeta_m(a, \eta)|}
\end{equation*}
and we infer in combination with \eqref{thm:disint eq} that
\begin{equation*}
    \big\| {\textstyle\bigwedge_m} \big(\bm{p} |T(a, \eta)\big) \big\| = \frac{|\tau_1(a, \eta) \wedge \ldots \wedge \tau_m(a, \eta)|}{| \zeta_1(a, \eta) \wedge \ldots \wedge \zeta_m(a, \eta)|}.
\end{equation*}
We can now easily conclude that $ \varrho^\phi_{A,m}(a, \eta) = 1 $ for every $(a, \eta) \in \widetilde{N}^\phi_m(A)$ is a suitable choice.
\end{proof}

\subsection{Relation between Euclidean and anisotropic curvatures}

We consider the map
$$ T : \mathbf{R}^{n+1} \times \partial \mathcal{W}^\phi \rightarrow \mathbf{R}^{n+1} \times \mathbf{S}^n, \qquad (a,\eta)\mapsto T(a, \eta) = (a, \bm{n}^\phi(\eta)). $$
By equation \eqref{eq: normal wulff shape and gradient phi} in the introduction, $ T $ is a $ \mathcal{C}^1$-diffeomorphism whose inverse is $ T^{-1}(a,u) = (a, \nabla \phi(u)) $ for $(a,u) \in \mathbf{R}^{n+1} \times \mathbf{S}^n$. In particular, $ \Der T(a, \eta) : \mathbf{R}^{n+1} \times \Tan(\partial \mathcal{W}^\phi, \eta) \rightarrow \mathbf{R}^{n+1} \times \Tan(\mathbf{S}^n, \bm{n}^\phi(\eta)) $ is an isomorphism and
$$ \Der T(a,\eta)(\tau, \upsilon) = (\tau, \Der \bm{n}^\phi(\eta)(\upsilon)) \quad \textrm{for $(\tau, \upsilon) \in \mathbf{R}^{n+1} \times \Tan(\partial \mathcal{W}^\phi, \eta) $}.$$
As already recalled in \eqref{eq: phi normal vs euclidean normal}, we have $ T(N^\phi(A)) = N(A) $.

\begin{Remark}\label{rem: approx tangent cones}
Suppose $ X $ is a normed vector space, $ \mu $ is a measure over $ X $ and $ f, g : X \rightarrow Y$ are functions differentiable at $ a $. If the $ m $ dimensional density of $ \mu \restrict \{x : f(x) \neq g(x) \}$ is zero at $a $, then
$$ \Der f(a) (v) = \Der g(a)(v) \qquad \textrm{for every $ v \in \Tan^m(\mu, a) $,} $$
where $ \Tan^m(\mu, a)$ is the cone of the $ (\mu,m)$ approximate tangent vectors of $ \mu $ at $ a $ (see \cite[3.2.16]{MR0257325}). One can check this remark from the definitions.
\end{Remark}

The following result expresses the mean curvature functions $ \bm{H}^\phi_{A,j} \cdot \bm{1}_{\widetilde{N}^\phi_n} = E^\phi_{A,j} \cdot \bm{1}_{\widetilde{N}^\phi_n(A)}$ in terms of the Euclidean generalized curvatures $ \kappa_{A,1}, \ldots , \kappa_{A,n} $ of $ A $.
\begin{Theorem}
Let $\varnothing\neq A \subseteq \mathbf{R}^{n+1}$ be a closed set, and let $ \phi $ be a uniformly convex $ \mathcal{C}^2$ norm.

Then $ \mathcal{H}^n(T(\widetilde{N}^\phi_d(A)) \triangle \widetilde{N}_d(A)) =0 $ for  $ d =0, \ldots, n $. Moreover, if  $ e_1, \ldots , e_n : \widetilde{N}(A) \rightarrow \mathbf{R}^{n+1}$ are maps such that
$$ \Der \bm{\nu}_A(a + r u)(e_i(a,u)) = \rchi_{A,i}(a+ru) e_i(a,u) \quad \textrm{for $ i =1, \ldots , n $, $ 0 < r < \underline{\bm{r}_A}(a,u) $} \text{ and }(a,u)\in \widetilde{N}(A) , $$
then
\begin{flalign*}
    E^\phi_{A,j}(a, \eta)  &= \sum_{\lambda \in \Lambda(n,j)}\bigg(\prod_{i=1}^j\kappa_{A, \lambda(i)}(T(a, \eta)) \bigg)  \\
    & \qquad \times \bigg(  \bigwedge_{i=1}^j\Der(\nabla \phi)(\bm{n}^\phi(\eta))( e_{\lambda(i)}(T(a,\eta))) \bigg)  \bullet \big(\bigwedge_{i=1}^j e_{\lambda(i)}(T(a,\eta))\big) \bigg)
\end{flalign*}
for $ \mathcal{H}^n $ a.e.\ $(a, \eta) \in \widetilde{N}^\phi_n(A) $ and $ j\in\{1,\ldots, n\} $.
\end{Theorem}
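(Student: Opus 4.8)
The plan is to transport the spectral decomposition of $\Der\bm{\nu}^\phi_A$ along the diffeomorphism $T$ to that of $\Der\bm{\nu}_A$, and then to compare the anisotropic symmetric functions $E^\phi_{A,j}$ with the Euclidean ones by writing both as traces of compound maps on the common tangent space. First I would establish the measure-theoretic identification $\mathcal{H}^n(T(\widetilde N^\phi_d(A))\triangle\widetilde N_d(A))=0$. Recall that $x\in\Unp^\phi(A)$ and $x\in\Unp(A)$ agree $\mathcal{L}^{n+1}$-a.e.\ outside $A$ (both complements of the respective cut loci, each of measure zero), and that at a common point $x=a+ru$ of twice differentiability of $\bm{\delta}_A$ and $\bm{\delta}^\phi_A$ the two normal maps are linked by $\bm{\nu}^\phi_A=\nabla\phi\circ\nabla\bm{\delta}_A=\nabla\phi(u)\cdot(\text{positive factor})$ via \eqref{eq: gradient and normal}; using Remark \ref{rem: approx tangent cones} together with Lemma \ref{theo: distance twice diff} this forces $\dmn\Der\bm{\nu}^\phi_A$ and $T^{-1}(\dmn\Der\bm{\nu}_A)$ to coincide up to $\mathcal{H}^n$-null sets on $N^\phi(A)$, and the finiteness (or not) of the $i$-th curvature is preserved by the factorization $\Der\bm{\nu}^\phi_A(x)=\Der(\nabla\phi)(u(x))\circ\Der u(x)$ in Lemma \ref{lem: distance twice diff}(b). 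This yields the stratum identity for all $d$.

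Next, on the tangent space $T:=\Tan(\partial\mathcal{W}^\phi,\eta)=\bm{n}^\phi(\eta)^\perp$ I would exploit Lemma \ref{lem: distance twice diff}(b): writing $P:=\Der(\nabla\phi)(\bm{n}^\phi(\eta))|T$ (a self-adjoint positive-definite automorphism of $T$, with inverse $\Der\bm{n}^\phi(\eta)|T$) and $Q:=\Der u(x)|T$ (self-adjoint), we have $\Der\bm{\nu}^\phi_A(x)=P\circ Q$ on $T$, while $\Der\bm{\nu}_A(T(a,\eta))$ restricted to $\Tan(\mathbf{S}^n,\bm{n}^\phi(\eta))=T$ is (via Remark \ref{rem: tangent ot level sets} and Lemma \ref{lem: distance twice diff}(b) applied to the Euclidean case) just $Q$ itself, up to the normalization of the Euclidean normal. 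Hence the Euclidean generalized curvatures $\kappa_{A,i}(T(a,\eta))$ and their eigenvectors $e_i(T(a,\eta))$ diagonalize $Q$ (in the appropriate rescaled sense for finite curvatures), and $E^\phi_{A,j}(a,\eta)=S_j\big(\text{eigenvalues of }P\circ Q\big)=\operatorname{trace}\big(\bigwedge_j(P\circ Q)\big)$. Expanding $\bigwedge_j(P\circ Q)=(\bigwedge_j P)\circ(\bigwedge_j Q)$ in the orthonormal-for-$\bullet$ basis $\{e_{\lambda(1)}\wedge\cdots\wedge e_{\lambda(j)}:\lambda\in\Lambda(n,j)\}$ of $\bigwedge_j T$ — on which $\bigwedge_j Q$ acts diagonally with eigenvalue $\prod_{i=1}^j\kappa_{A,\lambda(i)}(T(a,\eta))$ — gives exactly
$$
E^\phi_{A,j}(a,\eta)=\sum_{\lambda\in\Lambda(n,j)}\Big(\prod_{i=1}^j\kappa_{A,\lambda(i)}(T(a,\eta))\Big)\,\Big(\big(\textstyle\bigwedge_{i=1}^j\Der(\nabla\phi)(\bm{n}^\phi(\eta))(e_{\lambda(i)}(T(a,\eta)))\big)\bullet\big(\textstyle\bigwedge_{i=1}^j e_{\lambda(i)}(T(a,\eta))\big)\Big),
$$
which is the claimed formula.

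The delicate point — and the main obstacle — is the bookkeeping at points where some Euclidean curvatures are infinite, i.e.\ on $\widetilde N_d(A)$ with $d<n$: there $\Der\bm{\nu}_A$ is not an honest endomorphism of $T$ but is described through the vectors $\zeta_i$ of \eqref{eq:hneu2}, so "$Q$ diagonal with eigenvalue $\kappa_{A,i}$" must be read in the degenerate sense, and the identity $\bigwedge_j(PQ)=(\bigwedge_jP)(\bigwedge_jQ)$ has to be justified on the image of $\Der u(x)$ rather than on all of $\bigwedge_j T$. However, the statement we must prove is required only for $\mathcal{H}^n$-a.e.\ $(a,\eta)\in\widetilde N^\phi_n(A)$, and by the first step $T(\widetilde N^\phi_n(A))=\widetilde N_n(A)$ up to null sets, so at almost every relevant point \emph{all} $\kappa_{A,i}(T(a,\eta))$ are finite and $Q$ is a genuine self-adjoint automorphism of $T$; the degenerate analysis is thus avoided entirely. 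The remaining care is only to check that the rescaling passing from $\rchi_{A,i}$ to $\kappa_{A,i}$ (Definition \ref{def:curv2.8}) is compatible with the factorization through $P$, which follows because $P$ depends only on $\phi$ and $\eta$, not on the parameter $r$, so the common ratio in Lemma \ref{lem: existence of curvatures} commutes with left-composition by $P$; and to note that the eigenvectors $e_i$ can be chosen measurably by Lemma \ref{lem: tangent of normal bundle} and its Euclidean specialization, so both sides of the displayed identity are $\mathcal{H}^n$-measurable functions on $\widetilde N^\phi_n(A)$.
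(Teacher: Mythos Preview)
Your plan for the trace computation at the end is sound, but the bridge you build to get there collapses at one specific point: the identification of $Q=\Der u(x)$ with (the derivative of) the Euclidean Gauss map $\bm{\nu}_A$. The map $u$ appearing in Lemma~\ref{lem: distance twice diff}(b) is the \emph{normalized gradient of $\bm{\delta}^\phi_A$}, i.e.\ the Euclidean unit normal to the $\phi$-level set $S^\phi(A,r)$ at $x=a+r\eta$. The correct identity from \eqref{eq: gradient and normal} is $\bm{\nu}^\phi_A=\nabla\phi\circ\nabla\bm{\delta}^\phi_A$, not $\nabla\phi\circ\nabla\bm{\delta}_A$ as you write; in general $\bm{\xi}^\phi_A(x)\neq\bm{\xi}_A(x)$, so $u(x)$ has no a priori relation to $\bm{\nu}_A(x)$, and evaluating $\Der\bm{\nu}_A$ at $x=a+r\eta$ tells you nothing about the Euclidean curvatures $\kappa_{A,i}(a,\bm{n}^\phi(\eta))$, which are defined through the \emph{different} ray $a+s\bm{n}^\phi(\eta)$. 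Your Part~1 suffers from the same conflation, and in addition mixes the spatial domain $\dmn\Der\bm{\nu}^\phi_A\subset\R^{n+1}$ with the normal bundle $N^\phi(A)\subset\R^{n+1}\times\R^{n+1}$.

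The paper repairs this in two different ways for the two parts. For Part~1 it works \emph{intrinsically on the normal bundles}: since $T$ is bi-Lipschitz and acts as the identity on the first factor, $\Der T(a,\eta)$ carries $\ker\!\big(\bm{p}\,|\Tan^n(\mathcal{H}^n\llcorner N^\phi(A),(a,\eta))\big)$ isomorphically onto the corresponding kernel for $N(A)$; the stratum index is $n$ minus this kernel dimension, hence agrees. For Part~2 the paper evaluates the Euclidean $\bm{\nu}_A$ along the Euclidean rays via $L_s(a,\eta)=a+s\bm{n}^\phi(\eta)$, uses the identity $\nabla\phi\circ\bm{\nu}_A\circ L_s=\bm{q}$ on a set $Q_s\subset N^\phi(A)$ of full density, differentiates in the approximate tangent directions $\zeta_i$ of $N^\phi(A)$ via Remark~\ref{rem: approx tangent cones}, and then lets $s\to0$ so that $T_s:=\Der(\nabla\phi)(\bm{n}^\phi(\eta))\circ\Der\bm{\nu}_A(a+s\bm{n}^\phi(\eta))$ converges to the endomorphism $T_0$ with eigenvalues $\kappa^\phi_{A,i}(a,\eta)$. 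Your final trace step then applies to $T_0=\lim T_s$. Alternatively, once Part~1 is established, one can argue directly from the tangent-space equality $\Der T[\Tan^n N^\phi(A)]=\Tan^n N(A)$ to see that the single endomorphism $L=\bm{q}\circ(\bm{p}|\Tan^n N(A))^{-1}$ satisfies both $L e_i=\kappa_{A,i}e_i$ and $L\tau_i=\kappa^\phi_{A,i}\Der\bm{n}^\phi(\eta)\tau_i$, giving $E^\phi_{A,j}=\trace\bigwedge_j(PL)$ without any limit; this is closer in spirit to what you attempted, but it requires comparing the two bases of the \emph{same} tangent space of the normal bundle, not two second fundamental forms at a common spatial point.
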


\begin{proof}
Given the maps $ e_1, \ldots , e_n $ as in the statement of the Theorem, we define the maps $ z_i : \widetilde{N}(A) \rightarrow \mathbf{R}^{n+1} \times \mathbf{R}^{n+1} $, for $ i = 1, \ldots , n $,  so that
 \begin{equation*}
 z_i(a,u) =
 \begin{cases}
 (e_i(a,u),  \kappa_{A,i}(a, u)e_i(a,u)), & \textrm{if $ \kappa_{A,i}(a,u) < \infty $},\\
 	(0, e_i(a,u)), & \textrm{if $\kappa_{A,i}(a, u) = + \infty$.}
 \end{cases}
 \end{equation*}
 Then we choose the maps $ \tau_1, \ldots , \tau_n $ and $ \zeta_1, \ldots , \zeta_n $ as in Lemma \ref{lem: tangent of normal bundle}. Notice that $ e_1(a,u). \ldots , e_n(a,u) $ form an orthonormal basis of $ \Tan(\mathbf{S}^n, u) = \Tan(\partial \mathcal{W}^\phi, \nabla \phi(u))$.

Suppose $ W \subseteq N^\phi(A)$ is $ \mathcal{H}^n$ measurable and $ \mathcal{H}^n(W) < \infty$ and define $ W^\ast $ as
the set of $(a, \eta) \in W $ such that
$$ \Tan^n(\mathcal{H}^n\restrict W, (a, \eta)) = \textrm{span}\{\zeta_1(a, \eta), \ldots , \zeta_n(a, \eta) \} $$
and
$$\Tan^n(\mathcal{H}^n\restrict T(W), T(a, \eta)) = \textrm{span}\{z_1(T(a, \eta)), \ldots , z_n(T(a, \eta)) \}. $$
Since $ T $ is a bi-lipschitz map it follows from Lemma \ref{lem: tangent of normal bundle} that $ \mathcal{H}^n(W \setminus W^\ast) =0$. Fix now $(a,\eta) \in W^\ast$ and define $ V = \ker [\bm{p} | \Tan^n(\mathcal{H}^n\restrict W, (a, \eta)) ] $,
$ d = n-\dim V $,
$ V'  = \ker [\bm{p} | \Tan^n(\mathcal{H}^n\restrict T(W), T(a, \eta)) ]$ and $ d' = n- \dim V'$. By \cite{MR4117503}[Lemma B.2]  we infer that
$$
\Der T(a,\eta) \big(\Tan^n(\mathcal{H}^n\restrict W, (a, \eta))\big) = \Tan^n(\mathcal{H}^n \restrict T(W), T(a,\eta)), $$
whence we can easily deduce that $ \Der T(a,\eta)(V) = V' $. This implies in particular that $ d = d' $. Consequently it holds that
$$ T(W^\ast \cap \widetilde{N}^\phi_d(A)) \subseteq \widetilde{N}_d(A) \quad \textrm{and} \quad \widetilde{N}_d(A) \cap T(W^\ast) \subseteq T(\widetilde{N}^\phi_d(A)) $$
for every $ d=0, \ldots , n $. Since $ N^\phi(A) $ is a countable union of $ \mathcal{H}^n $ measurable sets $ W $ with finite $ \mathcal{H}^n$ measure, we conclude
$$ \mathcal{H}^n\big(T(\widetilde{N}^\phi_d(A)) \triangle \widetilde{N}_d(A)\big) =0. $$

For  $ r > 0 $ we define
$$ Q_r = \{ (a, \eta) \in N^\phi(A): \bm{r}^\phi_A(a, \eta) \ge r, \; \bm{r}_A(T(a,\eta)) \ge r \}. $$
From the upper semicontinuity of $ \bm{r}^\phi_A $ and $ \bm{r}_A $ (see \cite[Lemma 2.35]{kolasinski2021regularity}) it follows that $ Q_r $ is relatively  closed in $ N^\phi(A) $ (in particular it is a Borel set) and $ \mathcal{H}^n \restrict Q_r $ is finite on compact sets by Theorem \ref{theo: Steiner closed}(a). Notice that $ Q_r \subseteq Q_s $ is $ s \leq r $ and $ \bigcup_{r >0} Q_r = N^\phi(A) $. In addition, for $ r > 0$ we  define $ Q^\ast_r $ as the set of all $(a, \eta) \in Q_r $ such that $ \underline{\bm{r}_A}(a, \eta) = \bm{r}_A(a, \eta)$, the $ n $-dimensional density of $ \mathcal{H}^n \restrict (\mathbf{R}^{n+1} \times \mathbf{R}^{n+1}) \setminus Q_r $ is zero at $(a, \eta)$ and $ \Tan^n(\mathcal{H}^n \restrict Q_r, (a, \eta)) = \lin\{\zeta_1(a, \eta), \ldots , \zeta_n(a, \eta) \}$. By Lemma \ref{lem: tangent of normal bundle}, \cite[2.10.19]{MR0257325} and Lemma \ref{theo: distance twice diff} it follows that $ \mathcal{H}^n(Q_r \setminus Q^\ast_r) =0$ for every $ r > 0 $; moreover, one can check directly from the definitions (see \cite[3.2.16]{MR0257325}) that $ Q^\ast_r \subseteq Q^\ast_s $ if $ s \leq r $.

Fix now $(\hat{a}, \hat{\eta}) \in \widetilde{N}^\phi_d(A) \cap T^{-1}(\widetilde{N}_d(A)) \cap Q^\ast_r $ for some $ r > 0 $. For  $ s > 0 $ define $ L_s : \mathbf{R}^{n+1} \times \partial \mathcal{W}^\phi \rightarrow \mathbf{R}^{n+1} $ by $ L_s(a, \eta) = a + s \bm{n}^\phi(\eta) $. Choose a function $ u : \mathbf{R}^{n+1} \rightarrow \mathbf{R}^{n+1}$ such that $ u(x) \in \bm{\nu}_A(x) $ for every $ x \in \mathbf{R}^{n+1} \setminus A$. For $ 0 < s \leq r $,
$$ (\nabla \phi \circ u \circ L_s)(a,\eta) = \bm{q}(a, \eta)= \eta \quad \textrm{for   $(a,\eta) \in Q_s $} $$
and $ \nabla \phi \circ u \circ L_s $ is differentiable at $(\hat{a}, \hat{\eta})$ by Remark \ref{rem: principla curvatures basic rem}. Remark \ref{rem: approx tangent cones} yields $ \Der (\nabla \phi \circ u \circ L_s)(\hat{a}, \hat{\eta})(\xi) = \bm{q}(\xi)$ for  $ \xi \in \Tan^n(\mathcal{H}^n \restrict Q_s, (\hat{a}, \hat{\eta}))$. Hence, for $ 1 \leq i \leq d$ and  $ s < r $ we compute
\begin{equation}\label{theo: comparison eq1}
    \big[\Der(\nabla \phi)(\bm{n}^\phi(\hat{\eta})) \circ \Der u( \hat{a} + s\bm{n}^\phi(\hat{\eta} )\big]\big(\tau_i(\hat{a}, \hat{\eta}) + s \kappa^\phi_{A,i}(\hat{a}, \hat{\eta}) \Der \bm{n}^\phi(\hat{\eta})(\tau_i(\hat{a}, \hat{\eta}))\big) = \kappa^\phi_{A,i}(\hat{a}, \hat{\eta})\tau_i(\hat{a}, \hat{\eta}).
\end{equation}
Noting that
\begin{flalign*}
 &s \Der u( \hat{a} + s\bm{n}^\phi(\hat{\eta} ))\bigg( \sum_{j=1}^d \big[\Der\bm{n}^\phi(\hat{\eta})(\tau_i(\hat{a}, \hat{\eta})) \bullet e_j(T(\hat{a}, \hat{\eta}))\big] e_j(T(\hat{a}, \hat{\eta}))\bigg) \\
 & \qquad  \qquad = s\sum_{j=1}^d\big[\Der\bm{n}^\phi(\hat{\eta})(\tau_i(\hat{a}, \hat{\eta})) \bullet e_j(T(\hat{a}, \hat{\eta}))\big] \frac{\kappa_{A,j}(T(\hat{a}, \hat{\eta}))}{1 + s \kappa_{A,j}( T(\hat{a}, \hat{\eta}))}e_j(T(\hat{a}, \hat{\eta})) \to 0 \quad \textrm{as $ s \to 0 $}
\end{flalign*}
and
\begin{flalign*}
     &s \Der u( \hat{a} + s\bm{n}^\phi(\hat{\eta} ))\bigg( \sum_{j=d+1}^n \big[\Der\bm{n}^\phi(\hat{\eta})(\tau_i(\hat{a}, \hat{\eta})) \bullet e_j(T(\hat{a}, \hat{\eta}))\big] e_j(T(\hat{a}, \hat{\eta}))\bigg)  \\
      & \qquad  \qquad = \sum_{j=d+1}^n\big[\Der\bm{n}^\phi(\hat{\eta})(\tau_i(\hat{a}, \hat{\eta})) \bullet e_j(T(\hat{a}, \hat{\eta}))\big]e_j(T(\hat{a}, \hat{\eta})) \quad \textrm{for  $ s > 0 $,}
\end{flalign*}
we conclude from \eqref{theo: comparison eq1} that
\begin{flalign}\label{theo: comparison eq2}
&\lim_{s \to 0}\big[\Der(\nabla \phi)(\bm{n}^\phi(\hat{\eta})) \circ \Der u( \hat{a} + s\bm{n}^\phi(\hat{\eta} )\big]\big(\tau_i(\hat{a}, \hat{\eta})) \\
& \qquad  = \kappa^\phi_{A,i}(\hat{a}, \hat{\eta})\tau_i(\hat{a}, \hat{\eta}) \notag \\
& \qquad \qquad -\kappa^\phi_{A,i}(\hat{a}, \hat{\eta}) \sum_{j=d+1}^n \big[\Der\bm{n}^\phi(\hat{\eta})(\tau_i(\hat{a}, \hat{\eta})) \bullet e_j(T(\hat{a}, \hat{\eta}))\big] \Der(\nabla \phi)(\bm{n}^\phi(\hat{\eta}))(e_j(T(\hat{a}, \hat{\eta}))) \notag
\end{flalign}
for $i \leq d $.

Now  choose $ d = n $ in the previous paragraph and define the linear maps $ T_s : \Tan(\partial \mathcal{W}^\phi, \hat{\eta}) \rightarrow \Tan(\partial \mathcal{W}^\phi, \hat{\eta}) $ by
$$ T_s = \Der(\nabla \phi)(\bm{n}^\phi(\hat{\eta})) \circ \Der u( \hat{a} + s\bm{n}^\phi(\hat{\eta} ) $$
and $ T_0 : \Tan(\partial \mathcal{W}^\phi, \hat{\eta}) \rightarrow \Tan(\partial \mathcal{W}^\phi, \hat{\eta}) $ by $ T_0 (\tau_i(\hat{a},\hat{\eta}) ) = \kappa^\phi_{A,i}(\hat{a},\hat{\eta} )\tau_i (\hat{a},\hat{\eta})$ for $ i = 1, \ldots , n$. Denoting by $ \| \cdot \| $ the operator norm, we notice that
$ \sup_{s > 0}\| T_s \| < \infty $  and $ T_s(v) \rightarrow T_0(v)$ for each $ v \in \Tan(\partial \mathcal{W}^\phi. \hat{\eta}) $ by \eqref{theo: comparison eq2}. Therefore, $ \lim_{s \to \infty}\| T_s - T_0 \| = 0$ and by continuity
\begin{flalign*}
H^\phi_{A,j}(\hat{a}, \hat{\eta}) & = \trace \big({\textstyle \bigwedge_j}T_0\big) \\
& = \trace\big({\textstyle \bigwedge_j}\lim_{s \to 0}T_s \big) = \lim_{s \to 0}\trace \big({\textstyle \bigwedge_j}T_s \big).
\end{flalign*}
Computation of the $ \trace \big({\textstyle \bigwedge_j}T_s \big)  $ by means of the orthonormal basis $ \{\bigwedge_{i=1}^j e_{\lambda(i)}(T(\hat{a}, \hat{\eta})) : \lambda \in \Lambda(n,j) \}$ of $ \bigwedge_j \Tan(\partial \mathcal{W}^\phi, \hat{\eta})$, we get the conclusion.
\end{proof}

\section{Differentiability of the volume function}\label{sec:three}

In this section, employing the Steiner-type formula from the previous section, we study the differentiability properties of the (localized) parallel volume function $V $ of an anisotropic tubular neighbourhood around an arbitrary compact (closed) set. In particular, we obtain an expression for the left and right derivative of $V$ in terms of the anisotropic curvatures of the compact set and we deduce a geometric characterization of the differentiability points of $ V $. The results of this section extend \cite[eq.\ (4.5) and (4.6), Corollary 4.5]{MR2031455} and complement some of the results in \cite{MR4329249}, see also Remark \ref{rem:finsec3} below and the preceding work \cite{MR442202,MR2865426,MR3939268}.

\begin{Remark}\label{remark: bdry tubular neigh}
Notice that if $ \rho > 0 $, $ y \in S^\phi(A, \rho)$ and $a\in \bm{\xi}^\phi_A(x)$, that is, $ a \in A $ and $ \phi^\ast(y-a) = \bm{\delta}^\phi_A(y) $, then
\begin{equation*}
U^\phi(a, \rho) \cap S^\phi(A, \rho) = \varnothing \quad \textrm{and} \quad B^\phi(a, \rho) \subseteq B^\phi(A, \rho).
\end{equation*}
Hence we deduce that $ \bm{p}\big(N(B^\phi(A, \rho))\big) = \partial^v B^\phi(A, \rho) = \partial^v_+ B^\phi(A, \rho)$ and the function $ u $ defined in Theorem \ref{theo: derivative of volume} satisfies $ u(x) = \bm{n}(B^\phi(A, \rho), x)$ for every $ x \in \partial^v_+ B^\phi(A, \rho)$.
\end{Remark}

The following auxiliary result is used in the proof of Theorem \ref{theo: derivative of volume}. At the same time it provides an interesting insight into  the nature of the cut points in $ \Cut^\phi(A) \cap \Unp^\phi(A)$. In fact, in this regard we recall that there are  closed (compact) sets $ A \subseteq \mathbf{R}^{n+1}$ such that $ \mathcal{H}^n[S^\phi(A,\rho)\setminus (\partial B^\phi(A, \rho) \cup \Unp^\phi(A))] >0 $ for some $ \rho >0$; for instance, let $ A \subseteq \mathbf{R}^2$   be the union of two parallel lines (segments) at distance $ 2 $ and $ \rho = 1 $. In view of these simple examples, the second equation in \eqref{lem: unp and level sets eq2} (which holds for every $ \rho > 0 $!) is quite surprising since $ \Cut^\phi(A) \cap \Unp^\phi(A) $ can be a much larger set than $\mathbf{R}^{n+1} \setminus (A \cup \Unp^\phi(A)) $. In fact, since $\mathbf{R}^{n+1} \setminus (A \cup \Unp^\phi(A)) $ is always an $ n $-dimensional set (see section \ref{sec:distnbperi}), it follows from the example in \cite{MR2390513} that the set $ \Cut^\phi(A) \cap \Unp^\phi(A)$ can be an $( n+1 )$-dimensional set!

\begin{Lemma}\label{lem: unp and level sets}
Let $ A \subseteq \mathbf{R}^{n+1}$ be a closed set and $ \rho > 0 $. Let $ f_\rho : N^\phi(A) \cap \{  \bm{r}^\phi_A \geq \rho   \} \rightarrow S^\phi(A, \rho) $ be defined by $ f_\rho(a, \eta) = a + \rho \eta $. Then
\begin{align}
    f_\rho(N^\phi(A) \cap \{  \bm{r}^\phi_A \geq \rho   \})& = S^\phi(A, \rho),\label{eq:frho1}\\
  f_\rho(N^\phi(A) \cap \{ \bm{r}^\phi_A > \rho   \}) &= \partial^v_+ B^\phi(A, \rho) \subseteq S^\phi(A, \rho)\cap \Unp^\phi(A) ,   \label{eq:frho2}
\end{align}
\begin{equation}\label{lem: unp and level sets eq1}
S^\phi(A, \rho) \cap \Unp^\phi(A) \subseteq \partial B^\phi(A, \rho), \quad   \mathcal{H}^{n}(S^\phi(A, \rho) \cap \Unp^\phi(A) \setminus \partial^v_+ B^\phi(A, \rho)) =0,
\end{equation}
\begin{equation}\label{lem: unp and level sets eq2}
    \partial^v_+B^\phi(A, \rho) \cap \Cut^\phi(A) = \varnothing \quad \textrm{and} \quad \mathcal{H}^n(S^\phi(A, \rho) \cap \Unp^\phi(A) \cap \Cut^\phi(A) ) =0.
\end{equation}
\end{Lemma}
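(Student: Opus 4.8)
The strategy is to use the bi-Lipschitz parametrization of level sets provided by Theorem~\ref{theo: Steiner closed}~\ref{theo: Steiner closed 1} together with the basic properties of the reach function collected in Section~\ref{sec:distnbperi}. The key observation for \eqref{eq:frho1} and \eqref{eq:frho2} is the relation \eqref{eq : r and rho}, namely $\bm{r}^\phi_A(a,\eta) = r\,\bm{\rho}^\phi_A(a+r\eta)$ for $0<r<\bm{r}^\phi_A(a,\eta)$, combined with the inclusion $\{x:\bm{\rho}^\phi_A(x)>1\}\subseteq \Unp^\phi(A)$. First I would establish \eqref{eq:frho1}: given any $x\in S^\phi(A,\rho)$, pick $a\in\bm{\xi}^\phi_A(x)$ and set $\eta=\rho^{-1}(x-a)$; then $(a,\eta)\in N^\phi(A)$ and by definition of the reach function $\bm{r}^\phi_A(a,\eta)\ge \rho$, so $x=f_\rho(a,\eta)$ lies in the image; the reverse inclusion is immediate from $\bm{\delta}^\phi_A(a+\rho\eta)=\rho$ when $\bm{r}^\phi_A(a,\eta)\ge\rho$. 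For \eqref{eq:frho2}, if $\bm{r}^\phi_A(a,\eta)>\rho$ then by \eqref{eq : r and rho} we get $\bm{\rho}^\phi_A(a+\rho\eta)=\bm{r}^\phi_A(a,\eta)/\rho>1$, hence $a+\rho\eta\in\Unp^\phi(A)$; moreover one checks that $U^\phi(x-\rho\bm{\nu}^\phi_A(x),\rho)\cap A = \varnothing$, which as recalled in Remark~\ref{remark: bdry tubular neigh} forces $x\in\partial^v_+B^\phi(A,\rho)$ (here the norm-ball $B^\phi(a,\rho)=a+\rho B^\circ\subseteq B^\phi(A,\rho)$ touches the boundary at $x$). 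Conversely, a point of $\partial^v_+B^\phi(A,\rho)$ carries a supporting $\phi$-ball from inside and from outside, and the outer one yields $\bm{\rho}^\phi_A(x)>1$ strictly, hence $\bm{r}^\phi_A>\rho$ along the corresponding normal ray.

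\textbf{The measure-zero statements.} For \eqref{lem: unp and level sets eq1}, the inclusion $S^\phi(A,\rho)\cap\Unp^\phi(A)\subseteq\partial B^\phi(A,\rho)$ is topological: a point where $\bm{\delta}^\phi_A$ equals $\rho$ and has a unique footpoint is a differentiability point of $\bm{\delta}^\phi_A$ with nonvanishing gradient, so $B^\phi(A,\rho)=\{\bm{\delta}^\phi_A\le\rho\}$ cannot contain a neighbourhood of it. For the $\mathcal{H}^n$-null statement, I would argue that on $S^\phi(A,\rho)\cap\Unp^\phi(A)$ the footpoint map $\bm{\xi}^\phi_A$ is defined, and the set of $x$ where additionally $\bm{\rho}^\phi_A(x)=1$ (i.e.\ $\bm{r}^\phi_A(\bm{\xi}^\phi_A(x),\bm{\nu}^\phi_A(x))=\rho$ exactly, not strictly larger) is contained in the closure of $\mathbf{R}^{n+1}\setminus(A\cup\Unp^\phi(A))$ by \cite[Theorem 3B]{MR1434446}; more efficiently, I would invoke the coarea argument used inside the proof of Theorem~\ref{theo: Steiner closed}: for $\mathcal{L}^1$-a.e.\ $\rho$ the level set has finite $\mathcal{H}^n$ measure, but to get the statement for \emph{every} $\rho$ one parametrizes via $f_\rho$ on $N^\phi(A)\cap\{\bm{r}^\phi_A\ge\rho\}$, whose domain has finite $\mathcal{H}^n$ measure on bounded sets by Theorem~\ref{theo: Steiner closed}~\ref{theo: Steiner closed 1}, and notes that $f_\rho$ restricted to $\{\bm{r}^\phi_A=\rho\}$ has rank-deficient approximate differential $\mathcal{H}^n$-a.e.\ (its image there is where $\bm{\rho}^\phi_A=1$). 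By the area/coarea formula this image is $\mathcal{H}^n$-null, and it contains $S^\phi(A,\rho)\cap\Unp^\phi(A)\setminus\partial^v_+B^\phi(A,\rho)$ by \eqref{eq:frho1}--\eqref{eq:frho2}.

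\textbf{The surprising identity in \eqref{lem: unp and level sets eq2}.} The first equation $\partial^v_+B^\phi(A,\rho)\cap\Cut^\phi(A)=\varnothing$ is the crux and follows directly from \eqref{eq:frho2}: every point of $\partial^v_+B^\phi(A,\rho)$ equals $a+\rho\eta$ with $\bm{r}^\phi_A(a,\eta)>\rho$ strictly, so by definition $\rho$ is not the supremal distance along that ray and $a+\rho\eta\notin\Cut^\phi(A)$. Then the second equation is a formal consequence: by \eqref{lem: unp and level sets eq1} we have $\mathcal{H}^n(S^\phi(A,\rho)\cap\Unp^\phi(A)\setminus\partial^v_+B^\phi(A,\rho))=0$, and on the remaining set $\partial^v_+B^\phi(A,\rho)$ the cut locus does not meet it at all, so the intersection with $\Cut^\phi(A)$ is contained in an $\mathcal{H}^n$-null set. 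I expect the main obstacle to be the careful bookkeeping in the null-set claim of \eqref{lem: unp and level sets eq1}: one must verify that the points with $\bm{\rho}^\phi_A(x)=1$ on a fixed level set genuinely lie in the (approximate-differential) critical image of $f_\rho$, which requires using Lemma~\ref{theo: projectionLipschitz} (the Lipschitz estimate degenerates precisely as $\bm{\rho}^\phi_A\to 1$) and the rectifiability of $N^\phi(A)\cap\{\bm{r}^\phi_A\ge\rho\}$; once this is set up, everything else is bookkeeping with the definitions already in place.
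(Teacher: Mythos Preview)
Your handling of \eqref{eq:frho1}, \eqref{eq:frho2}, and \eqref{lem: unp and level sets eq2} is essentially the paper's argument and is fine.

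The gap is in the $\mathcal{H}^n$-null part of \eqref{lem: unp and level sets eq1}. Your proposed rank-deficiency argument does not go through. The assertion that ``$f_\rho$ restricted to $\{\bm r^\phi_A=\rho\}$ has rank-deficient approximate differential $\mathcal{H}^n$-a.e.'' is unjustified and in general false. From formula~\eqref{theo: Steiner closed: formula 0} one has, on $\widetilde N^\phi_d(A)\cap N_\rho$,
\[
\ap J^{N_\rho}_n f_\rho(a,\eta)=J^\phi_A(a,\eta)\,\rho^{\,n-d}\prod_{j=1}^d\bigl(1+\rho\,\kappa^\phi_{A,j}(a,\eta)\bigr),
\]
and Remark~\ref{eq: curvatures and reach} only gives $\kappa^\phi_{A,j}(a,\eta)\ge -1/\bm r^\phi_A(a,\eta)=-1/\rho$, i.e.\ $1+\rho\,\kappa^\phi_{A,j}(a,\eta)\ge 0$ with no reason for equality. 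Moreover $\{\bm r^\phi_A=\rho\}$ can have positive $\mathcal H^n$ measure inside $N_\rho$ (the reach function is merely upper semicontinuous, so no Sard-type argument is available), so the area formula does not force $\mathcal H^n\bigl(f_\rho(\{\bm r^\phi_A=\rho\})\bigr)=0$. Your fallback via Fremlin's theorem also fails: containment of $\Cut^\phi(A)$ in the \emph{closure} of the non-$\Unp^\phi$ set says nothing about $\mathcal H^n$-size, since that closure can have positive $\mathcal L^{n+1}$ measure (as the paper itself notes after \eqref{eq: cut locus}).

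The paper's route is genuinely different and supplies the missing idea: fix $x\in S^\phi(A,\rho)\cap\Unp^\phi(A)$; then $\bm\delta^\phi_A$ is semiconcave near $x$ and differentiable there with nonzero gradient, so by Fu's implicit function theorem for semiconcave functions \cite[Theorem~3.3, Proposition~1.7]{MR0816398} the level set $S^\phi(A,\rho)$ is locally the graph of a Lipschitz \emph{semiconvex} function $f$ over a hyperplane, with $B^\phi(A,\rho)$ locally equal to $\epi(f)$. Alexandrov's theorem gives pointwise second-order differentiability of $f$ at $\mathcal H^n$-a.e.\ point, and at each such point there is an exterior touching ball to $\epi(f)$; combined with Remark~\ref{remark: bdry tubular neigh} this places the point in $\partial^v_+B^\phi(A,\rho)$. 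This is precisely what is needed and avoids any claim about the Jacobian of $f_\rho$ on $\{\bm r^\phi_A=\rho\}$.
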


\begin{proof} We start with \eqref{eq:frho1}. ``$\subseteq$'': Let $(x,\eta)\in N^\phi(A)\cap \{  \bm{r}^\phi_A \ge\rho\}$. Then $\bm{\delta}^\phi_A(x+t\eta)=t$ for $0\le t<\rho$, hence also for $t=\rho$ since $\delta^\phi_A$ is continuous. Thus, $f_\rho(x,\eta)\in S^\phi(A,\rho)$.

``$\supseteq$'': Let $z\in S^\phi(A,\rho)$, that is, $\delta^\phi_A(z)=\rho$ and there is some $x\in A$ with $\phi^*(z-x)=\rho$. Set $\eta=\rho^{-1}(z-x)$. Then $\bm{\xi}^\phi_A(x+t\eta)=x$ for $0\le t< \rho$ by \cite[Lemma 2.38 (g)]{MR4160798}. It follows that $(x,\eta)\in N^\phi(A)$,  $\bm{r}^\phi_A(x,\eta)\ge \rho$ and $f_\rho(x,\eta)=z$.

\medskip

Next we deal with \eqref{eq:frho2}. Let $(x,\eta)\in N^\phi(A) \cap \{  \bm{r}^\phi_A >\rho\}$. Then $f_\rho(x,\eta)\in S^\phi(A,\rho)$ by \eqref{eq:frho1}. Since  $\bm{r}^\phi_A(x,\eta)>\rho$ it follows from \eqref{eq : r and rho} that $\bm{\rho}^\phi_A(x+\rho\eta)>1$, and therefore also  $f_\rho(x,\eta)=x+\rho\eta\in\Unp^\phi(A)$. This yields $f_\rho(N^\phi(A) \cap \{ \bm{r}^\phi_A > \rho   \})  \subseteq S^\phi(A, \rho)\cap \Unp^\phi(A) $.

Now we show that $f_\rho(N^\phi(A) \cap \{ \bm{r}^\phi_A > \rho   \})= \partial^v_+ B^\phi(A, \rho) $ by proving two inclusions.
``$\subseteq$'':  Let $(x,\eta)\in N^\phi(A) \cap \{  \bm{r}^\phi_A >\rho\}$. In view of Remark \ref{remark: bdry tubular neigh} it suffices to show that $x+\rho\eta\in\partial B^\phi(A,\rho)$. For this, choose $\bar\rho\in(\rho,\bm{r}^\phi_A(x,\eta))$. Then we have already shown that $x+\bar\rho\eta\in S^\phi(A,\bar\rho)\cap\Unp^\phi(A)$ and clearly  $\bm{\xi}^\phi_A(x+\bar\rho\eta)=x$. We now claim that $\bm{\xi}^\phi_{B^\phi(A,\rho)}(x+\bar\rho\eta)=x+\rho\eta$, which will imply the required inclusion. Since
$$x+\rho\eta\in (x+\bar\rho\eta+(\bar\rho-\rho)B^\circ)\cap B^\phi(A,\rho)\qquad\text{and}\qquad (x+\bar\rho\eta+(\bar\rho-\rho)\Int(B^\circ))\cap B^\phi(A,\rho)=\varnothing,
$$
we get $x+\rho\eta\in \bm{\xi}^\phi_{B^\phi(A,\rho)}(x+\bar\rho\eta)$ and $\delta^\phi_{B^\phi(A,\rho)}(x+\bar\rho\eta)=\bar\rho-\rho>0$. Let $z\in \xi^\phi_{B^\phi(A,\rho)}(x+\bar\rho\eta)$ be arbitrarily chosen. Then
$$
z\in (x+\bar\rho\eta+(\bar\rho-\rho)\partial B^\circ)\cap (a+\rho\partial B^\circ)\qquad \text{for some }a\in A
$$
and
$$
(x+\bar\rho\eta+(\bar\rho-\rho)\Int( B^\circ))\cap (a+\rho\Int( B^\circ))=\varnothing.
$$
Hence $z=a+\rho\eta_1=x+\bar\rho\eta+(\bar\rho-\rho)\eta_2$ for some $\eta_1,\eta_2\in\partial B^\circ$. Since $B^\circ$ is smooth and strictly convex, it follows that $\eta_1=-\eta_2$. This implies that $a=x+\bar\rho\eta+\bar\rho\eta_2\in \bm{\xi}^\phi_A(x+\bar\rho\eta)$, thus $a=x$ and $\eta=-\eta_2=\eta_1$, which yields $z=x+\rho\eta$.

``$\supseteq$'': Let $z\in \partial^v_+B(A,\rho)$. Then there is some $x\in A$ with
$$
z\in x+\rho B^\circ\subseteq B^\phi(A,\rho)\qquad\text{and}\qquad z\in S^\phi(A,\rho).
$$
Furthermore there are $y\notin B^\phi(A,\rho)$ and $\epsilon>0$ such that
$$
z\in y+\epsilon B^\circ\qquad\text{and}\qquad (y+\epsilon\Int(B^\circ))\cap B^\phi(A,\rho)=\varnothing.
$$
This implies that $\bm{\delta}_A^\phi(y)=\rho+\epsilon$. Moreover,
$$
\rho+\epsilon\le \phi^*(y-x)\le \phi^*(y-z)+\phi^*(z-x)\le \epsilon+\rho,
$$
and hence $\phi^*(y-x)=\rho+\epsilon$, $\phi^*(y-z)=\epsilon$ and $\phi^*(z-x)=\rho$. Since $\phi^*$ is strictly convex, we also get
$y-z=s(z-x)$ with some $s>0$ and therefore $z=\frac{1}{1+s}y+\frac{s}{1+s}x$. We set $\eta=(\rho+\epsilon)^{-1}(y-x)\in\partial B^\circ$ and thus get
$\bm{\delta}^\phi_A(y)=\bm{\delta}^\phi_A(x+(\rho+\epsilon)\eta)=\rho+\epsilon$. From \cite[Lemma 2.38 (g)]{MR4160798} we now conclude that $(x,\eta)\in N^\phi(A)$ and $\bm{r}^\phi_A(x,\eta)\ge \rho+\epsilon>\rho$, which yields $z=f_\rho(x,\eta)\in f_\rho(N^\phi(A) \cap \{ \bm{r}^\phi_A > \rho   \})$.

\medskip

Now we turn to \eqref{lem: unp and level sets eq1}.
We fix an arbitrary $ \rho > 0$ and $ x \in S^\phi(A, \rho) \cap \Unp^\phi(A) $. Recall that  $ \bm{\delta}^\phi_A $ is semiconcave on $  \mathbf{R}^{n+1} \setminus B^\phi(A, s) $ for every $ s > 0 $ (see section \ref{sec:distnbperi}).  Since $ x \in S^\phi(A, \rho) \cap \Unp^\phi(A) $, the distance function  $\bm{\delta}^\phi_A $ is differentiable at $x$ with $0  \notin \{\nabla \bm{\delta}^\phi_A(x)\}=\partial \bm{\delta}^\phi_A(x) $, where we use that the generalized subgradient coincides with the subgradient from convex analysis for semiconcave functions (see \cite[Remark 1.4]{MR0816398} and the references to \cite{MR1058436}  given there). Hence we infer from \cite[Theorem 3.3 and Proposition 1.7]{MR0816398}  that there exist $ \epsilon, \delta > 0$, $ u \in \mathbf{S}^n$ and a lipschitzian semiconvex function $ f : x+u^\perp \rightarrow \mathbf{R}$ such that
\begin{equation*}
 \epi(f) \cap U_{\epsilon, \delta}(x, u) = B^\phi(A, \rho) \cap U_{\epsilon, \delta}(x, u) \quad \textrm{and} \quad    \graph(f) \cap U_{\epsilon, \delta}(x, u) = S^\phi(A, \rho) \cap U_{\epsilon, \delta}(x, u).
\end{equation*}
(One can easily see that $ u = \nabla \bm{\delta}^\phi_A(x)/ |\nabla \bm{\delta}^\phi_A(x)| $, but this is not relevant here). Since semiconvex functions are pointwise twice-differentiable almost everywhere by a classical theorem of Alexandrov, we conclude that for $ \mathcal{H}^n $ a.e.\ $ y \in S^\phi(A, \rho) \cap U_{\epsilon, \delta}(x,u) $  there exists an open ball $ B \subseteq U_{\epsilon, \delta}(x,u) $ such that $ B \cap B^\phi(A, \rho) = \varnothing$ and $ y \in \Clos(B) \cap B^\phi(A, \rho)$. Therefore it follows from Remark \ref{remark: bdry tubular neigh} that
\begin{equation*}
    \mathcal{H}^n(S^\phi(A, \rho) \cap U_{\epsilon, \delta}(x,u) \setminus \partial^v_+B^\phi(A, \rho)) =0.
\end{equation*}
Since $ x $ is arbitrarily chosen in $ S^\phi(A, \rho) \cap \Unp^\phi(A)$, we obtain the assertions in \eqref{lem: unp and level sets eq1}.

\medskip

The assertions in \eqref{lem: unp and level sets eq2} follow immediately from what we have already shown.
\end{proof}

For a closed set  $ A \subseteq \mathbf{R}^{n+1} $, a bounded Borel set $D\subset\R^{n+1}\times B^\circ$ and $\rho>0$, we define
$$
P^\phi_\rho(A,D)=\left\{x\in\Unp(A)\setminus A:\bm{\psi}^\phi_A(x)\in D,\bm{\delta}_A^\phi(x)\le \rho\right\}
$$
and
$$
V(A,D;\rho):=\mathcal{L}^{n+1}(P^\phi_\rho(A,D)).
$$
Recall that $\mathcal{W}^\phi=B^\circ$.  If   $ \xi : \mathbf{R}^{n+1} \setminus A \rightarrow \partial A $ and $ \nu : \mathbf{R}^{n+1} \setminus A \rightarrow \partial \mathcal{W}^\phi $ are two Borel functions such that
\begin{equation}\label{eq:Borelf}
\xi(x)\in \bm{\xi}^\phi_A(x)\qquad\text{and}\qquad \nu(x) \in \bm{\nu}^\phi_A(x),
\end{equation}
then we also define
$$
\overline{P}^\phi_\rho(A,D)=\left\{x\in \R^{n+1}\setminus A : (\xi(x),\nu(x))\in D,\bm{\delta}_A^\phi(x)\le \rho\right\}
$$
and
$$
\overline{V}(A,D;\rho):=\mathcal{L}^{n+1}(\overline{P}^\phi_\rho(A,D)).
$$
Clearly, we have $\overline{V}(A,D;\rho)={V}(A,D;\rho)$, and if
$D=A\times B^\circ$, then $\overline{V}(A,D;\rho)={V}(A,D;\rho)=\mathcal{L}^{n+1}(B^\phi(A,\rho)
\setminus A)$. Furthermore, note that if $x\in S^\phi(A,\rho)\cap  \Unp^\phi(A)\setminus A$, then $\xi(x)=x-\rho\cdot \nu(x)$.

\begin{Theorem}\label{theo: derivative of volume}
Let  $A \subseteq \mathbf{R}^{n+1} $ be a closed set, and let $ \phi $ be a uniformly convex $ \mathcal{C}^2 $-norm on $ \mathbf{R}^{n+1} $. Let  $D\subset\R^{n+1}\times \mathcal{W}^\phi$ be a bounded Borel set, and let $ \xi : \mathbf{R}^{n+1} \setminus A \rightarrow \partial A $ and $ \nu : \mathbf{R}^{n+1} \setminus A \rightarrow \partial \mathcal{W}^\phi $  be Borel functions as in \eqref{eq:Borelf}. Define a Borel function $ u: \mathbf{R}^{n+1} \setminus A \rightarrow \mathbf{S}^n $ by $u(x) = \bm{n}^\phi(\nu(x))$ for $x\in \R^{n+1}\setminus A$.
Then, for every $ \rho > 0 $  the right derivative $ V'_+(A,D;\rho) $ and the left derivative $ V'_-(A,D;\rho) $ of $ V $ at $ \rho $ exist and are given by
\begin{flalign}\label{theo: derivative of volume:1}
V'_+(A,D;\rho) & = \sum_{d=0}^n \int_{\widetilde{N}^\phi_d(A) \cap D\cap \{ \bm{r}^\phi_A> \rho\}}\phi(\bm{n}^\phi(\eta))J^\phi_A(a,\eta)\,\rho^{n-d}\prod_{i=1}^d(1 + \rho \kappa^\phi_{A,i}(a, \eta))\, d\mathcal{H}^n(a,\eta) \notag \\
& = \sum_{i=0}^n \rho^i \int_{N^\phi(A)\cap D \cap \{ \bm{r}^\phi_A > \rho\}}\phi(\bm{n}^\phi(\eta))J^\phi_A(a,\eta)\bm{H}^\phi_{A,i}(a,\eta)\, d\mathcal{H}^n(a,\eta) \notag \\
& = \int_{\partial^v_+ B^\phi(A,\rho)}\ind_D(\xi(x),\nu(x))  \phi(u(x))\, d\mathcal{H}^n(x)
\end{flalign}
and
\begin{flalign}\label{theo: derivative of volume:2}
V'_-(A,D;\rho)  & = \sum_{d=0}^n\int_{\widetilde{N}^\phi_d(A) \cap D \cap \{\bm{r}^\phi_A \geq \rho\}}\phi(\bm{n}^\phi(\eta)) J^\phi_A(a,\eta) \,\rho^{n-d}\prod_{i=1}^d(1 + \rho\kappa^\phi_{A,i}(a, \eta)) \, d\mathcal{H}^n(a,\eta) \notag \\
& = \sum_{i=0}^n \rho^i \int_{N^\phi(A)\cap D \cap \{ \bm{r}^\phi_A \geq  \rho\}}\phi(\bm{n}^\phi(\eta))J^\phi_A(a,\eta)\bm{H}^\phi_{A,i}(a,\eta)\, d\mathcal{H}^n(a,\eta) \notag \\
& =  V'_+(A,D; \rho)  +  \int_{S^\phi(A,\rho) \setminus \Unp^\phi(A)} \big(\bm{1}_D(x-\rho\nu(x),\nu(x))\\
&\qquad\qquad\qquad\qquad \qquad\qquad \quad+ \bm{1}_D(x+\rho\nu(x),-\nu(x)\big)  \phi(u(x))\, d\mathcal{H}^n(x).\nonumber
\end{flalign}
Consequently, $ V(A,D;\cdot) $ is differentiable at $ \rho>0 $ if
\begin{equation*}
	\int_{N^\phi(A)\cap D \cap \{ \bm{r}^\phi_A = \rho\}}\phi(\bm{n}^\phi(\eta))J^\phi_A(a,\eta)\bm{H}^\phi_{A,i}(a,\eta)\, d\mathcal{H}^n(a,\eta) = 0 \qquad \textrm{for $ i =0, \ldots , n $},
\end{equation*}
and this happens for all but countably many $ \rho \in (0,\infty)$.

Furthermore, $V(A,D;\cdot)$ is differentiable at $\rho>0$ if  and only if
$$ \int_{S^\phi(A,\rho) \setminus \Unp^\phi(A)} \big(\bm{1}_D(x-\rho\nu(x),\nu(x))+ \bm{1}_D(x+\rho\nu(x),-\nu(x)\big)  \phi(u(x))\, d\mathcal{H}^n(x).
$$
In particular, $V(A,(\R^{n+1})^2;\cdot)$ is differentiable at $\rho>0$ if and only if $\mathcal{H}^n(S^\phi(A,\rho) \setminus \Unp^\phi(A))=0$.
\end{Theorem}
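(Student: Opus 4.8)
The plan is to obtain the final statement as a formal consequence of the formulas for $V'_+(A,D;\cdot)$ and $V'_-(A,D;\cdot)$ already proved in the earlier part of the theorem, combined with the elementary fact that a real-valued function on an open interval is differentiable at a point exactly when its two one-sided derivatives there exist and coincide.

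First I would record the standing facts: since $D$ is bounded and $A$ is closed, $P^\phi_\rho(A,D)$ is bounded, so $V(A,D;\cdot)$ is finite, and it is non-decreasing because $P^\phi_\rho(A,D)\subseteq P^\phi_{\rho'}(A,D)$ for $0<\rho\le\rho'$. Moreover, by the part of the theorem already established, $V'_+(A,D;\rho)$ and $V'_-(A,D;\rho)$ exist for every $\rho>0$, and they are \emph{finite}: indeed, $\phi(\bm{n}^\phi(\eta))\le\max_{\mathbf{S}^n}\phi<\infty$, and $N^\phi(A)\cap D\subseteq(B\times\partial\mathcal{W}^\phi)\cap N^\phi(A)$ for a suitable compact $B$, so by the integrability estimate \eqref{theo: Steiner closed 3.1} (applied on $\{\bm{r}^\phi_A>\rho\}$, where $\inf\{\rho,\bm{r}^\phi_A\}=\rho$) each of the signed integrals in the second expressions for $V'_\pm$ is absolutely convergent. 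Consequently $V(A,D;\cdot)$ is differentiable at $\rho>0$ if and only if $V'_+(A,D;\rho)=V'_-(A,D;\rho)$, i.e.\ if and only if $V'_-(A,D;\rho)-V'_+(A,D;\rho)=0$. By the third displayed equality in \eqref{theo: derivative of volume:2}, this difference equals precisely the surface integral $\int_{S^\phi(A,\rho)\setminus\Unp^\phi(A)}\big(\bm{1}_D(x-\rho\nu(x),\nu(x))+\bm{1}_D(x+\rho\nu(x),-\nu(x))\big)\phi(u(x))\,d\mathcal{H}^n(x)$, which yields the asserted equivalence. I would also point out that since $\phi$ is a norm and $u(x)\in\mathbf{S}^n$, one has $\phi(u(x))\ge c_0:=\min_{\mathbf{S}^n}\phi>0$, so the integrand is non-negative; hence $V'_-\ge V'_+$ always, and the vanishing of the integral is equivalent to the $\mathcal{H}^n$-a.e.\ vanishing of its integrand on $S^\phi(A,\rho)\setminus\Unp^\phi(A)$.

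For the "in particular" claim I would specialize $D=(\R^{n+1})^2$, so that both indicator functions are identically $1$ and the surface integral reduces to $2\int_{S^\phi(A,\rho)\setminus\Unp^\phi(A)}\phi(u(x))\,d\mathcal{H}^n(x)$; since the integrand is bounded below by the positive constant $c_0$, this integral vanishes if and only if $\mathcal{H}^n(S^\phi(A,\rho)\setminus\Unp^\phi(A))=0$, which is the assertion. The only point needing a word of care is that $(\R^{n+1})^2$ is not a bounded Borel set, so the hypotheses of the theorem are not literally met; I would dispose of this either by observing that $P^\phi_\rho(A,(\R^{n+1})^2)=P^\phi_\rho(A,A\times\mathcal{W}^\phi)$ (since $\bm\psi^\phi_A$ takes values in $\partial A\times\partial\mathcal{W}^\phi$) and restricting to compact $A$, so that $A\times\mathcal{W}^\phi$ is a bounded Borel set and the previous equivalence applies directly, or by a routine exhaustion of $A$ by bounded Borel pieces $D_k\uparrow A\times\mathcal{W}^\phi$. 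I do not expect any genuine obstacle: the whole statement is a direct corollary of \eqref{theo: derivative of volume:2}, and the "hard part," such as it is, is merely the bookkeeping around the unboundedness of $(\R^{n+1})^2$ and the verification that $V'_\pm(A,D;\rho)$ are finite, so that equality of the one-sided derivatives genuinely amounts to differentiability.
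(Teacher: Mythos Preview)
Your proposal addresses only the concluding ``Furthermore'' and ``In particular'' assertions, treating the formulas \eqref{theo: derivative of volume:1}--\eqref{theo: derivative of volume:2} and the countability claim as already established. For those final assertions your argument is correct and coincides with what the paper leaves implicit: $V'_-(A,D;\rho)-V'_+(A,D;\rho)$ is exactly the non-negative surface integral by the third line of \eqref{theo: derivative of volume:2}, so differentiability is equivalent to its vanishing; for the global case the integrand becomes $2\phi(u(\cdot))\ge 2\min_{\mathbf{S}^n}\phi>0$, giving the equivalence with $\mathcal{H}^n(S^\phi(A,\rho)\setminus\Unp^\phi(A))=0$. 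Your handling of the boundedness of $D$ is appropriate; the paper's section preamble tacitly restricts the global statement to compact $A$.

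The substantial content of the theorem, however, is the derivation of the one-sided derivative formulas themselves, which you do not touch. The paper's route is: apply Corollary~\ref{theo: change of variable} with $h=\bm{1}_D\circ(\xi,\nu)$ and Fubini to write $\overline{V}(A,D;\rho)=\int_0^\rho g(t)\,dt$, where $g(t)$ is the first curvature integral in \eqref{theo: derivative of volume:1} with $\rho$ replaced by $t$; invoke dominated convergence (using the monotone pointwise limits $\bm{1}_{\{\bm{r}^\phi_A>s\}}\to\bm{1}_{\{\bm{r}^\phi_A>t\}}$ as $s\downarrow t$ and $\to\bm{1}_{\{\bm{r}^\phi_A\ge t\}}$ as $s\uparrow t$) to show $g$ is right-continuous with left limits, whence $V'_+=g(\rho)$ and $V'_-=\lim_{s\uparrow\rho}g(s)$; and obtain the surface-integral forms via the coarea formula for $f_\rho(a,\eta)=a+\rho\eta$, using Lemma~\ref{lem: unp and level sets} to identify $f_\rho(\{\bm{r}^\phi_A>\rho\})=\partial^v_+B^\phi(A,\rho)$ and Lemma~\ref{lem: exterior normal basic properties closed} to resolve the two-point fibres over $S^\phi(A,\rho)\setminus\Unp^\phi(A)$. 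The countability claim comes from the integrability bound \eqref{theo: Steiner closed 3.1} via a standard level-set argument. None of this appears in your proposal.
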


\begin{proof}
Note that if $(a,\eta)\in N^\phi(A)$ and $0<t<\bm{r}^\phi_A(a,\eta)$, then $a+t\eta\in\Unp^\phi(A)$ and therefore $\xi(a+t\eta)=\bm{\xi}^\phi_A(a+t\eta)=a$ and $\nu(a+t\eta)=\bm{\nu}^\phi_A(a+t\eta)=\eta$. Therefore, choosing $ h(x) = \bm{1}_D(\xi(x), \nu(x))$ in Corollary \ref{theo: change of variable}, we get that
\begin{flalign*}
&\overline{V}(A,D;\rho) \notag \\
& \qquad =\sum_{d=0}^n\int_{\widetilde{N}^\phi_d(A) \cap D}\phi(\bm{n}^\phi(\eta)) J^\phi_A(a,\eta)\bigg(\int_{0}^\rho \bm{1}\{t <  \bm{r}^\phi_A(a, \eta)\}\,t^{n-d}\prod_{i=1}^d(1 + t \kappa^\phi_{A,i}(a, \eta)) \, dt\bigg)\, d\mathcal{H}^n(a,\eta)
\end{flalign*}
for  $ \rho > 0 $. Since the integrand is non-negative, we can exchange the order of integration by Fubini theorem \cite[Theorem2.6.2]{MR0257325} and obtain
\begin{flalign*}
&\overline{V}(A,D;\rho) \notag \\
& \qquad = \int_{0}^\rho \left(\sum_{d=0}^n\int_{\widetilde{N}^\phi_d(A) \cap D \cap \{\bm{r}^\phi_A > t\}}\phi(\bm{n}^\phi(\eta)) J^\phi_A(a,\eta) \,t^{n-d}\prod_{i=1}^d(1 + t \kappa^\phi_{A,i}(a, \eta)) \, d\mathcal{H}^n(a,\eta)\right)\, dt
\end{flalign*}
for $ \rho > 0 $. For $ \rho > 0 $, we define
$$
g(\rho)=\sum_{d=0}^n\int_{\widetilde{N}^\phi_d(A) \cap D \cap \{\bm{r}^\phi_A > \rho\}}\phi(\bm{n}^\phi(\eta)) J^\phi_A(a,\eta) \,\rho^{n-d}\prod_{i=1}^d(1 + \rho\kappa^\phi_{A,i}(a, \eta)) \, d\mathcal{H}^n(a,\eta)
$$
and $ f_\rho : N_\rho \rightarrow S^\phi(A, \rho) $ by $ f_\rho(a, \eta) = a + \rho \eta $ (where $ N_\rho $ is defined as in Theorem \ref{theo: Steiner closed}). Since
\begin{equation*}
    \bm{1}_{\{\bm{r}^\phi_A > s\}}(a, \eta) \nearrow \bm{1}_{\{\bm{r}^\phi_A > t\}}(a, \eta) \qquad \textrm{as $ s \searrow t$ for $(a, \eta) \in N^\phi(A)$,}
\end{equation*}
an application of the dominated convergence theorem gives that $ \lim_{s \downarrow t} g(s) = g(t)$ for $ t > 0 $. Using the formula \eqref{theo: Steiner closed: formula 0} in the proof of Theorem \ref{theo: Steiner closed}, we get
\begin{equation}\label{eq:rightder}
g(t) =  \int_{N^\phi(A) \cap D \cap \{\bm{r}^\phi_A > t\}}  \phi(\bm{n}^\phi(\eta))  \,\ap J_n^{N_t} f_t(a, \eta)\, d\mathcal{H}^n(a,\eta) < \infty
\end{equation}
for  $ t > 0 $. Now, noting that $  \overline{V}(A,D;\rho)=\int_0^\rho g(s)\, ds $, we readily obtain that
\begin{equation}
\overline{V}'_+(A,D;\rho)= g(\rho) \qquad \textrm{for $ \rho > 0$.}
\end{equation}
 By Lemma \ref{lem: unp and level sets},  we have $ f_\rho(N^\phi(A) \cap \{ \bm{r}^\phi_A > \rho   \}) = \partial^v_+ B^\phi(A, \rho) \subseteq \Unp^\phi(A) \cap S^\phi(A, \rho) $; moreover,
\begin{equation*}
	\bm{q}(f_\rho^{-1}(\{x\})) = \{\nu(x)   \}=\{\bm{\nu}^\phi_A(x)\} \qquad \textrm{for $ x \in \partial^v_+ B^\phi(A, \rho) $}.
\end{equation*}
Noting \eqref{eq:rightder}, we can apply the coarea formula to conclude that
\begin{equation*}
g(\rho)= \int_{\partial^v_+ B^\phi(A,\rho)}\bm{1}_D(\xi(x),\nu(x)) \phi(u(x))\, d\mathcal{H}^n(x) \qquad \textrm{for  $ \rho > 0$}.
\end{equation*}

Now we deal with the left derivative. Since
$$
\bm{1}_{\{\bm{r}^\phi_A > s\}}\searrow \bm{1}_{\{\bm{r}^\phi_A \geq t\}}\quad \text{as }s\nearrow t,
$$
it follows again from the dominated convergence theorem that
$$
\lim_{s\uparrow t}g(s)=\sum_{d=0}^n\int_{\widetilde{N}^\phi_d(A) \cap D \cap \{\bm{r}^\phi_A \geq t\}}\phi(\bm{n}^\phi(\eta)) J^\phi_A(a,\eta) \,t^{n-d}\prod_{i=1}^d(1 + t\kappa^\phi_{A,i}(a, \eta)) \, d\mathcal{H}^n(a,\eta)
$$
for  $ t > 0 $. Hence we deduce again using the formula \eqref{theo: Steiner closed: formula 0} in the proof of Theorem \ref{theo: Steiner closed} that
\begin{align*}
\overline{V}'_-(A,D;\rho)&=\sum_{d=0}^n\int_{\widetilde{N}^\phi_d(A) \cap D \cap \{\bm{r}^\phi_A \geq \rho\}}\phi(\bm{n}^\phi(\eta)) J^\phi_A(a,\eta) \,\rho^{n-d}\prod_{i=1}^d(1 + \rho\kappa^\phi_{A,i}(a, \eta)) \, d\mathcal{H}^n(a,\eta)\\
&=\int_{N^\phi(A)\cap D \cap \{\bm{r}^\phi_A\ge\rho\}}\phi(\bm{n}^\phi(\eta))\ap J^{N_\rho}_nf_\rho(a,u)\, d\mathcal{H}^n(a,\eta)\\
&=\int_{S^\phi(A, \rho)} \int_{f_\rho^{-1}(\{x\})}\bm{1}_D(a,\eta)\phi(\bm{n}^\phi(\eta))\, d\mathcal{H}^0(a,\eta)\, d\mathcal{H}^n(x) ,
\end{align*}
where \eqref{eq:frho1} and the coarea formula have been used in the last step.

If $x\in S^\phi(A,\rho)\cap\Unp^\phi(A)$, then there is a unique $(a,\eta)\in N^\phi(A)\cap \{\bm{r}^\phi_A\ge\rho\}$ with $f_\rho(a,\eta)=x$, and we have $\xi(x)=\bm{\xi}^\phi_A(x)=a$, $\nu(x)=\bm{\nu}^\phi_A(x)=\eta$ and $u(x)=\bm{n}^\phi(\eta)$. On the other hand, if $ x \in S^\phi(A, \rho) \setminus \Unp^\phi(A) $, then $	\mathcal{H}^0\big(\bm{q}(f_\rho^{-1}(\{x\}))\big) > 1 $, and in addition we have
\begin{equation}\label{eq:late1}
\bm{q}(f_\rho^{-1}(\{x\})) \subseteq N^\phi(S^\phi(A, \rho), x) \qquad \textrm{for   $ x \in S^\phi(A, \rho) $.}
\end{equation}
Consequently, we deduce from Lemma \ref{lem: exterior normal basic properties closed} that
\begin{equation}\label{eq:late2}
\bm{q}(f_\rho^{-1}(\{x\})) = \{ \nu(x), -\nu(x)  \} \qquad \textrm{for $ \mathcal{H}^n $ a.e.\  $ x \in S^\phi(A, \rho) \setminus \Unp^\phi(A) $}.
\end{equation}
Thus we conclude that
\begin{align*}
\overline{V}'_-(A,D;\rho)&=\int_{S^\phi(A,\rho)\cap\Unp^\phi(A)}\bm{1}_D(\xi(x),\nu(x))\phi(u(x))\, d\mathcal{H}^n(x)\\
&\qquad +
\int_{S^\phi(A,\rho)\setminus\Unp^\phi(A)} \big(\bm{1}_D(x-\rho\nu(x),\nu(x))\phi(\bm{n}^\phi(\nu(x))\\
&\qquad\qquad\qquad\qquad\qquad\qquad +
\bm{1}_D(x+\rho\nu(x),-\nu(x))\phi(\bm{n}^\phi(-\nu(x))\big)\, d\mathcal{H}^n(x).
\end{align*}
It follows from \eqref{eq:frho2} and \eqref{lem: unp and level sets eq1} that the first summand on the right side of the preceding equation equals $\overline{V}'_+(A,D;\rho)$. We now obtain the asserted representation of $\overline{V}'_-(A,D;\rho)$ by observing that $\bm{n}^\phi(\pm\nu(x))=\pm u(x)$.

To check the second equality both in \eqref{theo: derivative of volume:1} and in \eqref{theo: derivative of volume:2} we  notice that
\begin{equation*}
    \sum_{d=0}^n \rho^{n-d}\prod_{i=1}^d(1 + \rho\kappa^\phi_{A,i}(a, \eta))\, \bm{1}_{\widetilde{N}^\phi_d(A)}(a, \eta) = \sum_{i=0}^n \rho^i \bm{H}^\phi_{A,i}(a, \eta) \qquad \textrm{for $ (a, \eta) \in \widetilde{N}^\phi(A)$}
\end{equation*}
and we use the integrability condition in \eqref{theo: Steiner closed 3.1} proved in Theorem \ref{theo: Steiner closed} to interchange summation and integral.

Finally,  for   $ \epsilon >0 $ and $ s > 0 $ the set
\begin{equation*}
    I_{\epsilon, s} = \bigg\{  t \in[ s,\infty) :   \int_{N^\phi(A) \cap \{ \bm{r}^\phi_A = t\}}\phi(\bm{n}^\phi(\eta))J^\phi_A(a,\eta)|\bm{H}^\phi_{A,i}(a,\eta)|\, d\mathcal{H}^n(a,u)  \geq \epsilon \bigg\}
\end{equation*}
is finite by the integrability property in \eqref{theo: Steiner closed 3.1}. This readily implies that
\begin{equation*}
    \int_{N^\phi(A) \cap \{ \bm{r}^\phi_A = \rho\}}\phi(\bm{n}^\phi(\eta))J^\phi_A(a,\eta)\bm{H}^\phi_{A,i}(a,\eta)\, d\mathcal{H}^n(a,\eta) \neq 0
\end{equation*}
for at most countably many different values of $ \rho$.
\end{proof}

\begin{Remark}\label{rem:finsec3}
The derivative of the volume function has been the subject of several investigations; see \cite{MR442202}, \cite{MR2031455}, \cite{MR2218870}, \cite{MR2865426} and \cite{MR4329249}.


In order to compare our characterization of the differentiability of the parallel volume function of a compact set (in the non-localized setting) with the one in \cite{MR4329249}, we introduce the following notation. If $F\subseteq\R^{n+1} $ is a measurable set, then we write $F^1$ for the set of all $x\in\R^{n+1}$ for which the $(n+1)$-dimensional density of $F$ at $x$ equals $1$.  Let  $A\subset\R^{n+1}$ be compact and $\rho>0$. Then it follows from \eqref{eq:late1} and \eqref{eq:late2} that
$$
\mathcal{H}^n\left([S^\phi(A,\rho)\setminus\Unp^\phi(A)]\setminus B^\phi(A,\rho)^1\right)=0.
$$
On the other hand, we also have
\begin{align*}
\mathcal{H}^n\left(S^\phi(A,\rho)\cap B^\phi(A,\rho)^1\cap   \Unp^\phi(A)
\right)
 =\mathcal{H}^n\left(S^\phi(A,\rho)\cap  \Unp^\phi(A)\setminus  \partial^v_+B^\phi(A,\rho)\right)=0,
\end{align*}
where we use \eqref{lem: unp and level sets eq1}  and the fact that
$B^\phi(A,\rho)^1\cap  \partial^v_+B^\phi(A,\rho)=\varnothing$. Thus we see that if $\Delta$ denotes the symmetric difference operator for subsets of $\R^{n+1}$, then
$$
\mathcal{H}^n\left([S^\phi(A,\rho)\cap B^\phi(A,\rho)^1]\Delta
[S^\phi(A,\rho)\setminus\Unp^\phi(A)]
\right)=0.
$$

More generally, our method allows us to  study  the differentiation of {\em local}  parallel volumes in an anisotropic setting. Such a localization was suggested in an isotropic framework by results in \cite{MR3939268} (note however that \cite[Lemma 2.9]{MR3939268} is not correct, which  affects the proof of \cite[Proposition 2.10]{MR3939268} for instance).
\end{Remark}

\begin{Remark}
By the arguments in the proof of Theorem \ref{theo: derivative of volume} it also follows that $V_+'(A,D;\cdot)$ is continuous from the right and $V_-'(A,D;\cdot)$ is continuous from the left.
\end{Remark}

\section{Alexandrov points of sets of positive reach}\label{sec: Alexandrov points}

Throughout this section, $ \phi $ is a uniformly convex $\mathcal{C}^2 $-norm on $ \mathbf{R}^{n+1} $ with corresponding gauge body $B$. For $\rho>0$ we set $B^\phi(\rho)=\rho B^\circ=\rho \mathcal{W}^\phi$ and $B^\phi= B^\circ=\mathcal{W}^\phi$. Recall that if $\phi$ is the Euclidean norm, then the upper index $\phi$ is omitted.

\paragraph{Notation.} For $a\in \mathbf{R}^{n+1}$, $u\in\mathbf{S}^n$ and $\varepsilon,\delta>0$ we define
$$
u^\perp=\{x\in \mathbf{R}^{n+1}:x\bullet u=0\},
$$
$$
U_\varepsilon(a,u)=\{x\in a+u^\perp:|x-a|<\varepsilon\},\quad U_{\varepsilon,\delta}(a,u) = \{ x + \lambda u : x \in U_\epsilon(a,u),\; \lambda \in  (-\delta, \delta)\}.
$$
If $f:a + u^\perp\to\R$ is a function (and $u$ is a given orienting normal vector), we write
\begin{equation}\label{eq: epigraph}
\graph(f)=\{x-f(x)u:x\in a + u^\perp\}\qquad\text{and}\qquad  \epi(f)=\{x-su:s\ge f(x),\, x \in a + u^\perp \}
\end{equation}
for the graph of $f$ and the epigraph of $f$ (with respect to $u$), respectively.

\subsection{Positive $\phi$-reach}

\begin{Definition}\label{def: reach}
Let $ A \subseteq \mathbf{R}^{n+1} $ be a closed set. The \emph{$ \phi $-reach}   of $ A $ is defined as the non-negative number
\begin{equation*}
\reach^\phi(A)=\sup\{\rho\ge 0:B^\phi(A, \rho) \subseteq \Unp^\phi(A)\}.
\end{equation*}
The set $A$ is said to have \emph{positive $ \phi $-reach} if $\reach^\phi(A)>0$.
\end{Definition}

In the following, we say that a convex body $L\subset \mathbf{R}^{n+1}$ slides freely inside a convex body
$K\subset \mathbf{R}^{n+1}$ if for each $x\in\partial K$ there is some $t\in \mathbf{R}^{n+1}$ such that $x\in L+t\subseteq K$ (see \cite[Section 3.2]{MR3155183}). In particular, $L$ slides freely inside $K$ if and only if $L$ is a summand of $K$. It follows from \cite[Theorem 3.2.12]{MR3155183} that if $L,K\subset \mathbf{R}^{n+1}$ are convex bodies of class $C^2_+$, then there is some $\rho>0$ (depending on $L,K$) such that $\rho L$ slides freely inside $K$.

\begin{Lemma}\label{lem:comparereach}
Let $A\subset \mathbf{R}^{n+1}$ be a closed set. Let $\phi,\bar\phi$ be uniformly convex $C^2$ norms on $\mathbf{R}^{n+1}$ with corresponding gauge bodies $B,\bar B$. Let $\rho>0$ be such that $B^\phi(\rho)=\rho B^\circ$ slides freely inside $B^{\bar\phi}=\bar B^\circ$. If $\reach^{\bar\phi}(A)>r$, then $\reach^{\phi}(A)>\rho r$.
\end{Lemma}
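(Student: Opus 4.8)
The plan is to translate everything into statements about $\phi^*$- and $\bar\phi^*$-metric balls and the unique-footpoint property, and then exploit the free-sliding hypothesis together with the characterization of nearest-point projections via touching Wulff shapes. First I would recall that, by Definition \ref{def: reach}, $\reach^{\bar\phi}(A)>r$ means $B^{\bar\phi}(A,r)\subseteq \Unp^{\bar\phi}(A)$, and the goal is to show $B^\phi(A,\rho r)\subseteq \Unp^\phi(A)$. The key dictionary is this: for a point $x\notin A$, we have $x\in \Unp^{\psi}(A)$ with footpoint $c$ exactly when the $\psi^*$-ball $c+\bm{\delta}^\psi_A(x)\,\mathcal{W}^\psi$ (a translate of a scaled Wulff shape $\mathcal{W}^\psi=B^{\psi}$) contains $x$ on its boundary and meets $A$ only in $c$; more precisely, $\bm{\delta}^\psi_A(x)=\min\{\lambda\ge 0:x\in A+\lambda \mathcal{W}^\psi\}$ by \eqref{eq: distance function}, and uniqueness of the footpoint is equivalent to the translate of $\bm{\delta}^\psi_A(x)\mathcal{W}^\psi$ through $x$ with center in $A$ being unique and touching $A$ at a single point. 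The hypothesis that $\rho B^\circ = B^\phi(\rho)$ slides freely inside $\bar B^\circ = B^{\bar\phi}$ is precisely what lets us ``roll'' a $\phi^*$-ball of radius $\rho$ inside any $\bar\phi^*$-ball of radius $1$, and by scaling, a $\phi^*$-ball of radius $\rho s$ inside a $\bar\phi^*$-ball of radius $s$ for every $s>0$.

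The core of the argument runs as follows. Fix $x$ with $0<\bm{\delta}^\phi_A(x)=:d\le \rho r$; I must show $x\in\Unp^\phi(A)$. Pick any footpoint $c\in\bm{\xi}^\phi_A(x)$, so $x\in c+d\,B^\circ=c+d\,\mathcal{W}^\phi$ and $\Int(c+d\,\mathcal{W}^\phi)\cap A=\varnothing$. Write $d=\rho s$ with $s=d/\rho\le r$. Since $\rho B^\circ$ slides freely inside $\bar B^\circ$, the scaled body $d\,B^\circ = \rho s\,B^\circ$ slides freely inside $s\,\bar B^\circ = s\,\mathcal{W}^{\bar\phi}$; in particular there is a translate $s\,\mathcal{W}^{\bar\phi}+t$ with $x\in d\,\mathcal{W}^\phi + (c) \subseteq s\,\mathcal{W}^{\bar\phi}+t$ — here I use the sliding property at the boundary point of $c+d\,\mathcal{W}^\phi$ lying opposite to $x$, or more robustly I use that $d\,\mathcal{W}^\phi$ is a summand of $s\,\mathcal{W}^{\bar\phi}$, so every translate of $d\,\mathcal{W}^\phi$ sits inside a suitable translate of $s\,\mathcal{W}^{\bar\phi}$. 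Then $\Int(s\,\mathcal{W}^{\bar\phi}+t)\cap A \subseteq$ — no, this inclusion can fail, so instead I should argue: the translate $s\mathcal{W}^{\bar\phi}+t'$ obtained so that $c+d\mathcal{W}^\phi\subseteq s\mathcal{W}^{\bar\phi}+t'$ and so that $x\in\partial(s\mathcal{W}^{\bar\phi}+t')$ (this is where genuine rolling, not just the summand property, is used: roll the small Wulff shape until it touches $x$) still has $\Int(s\mathcal{W}^{\bar\phi}+t')\cap A$ possibly nonempty. This is the delicate point, so let me restructure.

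The clean route is to fix $x$ with $0<\bm{\delta}^\phi_A(x)\le \rho r$ and its footpoint $c$, and show $\bm{\delta}^{\bar\phi}_A(x)\le r$ while keeping track that the $\bar\phi^*$-footpoint structure forces uniqueness downstream. Concretely: because $c+\bm{\delta}^\phi_A(x)\mathcal{W}^\phi$ has empty-interior intersection with $A$ and, by the rolling property, is contained in a translate $K:=s\mathcal{W}^{\bar\phi}+t'$ touching $x$ on its boundary with $s=\bm{\delta}^\phi_A(x)/\rho\le r$, I claim I can choose the rolling so that $K$ also has $\Int(K)\cap A=\varnothing$: start with $c+\bm{\delta}^\phi_A(x)\mathcal{W}^\phi$ disjoint (in interior) from $A$, and enlarge/slide to the maximal $\bar\phi^*$-ball through $x$ still disjoint from $\Int$ from $A$; its radius $s'$ satisfies $s'\le \bm{\delta}^{\bar\phi}_A(x)$, but also $s'\ge s$ by the sliding property (the small Wulff shape fits inside, so we can grow at least that far), hence $\bm{\delta}^{\bar\phi}_A(x)\ge s\cdot(\text{something})$ — I need to be careful with the direction of the inequality. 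The honest statement is: $B^\phi(A,\rho r)\subseteq B^{\bar\phi}(A, r')$ for an appropriate $r'$, combined with $\reach^{\bar\phi}(A)>r$, but I actually want to conclude unique-footpoint for $\phi$. The right final step is to invoke Lemma \ref{lem suff condition for finite curvature}(b) or, better, the direct geometric characterization: $x\in\Unp^\phi(A)$ iff there is a $\phi^*$-ball through $x$ meeting $A$ in exactly one point and containing $x$ on its boundary. Since $\reach^{\bar\phi}(A)>r\ge s$, the point $x$ (which lies within $\bar\phi^*$-distance $\le r$ of $A$ once we verify $\bm{\delta}^{\bar\phi}_A(x)\le r$, using $\bm{\delta}^{\bar\phi}_A\le \rho^{-1}\cdot(\text{const})\cdot$ — wait, the norms satisfy $\bar\phi^*\le \rho^{-1}\phi^*$ up to the sliding constant, giving $\bm{\delta}^{\bar\phi}_A(x)\le \rho^{-1}\bm{\delta}^\phi_A(x)\le r$) has a unique $\bar\phi^*$-footpoint $c^*$, and the $\bar\phi^*$-ball $B^{\bar\phi}(c^*,\bm{\delta}^{\bar\phi}_A(x))$ touches $A$ only at $c^*$. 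Inside this $\bar\phi^*$-ball (near its boundary point $x$), roll in a $\phi^*$-ball of radius $\rho\cdot\bm{\delta}^{\bar\phi}_A(x)$ through $x$: it lies inside $B^{\bar\phi}(c^*,\bm{\delta}^{\bar\phi}_A(x))$, hence its interior misses $A$, and its boundary contains $x$; therefore $x\in\Unp^\phi(A)$ with $\bm{\delta}^\phi_A(x)\le \rho\,\bm{\delta}^{\bar\phi}_A(x)$. Since this holds for every $x$ with $0<\bm{\delta}^\phi_A(x)\le \rho r$ — here one must cross-check that the range $\bm{\delta}^\phi_A(x)\le\rho r$ is consistent with $\bm{\delta}^{\bar\phi}_A(x)\le r$, which it is by $\bm{\delta}^{\bar\phi}_A(x)\le\rho^{-1}\bm{\delta}^\phi_A(x)$ — we conclude $B^\phi(A,\rho r)\subseteq\Unp^\phi(A)$, i.e.\ $\reach^\phi(A)\ge \rho r$; strictness follows because $\reach^{\bar\phi}(A)>r$ gives a little room to replace $r$ by $r+\epsilon$ throughout. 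The main obstacle is making the ``roll a small Wulff shape inside a big one through a prescribed boundary point'' step rigorous from the free-sliding hypothesis and \cite[Theorem 3.2.12]{MR3155183}; this is a standard but somewhat technical convexity argument (it amounts to: if $L$ is a summand of $K$ and $x\in\partial(K+t)$, then there is a translate $L+t'$ with $x\in\partial(L+t')\subseteq K+t$), and handling the touching/uniqueness at $x$ carefully is where the care is needed.
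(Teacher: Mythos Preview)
Your proposal has the right rough strategy---use the free-sliding hypothesis to roll a $\phi^*$-ball inside a $\bar\phi^*$-ball that is known to touch $A$ at a single point---but there is a persistent confusion about \emph{which} ball has interior disjoint from $A$, and this causes the argument to break down. Concretely: if $c\in\bm{\xi}^\phi_A(x)$ and $d=\bm{\delta}^\phi_A(x)$, then the ball with interior disjoint from $A$ is $x+dB^\circ$ (centred at $x$), \emph{not} $c+dB^\circ$ (centred at $c\in A$, which contains $c$ in its interior). You write ``$\Int(c+d\,\mathcal{W}^\phi)\cap A=\varnothing$'' and later ``the $\bar\phi^*$-ball $B^{\bar\phi}(c^*,\bm{\delta}^{\bar\phi}_A(x))$ touches $A$ only at $c^*$''; both are false for the same reason. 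Once you correct the centres, the point $x$ is the \emph{centre} of $B^{\bar\phi}(x,\bm{\delta}^{\bar\phi}_A(x))$, not a boundary point, so ``rolling a $\phi^*$-ball through the boundary point $x$'' no longer makes sense, and even if you roll a $\phi^*$-ball touching the footpoint $c^*$, its centre is some $y\ne x$ and you have only shown $y\in\Unp^\phi(A)$.

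The missing idea is to roll at the \emph{footpoint}, not at $x$. Take any $\phi$-footpoint $c$ of $x$; the ball $x+dB^\circ$ has $c$ on its boundary with some outer unit normal $v$, and then $-v\in N(A,c)$. Since $\reach^{\bar\phi}(A)>r$, the $\bar\phi^*$-ball $c-r\nabla h_{\bar B^\circ}(v)+r\bar B^\circ$ (radius $r$, touching $A$ at $c$ with the same outer normal $v$) satisfies $(\,\cdot\,)\cap A=\{c\}$. Because $dB^\circ$ slides freely inside $r\bar B^\circ$ (as $d\le\rho r$) and both translated balls share the boundary point $c$ with normal $v$, we get $x+dB^\circ\subseteq c-r\nabla h_{\bar B^\circ}(v)+r\bar B^\circ$; hence $(x+dB^\circ)\cap A=\{c\}$, i.e.\ the $\phi$-footpoint is unique. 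The paper carries out exactly this argument, packaged as a short proof by contradiction (assume two distinct footpoints $x_1,x_2$, derive $x_2\in\{x_1\}$).
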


\begin{proof}
Assume that $\reach^{\bar\phi}(A)>r$ and $B^\phi(\rho)=\rho B^\circ$ slides freely inside $B^{\bar\phi}=\bar B^\circ$. Then $s B^\circ$ slides freely inside $r\bar B^\circ$ if $0<s\le \rho r$. Aiming at a contradiction, we assume that there is some $z\in \mathbf{R}^{n+1}\setminus A$ and there are $x_1,x_2\in\partial A$ with $x_1\neq x_2$ and such that
$\{x_1,x_2\}\subseteq (z+s_0B^\circ)\cap A$ and $\Int(z+s_0B^\circ)\cap A=\emptyset$ for some $s_0\in (0,\rho r]$. Then $x_i-z\in\partial (s_0 B^\circ)$ and   $N(s_0B^\circ,x_i-z)=\{v_i\}$ for some unit vector $v_i$, for $i=1,2$ (since $B^\circ$ is smooth). In particular, we have $x_i-z=s_0\nabla h_{B^\circ}(v_i)$ for $i=1,2$. Since $B^\circ$ is of class $C^2_+$ (and hence a Euclidean ball slides freely inside $B^\circ$), it follows that $-v_i\in N
(A,x_i)$ for $i=1,2$. Since $\reach^{\bar\phi}(A)>r$, we conclude that
\begin{equation}\label{eq:pr1}
(x_i-r\nabla h_{\bar B^\circ}(v_i)+r\bar B^\circ)\cap A=\{x_i\}.
\end{equation}
Using first that  $x_i-s_0\nabla h_{ B^\circ}(v_i)=z$ for $i=1,2$ and then that $s_0B^\circ$ slides freely inside $r\bar B^\circ$, we get
$$
\{x_1,x_2\}\subset z+s_0B^\circ\subseteq x_i-s_0 \nabla h_{B^\circ}(v_i)+s_0B^\circ\subseteq x_i-r\nabla h_{\bar B^\circ}(v_i)+r\bar B^\circ.
$$
But then \eqref{eq:pr1} yields
$$
x_2\in (x_1-r\nabla h_{\bar B^\circ}(v_1)+r\bar B^\circ)\cap A=\{x_1\},
$$
a contradiction.
\end{proof}

\begin{Remark}
Assume that $ \phi,\psi $ are any two uniformly convex $ \mathcal{C}^2 $-norms. Then $ \reach^\phi(A) > 0 $ if and only if $ \reach^\psi(A) > 0 $. We say that a closed set $ A \subseteq \mathbf{R}^{n+1} $ is a \emph{set of positive reach} if  $ \reach(A) > 0 $, that is, if $A$ has positive reach with respect to the Euclidean norm. Hence, a set has positive reach if and only if it has positive $\phi$-reach for some (and then for any) uniformly convex $\mathcal{C}^2 $-norm $\phi$ on $ \mathbf{R}^{n+1} $.
\end{Remark}

\begin{Remark}\label{rem: sets of positive reach}
The class of sets of positive reach as defined here is precisely the class of sets of positive reach introduced in \cite[Definition 4.1]{MR0110078}. We recall from \cite[Theorem 4.8 (12)]{MR0110078} that if $A\subseteq \R^{n+1}$ is a set with positive reach, then
\begin{equation*}
	N(A) = \{(a,u) : a \in A, u \in \Nor(A,a), |u|=1   \}.
\end{equation*}
Moreover it follows from Lemma \ref{theo: projectionLipschitz} that $ \bm{\xi}^\phi_A| \{ x \in \mathbf{R}^{n+1} : 0 < \bm{\delta}^\phi_A(x) < \reach^\phi(A)  \} $ is a locally Lipschitz map and $ \bm{\psi}|S^\phi(A,r) $ is a locally bilipschitz homeomorphism onto $ N^\phi(A) $ for $  r \in (0, \reach^\phi(A) )$. In particular,  $ S^\phi(A,r) $ is a $ \mathcal{C}^{1,1} $ closed hypersurface for $  r \in (0, \reach^\phi(A) )$ and $ N^\phi(A) $ is a closed Lipschitz $ n $-dimensional submanifold of $ \mathbf{R}^{n+1}\times \mathbf{R}^{n+1} $.
\end{Remark}

In the next section we exploit the fundamental connection between sets of positive reach and semiconvex functions. We will refer to the fundamental work \cite{MR0816398} (see also the cited references therein). Here we recall the notion of a semiconvex function.

\begin{Definition}
Suppose $ U \subseteq \mathbf{R}^k $ is open and convex. Then a function $ f : U \rightarrow \mathbf{R} $ is called \emph{semiconvex} if there exists $ 0 \leq \kappa < \infty$ such that the function  $ U \ni y \mapsto \kappa |y|^2 + f(y)$ is convex. We denote the \emph{generalized Clarke gradient} of $f $ at $ a \in U$ by $ \partial f(a)$ (see \cite{MR0816398} or \cite{MR1058436}).
\end{Definition}

\subsection{Alexandrov points and pointwise curvatures}

The first main result of this section (see Theorem \ref{lem: graphical representability}) states that for a set $ A $ of positive reach  the set $\bm{p}(\widetilde{N}^\phi(A)) $ (which is the set of curvature points of $ A $) can be partitioned as follows
\begin{equation}\label{eq: partition}
    \bm{p}(\widetilde{N}^\phi(A))  = \bm{p}(\widetilde{N}^\phi(A) \setminus \widetilde{N}^\phi_n(A)) \cup \bm{p}(\widetilde{N}^\phi_n(A))
\end{equation}
For a convex body this partition is well known, as  $ \bm{p}(\widetilde{N}^\phi_n(A))$ is the set of normal boundary points (also known as Alexandrov points); see \cite[Notes for Sections 1.5 and 2.6]{MR3155183},  \cite[Lemma 3.1]{MR1654685} and the literature cited there. The second goal of this section is to extend the notion of an Alexandrov point to sets of positive reach. This notion will play a central role in most of the subsequent rigidity statements;  see for instance Theorem \ref{theo: heintze karcher positive reach} and Corollary \ref{theo: lower bound} in the next section.

\begin{Theorem}\label{lem: graphical representability}
Let $ A \subseteq \mathbf{R}^{n+1}$ be a closed set with $ \reach(A) >0$.

If $(a,\eta) \in \widetilde{N}^\phi_n(A)$, $ u \in N(A,a)$ with $ \nabla \phi(u) = \eta $ and $ \nu \in \mathbf{S}^n \setminus u^\perp $, then the following statements hold.
\begin{enumerate}[{\rm (a)}]
    \item\label{lem: graphical representability 1} There exist $ \epsilon(\nu) > 0$ and a function $ f_\nu : U_{\epsilon(\nu)}(a,\nu) \rightarrow \mathbf{R}$ which is differentiable at $ a $ and such that $  \graph(f_\nu) \subseteq \partial A $.
 \item\label{lem: graphical representability 2} There exists a map $ \widetilde{\eta} : U_{\epsilon(u)}(a,u) \rightarrow \partial \mathcal{W}^\phi$ such that $\widetilde{\eta}(b) \in N^\phi(A, b + f_u(b)u)$ for every $ b \in U_{\epsilon(u)}(a,u) $, $ \widetilde{\eta}(a) = \eta $, $\widetilde{\eta}$ is differentiable at $ a $ and the eigenvalues $ \lambda_1 \leq \ldots \leq \lambda_n $ of $ \Der \widetilde{\eta}(a)$ satisfy
\begin{equation*}
    \lambda_i = \kappa^\phi_{A,i}(a,\eta) \qquad \textrm{for $ i = 1, \ldots , n$.}
\end{equation*}
\end{enumerate}

Moreover,  $\bm{p}(\widetilde{N}^\phi_n(A)) \cap \bm{p}(\widetilde{N}^\phi(A) \setminus \widetilde{N}^\phi_n(A)) = \varnothing $.
\end{Theorem}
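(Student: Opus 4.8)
\textbf{Plan of proof for the last statement of Theorem \ref{lem: graphical representability}.}

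The plan is to prove the three labelled assertions first and then derive the disjointness of the two parts of the partition in \eqref{eq: partition} as a consequence. For part \ref{lem: graphical representability 1}, I would start from the fact (Remark \ref{rem: sets of positive reach}) that for $(a,\eta)\in\widetilde{N}^\phi_n(A)$ with associated Euclidean normal $u\in N(A,a)$, the point $a$ has a Euclidean ball of some fixed radius $\le\reach(A)$ touching $A$ from inside at $a$ with inner normal $u$, and a touching ball from outside along $-u$ as well (taking $x$ slightly along the normal ray and using $\reach(A)>0$). This two-sided ball condition, combined with the semiconcavity of the distance functions, lets me invoke \cite[Theorem 3.3 and Proposition 1.7]{MR0816398} exactly as in the proof of Lemma \ref{lem: unp and level sets}: near $a$ the boundary $\partial A$ is simultaneously the graph of a Lipschitz semiconvex function over $a+u^\perp$ (from the ``inside'' ball) and of a Lipschitz semiconcave function (from the ``outside'' ball), hence locally $\partial A$ is a $\mathcal{C}^{1,1}$ hypersurface near $a$ with well-defined tangent plane $u^\perp$. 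Since $\nu\notin u^\perp$, the projection of this $\mathcal{C}^{1,1}$ graph onto $a+\nu^\perp$ is a local diffeomorphism near $a$ by the implicit function theorem, which produces $f_\nu$ on some $U_{\epsilon(\nu)}(a,\nu)$ with $\graph(f_\nu)\subseteq\partial A$ and $f_\nu$ differentiable at $a$ (in fact $\mathcal{C}^{1,1}$ near $a$, which is more than needed).

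For part \ref{lem: graphical representability 2}, I would work with $\nu=u$, so $f:=f_u$ is $\mathcal{C}^{1,1}$ near $a$ with $\Der f(a)=0$ and $\graph(f)\subseteq\partial A$. The key point is to realise $\widetilde\eta$ as (essentially) the normal map of $\partial A$ read off through the Cahn--Hoffman correspondence. Concretely, fix a small $r\in(0,\arl{A}{\phi}(a,\eta))$, let $\bm{\xi}^\phi_A$ denote the nearest point projection, and consider the map $b\mapsto \bm{\nu}^\phi_A\big((b+f(b)u)+r\bar\eta(b)\big)$; but it is cleaner to use that $\bm{\psi}^\phi_A|S^\phi(A,r)$ is a bilipschitz homeomorphism onto $N^\phi(A)$ (Remark \ref{rem: sets of positive reach}) and that the level set $S^\phi(A,r)$ is itself a $\mathcal{C}^{1,1}$ hypersurface near $a+r\eta$, differentiable in the strong sense at $a+r\eta$ because $a+r\eta\in\dmn\Der\bm{\nu}^\phi_A$ (Lemma \ref{theo: distance twice diff} and Definition \ref{def:curv2.8}). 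Then $\widetilde\eta(b)$ is defined to be the second component of $\bm{\psi}^\phi_A$ evaluated at the point of $S^\phi(A,r)$ lying over $b+f(b)u$ along the $\phi$-normal direction; since $b\mapsto b+f(b)u$ parametrises $\partial A$ near $a$ and $\bm{\xi}^\phi_A$ restricted to $S^\phi(A,r)$ is the inverse of $x\mapsto x+r\bm{\nu}^\phi_A(x)$, the map $\widetilde\eta$ is a composition of strongly differentiable maps near $a$, giving $\widetilde\eta(a)=\eta$ and $\widetilde\eta\in N^\phi(A,b+f(b)u)$. To identify the eigenvalues of $\Der\widetilde\eta(a)$: differentiating the identity relating $\widetilde\eta$, the boundary parametrisation and $\bm{\nu}^\phi_A$, and comparing with the defining relation $\kappa^\phi_{A,i}(a,\eta)=\rchi^\phi_{A,i}(a+r\eta)/(1-r\rchi^\phi_{A,i}(a+r\eta))$ together with Lemma \ref{lem: existence of curvatures} and Lemma \ref{lem: distance twice diff}(c), one sees that $\Der\widetilde\eta(a)$ is conjugate to $\Der\bm{\nu}^\phi_A$ restricted to $\Tan(\partial\mathcal{W}^\phi,\eta)$ composed with the inverse of $\Id-r\,\Der\bm{\nu}^\phi_A(a+r\eta)$ — exactly the operator whose eigenvalues are the $\kappa^\phi_{A,i}(a,\eta)$. (Lemma \ref{lem: elementary diff} is tailor-made for the differentiability of this inverse composition.)

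For the final disjointness claim $\bm{p}(\widetilde{N}^\phi_n(A))\cap\bm{p}(\widetilde{N}^\phi(A)\setminus\widetilde{N}^\phi_n(A))=\varnothing$, suppose $a$ lay in both sets, so there are $\eta\in\widetilde{N}^\phi_n(A,a)$ and $\eta'\in\widetilde{N}^\phi(A,a)$ with at least one infinite $\phi$-curvature at $(a,\eta')$. From $(a,\eta)\in\widetilde{N}^\phi_n(A)$ and the argument in part \ref{lem: graphical representability 1}, $\partial A$ is a $\mathcal{C}^{1,1}$ hypersurface near $a$ with a single (up to sign) unit normal there, hence $\mathcal{H}^0(N(A,a))\in\{1,2\}$ and the tangent cone $\Tan(A,a)$ is a half-space with boundary $u^\perp$; in particular $\Nor(A,a)\subseteq\{tu:t\in\mathbf{R}\}$, so $\eta'=\nabla\phi(\pm u)=\pm\eta$. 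Now $\partial A$ being $\mathcal{C}^{1,1}$ near $a$ forces the level sets $S^\phi(A,r)$ to be $\mathcal{C}^{1,1}$ near the corresponding point on the normal ray through $(a,\eta')$ as well, with a uniform one-sided ball condition; applying Lemma \ref{lem suff condition for finite curvature}(a) to that level set gives a finite upper bound on all $\rchi^\phi_{A,i}$, hence $\kappa^\phi_{A,i}(a,\eta')<\infty$ for every $i$, i.e.\ $(a,\eta')\in\widetilde{N}^\phi_n(A)$ — a contradiction. The main obstacle I anticipate is the bookkeeping in part \ref{lem: graphical representability 2}: making the composition defining $\widetilde\eta$ precise enough that its strong differentiability at $a$ is manifest, and then carefully matching its differential with the operator governing the $\kappa^\phi_{A,i}$ without sign or conjugation errors; the ball-condition arguments in parts \ref{lem: graphical representability 1} and in the disjointness step are, by contrast, routine given \cite{MR0816398} and Lemma \ref{lem suff condition for finite curvature}.
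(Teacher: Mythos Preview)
Your approach to part~(a) has a genuine gap: the claim that $\partial A$ is a $\mathcal{C}^{1,1}$ hypersurface near $a$ is not justified and is in fact false in general. The hypothesis $(a,\eta)\in\widetilde{N}^\phi_n(A)$ is a \emph{pointwise} condition (finiteness of the $\phi$-curvatures at that single pair), and positive reach only gives a touching ball from \emph{outside} along each normal direction; there is no inner ball guaranteed. Hence you cannot sandwich $\partial A$ between a semiconvex and a semiconcave graph, and the $\mathcal{C}^{1,1}$ conclusion fails. (For a concrete obstruction: Alexandrov points of a convex body are dense in $\partial C$, so if each had a $\mathcal{C}^{1,1}$ neighbourhood, every convex body would have $\mathcal{C}^{1,1}$ boundary.) The citation of \cite{MR0816398} ``exactly as in the proof of Lemma~\ref{lem: unp and level sets}'' does not transfer: there the argument is applied to the level set $S^\phi(A,\rho)$, which \emph{is} $\mathcal{C}^{1,1}$ by Remark~\ref{rem: sets of positive reach}, not to $\partial A$.

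The paper bypasses this obstacle precisely by staying on the level set: one considers $\psi_\nu=\pi_\nu\circ\bm{\xi}^\phi_A:S^\phi(A,r)\to\nu^\perp$, which is Lipschitz. The key observation is that $\Der\bm{\xi}^\phi_A(a+r\eta)|\Tan(S^\phi(A,r),a+r\eta)$ is injective \emph{exactly because} all $\kappa^\phi_{A,i}(a,\eta)$ are finite (the eigenvalues $1-r\chi^\phi_{A,i}$ are nonzero), and its image lies in $u^\perp$; hence $\Der\psi_\nu(a+r\eta)$ is an isomorphism onto $\nu^\perp$ whenever $\nu\notin u^\perp$. A right-inverse function theorem (e.g.\ \cite{MR1769420}) then produces a map $\varphi_\nu$ into $S^\phi(A,r)$, differentiable at $a$, and $f_\nu=(\bm{\xi}^\phi_A\circ\varphi_\nu)\bullet\nu$ is the desired graph. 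No regularity of $\partial A$ beyond positive reach is used. For part~(b), $\widetilde\eta(b)=\tfrac{1}{r}(\varphi_u(b)-\bm{\xi}^\phi_A(\varphi_u(b)))$ is then automatically differentiable at $a$ and a direct eigenvalue computation gives $\lambda_i=\kappa^\phi_{A,i}(a,\eta)$.

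Your disjointness argument inherits the same flaw, since it again rests on the $\mathcal{C}^{1,1}$ claim. The paper's route is shorter: from Lemma~\ref{lem: exterior normal basic properties closed}\ref{lem: exterior normal basic properties closed: a} one has $\mathcal{H}^0(N^\phi(A,a))\in\{1,2\}$, so any second normal is $\eta'=-\eta=\nabla\phi(-u)$; then $\eta'\bullet u=-\phi(u)\neq 0$, so part~(a) applied with $\nu=\eta'$ gives a graph over $a+(\eta')^\perp$ contained in $\partial A\subseteq A$, and Lemma~\ref{lem suff condition for finite curvature}\ref{lem suff condition to finite curvature 2} (applied at $(a,\eta')$) forces $(a,\eta')\in\widetilde{N}^\phi_n(A)$.
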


\begin{proof} \ref{lem: graphical representability 1}
Suppose $ a =0 $ and $ 0 < r < \reach^\phi(A) $. Fix $ \nu \in \mathbf{S}^n \setminus u^\perp $ and let $ \pi_\nu $ be the orthogonal projection onto $ \nu^\perp$. We consider the lipschitz function $ \psi_\nu  : S^\phi(A,r) \rightarrow  \nu^\perp $, $\pi_\nu \circ \bm{\xi}^\phi_A $. If $ H $ is the halfspace orthogonal to $ u $ which does not contain $u$, we notice the inclusions
\begin{equation*}
    \Der \bm{\xi}^\phi_A(r\eta) [\Tan(S^\phi(A,r), r\eta)] \subseteq \Tan(\bm{\xi}^\phi_A(S^\phi(A,r)), 0) \subseteq \Tan(A,0) \subseteq H.
\end{equation*}
Since $ \Der \bm{\xi}^\phi_A(r\eta)|\Tan(S^\phi(A,r), r\eta)  $ is injective, we infer that
\begin{equation*}
    \Tan(S^\phi(A,r), r\eta) =  \Der \bm{\xi}^\phi_A(r\eta) [\Tan(S^\phi(A,r), r\eta)] = u^\perp
\end{equation*}
and $ \Der \psi_\nu(r\eta) $ is an isomorphism onto $ \nu^\perp $.  Consequently, we can apply the (right-) inverse function theorem in \cite{MR1769420} to infer the existence of a constant $ \epsilon(\nu) > 0 $ and a map $  \varphi_\nu : U_{\epsilon(\nu)}(0,\nu) \rightarrow S^\phi(A,r) $ such that $ \varphi_\nu(0) = r\eta $, $ \varphi_\nu $ is differentiable at $ 0 $ with $ \Der \varphi_\nu(0) = \Der \psi_\nu(r\eta)^{-1} = \Der \bm{\xi}^\phi_A(r\eta)^{-1} \circ (\pi_\nu|u^\perp)^{-1}$ and $\psi_\nu(\varphi_\nu(b)) = b $ for every $ b \in U_{\epsilon(\nu)}(a,\nu)$. We define $ f_\nu: U_{\epsilon(\nu)}(a,\nu) \rightarrow \mathbf{R}$ by
\begin{equation*}
    f_\nu(b) = \bm{\xi}^\phi_A(\varphi_\nu(b)) \bullet \nu \qquad \textrm{for $ b \in U_{\epsilon(\nu)}(a,\nu)$}
\end{equation*}
and we readily check that $ f_\nu(0) =0$, $ \Der f_\nu(0) = (\pi_\nu|u^\perp)^{-1} \bullet \nu $ and $ b + f_\nu(b) \nu = \bm{\xi}^\phi_A(\varphi_\nu(b)) \in \partial A $ for every $ b \in U_{\epsilon(\nu)}(a,\nu)$.

\medskip

\ref{lem: graphical representability 2}
We define $ \widetilde{\eta}: U_{\epsilon(u)}(a,u) \rightarrow \partial \mathcal{W}^\phi$ by
\begin{equation*}
    \widetilde{\eta}(b) =\frac{1}{r}(\varphi_u(b) - \bm{\xi}^\phi_A(\varphi_u(b))) \qquad \textrm{for $ b \in U_{\epsilon(u)}(a,u)$}
\end{equation*}
and notice that  $ \widetilde{\eta}(0) = \eta $ and $ \widetilde{\eta}(b) \in N^\phi(A, b + f_u(b) u)$ for every $ b \in U_{\epsilon(u)}(a,u)$ and
\begin{equation*}
    \Der \widetilde{\eta}(0) = \frac{1}{r}(\Der \bm{\xi}^\phi_A(r\eta)^{-1}|u^\perp -\bm{1}_{u^\perp}).
\end{equation*}
Noting that $ \{  (1 - r \rchi^\phi_{A,i}(r\eta))^{-1}: i = 1, \ldots , n \}$ are the eigenvalues of $\Der \bm{\xi}^\phi_A(r\eta)^{-1}|u^\perp   $, we conclude that the eigenvalues of $ \Der \widetilde{\eta}(0)$ satisfy  the equation $\lambda_i = \kappa^\phi_{A,i}(a,\eta) $ for  $ i \in \{1, \ldots, n \}$.

\medskip

Finally, since $ \nabla \phi(u) \bullet u = \phi(u) \neq 0$ for $ u \in \mathbf{S}^n$ (see \ref{eq: normal wulff shape and gradient phi}), the remaining assertion follows from part \ref{lem: graphical representability 1} and Lemma \ref{lem suff condition for finite curvature}.
\end{proof}

A more refined description of a set of positive reach around the points of the viscosity boundary is given by the following result.

\begin{Theorem}\label{prop:p1}
Let $A\subset\R^{n+1}$ be a closed set with $\reach(A)>0$ and $a\in \partial^v A$. Assume that $N(A,a)=\{u\}$ for some $u\in\bS^n$. Then the following statements hold.
\begin{enumerate}[{\rm (a)}]
\item There are $\varepsilon,\delta>0$ and  a semiconvex lipschitz function $f:a+u^\perp\to\R$ such that $ f(a) =0$, $ f $ is differentiable at $ a $ with $ \Der f(a) =0$ and (with respect to the orienting normal vector $u$)
\begin{equation}\label{prop:p1:eq1}
\graph(f)\cap U_{\varepsilon,\delta}(a,u)=\partial A \cap U_{\varepsilon,\delta}(a,u),\quad \epi(f)\cap U_{\varepsilon,\delta}(a,u)=  A \cap U_{\varepsilon,\delta}(a,u).
\end{equation}
\item  $(a, \nabla \phi(u)) \in \widetilde{N}^\phi_n(A)$ if and only if $ f $ is pointwise twice differentiable at $ a $. In this case, every map $ \nu : a + u^\perp \rightarrow \mathbf{S}^n$ such that $ \nu(b) \in N(\epi(f), b-f(b) u)$ for every $ b \in a + u^\perp$ is differentiable at $ a $ and satisfies
\begin{equation*}
    \Der^2f(a)(\tau_1, \Der(\nabla \phi)(u)(\tau_2)) = \Der (\nabla \phi \circ \nu)(a)(\tau_1) \bullet \tau_2 \qquad \textrm{for $ \tau_1 , \tau_2 \in u^\perp $.}
\end{equation*}
\end{enumerate}

\end{Theorem}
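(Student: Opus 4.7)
The first step is to represent $\partial A$ locally as a graph over $a+u^\perp$. The upper semicontinuity of the normal cone for sets of positive reach, together with $N(A,a)=\{u\}$, ensures that every boundary point $b\in\partial A$ sufficiently close to $a$ admits at least one outer unit normal and that every such normal lies close to $u$. Combined with the positive-reach property $B(y+r u_y,r)\cap A=\varnothing$ for any $u_y\in N(A,y)\cap \mathbf{S}^n$ and $0<r<\reach(A)$, this rules out two distinct boundary points near $a$ lying on a common normal line $b+\mathbf{R}u$: a second one would be interior to the outer ball of the first. Hence the orthogonal projection onto $a+u^\perp$ is locally injective on $\partial A$, producing a Lipschitz function $f$ with $\graph(f)=\partial A$ and $\epi(f)=A$ inside some $U_{\varepsilon,\delta}(a,u)$. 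For semiconvexity, I fix $b_0\in a+u^\perp$ near $a$, let $y_0=b_0-f(b_0)u$, pick $u_{y_0}\in N(A,y_0)\cap \mathbf{S}^n$, and expand the constraint $b-f(b)u\notin \Int B(y_0+r u_{y_0},r)$ for $b$ near $b_0$. Using the Lipschitz bound on $f$, straightforward algebra yields
\begin{equation*}
f(b)\ge f(b_0)+\frac{\tilde u_{y_0}}{u_{y_0}\bullet u}\cdot(b-b_0)-\frac{C}{2}|b-b_0|^2,
\end{equation*}
where $\tilde u_{y_0}$ is the $u^\perp$-component of $u_{y_0}$ and $C$ depends only on $r$, the Lipschitz constant of $f$, and a positive lower bound on $u_{y_0}\bullet u$. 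This is exactly the subgradient characterisation of $C$-semiconvexity, with modulus uniform in $b_0$. Finally, $f(a)=0$ is immediate, and $\Der f(a)=0$ follows because the uniqueness $N(A,a)=\{u\}$ forces the subdifferential $\partial f(a)$ to collapse to $\{0\}$, which for a semiconvex function is equivalent to strong differentiability with vanishing gradient.

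\textbf{Plan for (b).}
For the forward direction, I apply Theorem \ref{lem: graphical representability} at $(a,\nabla\phi(u))\in \widetilde N^\phi_n(A)$ with $\nu=u$. Part (a) supplies a function $f_u$ with $\graph(f_u)\subseteq\partial A$ differentiable at $a$, and part (b) supplies a map $\widetilde\eta:U_{\epsilon(u)}(a,u)\to\partial\mathcal{W}^\phi$ differentiable at $a$, with $\widetilde\eta(a)=\nabla\phi(u)$ and with derivative having finite eigenvalues $\kappa^\phi_{A,i}(a,\nabla\phi(u))$. By the local uniqueness of the graph representation in (a), $f_u$ agrees with $\pm f$ (up to the opposite sign convention used there) near $a$. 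The composition $b\mapsto \nabla\phi^\ast(\widetilde\eta(b))/|\nabla\phi^\ast(\widetilde\eta(b))|$ is a differentiable selection at $a$ of the Euclidean normal map $b\mapsto N(A,b-f(b)u)\cap\mathbf{S}^n$; since in the graph representation this selection equals $(\nabla f(b)+u)/|\nabla f(b)+u|$ and takes value $u$ at $a$, its differentiability at $a$ forces differentiability of $\nabla f$ at $a$, i.e.\ pointwise twice differentiability of $f$ at $a$. For the converse, pointwise twice differentiability of $f$ at $a$ combined with $\Der f(a)=0$ and the semiconvexity from (a) yields the two-sided expansion $f(b)=\tfrac12\Der^2 f(a)(b-a,b-a)+o(|b-a|^2)$. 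For $R>0$ and $r>0$ sufficiently small (depending on the operator norm of $\Der^2f(a)$) the set $A=\epi(f)$ then contains locally both a Euclidean inner ball $B(a-Ru,R)$ and an anisotropic inner ball $U^\phi(a-r\nabla\phi(u),r)$, so in particular $a\in\partial^v_+ A$. Inserting the same two-sided quadratic expansion of $\partial A$ at $a$ into the characterisation of twice differentiability of $\bm\delta^\phi_A$ from \cite{kolasinski2021regularity} (especially Lemma 2.41 and Lemma 2.44 there) yields twice differentiability of $\bm\delta^\phi_A$ at $a+r\nabla\phi(u)$, so $(a,\nabla\phi(u))\in \widetilde N^\phi(A)$; then Lemma \ref{lem suff condition for finite curvature}(b) upgrades this to $(a,\nabla\phi(u))\in \widetilde N^\phi_n(A)$. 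The formula for $\Der(\nabla\phi\circ \nu)(a)$ comes out by direct computation: on the full-measure set where $\nabla f$ exists, any selection $\nu$ coincides with $(\nabla f(b)+u)/|\nabla f(b)+u|$, and pointwise twice differentiability of $f$ at $a$ together with $\nabla f(a)=0$ forces any selection to be strongly differentiable at $a$ with $\Der\nu(a)(\tau_1)=\Der^2 f(a)(\tau_1,\cdot)\in u^\perp$; composing with the self-adjoint automorphism $\Der(\nabla\phi)(u)$ of $u^\perp$ and pairing with $\tau_2$ yields the asserted identity.

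\textbf{Main obstacle.}
I expect the most delicate step to be the implication ``pointwise twice differentiability of $f$ at $a\Rightarrow(a,\nabla\phi(u))\in\widetilde N^\phi(A)$'' in the reverse direction of (b), i.e.\ converting the local second-order behaviour of the graph representation of $\partial A$ into twice differentiability of the anisotropic distance function $\bm\delta^\phi_A$ at the parallel point $a+r\nabla\phi(u)$. This requires carefully combining the upper quadratic control coming from the semiconvexity established in (a) with the two-sided quadratic expansion furnished by pointwise twice differentiability, and invoking the fine regularity results for $\bm\delta^\phi_A$ developed in \cite{kolasinski2021regularity}.
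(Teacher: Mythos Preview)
Your plan for (a) is sound and essentially reproduces by hand what the paper obtains by citing \cite{MR3683461}; the direct argument via upper semicontinuity of the normal cone and the outer-ball expansion is exactly the content of those references, so this is the same approach written out rather than quoted. The forward direction of (b) is also correct and matches the paper: use Theorem~\ref{lem: graphical representability} to produce a differentiable normal selection, identify its tangential part with an element of $\partial f$, and conclude twice differentiability of $f$ from the classical subgradient theory of semiconvex functions. Your derivation of the displayed formula via $\Der\nu(a)(\tau_1)=\Der^2f(a)(\tau_1,\cdot)$ and self-adjointness of $\Der(\nabla\phi)(u)$ is correct.

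The genuine gap is in the reverse implication of (b), and you have correctly flagged it as the obstacle but not supplied a mechanism. Lemmas~2.41 and~2.44 of \cite{kolasinski2021regularity} do not give a sufficient condition for twice differentiability of $\bm\delta^\phi_A$ in terms of a second-order expansion of $\partial A$; they describe consequences of twice differentiability, not criteria for it. The paper's device is different and concrete: set $\eta=\nabla\phi\circ\nu$, define $F(b,t)=b-f(b)u+t\eta(b)$ on $U_\epsilon(a,u)\times(r-\epsilon',r+\epsilon')$, and observe that the Lipschitz map $G(x)=(\pi(\bm\xi^\phi_A(x)),\bm\delta^\phi_A(x))$ satisfies $G\circ F=\Id$. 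Then the elementary Lemma~\ref{lem: elementary diff} forces $G$, and hence $\bm\xi^\phi_A$, to be differentiable at $F(a,r)=a+r\nabla\phi(u)$, which gives $(a,\nabla\phi(u))\in\widetilde N^\phi(A)$; the upgrade to $\widetilde N^\phi_n(A)$ via Lemma~\ref{lem suff condition for finite curvature}\ref{lem suff condition to finite curvature 1} you already have. What you are missing in addition is the verification that $\Der F(a,r)$ is invertible: this requires showing that all eigenvalues of $\Der\eta(a)$ are at most $\reach^\phi(A)^{-1}$, which the paper obtains by the same curvature-comparison argument as in Lemma~\ref{lem suff condition for finite curvature}\ref{lem suff condition to finite curvature 0}, using that $U^\phi(a+\reach^\phi(A)\nabla\phi(u),\reach^\phi(A))$ is disjoint from $A$.
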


\begin{proof}
Let $0<r<\reach(A)$. We start with (a).
By \cite[Theorem 4.8 (12)]{MR0257325} we have $\Tan (A,a)= \{v\in \R^{n+1}:v\bullet u\le 0\}$. Then $\mathcal{C}=\{v\in\R^{n+1}:v\bullet u\le -\frac{1}{2} |v|\}$ defines a closed convex cone such that $\mathcal{C}\subseteq\Int(\Tan(A,a))\cup\{0\}$. Lemma 3.5 in \cite{MR3683461} shows that there is some $s\in (0,r/2)$ such that
\begin{equation}\label{eq:Nr1}
(a+\mathcal{C})\cap B(a,s)\subseteq A.
\end{equation}
 We consider the set
 $$
 M_u=\{z\in\partial A:|u-n_z|<1/4 \text{ for some }n_z\in\Nor(A,z)\cap\bS^{n}\}.
 $$
It follows from \cite[Proposition 3.1 (iii)]{MR3683461} that there is some $s'\in (0,s)$ such that if $z\in\partial A\cap B(a,s')$, then $|u-n_z|<1/4$ whenever $n_z\in\Nor(A,z)\cap \bS^n$. The proof of \cite[Theorem 5.9]{MR3683461} then shows that
$\partial A\cap B(a,s')\subset M_u\cap B(a,s')$ is contained in the graph of a lipschitz semiconvex function $f:a+u^\perp\to\R$ with $f(a)=0$, and hence there are $\varepsilon,\delta>0$ such that (with respect to the orienting normal vector $u$)
\begin{equation}\label{eq:Nr2}
 \partial A \cap U_{\varepsilon,\delta}(a,u) \subseteq \graph(f)\cap U_{\varepsilon,\delta}(a,u).
\end{equation}
In order to verify the remaining assertions, we use \cite[Proposition 3.3]{MR3683461} (see \cite[Theorem 4.18]{MR0110078}). Thus we get
\begin{align}
A\cap B(a,r)&\subseteq \left\{a+x+tu: t \leq \tfrac{1}{2r}(|x|^2+t^2),x\in u^\perp,t\in\R, |x+tu|\le r\right\}\nonumber\\
&\subseteq \left\{a+x+tu:-r\le t\le \tfrac{1}{r}|x|^2,x\in u^\perp,t\in\R, |x|\le r\right\}.\label{eq:Nr3}
\end{align}
At this point the equalities in \eqref{prop:p1:eq1} follows from elementary topology by combining \eqref{eq:Nr1}--\eqref{eq:Nr3}. Moreover, since $ \Nor(\epi(f), a) =\Nor(A,a) = \{tu : t \geq 0\}$, it follows from \cite[Remarks 1.4, Lemma 2.9]{MR0816398} that the generalized Clarke gradient of $ f $ at $ a $ contains only $ 0 $ and $ f $ is differentiable at $ a $ with $ \Der f(a) =0$.

\medskip

(b) Let $\mathcal{N}$  be the family of all maps $ \nu : a + u^\perp \rightarrow \mathbf{S}^n $ such that $ \nu(b) \in N(\epi(f), b - f(b)u)$ for every $ b \in a + u^\perp $, where $f$ is chosen according to \eqref{prop:p1:eq1} with respect to the orienting normal vector $u$. For every $ \nu \in  \mathcal{N}  $  we define the function $ g_\nu : U_\epsilon(a,u) \rightarrow u^\perp$ by
\begin{equation*}
    g_\nu(b) = \frac{\nu(b) - (\nu(b) \bullet u)u}{\nu(b) \bullet u} \quad \textrm{for $ b \in U_\epsilon(a,u)$.}
\end{equation*}
(Notice $ \nu(b) \bullet u > 0 $ for every $ b \in U_\epsilon(a,u) $ since $ f $ is lipschitz). Employing \cite[Lemma 2.9]{MR0816398} we conclude that $ g_\nu(b) \in \partial f(b) $  for every $ b \in U_\epsilon(a,u)$. In particular, if we assume that $(a, \nabla \phi(u)) \in \widetilde{N}^\phi_n(A)$, then it follows from Theorem \ref{lem: graphical representability} that there exists at least one map $ \nu_0 \in \mathcal{N}$ that is differentiable at $ a $. Therefore $ g_{\nu_0} $ is differentiable at $ a $ and the classical theory of subgradients for (semi)convex functions (see \cite{MR0534228} or \cite[Lemma 2.39]{kolasinski2021regularity})  implies that $ f $ is twice differentiable at $ a $.

Suppose $ f $ is twice differentiable at $ a $, $ a =0 $ and $ \nu \in \mathcal{N}$. Then the aforementioned theory of subgradients for (semi)convex functions implies that $ \nu $ is differentiable at $  0$ with $ \Der \nu(0) \bullet \nu(0)=0$ and
\begin{equation}\label{prop:p1 eq2}
 \Der \nu(0)(\tau_1) \bullet \tau_2 =  \Der g_\nu(0)(\tau_1) \bullet \tau_2 = \Der^2 f(0)(\tau_1, \tau_2)
\end{equation}
for $ \tau_1, \tau_2 \in u^\perp$ and for every $ \nu \in  \mathcal{N}$. It follows that
\begin{equation*}
    \Der(\nabla \phi \circ \nu)(0)(\tau_1) \bullet \tau_2 = \Der \nu(0)(\tau_1) \bullet \Der(\nabla \phi)(u)(\tau_2) =  \Der^2f(0)(\tau_1, \Der(\nabla \phi)(u)(\tau_2))
\end{equation*}
for $ \tau_1, \tau_2 \in u^\perp$. Now we choose $ r , \epsilon' > 0 $ so that $ 0 < \epsilon' < r < r + \epsilon' < \reach(A)$ and we define $ J = ( r-\epsilon' , r+ \epsilon') $, $ \eta = \nabla \phi \circ \nu $ and the function $ F : U_\epsilon(0,u) \times J \rightarrow \mathbf{R}^{n+1}$ by \begin{equation*}
    F(b,t) = b - f(b) u + t \eta(b) \qquad \textrm{for $ b \in U_\epsilon(0,u) \times J$.}
\end{equation*}
Then $ F$ is differentiable at $ (0,r) $ and, noting that $ U(\reach^\phi(A)\eta(0), \reach^\phi(A)) \cap \epi(f) \cap U_{\epsilon, \delta}(0,u) = \varnothing$, we conclude from  \eqref{prop:p1 eq2}, employing the same comparison-of-curvatures argument as the one used in the proof of Lemma \ref{lem suff condition for finite curvature}, that all eigenvalues of $\Der \eta(0) $ are smaller or equal than $ \reach^\phi(A)^{-1}$. Consequently $  \Der F(a,r) $ is invertible. Let $ \pi :  \mathbf{R}^{n+1} \rightarrow u^\perp $ be the orthogonal projection onto $  u^\perp $.  Since the function $ G : (\bm{\delta}_A^{\phi})^{-1}(J) \rightarrow u^\perp \times \mathbf{R}$, defined by $ G(x) = (\pi(\bm{\xi}_A^\phi(x)), \bm{\delta}_A^\phi(x))$ for $ x \in (\bm{\delta}_A^{\phi})^{-1}(J) $, is Lipschitzian and satisfies
\begin{equation*}
    F(U_\epsilon(0,u) \times J) \subseteq (\bm{\delta}_A^{\phi})^{-1}(J) \quad \textrm{and} \quad G \circ F = \bm{1}_{U_\epsilon(0,u) \times J},
\end{equation*}
it follows from Lemma \ref{lem: elementary diff} that $ G $ and consequently $ \bm{\xi}_A $ is differentiable at $ r u $. Since $ \arl{A}{\phi}(0, \nabla \phi(u)) \geq \reach^\phi(A) > r $ by \cite[Lemma 4.16]{kolasinski2021regularity}, we infer that $ \bm{\xi}^\phi_A$ is differentiable at $ s \nabla \phi(u) $ for every $ 0 < s < \arl{A}{\phi}(0, \nabla \phi(u))$. Therefore $(0, \nabla \phi(u)) \in \widetilde{N}^\phi(A)$. Since $ f $ is twice differentiable at $ 0$, it follows that $ 0 \in \partial^v_+ A $ and $(0, \nabla \phi(u)) \in \widetilde{N}^\phi_n(A)$ by Lemma \ref{lem suff condition for finite curvature}\ref{lem suff condition to finite curvature 1}.
\end{proof}

Subsequently, we prefer to write $C$ for a set of positive reach. Theorem \ref{prop:p1} motivates the following definition.

\begin{Definition}\label{def: pointwise mean curvature}
Suppose $ C $ is a set of positive reach, $ a \in \partial^v C $, $ N(C,a) = \{u\} $  and $ f : a + u^\perp \rightarrow \mathbf{R}$ is a semiconvex function locally representing $ C $ as in Theorem \ref{prop:p1}. Then $ a $ is said to be an \emph{Alexandrov point of $ C $} if  $ f $ is twice differentiable at $ a $. Moreover, if $ \phi $ is a uniformly convex $ \mathcal{C}^2$-norm and $ k \in\{ 1, \ldots , n\} $, then the \emph{pointwise $ k $-th $ \phi $-mean curvature} of $C$ at $a$ is defined by \index{O5@$\bm{h}^\phi_{C,k}$}
\begin{equation*}
    \bm{h}^\phi_{C,k}(a) = S_k( \Der(\nabla \phi \circ \nu)(a)),
\end{equation*}
where $ \nu : a + u^\perp \rightarrow \mathbf{S}^n  $ is a map differentiable at $ a $ such that $ \nu(b) \in N(\epi(f), b - f(b)u) $ for $ b \in a + u^\perp $ and $S_k( \Der(\nabla \phi \circ \nu)(a))$ is the $k$-th elementary symmetric function of the eigenvalues (counted with multiplicities) of the endomorphism $\Der(\nabla \phi \circ \nu)(a)$ on $u^\perp $.

We denote the set of Alexandrov points \index{B6@$\mathcal{A}(C)$} of $ C $ by $ \mathcal{A}(C)$.
\end{Definition}

\begin{Remark}
If $ C $ is a convex body, then this notion of an Alexandrov point coincides with the classical notion of a normal boundary point of $ C $; see  \cite[Notes for Sections 1.5 and  2.6]{MR3155183} for further background  information.
\end{Remark}

\begin{Corollary}\label{cor: postive reach and viscosity boundary}
Suppose $ C \subseteq \mathbf{R}^{n+1}$ is a set of positive reach and $ \phi $ is a uniformly convex $ \mathcal{C}^2$-norm. Then the following statements hold.
\begin{enumerate}[{\rm (a)}]
    \item\label{cor: postive reach and viscosity boundary 1} $ \mathcal{A}(C) = \bm{p}(\widetilde{N}^\phi_n(C) ) \cap \partial^v C = \bm{p}(\widetilde{N}^\phi(C)) \cap \partial^v_+ C $ and
    \begin{equation*}
        \bm{H}^\phi_{C, k}(a, \eta) = \bm{h}^\phi_{C,k}(a) \qquad \textrm{for  $ a \in \mathcal{A}(C)$ and $ N^\phi(C,a)= \{\eta \}$.}
    \end{equation*}
    \item\label{cor: postive reach and viscosity boundary 2} $ \mathcal{H}^n(\partial^v C \setminus \mathcal{A}(C))=0$ and
    \begin{equation}\label{cor: postive reach and viscosity boundary 2 eq}
        \mathcal{P}^\phi(C) =\int_{\partial ^v_+ C}\phi(\bm{n}(C,a))\, d\mathcal{H}^n(a) = \int_{N^\phi(C)| \partial^v_+C}J^\phi_C(a,\eta) \phi(\bm{n}^\phi(\eta))\, d\mathcal{H}^{n}(a,\eta).
    \end{equation}
    \item\label{cor: postive reach and viscosity boundary 3} If $\mathcal{L}^{n+1}(C) < \infty $ then
    \begin{equation*}
        \Int(C) \neq \varnothing \quad \iff \quad \mathcal{L}^{n+1}(C) > 0 \quad \iff \mathcal{H}^n(\partial^v_+C) > 0.
    \end{equation*}
    \item\label{cor: postive reach and viscosity boundary 4} $ \mathcal{H}^n\big[ \bm{p}(\widetilde{N}^\phi_n(C)) \setminus \mathcal{A}(C) \big]=0 $ if and only if $ \mathcal{H}^n(\partial C \setminus \partial^v C) =0$.
\end{enumerate}
\end{Corollary}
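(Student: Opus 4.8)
\textbf{Plan for Corollary \ref{cor: postive reach and viscosity boundary}.}
The plan is to prove the four statements in order, using Theorem \ref{prop:p1}, Theorem \ref{lem: graphical representability}, Lemma \ref{lem suff condition for finite curvature}, and the Steiner-type formula (Theorem \ref{theo: Steiner closed}) as the main inputs.

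For \ref{cor: postive reach and viscosity boundary 1}, I would start from the definitions: by Definition \ref{def: pointwise mean curvature}, $a\in\mathcal{A}(C)$ means $a\in\partial^v C$ with $N(C,a)=\{u\}$ and the locally representing semiconvex function $f$ twice differentiable at $a$. By Theorem \ref{prop:p1}(b) this is equivalent to $(a,\nabla\phi(u))\in\widetilde{N}^\phi_n(C)$, i.e.\ $a\in\bm{p}(\widetilde{N}^\phi_n(C))\cap\partial^v C$; this gives the first equality. For the second equality, the inclusion $\bm{p}(\widetilde{N}^\phi(C))\cap\partial^v_+C\subseteq\bm{p}(\widetilde{N}^\phi_n(C))\cap\partial^v C$ follows from Lemma \ref{lem suff condition for finite curvature}\ref{lem suff condition to finite curvature 1} (a point of $\partial^v_+C$ with $U^\phi(a-r\eta,r)\subseteq\Int C$ forces $(a,\eta)\in\widetilde{N}^\phi_n(C)$); note that for a set of positive reach, $\partial^v C=\bm{p}(N^\phi(C))=\partial^v_+C$ (for $a\in\partial^v C$ the unique normal provides an interior touching Wulff shape of radius up to $\reach^\phi(C)$, hence a touching Euclidean ball), so this also gives $\mathcal{A}(C)\subseteq\partial^v_+C$; conversely $\mathcal{A}(C)\subseteq\bm{p}(\widetilde{N}^\phi(C))$ is immediate from $\widetilde{N}^\phi_n(C)\subseteq\widetilde{N}^\phi(C)$. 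The curvature identity $\bm{H}^\phi_{C,k}(a,\eta)=\bm{h}^\phi_{C,k}(a)$ then follows by comparing the definition of $\bm{H}^\phi_{C,k}$ on $\widetilde{N}^\phi_n(C)$ (which, via Definition \ref{Def:finitecurv} and $E^\phi_{C,k}$, is $S_k$ of the $\kappa^\phi_{C,i}(a,\eta)$) with Definition \ref{def: pointwise mean curvature}, using Theorem \ref{lem: graphical representability}\ref{lem: graphical representability 2}: the eigenvalues of $\Der\widetilde\eta(a)=\Der(\nabla\phi\circ\nu)(a)$ are exactly the $\kappa^\phi_{C,i}(a,\eta)$.

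For \ref{cor: postive reach and viscosity boundary 2}, I would combine the already established identity $\mathcal{A}(C)=\bm{p}(\widetilde{N}^\phi_n(C))\cap\partial^v C$ with Lemma \ref{lem: exterior normal basic properties closed}\ref{lem: exterior normal basic properties closed: c}, which gives $\mathcal{H}^n(\bm{p}[N^\phi(C)\setminus\widetilde{N}^\phi_n(C)])=0$; since $\partial^v C\subseteq\bm{p}(N^\phi(C))$ for a set of positive reach, this yields $\mathcal{H}^n(\partial^v C\setminus\mathcal{A}(C))=0$. For the perimeter formula, the first equality is the identity $\partial^m C\cap\bm{p}(N(C))=\partial^v_+C$ combined with $\mathcal{H}^n(\partial^m C\setminus\bm{p}(N(C)))=0$ (again Lemma \ref{lem: exterior normal basic properties closed}\ref{lem: exterior normal basic properties closed: c}, since strata $C^{(j)}$ with $j<n$ are $\mathcal{H}^n$-null) together with Remark \ref{rmk: viscosity boundary} ($N(C,a)=\{\bm n(C,a)\}$ there); one must also observe $\mathcal{H}^n(\partial^v_+C\setminus\partial^m C)=0$, which holds because a.e.\ point of $\partial^v_+C$ lies in $\bm{p}(\widetilde{N}^\phi_n(C))$ and hence in $\mathcal{A}(C)\subseteq\partial^m C$. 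The second equality in \eqref{cor: postive reach and viscosity boundary 2 eq} is the coarea/area formula applied to $\bm p$ restricted to $N^\phi(C)|\partial^v_+C\subseteq\widetilde{N}^\phi_n(C)$ (up to $\mathcal{H}^n$-null sets), using that $\ap J^{N_s}_n\bm p$ on $\widetilde{N}^\phi_n(C)$ equals $J^\phi_C\cdot(\text{ratio})$ from Lemma \ref{lem: tangent of normal bundle}; since $a\in\partial^v_+C$ has a unique normal, the fiber is a single point and the Jacobian factors are accounted for by $J^\phi_C$ exactly as in the Steiner formula, with the weight $\phi(\bm n^\phi(\eta))$ coming from $\nabla\phi$ as in the derivation of \eqref{theo: Steiner closed: jacobian}.

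For \ref{cor: postive reach and viscosity boundary 3}: $\Int(C)\neq\varnothing\iff\mathcal{L}^{n+1}(C)>0$ is clear for closed sets with one direction trivial and the other because a closed set with empty interior is $\mathcal{L}^{n+1}$-null (or, for positive reach, $C=\Clos(\Int C)$ when $\Int C\neq\varnothing$). For $\mathcal{L}^{n+1}(C)>0\Rightarrow\mathcal{H}^n(\partial^v_+C)>0$: if $\mathcal{L}^{n+1}(C)>0$ and finite, then by Lemma \ref{lem:finite perimeter and positive reach} $C$ has locally finite perimeter and $\mathcal{H}^n(\mathcal{F}C)>0$; since $\mathcal{F}C\subseteq\partial^m C$ and, up to $\mathcal{H}^n$-null sets, $\partial^m C\subseteq\partial^v_+C$ (as argued in \ref{cor: postive reach and viscosity boundary 2}), we get $\mathcal{H}^n(\partial^v_+C)>0$; the reverse implication $\mathcal{H}^n(\partial^v_+C)>0\Rightarrow\mathcal{L}^{n+1}(C)>0$ follows since each $a\in\partial^v_+C$ has a touching ball $B(a-s\bm n(C,a),s)\subseteq C$ of positive radius, hence $\Int(C)\neq\varnothing$. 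Finally, \ref{cor: postive reach and viscosity boundary 4} is the equivalence $\mathcal{H}^n[\bm p(\widetilde{N}^\phi_n(C))\setminus\mathcal{A}(C)]=0\iff\mathcal{H}^n(\partial C\setminus\partial^v C)=0$: using $\mathcal{A}(C)=\bm p(\widetilde{N}^\phi_n(C))\cap\partial^v C$ from \ref{cor: postive reach and viscosity boundary 1}, the left side says $\mathcal{H}^n(\bm p(\widetilde{N}^\phi_n(C))\setminus\partial^v C)=0$; since $\mathcal{H}^n(\bm p(N^\phi(C))\setminus\bm p(\widetilde{N}^\phi_n(C)))=0$ and $\mathcal{H}^n(\partial C\setminus\bm p(N^\phi(C)))=\mathcal{H}^n(C^{(n+1)}\cap\partial C\ \cup\ \bigcup_{j<n}C^{(j)})=0$ (the top stratum contributes nothing to $\partial C$ as $C^{(n+1)}=C\setminus\bm p(N(C))$ has empty interior contribution... more precisely $\partial C\subseteq\bm p(N^\phi(C))$ up to $\mathcal H^n$-null since lower strata are null and $\partial C\cap C^{(n+1)}$ has a normal at a.e.\ point of finite perimeter), we conclude $\mathcal{H}^n(\partial C\setminus\partial^v C)=\mathcal{H}^n(\bm p(\widetilde{N}^\phi_n(C))\setminus\partial^v C)$ up to null sets, giving the equivalence.

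The main obstacle I expect is bookkeeping the $\mathcal{H}^n$-null exceptional sets consistently across all four parts — in particular verifying that for a set of positive reach $\partial C$, $\partial^m C$, $\bm p(N^\phi(C))$, $\partial^v C$, $\partial^v_+C$, and $\bm p(\widetilde{N}^\phi_n(C))$ all agree up to $\mathcal{H}^n$-null sets, which is where Lemma \ref{lem: exterior normal basic properties closed}, the stratification Lemma \ref{lem: stratification and curvature}, Remark \ref{rmk: viscosity boundary}, and the positive-reach structure (Remark \ref{rem: sets of positive reach}) all need to be marshalled together; and then checking that the Jacobian factor in the second equality of \eqref{cor: postive reach and viscosity boundary 2 eq} is precisely $J^\phi_C$ with no extra product of $(1+r\kappa)$ terms, which relies on the fiber over $\partial^v_+C$ being a single point so that the ``reach-direction'' integration in Theorem \ref{theo: Steiner closed} does not enter.
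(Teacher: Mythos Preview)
Your overall strategy follows the paper's, and parts \ref{cor: postive reach and viscosity boundary 2}--\ref{cor: postive reach and viscosity boundary 4} are essentially right. The genuine gap is in part \ref{cor: postive reach and viscosity boundary 1}, where you assert that for a set of positive reach $\partial^v C=\bm{p}(N^\phi(C))=\partial^v_+ C$, arguing that ``for $a\in\partial^v C$ the unique normal provides an interior touching Wulff shape of radius up to $\reach^\phi(C)$''. Both equalities are false. First, for sets of positive reach one has $\bm{p}(N^\phi(C))=\partial C$ (every boundary point admits at least one normal, cf.\ Remark \ref{rem: sets of positive reach}), not $\partial^v C$; a square already shows the difference. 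Second, positive reach yields \emph{exterior} touching Wulff shapes ($U^\phi(a+r\eta,r)\cap C=\varnothing$ for $\eta\in N^\phi(C,a)$, $r<\reach^\phi(C)$), never interior ones. A concrete counterexample: the convex body $C=\{(x,y)\in\mathbf{R}^2:y\le -|x|^{3/2}\}$ has the origin in $\partial^v C$ with unique outer normal $(0,1)$, but since $|x|^{3/2}/x^2\to\infty$ no Euclidean ball contained in $C$ can touch $\partial C$ at $0$, so $0\notin\partial^v_+C$.

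The step you actually need for $\mathcal{A}(C)\subseteq\partial^v_+C$ is the observation that if $a\in\mathcal{A}(C)$ then the representing semiconvex function $f$ is pointwise twice differentiable at $a$, hence by Taylor expansion $f(b)\le M|b-a|^2$ for $b$ near $a$ and some $M<\infty$, which produces an interior touching paraboloid (and thus a small interior touching ball) at $a$. This is exactly what is established at the end of the proof of Theorem \ref{prop:p1}(b) (``Since $f$ is twice differentiable at $0$, it follows that $0\in\partial^v_+A$''), and it is what the paper invokes for this inclusion. Once this is fixed, your derivation of the second equality in \ref{cor: postive reach and viscosity boundary 1} via Lemma \ref{lem suff condition for finite curvature}\ref{lem suff condition to finite curvature 1} for the reverse inclusion is correct and matches the paper.
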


\begin{proof}
The assertions in \ref{cor: postive reach and viscosity boundary 1} follow from Theorem \ref{lem: graphical representability}, Theorem \ref{prop:p1} and Lemma \ref{lem suff condition for finite curvature}. Using \ref{cor: postive reach and viscosity boundary 1} we infer
\begin{equation*}
    \partial^v C \setminus \mathcal{A}(C) = \partial^v C \setminus \bm{p}(\widetilde{N}^\phi_n(C))\subseteq     \bm{p}(N^\phi(C))\setminus
    \bm{p}(\widetilde{N}^\phi_n(C))\subseteq \bm{p}\left(N^\phi(C)\setminus\widetilde{N}^\phi_n(C)\right),
\end{equation*}
whence we obtain $ \mathcal{H}^n(\partial^v C \setminus \mathcal{A}(C))=0$ from Lemma \ref{lem: exterior normal basic properties closed} \ref{lem: exterior normal basic properties closed: c}. Since $ \partial C = \bm{p}(N^\phi(C)) $  we obtain the first equality in \eqref{cor: postive reach and viscosity boundary 2 eq} from Remark \ref{rmk: viscosity boundary} and the second equality by combining Remark \ref{rmk: curvature positive reach}, Lemma \ref{lem: tangent of normal bundle} and the coarea formula.

We prove \ref{cor: postive reach and viscosity boundary 3}. Clearly, $\Int(C) \neq \varnothing$ implies $ \mathcal{L}^{n+1}(C) > 0 $. Let us assume $\mathcal{L}^{n+1}(C) > 0$. Then it follows from Lemma
\ref{lem:finite perimeter and positive reach}  that $\mathcal{H}^n(\partial^m C)>0$. Since $\partial^v_+ C\subseteq\partial^m C\subseteq \partial^v C$ by Remark \ref{rmk: viscosity boundary} and $\mathcal{H}^n(\partial^v C\setminus\partial^v_+ C)=0$ by \ref{cor: postive reach and viscosity boundary 2}, it follows that $\mathcal{H}^n(\partial^v_+ C)>0$. It is again clear that $\mathcal{H}^n(\partial^v_+ C)>0$ implies $\Int(C) \neq \varnothing$.

Finally, it follows by \ref{cor: postive reach and viscosity boundary 1} that
\begin{equation*}
    \partial C \setminus \partial^v C \subseteq \big[\partial C \setminus \bm{p}(\widetilde{N}^\phi_n(C))\big] \cup \big[ \bm{p}(\widetilde{N}^\phi_n(C)) \setminus \partial^v C \big] = \big[\partial C \setminus \bm{p}(\widetilde{N}^\phi_n(C))\big] \cup \big[  \bm{p}(\widetilde{N}^\phi_n(C)) \setminus \mathcal{A}(C) \big]
\end{equation*}
and, since $ \mathcal{H}^n\big[\partial C \setminus \bm{p}(\widetilde{N}^\phi_n(C))\big] =0$  by Lemma
\ref{lem: exterior normal basic properties closed} (c), we obtain that $ \mathcal{H}^n\big[ \bm{p}(\widetilde{N}^\phi_n(C)) \setminus \mathcal{A}(C) \big]=0 $ implies that $ \mathcal{H}^n(\partial C \setminus \partial^v C) =0$.
On the other hand, if $ \mathcal{H}^n(\partial C \setminus \partial^v C) =0$, then Lemma \ref{lem: exterior normal basic properties closed} (c) implies again that
 $ \mathcal{H}^n\big[ \bm{p}(\widetilde{N}^\phi_n(C)) \setminus \partial^v C \big]=0 $, and hence $ \mathcal{H}^n\big[ \bm{p}(\widetilde{N}^\phi_n(C)) \setminus \mathcal{A}(C) \big]=0 $ follows from part (a).
\end{proof}

\begin{Remark}
Suppose $ C $ is a closed convex set with $\Int(C)\neq\varnothing$. Then $C^{(n)}\setminus \partial^vC=\varnothing$ and, recalling that $ \bm{p}(\widetilde{N}^\phi_n(C)) \subseteq C^{(n)}$  by Lemma \ref{lem: exterior normal basic properties closed}\ref{lem: exterior normal basic properties closed: a}, we infer from Corollary \ref{cor: postive reach and viscosity boundary} that
\begin{equation*}
   \mathcal{A}(C) = \bm{p}(\widetilde{N}_n^\phi(C)).
\end{equation*}
\end{Remark}

\subsection{Lower-bounded pointwise mean curvature and bubbling}\label{sec:5.3}

After some preparations, we will deduce the Heintze-Karcher inequality for sets of positive reach from the more general version for arbitrary closed sets.

\begin{Lemma}\label{lem: convex body and its complementary}
Suppose $ C \subset\mathbf{R}^{n+1} $ is a set of positive reach with $ \Int(C) \neq \varnothing $, $ K = \mathbf{R}^{n+1} \setminus \Int(C) $ and
\begin{equation*}
\iota : \mathbf{R}^{n+1} \times \mathbf{R}^{n+1} \rightarrow \mathbf{R}^{n+1} \times \mathbf{R}^{n+1}
\end{equation*}
is the linear map defined by $ \iota(a,\eta) = (a, -\eta) $ for   $(a,\eta)\in \mathbf{R}^{n+1} \times \mathbf{R}^{n+1}$.

Then the following statements hold.
\begin{enumerate}[{\rm (a)}]
\item\label{lem: convex body and its complementary 1} $ \bm{p}(N^\phi(K)) = \partial^v_+C = \partial^v_+ K $ and $ N^\phi(K,a) = \{-\nabla \phi(\bm{n}(C,a))\} = - N^\phi(C,a) $  for  $ a \in \bm{p}(N^\phi(K)) $.
 \item\label{lem: convex body and its complementary 2} $ \widetilde{N}^{\phi}_n(K) = \widetilde{N}^\phi(K)$ and  $ \mathcal{H}^n(N^\phi(K)|S)=0$ for  every $ S \subseteq \mathbf{R}^{n+1}$ with $ \mathcal{H}^n(S)=0$.
\item\label{lem: convex body and its complementary 3} $\kappa^\phi_{K,i}(a, \eta) = - \kappa^\phi_{C,n+1-i}(a, -\eta) $ for $ \mathcal{H}^n $ a.e.\ $ (a,\eta) \in N^\phi(K) $ and $ i = 1,\ldots , n $.
\item\label{lem: convex body and its complementary 5} $ \bm{H}^\phi_{K,1}(a,\eta) = -\bm{H}^\phi_{C,1}(a,-\eta) $ for $ \mathcal{H}^n $ a.e.\ $(a,\eta)\in N^\phi(K) $.
\item\label{lem: convex body and its complementary 4} $J^\phi_K(a,\eta) = \ap J_n^{N^\phi(K)}\iota(a,\eta)\, J^\phi_C(a, -\eta)$ for $ \mathcal{H}^n $ a.e.\ $(a,\eta)\in N^\phi(K) $.
\end{enumerate}
	\end{Lemma}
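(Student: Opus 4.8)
\textbf{Proof plan for Lemma \ref{lem: convex body and its complementary}.}

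The plan is to establish the five assertions in order, since each one rests on the previous ones together with the dictionary between $C$ and its exterior complement $K=\R^{n+1}\setminus\Int(C)$. For part \ref{lem: convex body and its complementary 1}, I would first note that $\Int(C)\ne\varnothing$ and $\reach(C)>0$ force every boundary point of $C$ that carries an outer normal to be a point around which $C$ is an epigraph, and that exactly these points are also the points around which $K$ is an epigraph with the opposite orientation. More precisely, for $a\in\partial^v_+ C$ one has $B(a-s\bm n(C,a),s)\subseteq C$ for some $s>0$ (Remark \ref{rmk: viscosity boundary}), and since $\reach(C)>0$ there is also a ball touching $\partial C$ at $a$ from outside $C$, i.e.\ contained in $K$; this gives $a\in\partial^v_+ K$ with $\bm n(K,a)=-\bm n(C,a)$, so $N^\phi(K,a)=\nabla\phi(\{-\bm n(C,a)\})=-\nabla\phi(\{\bm n(C,a)\})=-N^\phi(C,a)$, using \eqref{eq: phi normal vs euclidean normal} and the oddness of $-\Id$ under $\nabla\phi$ only up to the correct sign bookkeeping (here one uses $\nabla\phi(-u)=-\nabla\phi(u)$, which follows from $1$-homogeneity of $\nabla\phi$). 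The reverse inclusions $\partial^v_+K\subseteq\bm p(N^\phi(K))\subseteq \partial^v K$ come from Remark \ref{rmk: viscosity boundary}, and the identification $\partial^v_+ C=\partial^v_+K$ follows because a ball inside $C$ at $a$ and a ball inside $K$ at $a$ exist simultaneously precisely when $a$ is a smooth boundary point in the viscosity sense, which holds on all of $\partial^v_+ C$ by $\reach(C)>0$ (every $a\in\partial^vC\cap\bm p(N(C))$ has an interior touching ball of radius $\reach(C)$).

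For part \ref{lem: convex body and its complementary 2}, the key input is Lemma \ref{lem suff condition for finite curvature}\ref{lem suff condition to finite curvature 1}: since $\bm p(N^\phi(K))=\partial^v_+K$ and for each $a\in\partial^v_+ K$ there is a ball $U^\phi(a-r\eta,r)\subseteq\Int K$ (namely the exterior touching ball of $C$, which has radius at least $\reach(C)$ up to the anisotropic comparison of Lemma \ref{lem:comparereach}), every $(a,\eta)\in\widetilde N^\phi(K)$ lies in $\widetilde N^\phi_n(K)$ with $\kappa^\phi_{K,n}(a,\eta)\le 1/r$. Combined with Remark \ref{rmk: curvature positive reach}, which gives $(N^\phi(K)|\partial^v_+K)\setminus\widetilde N^\phi_n(K)\subseteq N^\phi(K)\setminus\widetilde N^\phi(K)$, and $\mathcal H^n(N^\phi(K)\setminus\widetilde N^\phi(K))=0$, this yields $\widetilde N^\phi_n(K)=\widetilde N^\phi(K)$ up to $\mathcal H^n$-null sets, hence as sets after discarding the null part, which is all that is needed. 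The absolute continuity statement $\mathcal H^n(N^\phi(K)|S)=0$ whenever $\mathcal H^n(S)=0$ follows because over $\partial^v_+K$ the projection $\bm p|N^\phi(K)$ is locally bi-Lipschitz: indeed $\bm n(K,\cdot)|\partial^v_+K$ is locally Lipschitz by Remark \ref{rem:lipnorm} (using the interior and exterior touching balls whose radii are bounded below on compact pieces), and $N^\phi(K)=\{(a,\nabla\phi(\bm n(K,a))):a\in\partial^v_+K\}$ by \eqref{eq: phi normal vs euclidean normal} and part \ref{lem: convex body and its complementary 1}, so $\bm p$ has a locally Lipschitz inverse $a\mapsto(a,\nabla\phi(\bm n(K,a)))$.

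For parts \ref{lem: convex body and its complementary 3}, \ref{lem: convex body and its complementary 5} and \ref{lem: convex body and its complementary 4} I would work at an $\mathcal H^n$-a.e.\ point $(a,\eta)\in N^\phi(K)$ that is an Alexandrov point of $C$ (Corollary \ref{cor: postive reach and viscosity boundary}\ref{cor: postive reach and viscosity boundary 2} guarantees these are of full measure in $\partial^v_+ C$, and part \ref{lem: convex body and its complementary 2} transports this to $N^\phi(K)$). At such a point $C$ is locally $\epi(f)$ and $K$ is locally $-\epi(-f)$ for the same twice-differentiable $f$; the Weingarten map of $\partial K$ at $a$ is the negative of that of $\partial C$, with the eigenvalues listed in the reverse order because sorting in increasing order reverses under negation. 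Concretely, applying Theorem \ref{prop:p1}(b) to both $C$ and $K$ — with the orienting normals $\bm n(C,a)$ and $\bm n(K,a)=-\bm n(C,a)$ — and comparing the bilinear forms $\Der^2f(a)(\cdot,\Der(\nabla\phi)(\pm u)(\cdot))$ gives $\rchi^\phi_{K,i}$ at the corresponding parallel-set point equal to $-\rchi^\phi_{C,n+1-i}$, and the Möbius-type relation in Definition \ref{def:curv2.8} then yields $\kappa^\phi_{K,i}(a,\eta)=-\kappa^\phi_{C,n+1-i}(a,-\eta)$; summing over $i$ gives \ref{lem: convex body and its complementary 5}, since $\bm H^\phi_{\cdot,1}=\sum_i\kappa^\phi_{\cdot,i}$ on $\widetilde N^\phi_n$. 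For \ref{lem: convex body and its complementary 4}, I would use the characterization of $J^\phi_C$ and $J^\phi_K$ from Theorem \ref{theo: Steiner closed} in terms of wedge products of the eigenvector frames $\tau_i$ and $\zeta_i$: picking the frame for $K$ to be the image under $\Der\iota=\iota$ (acting as $(\tau,\sigma)\mapsto(\tau,-\sigma)$ on the relevant tangent spaces, after transporting through $T$) of a reordered frame for $C$, the quotient $|\tau_1\wedge\cdots\wedge\tau_n|/|\zeta_1\wedge\cdots\wedge\zeta_n|$ for $K$ differs from that for $C$ at $(a,-\eta)$ exactly by the Jacobian factor $\ap J_n^{N^\phi(K)}\iota(a,\eta)$, because $\iota$ maps $\Tan^n(\mathcal H^n\llcorner N^\phi(K),(a,\eta))$ onto $\Tan^n(\mathcal H^n\llcorner N^\phi(C),(a,-\eta))$ (using that $\iota$ restricted to $N^\phi(K)$ has image $N^\phi(C)$ over $\partial^v_+ C$, from part \ref{lem: convex body and its complementary 1}) and the $J$-function is precisely the ratio measuring how $\bm p$ distorts this tangent plane.

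The main obstacle I anticipate is the careful sign- and order-bookkeeping in parts \ref{lem: convex body and its complementary 3}–\ref{lem: convex body and its complementary 4}: one must keep straight (i) the reversal of the ordering of eigenvalues under negation, (ii) the fact that the curvatures $\kappa^\phi$ are defined via parallel sets at positive distance, so one compares $\Der\bm\nu^\phi_K$ at $a+r\eta$ with $\Der\bm\nu^\phi_C$ at $a-r\eta$, and (iii) the interplay of the anisotropic Weingarten map with $\Der(\nabla\phi)$ evaluated at $u$ versus $-u$. Getting the Jacobian identity \ref{lem: convex body and its complementary 4} to come out with $\ap J_n^{N^\phi(K)}\iota$ rather than its reciprocal requires being consistent about which direction $\iota$ is applied; the cleanest route is probably to invoke the area/coarea formula for $\iota:N^\phi(K)\to N^\phi(C)$ together with the defining ratio of $J^\phi$, rather than manipulating wedge products directly.
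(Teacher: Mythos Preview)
Your plan for parts \ref{lem: convex body and its complementary 1} and \ref{lem: convex body and its complementary 2} is sound and matches the paper; the bi-Lipschitz argument you give for the absolute-continuity claim in \ref{lem: convex body and its complementary 2} is precisely the alternative route the paper records in Remark~\ref{rem:5.13}. (Minor point: once you invoke Lemma~\ref{lem suff condition for finite curvature}\ref{lem suff condition to finite curvature 1} together with $\bm p(N^\phi(K))=\partial^v_+K$, the equality $\widetilde N^\phi_n(K)=\widetilde N^\phi(K)$ holds as \emph{sets}, not merely up to a null set.)

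The genuine gap is in your plan for \ref{lem: convex body and its complementary 3}. You propose to apply Theorem~\ref{prop:p1}(b) to both $C$ and $K$, but that theorem requires $\reach(A)>0$, and $K=\R^{n+1}\setminus\Int(C)$ does not have positive reach: there is no uniform lower bound on the radius of balls touching $\partial K$ from inside $\Int(C)$. Both directions of Theorem~\ref{prop:p1}(b) use positive reach essentially, via the Lipschitz continuity of $\bm{\xi}^\phi_A$ near the parallel set (through Theorem~\ref{lem: graphical representability} in one direction and Lemma~\ref{lem: elementary diff} in the other). So you cannot read off $\kappa^\phi_{K,i}(a,\eta)$ from the graph representation of $\partial K$ this way. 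Your intermediate claim that $\chi^\phi_{K,i}$ at $a+r\eta$ equals $-\chi^\phi_{C,n+1-i}$ at the corresponding point is also not correct; the $\chi$-values are related by a M\"obius transformation and only the $\kappa$-values come out as exact negatives.

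The paper bypasses this by working directly with the Cahn--Hoffman maps: for $y$ with $0<\bm{\delta}^\phi_K(y)<\reach^\phi(C)$ and any $0<\lambda\le 1$ one has the multivalued identity
\[
\bm{\nu}^\phi_K(y)=\{-\bm{\nu}^\phi_C((1+\lambda)a-\lambda y):a\in\bm{\xi}^\phi_K(y)\},
\]
and differentiating this at $y=a+r\eta$ (where $\bm{\nu}^\phi_C$ is differentiable at $a-\lambda r\eta$ because $C$ \emph{does} have positive reach and $(a,-\eta)\in\widetilde N^\phi(C)$ off a null set, by part~\ref{lem: convex body and its complementary 2}) yields $\chi^\phi_{C,n+1-i}(a-\lambda r\eta)=\chi^\phi_{K,i}(a+r\eta)/[(1+\lambda)r\chi^\phi_{K,i}(a+r\eta)-1]$, from which $\kappa^\phi_{K,i}(a,\eta)=-\kappa^\phi_{C,n+1-i}(a,-\eta)$ follows by definition. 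What you flagged as a ``sign- and order-bookkeeping obstacle'' is in fact the substantive argument, not a complication of the Alexandrov-point method. Once \ref{lem: convex body and its complementary 3} is established this way, your plans for \ref{lem: convex body and its complementary 5} and \ref{lem: convex body and its complementary 4} go through; the paper proves \ref{lem: convex body and its complementary 4} exactly by transporting the eigenframe for $K$ to one for $C$ via $\iota$, as you describe.
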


\begin{proof}
The statement in \ref{lem: convex body and its complementary 1} readily follows from Remark \ref{rmk: viscosity boundary}.

For the statement in \ref{lem: convex body and its complementary 2}  we combine \ref{lem: convex body and its complementary 1} and Lemma \ref{lem suff condition for finite curvature}\ref{lem suff condition to finite curvature 1} to infer that $ \widetilde{N}^{\phi}_n(K) = \widetilde{N}^\phi(K)$. Moreover if $ \mathcal{H}^n(S) =0$, then we can combine Lemma \ref{coarea} (first applied on $N_s$ for some $s>0$) with Lemma \ref{lem: tangent of normal bundle} to see that
\begin{equation*}
    \int_{\widetilde{N}^{\phi}_n(K)|S} \ap J_n^{N^\phi(K)}\bm{p}(a,u) \, d\mathcal{H}^n(a,u) = 0
\end{equation*}
and $ \ap J_n^{N^\phi(K)}\bm{p}(a,u) >0 $ for $ \mathcal{H}^n$ a.e.\ $(a,u) \in \widetilde{N}^{\phi}_n(K) $. We conclude that $ \mathcal{H}^n(\widetilde{N}^\phi(K)|S) =0 $ provided that $ \mathcal{H}^n(S) =0$. Since $ \mathcal{H}^n(N^\phi(K) \setminus \widetilde{N}^\phi(K)) =0$, we obtain \ref{lem: convex body and its complementary 2}.

Next we prove \ref{lem: convex body and its complementary 3}. We define the open set $ U = \{ y \in \mathbf{R}^{n+1}: 0 < \bm{\delta}^\phi_K(y) < \reach^\phi(C)   \} $. For  $ 0 < \lambda \leq 1 $, $ y \in U $ and $ a \in \bm{\xi}^\phi_K(y) $ we notice that
\begin{equation*}
	a + \lambda\bm{\delta}^\phi_K(y)\frac{a-y}{\bm{\delta}^\phi_K(y)} = (1+\lambda)a-\lambda y,
\end{equation*}
\begin{equation*}
	N^\phi(K,a)= \bigg\{ \frac{y-a}{\bm{\delta}^\phi_K(y)}\bigg\}, \qquad N^\phi(C,a)= \bigg\{ \frac{a-y}{\bm{\delta}^\phi_K(y)}\bigg\},
\end{equation*}
\begin{equation*}
	\bm{\nu}^\phi_C((1+\lambda)a-\lambda y) = \frac{a-y}{\bm{\delta}^\phi_K(y)}, \qquad -\bm{\nu}^\phi_C((1+\lambda)a-\lambda y) \in \bm{\nu}^\phi_K(y).
\end{equation*}
We infer that
\begin{equation}\label{lem: convex body and its complementary: 5}
	\bm{\nu}^\phi_K(y) = \{ - \bm{\nu}^\phi_C((1+\lambda)a - \lambda y)  : a \in \bm{\xi}^\phi_K(y) \} \qquad \textrm{for $ y \in U $ and $ 0 < \lambda \leq 1 $.}
\end{equation}
Define  $ S = \bm{p}\big( \iota(N^\phi(K)) \setminus \widetilde{N}^\phi(C)\big) $ and notice that $ \mathcal{H}^n(S) =0 $ by Remark \ref{rem: principla curvatures basic rem}. It follows from \ref{lem: convex body and its complementary 2} that $ \mathcal{H}^n(N^\phi(K)|S) =0 $. Fix now $(a,\eta)\in \widetilde{N}^\phi_n(K) $ with $ a \notin S $, $ 0 < r < \inf\{\bm{r}^\phi_K(a,\eta),\reach^\phi(C)\} $ and, noting that $ 1 - r \chi^\phi_{K,i}(a+r\eta) > 0 $ for $ i = 1, \ldots, n $,  we select  $ 0 < \lambda \leq 1 $ so that
\begin{equation*}
\chi^\phi_{K,i}(a+ r\eta) < \frac{1}{(1+\lambda)r} \qquad \textrm{for $ i = 1, \ldots , n $.}
\end{equation*}
Since $ a \notin S $, then $(a, -\eta)\in \widetilde{N}^\phi(C) $ and $ \bm{\nu}^\phi_C $ is differentiable at $ a - t\eta $ for every   $ 0 < t < \reach^\phi(C) $. Since $a + r\eta \in U $ and $ \bm{\nu}^\phi_C $ is differentiable at $ (1+\lambda) a - \lambda(a+r\eta) = a - \lambda r\eta $, we differentiate at $ a + r\eta $ the equality in \eqref{lem: convex body and its complementary: 5}, and thus we get
\begin{equation*}
\Der \bm{\nu}^\phi_K(a+ r\eta) = - \Der \bm{\nu}^\phi_C(a- \lambda r\eta) \circ ((1+\lambda)\Der \bm{\xi}^\phi_K(a+r\eta) - \textrm{Id}_{\mathbf{R}^{n+1}}).
\end{equation*}
If $ \tau_1, \ldots , \tau_n $ form a basis of $ \Tan(\partial \mathcal{W}^\phi, \eta)  $ such that $\Der \bm{\nu}^\phi_K(a+ r\eta)(\tau_i) = \chi^\phi_{K,i}(a+r\eta)\tau_i $ for  $ i = 1, \ldots , n $, then we infer
\begin{equation*}
\Der \bm{\nu}^\phi_C(a- \lambda r\eta)(\tau_i) = \frac{\chi^\phi_{K,i}(a+r\eta)}{(1+\lambda)r\chi^\phi_{K,i}(a+r\eta) -1} \tau_i \qquad \textrm{for $ i = 1, \ldots , n $}.
\end{equation*}
Note that $ \Tan(\partial \mathcal{W}^\phi_1, -\eta) = \Tan(\partial \mathcal{W}^\phi_1, \eta) $ and recall that $(1+\lambda)r\chi^\phi_{K,i}(a+r\eta) -1 < 0 $ for  $  i = 1, \ldots  n $. Hence, we conclude that
\begin{equation*}
\chi^\phi_{C, n+1-i}(a- \lambda r\eta) = \frac{\chi^\phi_{K,i}(a+r\eta)}{(1+\lambda)r\chi^\phi_{K,i}(a+r\eta) -1}
\end{equation*}
for $ i = 1, \ldots , n $. Therefore,
\begin{equation*}
	\kappa^\phi_{C, n+1-i}(a,-\eta) = \frac{\chi^\phi_{C, n+1-i}(a- \lambda r\eta)}{1 - \lambda r \chi^\phi_{C, n+1-i}(a- \lambda r\eta)} = \frac{\chi^\phi_{K,i}(a+ r\eta)}{r \chi^\phi_{K,i}(a+ r\eta) -1} = - \kappa^\phi_{K,i}(a,\eta)
\end{equation*}
for $ i = 1, \ldots, n $.

To prove \ref{lem: convex body and its complementary 5} we use \ref{lem: convex body and its complementary 2} and   \ref{lem: convex body and its complementary 3}, which yields that
\begin{equation*}
\bm{H}^\phi_{K,1}(a,\eta) = \sum_{i=1}^n \kappa^\phi_{K,i}(a,\eta) = - \sum_{i=1}^n \kappa^\phi_{C,i}(a,-\eta) = - \bm{H}^\phi_{C,1}(a,-\eta)
\end{equation*}
for $ \mathcal{H}^n $ a.e.\ $(a,\eta)\in \widetilde{N}^\phi_n(K) $.

Finally, we prove \ref{lem: convex body and its complementary 4}. Let $ \tau_1, \ldots , \tau_n, \zeta_1, \ldots , \zeta_n $ be $ \mathcal{H}^n \restrict N^\phi(K) $-measurable functions satisfying the hypothesis of Lemma \ref{lem: tangent of normal bundle}. Noting \ref{lem: convex body and its complementary 2} and  Lemma \ref{lem: existence of curvatures}, we observe that the proof of \ref{lem: convex body and its complementary 3}  shows that
\begin{equation*}
\Der \bm{\nu}^\phi_C(a - t\eta)(\tau_i(a,\eta)) = \rchi^\phi_{C, n+1-i}(a-t\eta) \tau_i(a,\eta)
\end{equation*}
for $ \mathcal{H}^n $ a.e.\ $(a,u)\in N^\phi(K) $ and   $ 0 < t < \reach^\phi(C) $. Since $ \kappa^\phi_{C,i}(a, -\eta) < \infty $ for $ \mathcal{H}^n $ a.e.\ $ (a,\eta)\in N^\phi(K) $ by \ref{lem: convex body and its complementary 2} and \ref{lem: convex body and its complementary 3}, we infer that
\begin{equation*}
J^\phi_C(a,-\eta) = \frac{|\tau_1(a,\eta)\wedge \ldots \wedge \tau_n(a,\eta)|}{|\iota(\zeta_1(a,\eta))\wedge \ldots \wedge \iota(\zeta_n(a,\eta))|}
\end{equation*}
for $\mathcal{H}^n$ a.e.\ $(a,\eta)\in N^\phi(K) $. Since
\begin{equation*}
\ap J_n^{N^\phi(K)}\iota(a,\eta) = \frac{|\iota(\zeta_1(a,\eta))\wedge \ldots \wedge \iota(\zeta_n(a,\eta))|}{|\zeta_1(a,\eta)\wedge \ldots \wedge \zeta_n(a,\eta)|},
\end{equation*}
the equation in \ref{lem: convex body and its complementary 4} follows.
\end{proof}

\begin{Remark}\label{rem:5.13}
The second statement in Lemma \ref{lem: convex body and its complementary}  \ref{lem: convex body and its complementary 2} can also be obtained as follows.  Let $0<s<\reach(C)$. From   \ref{lem: convex body and its complementary 1} we get
$$
N^\phi(K)|S=\bigcup_{\ell\in\N}\left\{(x,-\nabla\phi(\bm{n}(C,x))):x\in S\cap X_{\frac{s}{4\ell},s}(C)\right\}.
$$
The assertion now follows from Remark \ref{rem:lipnorm}.
\end{Remark}

\begin{Remark} We also outline an alternative argument for Lemma \ref{lem: convex body and its complementary}  \ref{lem: convex body and its complementary 3}. First, we obtain that for $\mathcal{H}^n$ a.e. $(a,\eta)\in \widetilde{N}^\phi_n(K)$ also $(a,-\eta)\in \widetilde{N}^\phi (C)$ and
$$
T=\Tan^n(\mathcal{H}^n\restrict \widetilde{N}^\phi_n(K),(a,\eta))=\text{lin}\{\zeta_1^K(a,\eta),\ldots,\zeta_n^K(a,\eta)\}=
\text{lin}\{\zeta_1^C(a,\eta),\ldots,\zeta_n^C(a,\eta)\},
$$
where
$$
\zeta_i^K(a,\eta)=\left(\tau_i^K(a,\eta),\kappa_{K,i}^\phi(a,\eta)\tau_i^K(a,\eta)\right)\qquad \text{for $i=1,\ldots,n$},
$$
the linearly independent vectors $\tau_1^K(a,\eta),\ldots,\tau_n^K(a,\eta)$ span an $n$-dimensional linear subspace $V$ of $\R^{n+1}$ and $\kappa_{K,i}^\phi(a,\eta)\in\R$, and where
$$
\zeta_i^C(a,\eta)=\begin{cases}
    \left(\tau_i^C(a,-\eta),-\kappa_{C,i}^\phi(a,-\eta)\tau_i^C(a,-\eta)\right),&\text{ if }\kappa_{C,i}^\phi(a,-\eta)<\infty,\\[1.5ex]
    \left(0,-\kappa_{C,i}^\phi(a,-\eta)\tau_i^C(a,-\eta)\right),&\text{ if }\kappa_{C,i}^\phi(a,-\eta)=\infty,
\end{cases}
$$
with linearly independent vectors $\tau_1^C(a,-\eta),\ldots,\tau_n^C(a,-\eta)$ which span an $n$-dimensional linear subspace $V'$ of $\R^{n+1}$.

Since the number of curvatures which are infinite equals the dimension of the kernel of the image of the linear map $\bm{p}|T$, a comparison of the two representations of $T$ shows that $\kappa_{C,i}^\phi(a,-\eta)<\infty$ for $i=1,\ldots,n$. Hence  $V=V'$ and $\bm{p}|T$ is an injective linear map. Therefore the linear map $L:V\to V$ with $L=\bm{q}\circ\bm{p}^{-1}$ is well defined and its eigenvalues are $\kappa_{K,i}^\phi(a,\eta)$  with corresponding eigenvectors $\tau_i^K(a,\eta)$, but also $-\kappa_{C,i}^\phi(a,-\eta)$ with corresponding eigenvectors $\tau_i^C(a,-\eta)$ for $i=1,\ldots,n$. This implies the assertion.
\end{Remark}

 We can now state the Heintze--Karcher inequality for sets of positive reach in the following form. Recall that $ \mathcal{H}^n(\partial^v C \setminus \mathcal{A}(C)) =0$ by Corollary \ref{cor: postive reach and viscosity boundary}.

 \begin{Theorem}\label{theo: heintze karcher positive reach}
Suppose $ \varnothing\neq C \subseteq \mathbf{R}^{n+1} $ is a set of positive reach with finite volume and assume that
\begin{equation*}
\bm{h}^\phi_{C,1}(a) \geq 0  \quad \textrm{  for $ \mathcal{H}^n\  \text{a.e.}\ a \in \mathcal{A}(C) $}.
\end{equation*}
Then
\begin{equation}\label{theo: heintze karcher positive reach: inequality}
	(n+1)\mathcal{L}^{n+1}(C) \leq n	\int_{\partial^v C} \frac{\phi(\bm{n}(C,a))}{\bm{h}^\phi_{C,1}(a)}\, d\mathcal{H}^n(a).
\end{equation}
	If $ \Int(C) \neq \varnothing $, equality holds in \eqref{theo: heintze karcher positive reach: inequality} and there exists $ q < \infty $ so that $\bm{h}^\phi_{C,1}(a)\leq q $ for $ \mathcal{H}^n $ a.e.\ $a\in \mathcal{A}(C) $, then there are  $ N \in\mathbb{N} $, $ c_1, \ldots , c_N \in \mathbf{R}^{n+1} $ and $ \rho_1, \ldots , \rho_N \geq \frac{n}{q} $ such that
\begin{equation*}
\Int(C) = \bigcup_{i=1}^N  \Int(c_i + \rho_i \mathcal{W}^\phi), \qquad \dist^\phi\big(c_i + \rho_i \mathcal{W}^\phi, c_j + \rho_j \mathcal{W}^\phi\big) \geq 2\reach^\phi(C) \quad \textrm{for  $ i \neq j $.}
\end{equation*}
\end{Theorem}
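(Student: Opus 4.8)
The plan is to reduce Theorem \ref{theo: heintze karcher positive reach} to the already proved Heintze--Karcher inequality for closed sets (Theorem \ref{theo: heintze karcher}) via the passage from $C$ to its ``complementary'' set $K=\mathbf{R}^{n+1}\setminus\Int(C)$, using the dictionary between the $\phi$-curvatures and Jacobians of $C$ and $K$ established in Lemma \ref{lem: convex body and its complementary}. First I would dispose of the degenerate case: if $\Int(C)=\varnothing$, then $\mathcal{L}^{n+1}(C)=0$ by Corollary \ref{cor: postive reach and viscosity boundary}\ref{cor: postive reach and viscosity boundary 3} (recall $\mathcal{L}^{n+1}(C)<\infty$), so \eqref{theo: heintze karcher positive reach: inequality} holds trivially. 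Hence assume $\Int(C)\neq\varnothing$ and set $K=\mathbf{R}^{n+1}\setminus\Int(C)$, so that $\Int(\mathbf{R}^{n+1}\setminus\Int(C))$ need not equal $\mathbf{R}^{n+1}\setminus C$ but $\mathcal{L}^{n+1}(\Int(C))=\mathcal{L}^{n+1}(C)$ since $\mathcal{H}^n(\partial C)<\infty$ implies $\mathcal{L}^{n+1}(\partial C)=0$. (Here $C$ takes the role of the set called $C$ in Theorem \ref{theo: heintze karcher}; note $K=\mathbf{R}^{n+1}\setminus\Int(C)$ matches exactly.)

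The key step is to translate the hypothesis and conclusion of Theorem \ref{theo: heintze karcher} into statements about $C$. By Lemma \ref{lem: convex body and its complementary}\ref{lem: convex body and its complementary 5} we have $\bm{H}^\phi_{K,1}(a,\eta)=-\bm{H}^\phi_{C,1}(a,-\eta)$ for $\mathcal{H}^n$-a.e.\ $(a,\eta)\in N^\phi(K)$, and by part \ref{lem: convex body and its complementary 1} the map $(a,\eta)\mapsto(a,-\eta)$ identifies $N^\phi(K)$ with $N^\phi(C)|\partial^v_+C$, under which (by Corollary \ref{cor: postive reach and viscosity boundary}\ref{cor: postive reach and viscosity boundary 1}) the quantity $\bm{H}^\phi_{C,1}(a,-\eta)$ equals the pointwise mean curvature $\bm{h}^\phi_{C,1}(a)$ for $a\in\mathcal{A}(C)$. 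Since $\mathcal{H}^n(\partial^v C\setminus\mathcal{A}(C))=0$ and $\mathcal{H}^n(\partial^v_+C\setminus\partial^v C)=0$, the assumption $\bm{h}^\phi_{C,1}\ge 0$ $\mathcal{H}^n$-a.e.\ on $\mathcal{A}(C)$ translates into $\bm{H}^\phi_{K,1}\le 0$ $\mathcal{H}^n$-a.e.\ on $N^\phi(K)$, i.e.\ $\sum_i\kappa^\phi_{K,i}\le 0$ a.e. (using Lemma \ref{lem: convex body and its complementary}\ref{lem: convex body and its complementary 2}, which gives $\widetilde N^\phi_n(K)=\widetilde N^\phi(K)$ up to $\mathcal{H}^n$-null sets, so $\bm{H}^\phi_{K,1}=\sum_i\kappa^\phi_{K,i}$). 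This is precisely hypothesis \eqref{eq:negmeancurv} of Theorem \ref{theo: heintze karcher}. For the right-hand side of \eqref{theo: heintze karcher: inequality}, I would use the change of variables under the bilipschitz map $\iota$: Lemma \ref{lem: convex body and its complementary}\ref{lem: convex body and its complementary 4} gives $J^\phi_K(a,\eta)=\ap J^{N^\phi(K)}_n\iota(a,\eta)\,J^\phi_C(a,-\eta)$, and the area formula (Lemma \ref{coarea} with $k=n$) then yields
\begin{equation*}
\int_{\widetilde N^\phi_n(K)}J^\phi_K(a,\eta)\frac{\phi(\bm{n}^\phi(\eta))}{|\bm{H}^\phi_{K,1}(a,\eta)|}\,d\mathcal{H}^n(a,\eta)
=\int_{N^\phi(C)|\partial^v_+C}J^\phi_C(a,\eta)\frac{\phi(\bm{n}^\phi(\eta))}{|\bm{H}^\phi_{C,1}(a,\eta)|}\,d\mathcal{H}^n(a,\eta),
\end{equation*}
and the last integral equals $\int_{\partial^v C}\phi(\bm{n}(C,a))/\bm{h}^\phi_{C,1}(a)\,d\mathcal{H}^n(a)$ by the disintegration argument behind \eqref{cor: postive reach and viscosity boundary 2 eq} in Corollary \ref{cor: postive reach and viscosity boundary}\ref{cor: postive reach and viscosity boundary 2} (coarea/Lemma \ref{lem: tangent of normal bundle}, with $\phi(\bm{n}^\phi(\eta))$ unchanged since $N^\phi(C,a)=\{\nabla\phi(\bm{n}(C,a))\}$ and $\bm{n}^\phi(\nabla\phi(\bm{n}(C,a)))=\bm{n}(C,a)$, together with the fact that the mean curvature is constant along each fiber as it is a single point for $a\in\partial^v_+C$). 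Applying Theorem \ref{theo: heintze karcher} to $K$ then gives \eqref{theo: heintze karcher positive reach: inequality}.

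For the equality case, the bound $\bm{h}^\phi_{C,1}\le q$ a.e.\ on $\mathcal{A}(C)$ transfers to $|\bm{H}^\phi_{K,1}|\le q$ a.e.\ on $\widetilde N^\phi_n(K)$, so the equality clause of Theorem \ref{theo: heintze karcher} applies and yields $N\in\mathbb{N}$, $c_1,\dots,c_N$ and radii $\rho_i\ge n/q$ with $\Int(C)=\Int(\mathbf{R}^{n+1}\setminus\Int(C))^c=\bigcup_{i=1}^N\Int(c_i+\rho_i\mathcal{W}^\phi)$ and the interiors pairwise disjoint. The only additional point is to upgrade ``pairwise disjoint interiors'' to the quantitative separation $\dist^\phi(c_i+\rho_i\mathcal{W}^\phi,c_j+\rho_j\mathcal{W}^\phi)\ge 2\reach^\phi(C)$ for $i\neq j$. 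For this I would argue directly: if two of the closed Wulff shapes were at $\phi^\ast$-distance less than $2\reach^\phi(C)$, pick $x$ on the segment (in the $\phi^\ast$ metric) realizing a distance $<\reach^\phi(C)$ from both $\partial(c_i+\rho_i\mathcal{W}^\phi)$ and $\partial(c_j+\rho_j\mathcal{W}^\phi)$; since these boundaries lie in $\partial C$, such an $x$ lies in $B^\phi(C,\reach^\phi(C))\setminus C$ but fails to have a unique $\phi$-nearest point in $C$ (it has nearest points on both components), contradicting $B^\phi(C,\reach^\phi(C))\subseteq\Unp^\phi(C)$ from Definition \ref{def: reach}. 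I expect the main obstacle to be bookkeeping the various $\mathcal{H}^n$-null exceptional sets consistently through the identifications $N^\phi(K)\leftrightarrow N^\phi(C)|\partial^v_+C$, $\partial^v_+C\leftrightarrow\partial^v C\leftrightarrow\mathcal{A}(C)$, and verifying that $\Int(\mathbf{R}^{n+1}\setminus\Int(C))$'s complement is genuinely $\Int(C)$ modulo a Lebesgue-null (in fact $\mathcal{H}^n$-finite) boundary, so that the volume identity $\mathcal{L}^{n+1}(\Int(C))=\mathcal{L}^{n+1}(C)$ and the union decomposition are literally the ones claimed.
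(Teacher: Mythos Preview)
Your proposal is correct and follows essentially the same route as the paper: reduce to Theorem \ref{theo: heintze karcher} applied to $K=\mathbf{R}^{n+1}\setminus\Int(C)$, translate the hypothesis via Lemma \ref{lem: convex body and its complementary} and Corollary \ref{cor: postive reach and viscosity boundary}, and convert the resulting integral over $N^\phi(K)$ back to an integral over $\partial^v C$ by the area/coarea formula together with Lemma \ref{lem: convex body and its complementary}\ref{lem: convex body and its complementary 4} and Lemma \ref{lem: tangent of normal bundle}. The paper's proof is essentially identical, only slightly more compressed.

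One point worth noting: the paper's proof simply invokes ``the characterization provided by Theorem \ref{theo: heintze karcher}'' for the equality case, whereas that theorem only yields pairwise disjoint interiors, not the stronger separation $\dist^\phi\!\big(c_i+\rho_i\mathcal{W}^\phi,c_j+\rho_j\mathcal{W}^\phi\big)\geq 2\reach^\phi(C)$. You explicitly supply this missing step, and your idea is right, though it can be made a bit tighter: one first checks $\partial W_i\subseteq\partial C$ and, since $\partial C=\bm{p}(N(C))$ for sets of positive reach and $\Nor(C,p_i)\subseteq\Nor(W_i,p_i)$, the outer $\phi$-normal $\eta_i$ to $W_i$ at a closest point $p_i$ lies in $N^\phi(C,p_i)$. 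Then $\bm{r}^\phi_C(p_i,\eta_i)\geq\reach^\phi(C)$ gives $\bm{\delta}^\phi_C\big(p_i+\tfrac{d}{2}\eta_i\big)=\tfrac{d}{2}$ whenever $d=\dist^\phi(W_i,W_j)<2\reach^\phi(C)$, while $p_j=p_i+d\eta_i\in C$ is a second nearest point at distance $\tfrac{d}{2}$, contradicting $\Unp^\phi(C)$. This closes the gap you identified and confirms that the midpoint indeed lies outside $C$ with two distinct nearest points.
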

\begin{proof}
We assume $ \Int(C) \neq \varnothing $ (otherwise there is nothing to prove) and we define $ K = \mathbf{R}^{n+1} \setminus \Int(C)$. Note that $\mathcal{L}^{n+1}(\partial C)=0$ and $\iota(N^\phi(K))=N^\phi(C)|\partial ^v_+C$. By Lemma \ref{lem: convex body and its complementary} and the assumption, we infer that
\begin{equation*}
  \bm{H}^\phi_{K,1}(a,\eta) =   -\bm{H}^\phi_{C,1}(a,-\eta) \leq 0 \quad \textrm{for $ \mathcal{H}^n $ a.e.\ $(a,\eta) \in N^\phi(K)$.}
\end{equation*}
Therefore, applying Theorem \ref{theo: heintze karcher}, Lemma \ref{lem: convex body and its complementary} and the coarea formula, we obtain
\begin{flalign*}
(n+1)\mathcal{L}^{n+1}(C) & \leq n \int_{N^\phi(K)} J^\phi_K(a,\eta) \, \frac{\phi(\bm{n}^\phi(\eta))}{|\bm{H}^\phi_{K,1}(a,\eta)|}\, d\mathcal{H}^n(a,\eta)\\
& = n \int_{N^\phi(K)} \ap J^{N^\phi(K)}_n \iota (a,\eta)\, J^\phi_C(a,-\eta) \, \frac{\phi(\bm{n}^\phi(\eta))}{\bm{H}^\phi_{C,1}(a,-\eta)}\, d\mathcal{H}^n(a,\eta)\\
& = n \int_{N^\phi(C)| \partial^v_+C}  J^\phi_C(a,\eta) \, \frac{\phi(\bm{n}^\phi(\eta))}{\bm{H}^\phi_{C,1}(a,\eta)}\, d\mathcal{H}^n(a,\eta).
\end{flalign*}
Since $\phi(\bm{n}^\phi(\eta))=\phi(\bm{n}^\phi(\nabla \phi(\bm{n}(C,a))))=\phi(\bm{n}(C,a))$ for $(a,\eta)\in N^\phi(C)|\partial^v_+C$, recalling  Remark \ref{rmk: curvature positive reach} and Corollary \ref{cor: postive reach and viscosity boundary}, we apply coarea formula in combination with Lemma \ref{lem: tangent of normal bundle} to obtain
$$
\int_{N^\phi(C)| \partial^v_+C}  J^\phi_C(a,\eta) \, \frac{\phi(\bm{n}^\phi(\eta))}{\bm{H}^\phi_{C,1}(a,\eta)}\, d\mathcal{H}^n(a,\eta)=\int_{ \partial^v C}  \frac{\phi(\bm{n}(C,a))}{\bm{h}^\phi_{C,1}(a)}\, d\mathcal{H}^n(a),
$$
which yields the first part of the assertion of the theorem.

Assume now that $ \bm{h}^\phi_{C,1}(a) \leq q $ for $ \mathcal{H}^n $ a.e.\ $ a \in \mathcal{A}(C) $ and $ \Int(C) \neq \varnothing $. Combining Corollary  \ref{cor: postive reach and viscosity boundary} with Lemma \ref{lem: convex body and its complementary} we get that
\begin{equation*}
 -\bm{H}^\phi_{K,1}(a, \eta) =  \bm{H}^\phi_{C,1}(a, -\eta) = \bm{h}^\phi_{C,1}(a) \leq q \qquad \textrm{for $ \mathcal{H}^n $ a.e.\ $ (a, \eta) \in N^\phi(K) $.}
\end{equation*}
Therefore if the equality holds in \eqref{theo: heintze karcher positive reach: inequality} then  the conclusion follows from the characterization provided by Theorem \ref{theo: heintze karcher}.
\end{proof}

\medskip

From Theorem \ref{theo: heintze karcher positive reach} we obtain a geometric rigidity result for a set $C$ of positive reach with positive and finite volume under the assumption of a sharp lower bound on the pointwise $\phi$ mean-curvature at almost all points in $\partial^v C $. For the set $ C $ in the next theorem we notice that  $ \mathcal{P}^\phi(C) > 0$ and $ \Int(C) \neq \varnothing $  by Corollary \ref{cor: postive reach and viscosity boundary}.

\begin{Corollary}\label{theo: lower bound}
Suppose $ C \subset \mathbf{R}^{n+1} $ is a set of positive reach with  finite and positive volume and define $ \rho =\frac{(n+1)\mathcal{L}^{n+1}(C)}{\mathcal{P}^\phi(C)} $. Assume that
\begin{equation}\label{eq:zeps}
	\bm{h}^\phi_{C,1}(a)\geq
	\frac{n}{\rho}\qquad \textrm{for $ \mathcal{H}^n $ a.e.\ $a\in \mathcal{A}(C) $.}
\end{equation}
Then there exist $ N \in\mathbb{N} $ and $ c_1, \ldots , c_N \in \mathbf{R}^{n+1} $ such that
\begin{equation*}
\Int(C)= \bigcup_{i=1}^N \Int(c_i + \rho \mathcal{W}^\phi), \quad \dist^\phi\big(c_i + \rho \mathcal{W}^\phi, c_j + \rho  \mathcal{W}^\phi\big) \geq 2\reach^\phi(C) \quad \textrm{for $ i \neq j $.}
\end{equation*}
\end{Corollary}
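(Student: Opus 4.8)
The plan is to obtain the statement as the equality case of the Heintze--Karcher inequality for sets of positive reach (Theorem \ref{theo: heintze karcher positive reach}): the specific value $\rho=\frac{(n+1)\mathcal{L}^{n+1}(C)}{\mathcal{P}^\phi(C)}$ is exactly what turns the inequality \eqref{eq:zeps} into a forced equality. First I would note that, since $0<\mathcal{L}^{n+1}(C)<\infty$, Corollary \ref{cor: postive reach and viscosity boundary} gives $\Int(C)\neq\varnothing$ and $\mathcal{P}^\phi(C)\in(0,\infty)$, so $\rho$ is well defined, and the hypothesis \eqref{eq:zeps} in particular yields $\bm{h}^\phi_{C,1}\ge 0$ $\mathcal{H}^n$-a.e.\ on $\mathcal{A}(C)$. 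Hence Theorem \ref{theo: heintze karcher positive reach} applies and gives
\[
(n+1)\mathcal{L}^{n+1}(C)\le n\int_{\partial^v C}\frac{\phi(\bm{n}(C,a))}{\bm{h}^\phi_{C,1}(a)}\,d\mathcal{H}^n(a).
\]
Bounding the integrand from above by $\tfrac{\rho}{n}\phi(\bm{n}(C,a))$ via \eqref{eq:zeps} and using Corollary \ref{cor: postive reach and viscosity boundary} (which gives $\mathcal{H}^n(\partial^v C\setminus\partial^v_+C)=0$ and $\mathcal{P}^\phi(C)=\int_{\partial^v_+C}\phi(\bm{n}(C,a))\,d\mathcal{H}^n(a)$), the right-hand side is at most $\rho\,\mathcal{P}^\phi(C)=(n+1)\mathcal{L}^{n+1}(C)$ by the very definition of $\rho$. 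Therefore every inequality is an equality; in particular \eqref{theo: heintze karcher positive reach: inequality} holds with equality, and, since the intermediate estimate would be strict otherwise, $\bm{h}^\phi_{C,1}(a)=n/\rho$ for $\mathcal{H}^n$-a.e.\ $a\in\mathcal{A}(C)$.

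Next I would apply the equality part of Theorem \ref{theo: heintze karcher positive reach} with the a priori bound $q:=n/\rho$. This produces $N\in\mathbb{N}$, points $c_1,\dots,c_N\in\mathbf{R}^{n+1}$ and radii $\rho_1,\dots,\rho_N\ge n/q=\rho$ such that $\Int(C)=\bigcup_{i=1}^N\Int(c_i+\rho_i\mathcal{W}^\phi)$ with $\dist^\phi(c_i+\rho_i\mathcal{W}^\phi,c_j+\rho_j\mathcal{W}^\phi)\ge 2\reach^\phi(C)$ for $i\neq j$. It then only remains to prove $\rho_i=\rho$ for every $i$. Since the interiors are pairwise disjoint (indeed at positive $\phi$-distance) and $\mathcal{L}^{n+1}(\partial C)=0$, comparing volumes gives $\mathcal{L}^{n+1}(C)=\sum_i\rho_i^{\,n+1}\mathcal{L}^{n+1}(\mathcal{W}^\phi)$, and comparing $\phi$-perimeters gives $\mathcal{P}^\phi(C)=\sum_i\rho_i^{\,n}\mathcal{P}^\phi(\mathcal{W}^\phi)$ (the separation ensures that the measure-theoretic normal of $C$ along $c_i+\rho_i\partial\mathcal{W}^\phi$ coincides with that of $c_i+\rho_i\mathcal{W}^\phi$). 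Combining these with the elementary identity $\mathcal{P}^\phi(\mathcal{W}^\phi)=(n+1)\mathcal{L}^{n+1}(\mathcal{W}^\phi)$, which follows from $\phi(\bm{n}^\phi(x))=x\bullet\bm{n}^\phi(x)$ on $\partial\mathcal{W}^\phi$ together with the divergence theorem, and with the definition of $\rho$, one gets $\rho\sum_i\rho_i^{\,n}=\sum_i\rho_i^{\,n+1}$, that is, $\sum_i\rho_i^{\,n}(\rho_i-\rho)=0$. As each summand is non-negative (recall $\rho_i\ge\rho$), each must vanish, hence $\rho_i=\rho$ for all $i$, which finishes the argument.

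The main point requiring care is not the logic — which is short — but the measure-theoretic bookkeeping involved in passing between curvature integrals on $N^\phi(C)$ and surface integrals on $\partial^v C$, and in comparing the contributions of the individual Wulff shapes with those of $C$; however, all of the needed identifications (between $\partial C$, $\partial^v C$, $\partial^v_+ C$, $\mathcal{A}(C)$, and the curvature functions) are already supplied by Corollary \ref{cor: postive reach and viscosity boundary} and by the disjointness assertion of Theorem \ref{theo: heintze karcher positive reach}, so the only genuinely new computation is the standard volume/perimeter relation for the Wulff shape.
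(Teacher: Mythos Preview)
Your proof is correct and follows the same route as the paper: the choice of $\rho$ forces equality in the Heintze--Karcher inequality of Theorem~\ref{theo: heintze karcher positive reach}, and its equality clause then yields the Wulff-shape decomposition. You are in fact more explicit than the paper about checking that all radii equal $\rho$ (the paper simply invokes Theorem~\ref{theo: heintze karcher positive reach} after first showing $\bm{h}^\phi_{C,1}=n/\rho$ a.e.\ via a $Z_\epsilon$ contradiction argument); one small slip is that $\mathcal{P}^\phi(C)<\infty$ is not contained in Corollary~\ref{cor: postive reach and viscosity boundary} but rather follows because $\bm{h}^\phi_{C,1}$ is finite at every Alexandrov point, so \eqref{eq:zeps} forces $n/\rho<\infty$.
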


\begin{proof}
Notice that there is at least one point $a\in\partial^v_+C$ with  $\bm{h}^\phi_{C,1}(a)<\infty$, hence we obtain $\rho >0$. Therefore  $0 < \mathcal{P}^\phi(C)<\infty$.

For  $ \epsilon > 0 $ we set
\begin{equation*}
	Z_\epsilon = \left\{ a\in  \mathcal{A}(C)  :\bm{h}^\phi_{C,1}(a)\geq (1+ \epsilon) \frac{n}{\rho} \right\}.
\end{equation*}
We claim that $ \mathcal{H}^n(Z_\epsilon) =0 $ for $ \epsilon > 0 $. Suppose that $ \mathcal{H}^n(Z_\epsilon) > 0 $ for some $\epsilon>0$. Then we deduce
\begin{align*}
	&	n	\int_{ \partial^v C}  \frac{\phi(\bm{n}(C,a))}{\bm{h}^\phi_{C,1}(a)}\, d\mathcal{H}^n(a)
	  = 	n	\int_{ \partial^v C\setminus Z_\epsilon}  \frac{\phi(\bm{n}(C,a))}{\bm{h}^\phi_{C,1}(a)}\, d\mathcal{H}^n(a)
	+	n	\int_{ Z_\epsilon}  \frac{\phi(\bm{n}(C,a))}{\bm{h}^\phi_{C,1}(a)}\, d\mathcal{H}^n(a)\\
	&\qquad\le \rho \int_{ \partial^v C\setminus Z_\epsilon} \phi(\bm{n}(C,a))\, d\mathcal{H}^n(a)+
	(1+\epsilon)^{-1}\rho  \int_{Z_\epsilon} \phi(\bm{n}(C,a))\, d\mathcal{H}^n(a)\\
	&\qquad<\rho\mathcal{P}^\phi(C)=(n+1)\mathcal{L}^{n+1}(C),
\end{align*}
where we used \eqref{eq:zeps} on $\partial^v C\setminus Z_\epsilon$     and the lower bound for $\bm{h}^\phi_{C,1}(a)$ on $Z_\epsilon$. This contradicts  the inequality in Theorem \ref{theo: heintze karcher positive reach} and thus proves the claim.

Since $ \mathcal{H}^n(Z_\epsilon) =0 $ for  $ \epsilon > 0 $, we infer that
\begin{equation*}
	\bm{h}^\phi_{C,1}(a) = \frac{n\mathcal{P}^\phi(C)}{(n+1)\mathcal{L}^{n+1}(C)}\qquad \textrm{for $ \mathcal{H}^n $ a.e.\ $a\in \partial^v C $},
\end{equation*}
whence we infer  that
$$
n	\int_{\partial^v C} \frac{\phi(\bm{n}(C,a))}{\bm{h}^\phi_{C,1}(a)}\, d\mathcal{H}^n(a)
 =(n+1)\mathcal{L}^{n+1}(C) ,
$$
thus \eqref{theo: heintze karcher positive reach: inequality} holds with equality.
We obtain now the conclusion of the theorem by employing the second part of Theorem \ref{theo: heintze karcher positive reach}.
\end{proof}

\begin{Remark}\label{spherical caps}
	Corollary \ref{theo: lower bound} is sharp already in the special isotropic case and for  convex bodies. In fact, if we consider the union of two congruent proper antipodal spherical caps of the unit sphere, we obtain a convex body $ K $ whose $ k $-th mean curvature on the smooth part of its boundary is constant and smaller than $\frac{\mathcal{H}^n(\partial K)}{(n+1)\mathcal{L}^{n+1}(K)}\binom{n }{ k}$. We provide the details for completeness. Let $ P \in \mathbf{G}(n+1,n) $ and $ \eta \in P^\perp $ with $ | \eta | = 1 $. For every $ 0 < \epsilon < 1 $ we define
	\begin{equation*}
		\Sigma^+_\epsilon = \mathbf{S}^{n} \cap \{ x : x \bullet \eta \geq \epsilon   \}, \quad 	\Sigma^-_\epsilon = \mathbf{S}^{n} \cap \{ x : x \bullet \eta \leq- \epsilon   \},
	\end{equation*}
	\begin{equation*}
		P^+_\epsilon = \mathbf{B}(0,1) \cap \{ x : x \bullet \eta = \epsilon   \}, \quad 	P^-_\epsilon = \mathbf{B}(0,1) \cap \{ x : x \bullet \eta =- \epsilon   \},
	\end{equation*}
	and we denote by $ K^+_\epsilon $ and $ K^-_\epsilon $ the convex bodies enclosed by $ \Sigma^+_\epsilon \cup P^+_\epsilon $ and $ \Sigma^-_\epsilon \cup P^-_\epsilon $ respectively. Then we define
	\begin{equation*}
		K_\epsilon = \{ x - \epsilon \eta : x \in K^+_{\epsilon}    \} \cup \{x + \epsilon \eta: x \in K^-_\epsilon \}.
	\end{equation*}
	Let $ X(x) = x $ for every $ x \in \mathbf{R}^{n+1} $. Since $ K^+_\epsilon $ is a set of finite perimeter, we denote by $ \eta_\epsilon $ the exterior unit normal and we compute by means of the divergence theorem \cite[Gauss--Green Theorem 4.5.6]{MR0257325}  (alternatively by noting that $K_\epsilon=\conv(\{o\}\cup \Sigma_\epsilon^+)\setminus \conv(\{o\}\cup P_\epsilon) $, where $\conv$ denotes the convex hull operator)
	\begin{flalign*}
		(n+1)\mathcal{L}^{n+1}(K^+_\epsilon)  &=  \int_{K^+_\epsilon} \mathrm{div} X \, d\mathcal{L}^{n+1} \\
		& = \int_{\Sigma^+_\epsilon \cup P^+_\epsilon} \eta_\epsilon(x) \bullet X(x)\, d\mathcal{H}^n(x) = \mathcal{H}^n(\Sigma^+_\epsilon) - \epsilon\mathcal{H}^n(P^+_\epsilon).
	\end{flalign*}
	We conclude that
	\begin{equation*}
		\frac{\mathcal{H}^n(\partial K_\epsilon)}{(n+1)\mathcal{L}^{n+1}(K_\epsilon)} = \frac{\mathcal{H}^n(\Sigma^+_\epsilon)}{(n+1)\mathcal{L}^{n+1}(K^+_\epsilon)} = 1 + \epsilon  \frac{\mathcal{H}^n(P_\epsilon^+)}{(n+1)\mathcal{L}^{n+1}(K^+_\epsilon)} > 1
	\end{equation*}
	for  $ 0 < \epsilon < 1 $. Finally, we notice that the $ k $-th mean curvature of $ K_\epsilon $ equals $ \binom{n }{ k} $ on the smooth part of  $ \partial K_\epsilon $.
\end{Remark}

\section{Curvature measures and soap bubbles}\label{Section: positive reach}

\subsection{Curvature measures and Minkowski formulae}
In the following, we write $C$ for a non-empty set with positive reach in $\R^{n+1}$.
For sets with positive reach, the Steiner formula simplifies in the following way (also in the anisotropic setting).

\begin{Corollary}[Anisotropic Steiner formula for sets of positive reach]\label{theo: Steiner positive reach}
Let $ \varnothing\neq C \subset \mathbf{R}^{n+1} $ be a set of positive reach. Let $ \varphi : N^\phi(C) \rightarrow \mathbf{R} $ be a bounded Borel function with compact support.  Then
$$
	 \int_{\{x \in\R^{n+1}: 0 < \bm{\delta}^\phi_C(x) \leq \rho\}} (\varphi \circ \bm{\psi}^\phi_C)\, d\mathcal{L}^{n+1} \notag \\
	   = \sum_{i=0}^n \frac{\rho^{i+1}}{i+1} \int_{N^\phi(C)}\phi(\bm{n}^\phi(\eta))\, J^\phi_C(a,\eta)\, \bm{H}^\phi_{C,i}(a,\eta)\;\varphi(a,u)\,  d\mathcal{H}^n(a,\eta)
$$
for $ 0 < \rho < \reach^\phi(C) $.
\end{Corollary}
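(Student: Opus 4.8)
The plan is to derive this Steiner formula for sets of positive reach as a direct specialization of the general Steiner-type formula for arbitrary closed sets, namely Theorem \ref{theo: Steiner closed}. The key observation is that positive reach removes the truncation by the reach function that appears in the general formula: if $0<\rho<\reach^\phi(C)$, then $\bm{r}^\phi_C(a,\eta)\geq \reach^\phi(C)>\rho$ for \emph{every} $(a,\eta)\in N^\phi(C)$, so that $\inf\{\rho,\bm{r}^\phi_C(a,\eta)\}=\rho$ identically on $N^\phi(C)$. This is immediate from Definition \ref{def: reach}: if $\reach^\phi(C)>\rho$ then $B^\phi(C,\rho')\subseteq\Unp^\phi(C)$ for some $\rho'>\rho$, which forces $\bm{\delta}^\phi_C(a+s\eta)=s$ for all $s\le\rho'$ and all $(a,\eta)\in N^\phi(C)$, hence $\bm{r}^\phi_C(a,\eta)\ge\rho'>\rho$ for every such pair.

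First I would fix $0<\rho<\reach^\phi(C)$ and a bounded Borel function $\varphi:N^\phi(C)\to\mathbf{R}$ with compact support. Since $N^\phi(C)$ is a closed subset of $\mathbf{R}^{n+1}\times\mathbf{R}^{n+1}$ (Remark \ref{rem: sets of positive reach}) and $\varphi$ has compact support, I may extend $\varphi$ by zero to a bounded Borel function $\tilde\varphi:\mathbf{R}^{n+1}\times\mathbf{R}^{n+1}\to\mathbf{R}$ with compact support that agrees with $\varphi$ on $N^\phi(C)$; the support of $\tilde\varphi$ is contained in $B\times\partial\mathcal{W}^\phi$ for some compact $B\subset\mathbf{R}^{n+1}$. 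Then I would apply Theorem \ref{theo: Steiner closed}\ref{theo: Steiner closed 3}, equation \eqref{theo: Steiner closed formula}, with $A=C$ and this $\tilde\varphi$. The left-hand side there is $\int_{B^\phi(C,\rho)\setminus C}(\tilde\varphi\circ\bm{\psi}^\phi_C)\,d\mathcal{L}^{n+1}$, which equals $\int_{\{x:0<\bm{\delta}^\phi_C(x)\le\rho\}}(\varphi\circ\bm{\psi}^\phi_C)\,d\mathcal{L}^{n+1}$ because $B^\phi(C,\rho)\setminus C=\{x:0<\bm{\delta}^\phi_C(x)\le\rho\}$ and because $\bm{\psi}^\phi_C(x)\in N^\phi(C)$ whenever $x\in\Unp^\phi(C)$, so $\tilde\varphi\circ\bm{\psi}^\phi_C=\varphi\circ\bm{\psi}^\phi_C$ on $B^\phi(C,\rho)\setminus C\subseteq\Unp^\phi(C)$ (using that $\rho<\reach^\phi(C)$). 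On the right-hand side I substitute $\inf\{\rho,\bm{r}^\phi_C(a,\eta)\}=\rho$, pull the constant $\rho^{j+1}$ out of each integral, and note that $\tilde\varphi(a,\eta)=\varphi(a,\eta)$ on $N^\phi(C)$, which yields exactly the claimed identity with $J=J^\phi_C$ (Definition \ref{def: jacobian}). The integrability needed to split the finite sum into individual finite integrals is guaranteed by \eqref{theo: Steiner closed 3.1} of Theorem \ref{theo: Steiner closed}, which in this setting reads $\int_{N^\phi(C)\cap(B\times\partial\mathcal{W}^\phi)}\rho^{j+1}\,J^\phi_C\,|\bm{H}^\phi_{C,j}|\,d\mathcal{H}^n<\infty$.

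There is essentially no obstacle here: the statement is a corollary in the literal sense, and the only points requiring a word of care are (i) the justification that $\bm{r}^\phi_C\equiv+\infty$ is not needed — only $\bm{r}^\phi_C>\rho$ — which follows from the definition of $\phi$-reach; (ii) the passage from a function defined only on $N^\phi(C)$ to a globally defined Borel function, handled by the zero extension together with closedness of $N^\phi(C)$; and (iii) the identification of the domains of integration, where one uses that $\{x:0<\bm{\delta}^\phi_C(x)\le\rho\}\subseteq\Unp^\phi(C)$ for $\rho<\reach^\phi(C)$ so that $\bm{\psi}^\phi_C$ is single-valued there and maps into $N^\phi(C)$. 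I would present these three points briefly and then invoke \eqref{theo: Steiner closed formula} and \eqref{theo: Steiner closed 3.1} to conclude.
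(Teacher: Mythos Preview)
Your proposal is correct and takes exactly the same approach as the paper, which simply states that the assertion is a straightforward consequence of Theorem \ref{theo: Steiner closed}. Your added details—the zero extension of $\varphi$, the observation that $\bm{r}^\phi_C(a,\eta)\geq\reach^\phi(C)>\rho$ for every $(a,\eta)\in N^\phi(C)$ (which is also implicit in Remark \ref{rem: sets of positive reach}), and the invocation of \eqref{theo: Steiner closed 3.1} for integrability—are precisely what is needed to unpack that one-line proof.
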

\begin{proof}
	The assertion is a straightforward consequence of Theorem \ref{theo: Steiner closed}.
\end{proof}

We can now introduce the generalized curvature measures of a set of positive reach with respect to $ \phi $. These are real-valued Radon measures (see \cite{MR0849863,MR3932153,Hug99,MR1782274} and the references cited there).
\begin{Definition}
Let $ \varnothing\neq C \subset \mathbf{R}^{n+1} $ be a set of positive reach and $ m \in\{0, \ldots , n\} $. The \emph{$m$-th generalized curvature measure of $ C $ with respect to $ \phi $} is the real-valued Radon measure $\Theta^\phi_{m}(C,\cdot) $ on $ \mathbf{R}^{n+1}\times \mathbf{R}^{n+1} $ such that \index{Q2@$\Theta^\phi_{m}(C,\cdot)$}
\begin{equation*}
	\Theta^\phi_{m}(C,B) = \frac{1}{n-m+1}\int_{ N^\phi(C)\cap B} \phi(\bm{n}^\phi(\eta))\,J^\phi_C(a,\eta)\, \bm{H}^\phi_{C,n-m}(a,\eta)\; d\mathcal{H}^n(a,\eta)
\end{equation*}
for any bounded Borel subset $ B \subset \mathbf{R}^{n+1} \times \mathbf{R}^{n+1}$.   Moreover, we set \index{Q2@$\mathcal{V}^\phi_m(C)$}
\begin{equation*}
\mathcal{V}^\phi_m(C) = \Theta^\phi_m(C, N^\phi(C))  \qquad \textrm{for $ m \in \{ 0, \ldots , n\} $.}
\end{equation*}
\end{Definition}

\begin{Remark}
Let $ C \subseteq \mathbf{R}^{n+1} $ be a set of positive reach and $ m\in\{0, \ldots , n\} $. The \emph{$m$-th curvature measure of $ C $ with respect to $ \phi $} is the real-valued Radon measure \index{Q1@$\mathcal{C}^\phi_{m}(C,\cdot)$} $\mathcal{C}^\phi_{m}(C,\cdot) $ on $ \mathbf{R}^{n+1}$ such that
\begin{equation*}
	\mathcal{C}^\phi_{m}(C,B) = \Theta^\phi_m(C, B \times \mathbf{R}^{n+1})
\end{equation*}
for any bounded Borel subset $ B \subset \mathbf{R}^{n+1} $.
\end{Remark}

\medskip

\begin{Lemma}\label{lem: abs continuity c.m.}
Suppose $ C $ is a set of positive reach, $ m \in \{0, \ldots , n\}$ and let $ \nu : C^{(n)} \rightarrow \mathbf{S}^n $ be a Borel map such that $ \nu(a) \in N(C,a) $ for every $ a \in C^{(n)}$. Let $\eta(a)=\nabla\phi(\nu(a))$. Then
\begin{flalign*}
&(n-m+1)\Theta^\phi_{m}(C, B \cap \widetilde{N}^\phi_n(C)) \\
&\qquad= \int_{\mathcal{A}(C)}\mathbf{1}_B(a,\eta(a))\phi(\nu(a))\, \bm{h}^\phi_{C,n-m}(a)\, d\mathcal{H}^n (a) \\
&\qquad \quad + \int_{ C^{(n)}\setminus \partial^v C} \phi(\nu(a)) \big[ \mathbf{1}_B(a,\eta(a))\bm{H}^\phi_{C,n-m}(a, \eta(a)) +  \mathbf{1}_B(a,-\eta(a))\bm{H}^\phi_{C,n-m}(a, -\eta(a)) \big]\, d\mathcal{H}^n (a)
\end{flalign*}
for every Borel set $ B \subseteq N^\phi(C)$.
\end{Lemma}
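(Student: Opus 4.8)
The strategy is to combine the general disintegration formula from Theorem \ref{thm:disint} (in the case $m=n$) with the structural description of the normal bundle of a set of positive reach, in particular the decomposition of $\bm{p}(\widetilde{N}^\phi_n(C))$ into Alexandrov points and points of $C^{(n)}\setminus\partial^v C$ where the fibre has two antipodal normals. First I would apply Theorem \ref{thm:disint} with $m=n$ and $B$ replaced by $\bm{p}(B)$ (or, more carefully, disintegrate over $C^{(n)}$), noting that the Jacobian-type function $\rho^\phi_{C,n}\equiv 1$ by part (b) of that theorem. This gives
$$
(n-m+1)\Theta^\phi_m(C,B\cap\widetilde{N}^\phi_n(C))=\int_{C^{(n)}}\int_{N^\phi(C,a)}\mathbf{1}_{B\cap\widetilde{N}^\phi_n(C)}(a,\eta)\,\phi(\bm{n}^\phi(\eta))\,d\mathcal{H}^0(\eta)\,d\mathcal{H}^n(a),
$$
after observing via \eqref{thm:disint eq1} and Remark \ref{rmk: stratification} that $\bm{H}^\phi_{C,n-m}\cdot\mathbf{1}_{N^\phi(C)|B}=\mathbf{1}_{\widetilde{N}^\phi_m(C)|(C^{(n)}\cap B)}$ restricted to the relevant piece, but here I should be a bit more careful: the statement mixes $\bm{h}^\phi_{C,n-m}$ and $\bm{H}^\phi_{C,n-m}(a,\pm\eta(a))$, so the cleaner route is to go back to the definition of $\Theta^\phi_m$ as an integral over $N^\phi(C)\cap B$ of $\phi(\bm{n}^\phi(\eta))J^\phi_C\,\bm{H}^\phi_{C,n-m}$, intersect with $\widetilde{N}^\phi_n(C)$, and apply the coarea formula for the projection $\bm{p}$ using $\ap J^{N_s}_n\bm{p}$ together with Lemma \ref{lem: tangent of normal bundle}, exactly as in the proof of Corollary \ref{cor: postive reach and viscosity boundary}\ref{cor: postive reach and viscosity boundary 2}.

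Next I would split $C^{(n)}$ according to the cardinality of the fibre $N^\phi(C,a)$. By Lemma \ref{lem: exterior normal basic properties closed}\ref{lem: exterior normal basic properties closed: a}, for $a\in\bm{p}(\widetilde{N}^\phi_n(C))$ we have $\mathcal{H}^0(N^\phi(C,a))\in\{1,2\}$. Modulo an $\mathcal{H}^n$-null set (Lemma \ref{lem: exterior normal basic properties closed}\ref{lem: exterior normal basic properties closed: c}) the set $C^{(n)}$ coincides with $\bm{p}(\widetilde{N}^\phi_n(C))$. On the set where the fibre is a singleton $\{\eta\}$, the point $a$ lies in $\partial^v C$, and by Corollary \ref{cor: postive reach and viscosity boundary}\ref{cor: postive reach and viscosity boundary 1} and the fact that $\mathcal{H}^n(\partial^v C\setminus\mathcal{A}(C))=0$ we may integrate over $\mathcal{A}(C)$; moreover on $\mathcal{A}(C)$ the curvature function $\bm{H}^\phi_{C,n-m}(a,\eta(a))$ equals $\bm{h}^\phi_{C,n-m}(a)$ (again Corollary \ref{cor: postive reach and viscosity boundary}\ref{cor: postive reach and viscosity boundary 1}), where $\eta(a)=\nabla\phi(\nu(a))$ is forced since $\nu(a)\in N(C,a)$ and \eqref{eq: phi normal vs euclidean normal} holds. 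On the set $C^{(n)}\setminus\partial^v C$ the fibre has exactly two elements $\pm\eta(a)$ by Lemma \ref{lem: exterior normal basic properties closed}\ref{lem: exterior normal basic properties closed: d}, so the inner $\mathcal{H}^0$-integral splits into the two antipodal contributions, with $\phi(\bm{n}^\phi(\eta(a)))=\phi(\nu(a))$ and $\phi(\bm{n}^\phi(-\eta(a)))=\phi(-\nu(a))=\phi(\nu(a))$ since $\bm{n}^\phi(\pm\eta(a))=\pm\nu(a)$ and $\phi$ is a norm; this produces precisely the second integral in the claimed formula. Assembling the two pieces gives the assertion.

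The one delicate point I anticipate is the bookkeeping needed to match $\bm{H}^\phi_{C,n-m}$ on $\widetilde{N}^\phi_n(C)$ with the pointwise curvature $\bm{h}^\phi_{C,n-m}$ on the Alexandrov points and with the two-sided evaluation $\bm{H}^\phi_{C,n-m}(a,\pm\eta(a))$ on $C^{(n)}\setminus\partial^v C$: one must be sure that $(a,\eta(a))\in\widetilde{N}^\phi_n(C)$ whenever $(a,-\eta(a))$ is, for $\mathcal{H}^n$-a.e.\ $a\in C^{(n)}\setminus\partial^v C$, which follows from Lemma \ref{lem: exterior normal basic properties closed}\ref{lem: exterior normal basic properties closed: e} together with \ref{lem: exterior normal basic properties closed: d}, so that restricting $\Theta^\phi_m$ to $\widetilde{N}^\phi_n(C)$ does not silently drop one of the two antipodal normals. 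Once this is in place, and once one records that $\mathcal{H}^n\big(\bm{p}(\widetilde{N}^\phi_n(C))\setminus\mathcal{A}(C)\big)$ need not vanish in general but the set on which the representing function fails to be twice differentiable carries no curvature mass once we have already intersected with $\widetilde{N}^\phi_n(C)$ and used Corollary \ref{cor: postive reach and viscosity boundary}, the computation is routine. The measurability of $\nu$ (hence of $\eta$) on $C^{(n)}$ is guaranteed because $C^{(n)}$ is a Borel set and $\bm{n}(C,\cdot)$ is Borel; on the one-normal part $\nu$ agrees with $\bm{n}(C,\cdot)$ by Remark \ref{rmk: viscosity boundary}, and on the two-normal part any Borel selection works since the two contributions are symmetrized in the final formula.
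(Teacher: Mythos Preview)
Your proposal is correct and, after the initial detour via Theorem \ref{thm:disint} that you rightly abandon, follows essentially the same route as the paper: apply the coarea formula to the projection $\bm{p}$ on $\widetilde{N}^\phi_n(C)$ using that $J^\phi_C=\ap J_n\bm{p}$ there (Lemma \ref{lem: tangent of normal bundle}), then split $\bm{p}(\widetilde{N}^\phi_n(C))$ into the one-normal part (identified with $\mathcal{A}(C)$ via Corollary \ref{cor: postive reach and viscosity boundary}) and the two-normal part $C^{(n)}\setminus\partial^v C$ via Lemma \ref{lem: exterior normal basic properties closed}. Your discussion of the ``delicate point'' about both antipodal normals lying in $\widetilde{N}^\phi_n(C)$ via Lemma \ref{lem: exterior normal basic properties closed}\ref{lem: exterior normal basic properties closed: e} is a nice explicit check that the paper leaves implicit.
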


\begin{proof}
Let $ B \subseteq N^\phi(C) $ be a Borel set. Combining Lemma \ref{lem: tangent of normal bundle} and the coarea formula, we get
\begin{equation*}
    (n-m+1)\Theta^\phi_{m}(C, B \cap \widetilde{N}^\phi_n(C)) = \int_{\bm{p}(\widetilde{N}^\phi_n(C))}\int_{N^\phi(C,a)}\mathbf{1}_B(a,\eta)\phi(\bm{n}^\phi(\eta))\, \bm{H}^\phi_{C,n-m}(a, \eta)\, d\mathcal{H}^0(\eta)\, d\mathcal{H}^n (a).
\end{equation*}
Since $ \bm{n}^\phi(\nabla \phi(\nu(a))) = \nu(a)$ for $ a \in C^{(n)}  $, the argument is completed by applying Lemma \ref{lem: exterior normal basic properties closed} \ref{lem: exterior normal basic properties closed: c} and Corollary \ref{cor: postive reach and viscosity boundary}, since
$ N^\phi(C,a) = \{\pm \nabla \phi(\nu(a))\}$ for  $ a \in C^{(n)} \setminus \partial^v C $.
\end{proof}

\begin{Remark}
The Lebesgue decomposition of the curvature measure  $\mathcal{C}^\phi_{m}(C,\cdot)$ with respect to $\mathcal{H}^n\llcorner \partial C$ is given by $\Theta^\phi_{m}(C, (\cdot\times\R^{n+1}) \cap \widetilde{N}^\phi_n(C))$ (the absolutely continuous part) and $\Theta^\phi_{m}(C, (\cdot\times\R^{n+1})  \setminus\widetilde{N}^\phi_n(C))$ (the singular part). It follows from Lemma \ref{lem: exterior normal basic properties closed} \ref{lem: exterior normal basic properties closed: e} that these parts are indeed singular with respect to each other. For the absolutely continuous part, Lemma \ref{lem: abs continuity c.m.} yields an explicit description. In the case of convex bodies  where $C^{(n)}\setminus \partial^vC=\emptyset$, a corresponding analysis can be found in \cite{MR1654685} in the isotropic framework.
\end{Remark}

\medskip


\medskip

The following lemma extends \cite[Lemma 2.1]{MR140907} from Euclidean curvature measures of convex bodies to generalized curvature measures with respect to a $C^2$-norm $\phi$ and sets with positive reach (compare also \cite[Section 3]{MR1742247}).  The non-negative Radon measure $ |\Theta^\phi_m(C,\cdot)\llcorner(A\times \partial \mathcal{W}^\phi)|  $ in the next lemma is the total variation of the real-valued Radon measure $ \Theta^\phi_m(C,\cdot)\llcorner(A\times \partial \mathcal{W}^\phi)  $; see \cite[Definition 1.4]{MR1857292}.

\begin{Lemma}\label{Lem:abs}
Let $\varnothing\neq C\subset\R^{n+1}$ be a set of positive reach. Let $A\subset\R^{n+1}$ be a $\mathcal{H}^m$ measurable set and $m\in\{0,\ldots,n\}$. Then there is a non-negative constant $c$, depending only on $n,\phi$, such that
$$
|\Theta^\phi_m(C,\cdot)\llcorner(A\times \partial \mathcal{W}^\phi)|\le c\cdot \mathcal{H}^m(A).
$$
\end{Lemma}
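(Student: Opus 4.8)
The statement asserts an absolute-continuity-type bound: the total variation of the localized $m$-th generalized curvature measure is controlled by the $m$-dimensional Hausdorff measure of the base set $A$. The natural approach is to pass through the disintegration formula of Theorem \ref{thm:disint}, which expresses integrals against $\phi(\bm{n}^\phi(\eta))J^\phi_C\,\bm{H}^\phi_{C,n-m}$ over $N^\phi(C)|B$ as iterated integrals over $A^{(m)}$ and the fibres $N^\phi(C,a)$, with a bounded density $\rho^\phi_{C,m}$. First I would fix a bounded Borel set $D\subseteq \R^{n+1}\times\R^{n+1}$ and apply Theorem \ref{thm:disint}(c) with $B$ replaced by the Borel set $A$ (or, if $A$ is merely $\mathcal{H}^m$ measurable, with a Borel hull $\widetilde A\supseteq A$ satisfying $\mathcal{H}^m(\widetilde A)=\mathcal{H}^m(A)$, exploiting that $\mathcal{H}^m\llcorner C^{(m)}$ is a Radon measure on the rectifiable set $C^{(m)}$ so that $\mathcal{H}^m$-measurable sets differ from Borel sets by $\mathcal{H}^m$-null sets). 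This gives
$$
(n-m+1)\,\Theta^\phi_m(C,D\cap(A\times\partial\mathcal{W}^\phi))=\int_{A\cap C^{(m)}}\int_{N^\phi(C,a)}\mathbf{1}_D(a,\eta)\,\phi(\bm{n}^\phi(\eta))\,\rho^\phi_{C,m}(a,\eta)\,d\mathcal{H}^{n-m}(\eta)\,d\mathcal{H}^m(a),
$$
since $\bm{H}^\phi_{C,n-m}\cdot\mathbf{1}_{N^\phi(C)|B}$ coincides $\mathcal{H}^n$-a.e.\ with $\mathbf{1}_{\widetilde N^\phi_m(C)|(C^{(m)}\cap B)}$ and the restriction to $D$ just inserts the indicator $\mathbf{1}_D$.

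The key quantitative inputs are then: $0<\rho^\phi_{C,m}(a,\eta)\le c_0$ for $\mathcal{H}^n$-a.e.\ $(a,\eta)$, where $c_0=c_0(n,\phi)$ by Theorem \ref{thm:disint}(a); the pointwise bound $\phi(\bm{n}^\phi(\eta))=\bm{n}^\phi(\eta)\bullet\eta\le c_1(n,\phi)$ uniformly over $\eta\in\partial\mathcal{W}^\phi$ (by compactness of $\partial\mathcal{W}^\phi$ and continuity of $\phi$); and the uniform bound $\mathcal{H}^{n-m}(N^\phi(C,a))\le c_2(n,\phi)$ for $a\in C^{(m)}$ --- indeed $N^\phi(C,a)$ is the diffeomorphic image under $\nabla\phi$ of the set of unit normal vectors $N(C,a)\cap\bS^n$ (see \eqref{eq: phi normal vs euclidean normal}), which lies in $\bS^n$; since $a\in C^{(m)}$ means $\dim\Dis(C,a)=n+1-m$ and $N(C,a)\cap\bS^n$ is the spherical image of the $(n+1-m)$-dimensional convex cone $\Dis(C,a)$ intersected with $\bS^n$, it is contained in an $(n-m)$-dimensional great subsphere, so $\mathcal{H}^{n-m}(N(C,a)\cap\bS^n)\le\mathcal{H}^{n-m}(\bS^{n-m})$, and the $C^1$-diffeomorphism $\nabla\phi|_{\bS^n}$ has bounded Jacobian, giving $c_2=c_2(n,\phi)$. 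Combining these three bounds, for every Borel $D$ we obtain
$$
\bigl|\Theta^\phi_m(C,D\cap(A\times\partial\mathcal{W}^\phi))\bigr|\le\frac{c_0c_1c_2}{n-m+1}\,\mathcal{H}^m(A\cap C^{(m)})\le c\,\mathcal{H}^m(A)
$$
with $c=c(n,\phi)$.

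Finally, to upgrade this to the asserted bound on the total variation $|\Theta^\phi_m(C,\cdot)\llcorner(A\times\partial\mathcal{W}^\phi)|$, I would recall that for a signed (real-valued) Radon measure $\mu$ and any Borel set $E$ one has $|\mu|(E)=\sup\sum_j|\mu(E_j)|$ over finite Borel partitions $\{E_j\}$ of $E$; applying the displayed estimate to $D=E_j$ with the observation that $\sum_j\mathcal{H}^m(A\cap C^{(m)})$ is not what appears --- rather one partitions $A\times\partial\mathcal{W}^\phi$ and the bound $|\Theta^\phi_m(C,E_j\cap(A\times\partial\mathcal{W}^\phi))|\le c\,\mathcal{H}^m(\bm{p}(E_j)\cap A\cap C^{(m)})$ together with $\sum_j\mathcal{H}^m(\bm{p}(E_j)\cap A\cap C^{(m)})$ being majorized by $\mathcal{H}^m(A)$ when the $E_j$ come from a product partition $A_j\times\partial\mathcal{W}^\phi$ --- would do it; more cleanly, it suffices to note that by the disintegration identity the measure $\Theta^\phi_m(C,\cdot)\llcorner(A\times\partial\mathcal{W}^\phi)$ is absolutely continuous with respect to the push-forward of $\phi(\bm{n}^\phi(\cdot))\rho^\phi_{C,m}\,\mathcal{H}^{n-m}\llcorner N^\phi(C,a)\otimes\mathcal{H}^m\llcorner(A\cap C^{(m)})$, a non-negative measure of total mass at most $c\,\mathcal{H}^m(A)$, so its total variation is bounded by the same quantity. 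I expect the only genuinely delicate point to be the uniform fibre-measure bound $\mathcal{H}^{n-m}(N^\phi(C,a))\le c_2(n,\phi)$ for $a\in C^{(m)}$; everything else is a direct consequence of Theorem \ref{thm:disint} and compactness of $\partial\mathcal{W}^\phi$. (If one prefers, the fibre bound can be avoided entirely by absorbing $\mathcal{H}^{n-m}(N^\phi(C,a))\,\phi(\bm{n}^\phi(\eta))\,\rho^\phi_{C,m}(a,\eta)$ into a single bounded integrand over $N^\phi(C,a)$ and invoking that the coarea/disintegration machinery already produced an $\mathcal{H}^n$-finite measure on $N^\phi(C)$ restricted to sets with $\bm{r}^\phi_C$ bounded below --- but since $C$ has positive reach, $\bm{r}^\phi_C\ge\reach^\phi(C)>0$ everywhere, so $N^\phi(C)$ itself is locally $\mathcal{H}^n$-finite, and the constant can be tracked through.)
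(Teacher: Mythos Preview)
Your approach is essentially the same as the paper's: both invoke Theorem~\ref{thm:disint} (the disintegration formula) and bound the resulting iterated integral using the uniform bound on $\rho^\phi_{C,m}$. The paper's proof is a two-line sketch; you supply the details the paper omits, in particular the uniform fibre bound $\mathcal{H}^{n-m}(N^\phi(C,a))\le c_2(n,\phi)$ for $a\in C^{(m)}$, which is correct.

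One simplification: your detour on the total variation is unnecessary. Equation~\eqref{thm:disint eq1} in Theorem~\ref{thm:disint}(c) says that $\bm{H}^\phi_{C,n-m}\cdot\bm{1}_{N^\phi(C)|A}$ is $\mathcal{H}^n$-a.e.\ an indicator function, hence non-negative; therefore $\Theta^\phi_m(C,\cdot)\llcorner(A\times\partial\mathcal{W}^\phi)$ is itself a non-negative measure and equals its own total variation. Evaluating it on the whole space via the disintegration identity then gives the bound directly, with no need to discuss partitions or positive/negative parts.
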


\begin{proof} For the proof, one can assume that $\mathcal{H}^m(A)<\infty$. An application of Theorem \ref{thm:disint} to the positive and the negative part of
$\Theta^\phi_m(C,\cdot)\llcorner(A\times \partial \mathcal{W}^\phi)$ then yields the assertion.
\end{proof}

The following lemma is now an immediate consequence of Lemma \ref{Lem:abs}. We do not include the case $m=n$ in the statement of the lemma, since in this case the hypothesis is always satisfied by Lemma \ref{lem: exterior normal basic properties closed} \ref{lem: exterior normal basic properties closed: c} and the conclusion holds essentially by definition; see Remark \ref{remcurvdef}.

\begin{Lemma}\label{lem:thetared}
Let $\varnothing\neq C \subset\R^{n+1}$ be a set of positive reach. Let $ m \in \{0, \ldots, n-1\}$ and assume that
\begin{equation*}
    \mathcal{H}^m[\bm{p}(\widetilde{N}^\phi(C) \setminus \widetilde{N}_n^\phi(C))]=0.
\end{equation*}
Then
\begin{equation*}
(n-m+1)\cdot    \Theta^\phi_{m}(C, B) = \int_{\widetilde{N}_n^\phi(C) \cap B}\phi(\bm{n}^\phi(\eta))J^\phi_C(a,\eta) \bm{H}^\phi_{C,n-m}(a,\eta)\, d\mathcal{H}^n(a,\eta)
\end{equation*}
for every Borel set $ B \subseteq N^\phi(C)$.
\end{Lemma}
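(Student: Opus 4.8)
The plan is to split the generalized curvature measure into its absolutely continuous part, supported on $\widetilde{N}_n^\phi(C)$, and its singular part, supported on $N^\phi(C)\setminus\widetilde{N}_n^\phi(C)$, and to show that under the stated hypothesis the singular part vanishes. First I would recall the decomposition
$$
N^\phi(C)=\bigcup_{j=0}^{n}\big(N^\phi(C)|C^{(j)}\big)
$$
together with the fact that $\widetilde{N}_n^\phi(C)\subseteq N^\phi(C)|C^{(n)}$, so that
$$
N^\phi(C)\setminus\widetilde{N}_n^\phi(C)\subseteq \big(N^\phi(C)|C^{(n)}\setminus\widetilde{N}_n^\phi(C)\big)\cup \bigcup_{j=0}^{n-1}\big(N^\phi(C)|C^{(j)}\big),
$$
and I would write $\Theta^\phi_m(C,\cdot)=\Theta^\phi_m(C,\cdot\cap\widetilde{N}_n^\phi(C))+\Theta^\phi_m(C,\cdot\setminus\widetilde{N}_n^\phi(C))$ for every Borel set $B\subseteq N^\phi(C)$. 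The desired identity is exactly the statement that $\Theta^\phi_m(C,B\setminus\widetilde{N}_n^\phi(C))=0$ for every Borel $B$.

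The key step is to control the singular part by a Hausdorff measure on the projected singular set. Applying Lemma~\ref{Lem:abs} with $A=\bm{p}(\widetilde{N}^\phi(C)\setminus\widetilde{N}_n^\phi(C))$ gives
$$
\big|\Theta^\phi_m(C,\cdot)\llcorner\big(\bm{p}(\widetilde{N}^\phi(C)\setminus\widetilde{N}_n^\phi(C))\times\partial\mathcal{W}^\phi\big)\big|\le c\cdot\mathcal{H}^m\big[\bm{p}(\widetilde{N}^\phi(C)\setminus\widetilde{N}_n^\phi(C))\big]=0,
$$
using the hypothesis that this $\mathcal{H}^m$ measure vanishes. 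Since $\mathcal{H}^n(N^\phi(C)\setminus\widetilde{N}^\phi(C))=0$ (Remark~\ref{rem: principla curvatures basic rem}) and $J^\phi_C\cdot\bm{H}^\phi_{C,n-m}$ is $\mathcal{H}^n$-integrable on bounded sets, the measure $\Theta^\phi_m(C,\cdot)$ assigns zero mass to $N^\phi(C)\setminus\widetilde{N}^\phi(C)$ as well; hence it suffices to note that
$$
\widetilde{N}^\phi(C)\setminus\widetilde{N}_n^\phi(C)\subseteq \bm{p}\big(\widetilde{N}^\phi(C)\setminus\widetilde{N}_n^\phi(C)\big)\times\partial\mathcal{W}^\phi,
$$
so that $|\Theta^\phi_m(C,\cdot)\llcorner(\widetilde{N}^\phi(C)\setminus\widetilde{N}_n^\phi(C))|=0$, and combining the two vanishing statements we obtain $|\Theta^\phi_m(C,\cdot)\llcorner(N^\phi(C)\setminus\widetilde{N}_n^\phi(C))|=0$. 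Therefore $\Theta^\phi_m(C,B)=\Theta^\phi_m(C,B\cap\widetilde{N}_n^\phi(C))$, which by the defining formula for $\Theta^\phi_m(C,\cdot)$ is precisely
$$
\frac{1}{n-m+1}\int_{\widetilde{N}_n^\phi(C)\cap B}\phi(\bm{n}^\phi(\eta))\,J^\phi_C(a,\eta)\,\bm{H}^\phi_{C,n-m}(a,\eta)\,d\mathcal{H}^n(a,\eta).
$$

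I expect the only genuine subtlety to be bookkeeping: making sure that the set $N^\phi(C)\setminus\widetilde{N}_n^\phi(C)$ is, up to an $\mathcal{H}^n$-null set on which $\Theta^\phi_m$ already vanishes, contained in $\bm{p}(\widetilde{N}^\phi(C)\setminus\widetilde{N}_n^\phi(C))\times\partial\mathcal{W}^\phi$, so that Lemma~\ref{Lem:abs} can indeed be invoked. This is immediate once one observes that $N^\phi(C)\setminus\widetilde{N}_n^\phi(C)$ is the disjoint union of $\widetilde{N}^\phi(C)\setminus\widetilde{N}_n^\phi(C)$ and the $\mathcal{H}^n$-null set $N^\phi(C)\setminus\widetilde{N}^\phi(C)$, and that $\bm{p}$ maps the former into the claimed product set. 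No further estimates are needed; the real work has already been done in Lemma~\ref{Lem:abs} (hence ultimately in the disintegration Theorem~\ref{thm:disint}).
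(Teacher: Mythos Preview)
Your proposal is correct and follows essentially the same route as the paper: apply Lemma~\ref{Lem:abs} with $A=\bm{p}(\widetilde{N}^\phi(C)\setminus\widetilde{N}_n^\phi(C))$, use the hypothesis to get zero total variation on the product set, and observe that $\widetilde{N}^\phi(C)\setminus\widetilde{N}_n^\phi(C)$ sits inside this product while $N^\phi(C)\setminus\widetilde{N}^\phi(C)$ is $\mathcal{H}^n$-null. The stratification decomposition you recall at the outset is not actually needed for the argument (and the paper does not invoke it here), but otherwise your bookkeeping matches the paper's proof.
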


\begin{proof}
An application of  Lemma \ref{Lem:abs} with $ A = \bm{p}(\widetilde{N}^\phi(C) \setminus \widetilde{N}_n^\phi(C))$ yields that
$$ \Theta^\phi_m(C,\cdot)\llcorner (\bm{p}( \widetilde{N}^\phi(C) \setminus  \widetilde{N}_n^\phi(C))\times\partial \mathcal{W}^\phi) =0,
$$
and hence $\Theta^\phi_m(C,B\cap  \widetilde{N}^\phi(C) \setminus \widetilde{N}_n^\phi(C) )=0$,  which is the desired conclusion.
\end{proof}

We now prove the anisotropic Minkowski formulae for sets of positive reach. The case of convex bodies has been treated in a different way in \cite{Hug99}.

\begin{Theorem}\label{theo: minkowski formula}
If $\varnothing\neq C \subset\R^{n+1}$ is a set of positive reach
  with finite volume and $ r \in\{ 1, \ldots , n \}$, then
\begin{flalign*}
& (n-r+1)\int_{N^\phi(C)} \phi(\bm{n}^\phi(\eta))\, J^\phi_C(a,\eta)\, \bm{H}^\phi_{C, r-1}(a,\eta)\, d\mathcal{H}^n(a,\eta) \\
& \qquad  = r\int_{N^\phi(C)} [a \bullet \bm{n}^\phi(\eta)]J^\phi_C(a,\eta)\, \bm{H}^\phi_{C,r}(a,\eta)\, d\mathcal{H}^n(a,\eta)
\end{flalign*}
and
\begin{equation*}
\int_{N^\phi(C)}a \bullet \bm{n}^\phi(\eta)\,J^\phi_C(a,\eta)\, \bm{H}^\phi_{C,0}(a,\eta) \, d\mathcal{H}^n(a,\eta) = (n+1)\mathcal{L}^{n+1}(C).
\end{equation*}
\end{Theorem}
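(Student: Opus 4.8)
The plan is to derive both identities from the anisotropic Steiner formula for sets of positive reach (Corollary~\ref{theo: Steiner positive reach}) by comparing the polynomial expansion in $\rho$ of the parallel volume $\mathcal{L}^{n+1}(B^\phi(C,\rho)\setminus C)$ with an alternative expansion obtained by applying the divergence theorem on the parallel set $B^\phi(C,\rho)$. More precisely, for $0<\rho<\reach^\phi(C)$ the boundary $S^\phi(C,\rho)$ is a closed $\mathcal{C}^{1,1}$-hypersurface (Remark~\ref{rem: sets of positive reach}) and $\bm{\psi}^\phi_C|S^\phi(C,\rho)$ is a bi-Lipschitz homeomorphism onto $N^\phi(C)$ with inverse $f_\rho(a,\eta)=a+\rho\eta$. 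First I would apply the Euclidean Gauss--Green theorem to the vector field $X(x)=x$ on the parallel set $B^\phi(C,\rho)$, using that its exterior unit normal at $x=a+\rho\eta$ is $\bm{n}^\phi(\eta)$, to get
\begin{equation*}
(n+1)\mathcal{L}^{n+1}(B^\phi(C,\rho))=\int_{S^\phi(C,\rho)}x\bullet \bm{n}(B^\phi(C,\rho),x)\,d\mathcal{H}^n(x)=\int_{S^\phi(C,\rho)}(a+\rho\eta)\bullet \bm{n}^\phi(\eta)\,d\mathcal{H}^n(x),
\end{equation*}
and then transport this integral to $N^\phi(C)$ via the coarea formula, using the Jacobian identity from Lemma~\ref{lem: tangent of normal bundle} and Theorem~\ref{theo: Steiner closed}, namely $\ap J^{N_\rho}_nf_\rho(a,\eta)=J^\phi_C(a,\eta)\sum_i\rho^i\bm{H}^\phi_{C,i}(a,\eta)$, together with $\eta\bullet\bm{n}^\phi(\eta)=\phi(\bm{n}^\phi(\eta))$.

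Next I would also expand $\mathcal{L}^{n+1}(B^\phi(C,\rho))=\mathcal{L}^{n+1}(C)+\mathcal{L}^{n+1}(B^\phi(C,\rho)\setminus C)$ and insert Corollary~\ref{theo: Steiner positive reach} with $\varphi\equiv 1$, which gives
\begin{equation*}
(n+1)\mathcal{L}^{n+1}(B^\phi(C,\rho))=(n+1)\mathcal{L}^{n+1}(C)+\sum_{i=0}^n\frac{(n+1)\rho^{i+1}}{i+1}\int_{N^\phi(C)}\phi(\bm{n}^\phi(\eta))J^\phi_C(a,\eta)\bm{H}^\phi_{C,i}(a,\eta)\,d\mathcal{H}^n(a,\eta).
\end{equation*}
On the other side, the Gauss--Green computation produces $\int_{N^\phi(C)}\phi(\bm{n}^\phi(\eta))J^\phi_C(a,\eta)\sum_i\rho^i\bm{H}^\phi_{C,i}(a,\eta)\,d\mathcal{H}^n$ from the $\rho\eta\bullet\bm{n}^\phi(\eta)$ term, and $\int_{N^\phi(C)}(a\bullet\bm{n}^\phi(\eta))J^\phi_C(a,\eta)\sum_i\rho^i\bm{H}^\phi_{C,i}(a,\eta)\,d\mathcal{H}^n$ from the $a\bullet\bm{n}^\phi(\eta)$ term. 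Both sides are polynomials in $\rho$ on the interval $(0,\reach^\phi(C))$; equating coefficients of $\rho^{r}$ for $r\in\{1,\ldots,n\}$ and of $\rho^0$ yields exactly the two claimed identities after relabeling indices (the $\rho^r$ coefficient couples $\bm{H}^\phi_{C,r-1}$ from the $\phi(\bm{n}^\phi(\eta))$-integral with $\bm{H}^\phi_{C,r}$ from the $a\bullet\bm{n}^\phi(\eta)$-integral and with the Steiner term, giving the factor $(n-r+1)$ versus $r$ after multiplying through; the $\rho^0$ coefficient isolates $(n+1)\mathcal{L}^{n+1}(C)=\int_{N^\phi(C)}(a\bullet\bm{n}^\phi(\eta))J^\phi_C(a,\eta)\bm{H}^\phi_{C,0}(a,\eta)\,d\mathcal{H}^n$).

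I expect the main technical obstacle to be the rigorous justification of the Gauss--Green step and the coarea transport: the parallel set $B^\phi(C,\rho)$ is only $\mathcal{C}^{1,1}$ (not smooth) and $C$ itself need only have locally finite perimeter, so I would invoke Federer's Gauss--Green theorem \cite[Gauss--Green Theorem 4.5.6]{MR0257325}, checking that $B^\phi(C,\rho)$ has finite perimeter with $\partial^m B^\phi(C,\rho)=S^\phi(C,\rho)$ up to $\mathcal{H}^n$-null sets and with the stated normal (this uses Remark~\ref{remark: bdry tubular neigh} and Lemma~\ref{lem: unp and level sets}, which give $\bm{p}(N^\phi(B^\phi(C,\rho)))=\partial^v_+B^\phi(C,\rho)$ and control the cut locus). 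A secondary point is the integrability needed to interchange sum and integral when passing coefficients; this is supplied by the integrability bound \eqref{theo: Steiner closed 3.1} in Theorem~\ref{theo: Steiner closed}. Finiteness of $\mathcal{L}^{n+1}(C)$ is used to ensure $\mathcal{V}^\phi_0(C)=\Theta^\phi_n(C,N^\phi(C))<\infty$ and that all the curvature integrals appearing are finite, so that the polynomial identity in $\rho$ is between genuine polynomials with finite coefficients. Once these measure-theoretic points are in place, matching coefficients is purely formal.
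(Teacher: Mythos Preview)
Your proposal is correct and follows essentially the same strategy as the paper: apply the divergence theorem to the vector field $X(x)=x$ on the $\mathcal{C}^{1,1}$ parallel body $B^\phi(C,\rho)$, pull the boundary integral back to $N^\phi(C)$ via the bi-Lipschitz map $f_\rho(a,\eta)=a+\rho\eta$ using the Jacobian identity from Theorem~\ref{theo: Steiner closed}, expand the volume via the Steiner formula, and compare coefficients of the resulting polynomials in $\rho$. The paper relies on Remark~\ref{rem: sets of positive reach} (rather than Remark~\ref{remark: bdry tubular neigh} and Lemma~\ref{lem: unp and level sets}) to justify that $\partial B^\phi(C,\rho)=S^\phi(C,\rho)$ with the stated exterior normal, but otherwise the argument and the technical ingredients you identify are the same.
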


\begin{proof}
We set $ u(x) = \frac{\nabla \bm{\delta}^\phi_C(x)}{|\nabla \bm{\delta}^\phi_C(x)|} $ for $ x \in \Unp^\phi(C) $ and  notice that for $ 0 < \rho < \reach^\phi(C)$ the set $ B^\phi(C, \rho) $ is a domain with $ \mathcal{C}^{1,1}$-boundary  $ \partial B^\phi(C, \rho) = S^\phi(C, \rho) $  whose exterior unit normal is given by $ u|\partial B^\phi(C, \rho) $. Moreover, for  $ 0 < \rho < \reach^\phi(C) $ the map $ f_\rho : N^\phi(C) \rightarrow S^\phi(C,\rho) $ defined by
\begin{equation*}
f_\rho(a,\eta) = a + \rho \eta \qquad \textrm{for $(a,\eta)\in N^\phi(C) $}
\end{equation*}
is a bi-lipschitz homeomorphism by Remark \ref{rem: sets of positive reach}. We observe (see proof of Theorem \ref{theo: Steiner closed}) that
\begin{equation*}
J_n^{N^\phi(C)}f_\rho(a,\eta) =J^\phi_C(a,\eta) \sum_{m=0}^n \rho^{n-m}\, \bm{H}^\phi_{C,n-m}(a,\eta)
\end{equation*}
for $ \mathcal{H}^n $ a.e.\ $(a,\eta)\in N^\phi(C) $. We set
\begin{equation*}
	I_m(C) = \int_{N^\phi(C)}a \bullet \bm{n}^\phi(\eta)\,J^\phi_C(a,\eta)\, \bm{H}^\phi_{C,n-m}(a,\eta) \, d\mathcal{H}^n(a,\eta) \qquad \textrm{for $ m =0, \ldots , n $.}
\end{equation*}
The divergence theorem and Remark \ref{rem: tangent level sets and Wulff shapes} yield
\begin{flalign*}
(n+1)\mathcal{L}^{n+1}(B^\phi(C, \rho)) & = \int_{S^\phi(C,\rho)} x \bullet u(x)\, d\mathcal{H}^n(x)\\
& = \int_{N^\phi(C)}[(a+ \rho \eta)\bullet u(a+ \rho \eta)] \, J_n^{N^\phi(C)}f_\rho(a,\eta)\, d\mathcal{H}^n(a,\eta)\\
& = \sum_{m=0}^n \rho^{n-m}I_m(C) +\sum_{m=0}^n (n-m+1)\rho^{n-m +1}\mathcal{V}^\phi_m(C)
\end{flalign*}
for $ 0 < \rho < \reach^\phi(C) $. Employing the Steiner formula \ref{theo: Steiner positive reach}, we get
\begin{equation*}
(n+1)\mathcal{L}^{n+1}(B^\phi(C, \rho)) = (n+1)\mathcal{L}^{n+1}(C) + (n+1)\sum_{m=0}^n \rho^{n-m+1}\mathcal{V}^\phi_m(C)
\end{equation*}
for $ 0 < \rho < \reach^\phi(C) $. Hence, we infer
\begin{equation*}
	\sum_{m=0}^{n-1}[I_m(C) - (m+1)\mathcal{V}^\phi_{m+1}(C)]\rho^{n-m} + I_n(C) - (n+1)\mathcal{L}^{n+1}(C) = 0
\end{equation*}
for  $ 0 < \rho < \reach^\phi(C) $. It follows that $ I_m(C) = (m+1)\mathcal{V}^\phi_{m+1}(C) $ for $ m =0, \ldots , n-1 $ and in addition we have $ I_n(C) = (n+1)\mathcal{L}^{n+1}(C) $.
\end{proof}

\subsection{The soap bubble theorem for sets of positive reach}
The following notion of $ k $-convexity generalizes the classical analogous notion used in the Euclidean setting to study isoperimetric-type inequalities for Querrmassintegrals (see \cite{MR1287239} or the more recent \cite{MR3107515}). Analogous concepts also arise in the context of elliptic differential operators (see \cite{MR1726702} and \cite{Salani} and the references given there to earlier work for instance by Caffarelli, Nirenberg, Spruck  ('85), Garding ('59), Ivochkina ('83, '85), Li ('90)).

\begin{Definition}\label{def:mc}
	Let $\varnothing\neq C \subset \mathbf{R}^{n+1} $ be a set of positive reach with $ \mathcal{P}^\phi(C) > 0$, and let $ r \in\{ 0, \ldots , n\} $. We say that $ C $ is \emph{ $ (r, \phi) $-mean convex } if
	\begin{equation}\label{eq mean convex 1}
		\bm{h}^\phi_{C,i}(a)\geq 0 \qquad \textrm{for $ \mathcal{H}^n $ a.e.\ $a\in \mathcal{A}(C) $ and $ i =1, \ldots , r-1 $}
	\end{equation}
	and
	\begin{equation}\label{eq mean convex 2}
		\bm{H}^\phi_{C,r}(a,u)\geq 0 \qquad \textrm{for $ \mathcal{H}^n $ a.e.\ $(a,u)\in {N}^\phi(C) $.}
	\end{equation}
\end{Definition}

\begin{Remark}
Suppose that $\varnothing\neq C \subset \mathbf{R}^{n+1} $ is a set of positive reach with finite volume. Then $ \mathcal{P}^\phi(C) > 0$ if and only if $ \mathcal{H}^n(\partial^v_+ C) > 0 $ by Corollary \ref{cor: postive reach and viscosity boundary}. This in turn is equivalent to $\Int(C)\neq\varnothing$.
\end{Remark}

\begin{Remark}
The $ (0, \phi)$-mean convex sets are all sets of positive reach  with non-empty interior, since by definition we have $ \bm{H}^\phi_{C,0} = \bm{1}_{\widetilde{N}^\phi_n(C)}\ge 0 $.  Moreover, if $ C $ is a set of  positive reach and positive perimeter such that $ \Theta^\phi_{n-k}(C. \cdot)$ are non-negative measures for all $ k \in \{1, \ldots , r\}$, then $ C $ is $ (r, \phi)$-mean convex.
\end{Remark}

Before we can add another remark, we need some preparations. Let $d_H^\phi$ denote the Hausdorff distance on the space of closed subsets of the metric space $(\R^{n+1},\phi^*)$. We say that a sequence of closed sets $C_i\subseteq\R^{n+1}$, $i\in\N$, converges to a closed set $C\subseteq\R^{n+1}$ as $i\to\infty$ if $d_H^\phi(C_i,C)\to 0$ as $i\to\infty$, which is equivalent to the uniform convergence of $\delta^\phi_{C_i}$ to $\delta^\phi_C$ as $i\to\infty$ on $\R^{n+1}$ (see \cite{MR0801600}). The following lemma is well known in the Euclidean setting (see  Theorem 4.13,  Remark 4.14 and Theorem 5.9 in \cite{MR0110078} and \cite[Section 3.1, pp.~7--9]{MR1846894}).

\begin{Lemma}\label{lem:reachvague}
Let $\varnothing\neq C_i\subset\R^{n+1}$ for $i\in\N$ be a sequence of closed sets converging to a closed set $C\subset\R^{n+1}$. Suppose there is a constant $\rho>0$ such that $\reach^\phi(C_i)\ge \rho$ for all $i\in\N$. Then $\reach^\phi(C)\ge \rho$ and for each $k\in\{0,\ldots,n\}$ the Radon measures $\Theta^\phi_k(C_i,\cdot)$ converge vaguely to the Radon measure $\Theta^\phi_k(C,\cdot)$ as $i\to\infty$.
\end{Lemma}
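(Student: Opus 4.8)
The plan is to split the statement into two parts: first establishing the lower bound $\reach^\phi(C)\ge\rho$ for the limit set, and then deducing the vague convergence of the curvature measures from the Steiner-type formula of Corollary \ref{theo: Steiner positive reach}. For the first part, I would argue by contradiction: if $\reach^\phi(C)<\rho$, then there is a point $z\in\R^{n+1}$ with $0<\delta^\phi_C(z)<\rho$ admitting two distinct nearest points $c_1,c_2\in C$, so that $z\in (c_j+\delta^\phi_C(z)\,B^\circ)$ with $\Int(c_j+\delta^\phi_C(z)\,B^\circ)\cap C=\varnothing$ for $j=1,2$ and $c_1\neq c_2$. Using the uniform convergence $\delta^\phi_{C_i}\to\delta^\phi_C$, one can choose $c_j^{(i)}\in C_i$ with $c_j^{(i)}\to c_j$ and $\delta^\phi_{C_i}(z)\to\delta^\phi_C(z)$; a slight inflation of the radius together with the strict convexity and smoothness of $B^\circ$ forces $c_1^{(i)}$ and $c_2^{(i)}$ to stay distinct for large $i$ and to be (almost) nearest points of $C_i$ from a nearby point $z_i$, contradicting $\reach^\phi(C_i)\ge\rho$ for $i$ large. (Here the argument is parallel to the standard Euclidean stability of reach under Hausdorff convergence; one has to be slightly careful to realize $z_i$ with $\delta^\phi_{C_i}(z_i)$ bounded away from $\rho$ and with two competing footpoints, which is where uniform convergence of the distance functions is used.)

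For the second part, fix $0<r<\rho$. By Corollary \ref{theo: Steiner positive reach} applied to each $C_i$ and to $C$, for every bounded Borel function $\varphi$ with compact support on the relevant normal bundle one has a polynomial identity in $\rho'\in(0,r)$ whose coefficients (up to the combinatorial factors $1/(i+1)$) are the integrals $\int\phi(\bm n^\phi(\eta))J^\phi\,\bm H^\phi_{\,\cdot}\,\varphi\,d\mathcal H^n$, i.e.\ essentially the curvature measures $\Theta^\phi_k$. Thus it suffices to show that for a fixed continuous function $g$ with compact support on $\R^{n+1}\times\R^{n+1}$,
\begin{equation*}
\int_{\{0<\delta^\phi_{C_i}\le\rho'\}} g\circ\bm\psi^\phi_{C_i}\,d\mathcal L^{n+1}\ \longrightarrow\ \int_{\{0<\delta^\phi_{C}\le\rho'\}} g\circ\bm\psi^\phi_{C}\,d\mathcal L^{n+1}\qquad(i\to\infty)
\end{equation*}
for all small $\rho'$, and then read off convergence of the individual coefficients by choosing $n+1$ distinct values of $\rho'$ and inverting a Vandermonde system. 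The left-hand integrands converge pointwise $\mathcal L^{n+1}$-a.e.\ on the set $U^\phi(C,r)\setminus C$ (excluding the null set $\Cut^\phi(C)$, using \eqref{eq: cut locus}): indeed, on $U^\phi(C,r)$ the maps $\bm\xi^\phi_{C_i}$ are single-valued $\mathcal C^{0,1}$ with uniform Lipschitz constant by Lemma \ref{theo: projectionLipschitz} and Remark \ref{rem: sets of positive reach}, and the Hausdorff convergence $C_i\to C$ forces $\bm\xi^\phi_{C_i}\to\bm\xi^\phi_{C}$ (hence $\bm\nu^\phi_{C_i}\to\bm\nu^\phi_C$) locally uniformly on $U^\phi(C,r)\setminus C$; combined with the uniform convergence of $\delta^\phi_{C_i}$, dominated convergence gives the claim, the domination being uniform because $\{0<\delta^\phi_{C_i}\le\rho'\}\subseteq B^\phi(C,\rho'+\varepsilon_i)$ with $\varepsilon_i\to0$.

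I expect the main obstacle to be the first part — proving stability of $\reach^\phi$ under Hausdorff convergence in the anisotropic setting — rather than the convergence of the measures. In the Euclidean case this is classical (Federer), but here one must handle the gauge body $B^\circ$ in place of a ball; the key technical inputs are the smoothness and strict convexity of $B^\circ$ (so that footpoints of a given exterior point are non-degenerate and vary continuously) and the uniform Lipschitz bounds on the nearest-point projection away from the cut locus. A secondary (but routine) point is justifying the passage from the integral convergence of the parallel-volume functionals to vague convergence of each $\Theta^\phi_k(C_i,\cdot)$: one notes that the Steiner polynomial has degree $n+1$ in $\rho'$, so evaluating at $n+1$ values of $\rho'$ in $(0,r)$ and solving the resulting (invertible) linear system expresses each $\int g\,d\Theta^\phi_k(C_i,\cdot)$ as a fixed linear combination of the parallel-volume integrals, whence convergence is inherited; density of such continuous $g$ then upgrades this to vague convergence of the Radon measures.
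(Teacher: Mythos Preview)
Your plan for the second part (vague convergence of the curvature measures) essentially matches the paper's argument: both reduce via the Steiner formula (Corollary \ref{theo: Steiner positive reach}) to showing convergence of the localized parallel-volume integrals $\int_{\{0<\bm\delta^\phi_{C_i}\le t\}} g\circ\bm\psi^\phi_{C_i}\,d\mathcal L^{n+1}$ for continuous compactly supported $g$ and fixed $t\in(0,\rho)$, and then extract the coefficients. The paper handles the pointwise convergence by a case distinction ($0<\bm\delta^\phi_C(x)<t$, $\bm\delta^\phi_C(x)>t$, $x\in\Int(C)$) and notes that the exceptional set $\{\bm\delta^\phi_C=t\}\cup\partial C$ is Lebesgue-null; you should be explicit about these boundary cases rather than invoking $\Cut^\phi(C)$, which is in fact empty inside $U^\phi(C,r)$ once $\reach^\phi(C)\ge\rho$ has been established.

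For the first part your approach differs from the paper's, and your sketch has a gap in the mechanism of contradiction. You propose to find, for large $i$, a point $z_i$ near $z$ with ``two competing footpoints'' in $C_i$; but since $\reach^\phi(C_i)\ge\rho$, every point at $\phi^*$-distance below $\rho$ from $C_i$ has a \emph{unique} footpoint, and having two \emph{almost}-nearest points does not by itself contradict this --- nor is it clear how to manufacture an actual double footpoint for $C_i$. The contradiction route can be repaired, but the mechanism is different: set $c_1^{(i)}:=\bm\xi^\phi_{C_i}(z)$ and $\eta^{(i)}:=\bm\nu^\phi_{C_i}(z)$; since $\reach^\phi(C_i)\ge\rho$, the open Wulff ball $U^\phi(c_1^{(i)}+\rho\,\eta^{(i)},\rho)$ is disjoint from $C_i$. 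Passing to a subsequential limit with $c_1^{(i)}\to c_1$ and $\eta^{(i)}\to\eta_1=(z-c_1)/d$, where $d=\bm\delta^\phi_C(z)$, one gets $U^\phi(c_1+\rho\,\eta_1,\rho)\cap C=\varnothing$; but writing $\eta_2=(z-c_2)/d\neq\eta_1$, strict convexity of $\phi^*$ gives $\phi^*\bigl((\rho-d)\eta_1+d\eta_2\bigr)<\rho$, i.e.\ $c_2\in U^\phi(c_1+\rho\,\eta_1,\rho)$, a contradiction. The paper instead argues directly: for any $x$ with $\bm\delta^\phi_C(x)\le\rho_1<\rho$ it pushes $x$ outward along $\bm\nu^\phi_{C_i}$ to a point $x_i$ at $\phi^*$-distance $\rho_2=\tfrac12(\rho+\rho_1)$ from $C_i$, takes a subsequential limit $z$, and observes that $U^\phi(z,\rho_2)\cap C=\varnothing$ with $x$ on the segment from the limit footpoint to $z$, whence $x\in\Unp^\phi(C)$. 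Both routes rest on the same rolling-Wulff-ball property; the direct one is cleaner and avoids the pitfall in your sketch.
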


\begin{proof} Let $\rho_1\in (0,\rho)$. Under the assumptions of the lemma, we show that $\reach(C)\ge \rho_1$. Let $x\in\R^{n+1}\setminus C$ with $\delta^\phi_C(x)\le \rho_1$. There is some $i_1\in \N$ such that if $i\ge i_1$, then $0<\delta^\phi_{C_i}(x)\le \rho_2<\rho$, where $\rho_2:=\frac{1}{2}(\rho+\rho_1)$. Define $x_i:=\xi_{C_i}^\phi(x)+\rho_2\cdot\nu^\phi_{C_i}(x)$, hence $\delta^\phi_{C_i}(x_i)=\rho_2$, $\delta^\phi_{C_i}(x_i)=\delta^\phi_{C_i}(x)$ and  $U^\phi(x_i,\rho_2)\cap C_i=\varnothing$, since $\bm{r}^\phi_{C_i}(\xi_{C_i}^\phi(x),\nu_{C_i}^\phi(x))\ge \reach(C_i)$. By compactness, we can find an infinite subset $ I\subseteq\N$ such $\xi^\phi_{C_i}(x_i)\to\xi\in\partial C$, $x_i\to z$, $\delta^\phi_C(z)=\rho_2$,
$U^\phi(z,\rho_2)\cap C=\varnothing$ and $x\in (\xi,z)$, where $I\ni i\to\infty$. But then clearly $U^\phi(x,\rho_2)\cap C=\varnothing$, $x\in \Unp^\phi(C)$ and $\xi=\xi^\phi_C(x)$. This proves the first assertion.

In view of Corollary \ref{theo: Steiner positive reach}, it is sufficient to show that if $\varphi:\R^{n+1}\times\R^{n+1}\to\R$ is a continuous function with compact support and $t\in (0,\rho)$, then
$$
\int\mathbf{1}\{0<\delta^\phi_{C_i}(x)\le t\}\varphi(\xi^{\phi}_{C_i}(x),\nu^{\phi}_{C_i}(x))\, d\mathcal{L}^{n+1}(x)\to
\int\mathbf{1}\{0<\delta^\phi_{C}(x)\le t\}\varphi(\xi^{\phi}_{C}(x),\nu^{\phi}_{C}(x))\, d\mathcal{L}^{n+1}(x)
$$
as $i\to\infty$.

We consider an arbitrary point $x\in \R^{n+1}$.
If $0<\delta_C^\phi(x)<t$, then  also $0<\delta_{C_i}^\phi(x)<t$ if $i\in\N$ is large enough. Moreover, $\xi_{C_i}^\phi(x)\to \xi_C^\phi(x)$ and $\nu_{C_i}^\phi(x)\to \nu_C^\phi(x)$ as $i\to\infty$, which can be obtained by minor adjustments of the proof for \cite[Lemma 2.2]{MR1782274}. Therefore,
\begin{equation}\label{eq:pc}
\mathbf{1}\{0<\delta^\phi_{C_i}(x)\le t\}\varphi(\xi^{\phi}_{C_i}(x),\nu^{\phi}_{C_i}(x))\to
\mathbf{1}\{0<\delta^\phi_{C}(x)\le t\}\varphi(\xi^{\phi}_{C}(x),\nu^{\phi}_{C}(x))
\end{equation}
as $i\to\infty$.

If $\delta_C^\phi(x)>t$, then also $\delta_{C_i}^\phi(x)>t$ if $i\in\N$ is large enough, hence \eqref{eq:pc} holds trivially. The same is true if  $x\in \Int(C)$ which implies that $x\in C_i$ for all sufficiently large $i\in\N$.

Since $\mathcal{L}^{n+1}(\{x\in\R^{n+1}:\delta_C^\phi(x)=t\}\cup \partial C)=0$, the assertion follows from the dominated convergence theorem.
\end{proof}

The preceding lemma implies that a non-negativity condition closely related to Definition \ref{def:mc} is stable with respect to converging sequences of sets of positive reach, as described in the following corollary.

\begin{Corollary}\label{cor:reachvague}
Let $\rho>0$ be a fixed constant. Let $\varnothing\neq C_i\subset\R^{n+1}$ for $i\in\N$ be a sequence of closed sets with $\reach(C_i)\ge \rho>0$  converging to a closed set $C\subset\R^{n+1}$. Then the following statements hold.
\begin{enumerate}
    \item[{\rm (a)}] If $\Theta^\phi_k(C_j,\cdot)\ge 0$ for all $j\in\N$, then also $\Theta^\phi_k(C,\cdot)\ge 0$.
    \item[{\rm (b)}] If $C_j$ is $(r,\phi)$-mean convex and smooth for all $j\in\N$ (so that  $\Theta^\phi_{n-k}(C_j,\cdot)\ge 0$ holds for   $k=1,\ldots,r$), then $\Theta^\phi_{n-k}(C,\cdot)\ge 0$ for $k=1,\ldots,r$; hence, $C$ is $(r,\phi)$-mean convex.
\end{enumerate}
\end{Corollary}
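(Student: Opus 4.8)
The plan is to deduce both parts directly from the vague convergence supplied by Lemma~\ref{lem:reachvague}, after first turning the given Euclidean reach bound into a uniform lower bound on the $\phi$-reach. Since the Wulff shape $\mathcal{W}^\phi=B^\circ$ and the Euclidean unit ball are both of class $C^2_+$, \cite[Theorem 3.2.12]{MR3155183} yields a constant $\rho_0=\rho_0(\phi)>0$ such that $\rho_0\mathcal{W}^\phi$ slides freely inside the Euclidean unit ball. First I would apply Lemma~\ref{lem:comparereach} with $\bar\phi$ the Euclidean norm to each $C_i$: from $\reach(C_i)\ge\rho$ (hence $\reach(C_i)>r$ for every $r<\rho$) we get $\reach^\phi(C_i)>\rho_0 r$, and letting $r\uparrow\rho$ gives $\reach^\phi(C_i)\ge\rho_0\rho>0$ for all $i$. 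Because $d_H^\phi$-convergence coincides with Euclidean Hausdorff convergence (all norms on $\R^{n+1}$ being equivalent), Lemma~\ref{lem:reachvague} now applies and provides both $\reach^\phi(C)\ge\rho_0\rho>0$, so that $C$ is a set of positive reach, and, for each $k\in\{0,\ldots,n\}$, the vague convergence $\Theta^\phi_k(C_i,\cdot)\to\Theta^\phi_k(C,\cdot)$ as $i\to\infty$.

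For part (a) I would use that vague convergence preserves non-negativity: if $\Theta^\phi_k(C_j,\cdot)\ge 0$ for all $j$, then for every non-negative continuous function $\varphi$ on $\R^{n+1}\times\R^{n+1}$ with compact support one has $\int\varphi\,d\Theta^\phi_k(C,\cdot)=\lim_{i\to\infty}\int\varphi\,d\Theta^\phi_k(C_i,\cdot)\ge 0$, whence $\Theta^\phi_k(C,\cdot)\ge 0$.

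For part (b) I would first observe that, for a \emph{smooth} set $C_j$, the conditions \eqref{eq mean convex 1}--\eqref{eq mean convex 2} defining $(r,\phi)$-mean convexity say exactly that at $\mathcal{H}^n$-a.e.\ $(a,\eta)\in N^\phi(C_j)$ the elementary symmetric functions $S_1,\ldots,S_r$ of the $\phi$-principal curvatures are non-negative; since on the (smooth) normal bundle $\bm{H}^\phi_{C_j,k}=S_k$ and the density of $\Theta^\phi_{n-k}(C_j,\cdot)$ with respect to $\mathcal{H}^n\restrict N^\phi(C_j)$ is the positive multiple $\tfrac{1}{k+1}\,\phi(\bm{n}^\phi(\eta))\,J^\phi_{C_j}(a,\eta)\,\bm{H}^\phi_{C_j,k}(a,\eta)$, this gives $\Theta^\phi_{n-k}(C_j,\cdot)\ge 0$ for $k=1,\ldots,r$ (the implication already recorded in the statement). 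Applying part (a) to each of the indices $n-1,\ldots,n-r$ then yields $\Theta^\phi_{n-k}(C,\cdot)\ge 0$ for $k=1,\ldots,r$, and since $C$ has positive reach the Remark following Definition~\ref{def:mc} shows that $C$ is $(r,\phi)$-mean convex, provided $\mathcal{P}^\phi(C)>0$ (which is in any case part of the definition of $(r,\phi)$-mean convexity).

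The argument is essentially formal once Lemma~\ref{lem:reachvague} is available, so I do not expect a genuine obstacle; the two points requiring attention are the uniform passage from Euclidean to $\phi$-reach via Lemma~\ref{lem:comparereach}, and, in part (b), the elementary reformulation of $(r,\phi)$-mean convexity of a smooth set as the simultaneous non-negativity of $\Theta^\phi_{n-1}(\cdot),\ldots,\Theta^\phi_{n-r}(\cdot)$, together with the observation that the final conclusion is only meaningful when $\mathcal{P}^\phi(C)>0$.
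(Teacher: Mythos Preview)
Your proposal is correct and follows essentially the same route the paper intends: the corollary is stated without proof as an immediate consequence of Lemma~\ref{lem:reachvague}, and you supply exactly the expected details (vague convergence preserves non-negativity, and for smooth sets $(r,\phi)$-mean convexity translates into non-negativity of the relevant curvature measures). Your explicit conversion from the Euclidean reach bound to a uniform $\phi$-reach bound via Lemma~\ref{lem:comparereach}, and your flag that the final ``hence $C$ is $(r,\phi)$-mean convex'' presupposes $\mathcal{P}^\phi(C)>0$, are both careful observations that the paper leaves implicit.
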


\medskip

A major ingredient for the proof of the Alexandrov theorem for sets with positive reach is the next lemma.

\begin{Lemma}\label{lem: constant mean curvature properties}
Let $ \varnothing\neq C \subset\mathbf{R}^{n+1} $ be a set of positive reach with finite and positive volume. Let $ \lambda \in \mathbf{R} $ and   $ r\in\{ 1, \ldots , n\} $ be such that
\begin{equation}\label{eq:hypo}
\Theta^\phi_{n-r}(C, \cdot) = \lambda\, \Theta^\phi_n(C, \cdot).
\end{equation}
Then $\bm{H}^\phi_{C,r}(a,\eta) = 0$ for $ \mathcal{H}^n $ a.e.\ $(a,\eta)\in N^\phi(C) \setminus \widetilde{N}^\phi_n(C) $ and
\begin{equation*}
\bm{H}^\phi_{C,r}(a,\eta) = (r+1) \lambda =\frac{n-r+1}{(n+1)\mathcal{L}^{n+1}(C)}\mathcal{V}^\phi_{n-r+1}(C)  \qquad \textrm{for $ \mathcal{H}^n $ a.e.\ $(a,\eta)\in \widetilde{N}^\phi_n(C) $}.
\end{equation*}
Furthermore, the following two statements hold.
\begin{enumerate}
    \item[{\rm (a)}] \label{lem: constant mean curvature properties:p1}If $ \lambda \neq 0 $, $ r \geq 2$ and $ C $ is $ (r-1, \phi) $-mean convex, then
    \begin{equation*}
\infty> \frac{\bm{h}^\phi_{C,1}(a)}{\binom{n}{ 1}} \geq  \Bigg(\frac{\bm{h}^\phi_{C,2}(a)}{\binom{n}{ 2}}\Bigg)^{\frac{1}{2}} \geq \ldots \geq \Bigg(\frac{\bm{h}^\phi_{C,r}(a)}{{\binom{n}{ r}}}\Bigg)^{\frac{1}{r}}	\geq \frac{\mathcal{P}^\phi(C)}{(n+1)\mathcal{L}^{n+1}(C)}
\end{equation*}
for $ \mathcal{H}^n $ a.e.\ $ a \in \partial^v C $.
\item[{\rm (b)}] \label{lem: constant mean curvature properties:p2} If $ r = 1 $, then $\bm{H}^\phi_{C,1}(a,\eta) \geq\frac{n\mathcal{P}^\phi(C)}{(n+1)\mathcal{L}^{n+1}(C)}$ for $ \mathcal{H}^n $ a.e.\ $(a,\eta)\in \widetilde{N}^\phi_n(C) $.
\end{enumerate}
\end{Lemma}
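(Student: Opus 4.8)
The plan is to start from the hypothesis \eqref{eq:hypo} and peel off information layer by layer. First I would reinterpret the identity $\Theta^\phi_{n-r}(C,\cdot)=\lambda\,\Theta^\phi_n(C,\cdot)$ using the definition of the generalized curvature measures: both sides are signed Radon measures given by integrating $\phi(\bm{n}^\phi(\eta))\,J^\phi_C(a,\eta)$ against $\bm{H}^\phi_{C,r}(a,\eta)$, respectively $\bm{H}^\phi_{C,0}(a,\eta)=\bm{1}_{\widetilde{N}^\phi_n(C)}(a,\eta)$, over Borel subsets of $N^\phi(C)$, up to the explicit normalizing constants $\tfrac{1}{r+1}$ and $\tfrac{1}{n+1}$. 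Since $\phi(\bm{n}^\phi(\eta))>0$ and $J^\phi_C>0$ $\mathcal{H}^n$-a.e.\ on $N^\phi(C)$, comparing the two measures on Borel subsets of $N^\phi(C)\setminus\widetilde{N}^\phi_n(C)$ (where the right-hand side vanishes identically because $\bm{H}^\phi_{C,0}=\bm{1}_{\widetilde{N}^\phi_n(C)}$) forces $\bm{H}^\phi_{C,r}(a,\eta)=0$ for $\mathcal{H}^n$-a.e.\ $(a,\eta)\in N^\phi(C)\setminus\widetilde{N}^\phi_n(C)$, and comparing on subsets of $\widetilde{N}^\phi_n(C)$ forces $\tfrac{1}{r+1}\bm{H}^\phi_{C,r}=\lambda$ there, i.e.\ $\bm{H}^\phi_{C,r}(a,\eta)=(r+1)\lambda$ for $\mathcal{H}^n$-a.e.\ $(a,\eta)\in\widetilde{N}^\phi_n(C)$. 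The identification $(r+1)\lambda=\tfrac{n-r+1}{(n+1)\mathcal{L}^{n+1}(C)}\,\mathcal{V}^\phi_{n-r+1}(C)$ then comes from taking $B=N^\phi(C)$ in \eqref{eq:hypo}, i.e.\ $\mathcal{V}^\phi_{n-r}(C)=\lambda\,\mathcal{V}^\phi_n(C)$, and rewriting both total masses via the definition $\mathcal{V}^\phi_m(C)=\Theta^\phi_m(C,N^\phi(C))$ together with the Minkowski formula of Theorem \ref{theo: minkowski formula} with $r$ replaced by $1$ — which gives $(n)\mathcal{V}^\phi_n(C)=\,$(something)$\cdots$; more directly, $\mathcal{V}^\phi_n(C)=\Theta^\phi_n(C,N^\phi(C))=\tfrac{1}{n+1}\int_{\widetilde{N}^\phi_n(C)}\phi(\bm{n}^\phi(\eta))J^\phi_C\,d\mathcal{H}^n=\tfrac{1}{n+1}\mathcal{P}^\phi(C)$ by Corollary \ref{cor: postive reach and viscosity boundary}\ref{cor: postive reach and viscosity boundary 2} and Remark \ref{rmk: curvature positive reach}, while $\mathcal{L}^{n+1}(C)=\mathcal{V}^\phi_0(C)$ via the last Minkowski formula; I would then chase the normalizations to land on the stated constant. (I will double check which of $\mathcal{V}^\phi_0(C)$ equals $\mathcal{L}^{n+1}(C)$ and insert the correct bookkeeping.)

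For part (b), assume $r=1$. Then the displayed identity just proved gives $\bm{H}^\phi_{C,1}(a,\eta)=2\lambda$ constant $\mathcal{H}^n$-a.e.\ on $\widetilde{N}^\phi_n(C)$, and this constant equals $\tfrac{n}{(n+1)\mathcal{L}^{n+1}(C)}\mathcal{V}^\phi_n(C)=\tfrac{n}{(n+1)\mathcal{L}^{n+1}(C)}\cdot\tfrac{1}{n}\cdot(\text{something})$; precisely, from $\mathcal{V}^\phi_{n-1}(C)=\lambda\,\mathcal{V}^\phi_n(C)$ and the Minkowski formula (with $r=1$) relating $\mathcal{V}^\phi_{n-1}(C)$ to an integral involving $a\bullet\bm{n}^\phi(\eta)$ and $\bm{H}^\phi_{C,1}$, combined with $\mathcal{V}^\phi_n(C)=\tfrac{1}{n+1}\mathcal{P}^\phi(C)$, I get $\bm{H}^\phi_{C,1}=\tfrac{n\mathcal{P}^\phi(C)}{(n+1)\mathcal{L}^{n+1}(C)}$. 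In fact the cleanest route is: by the general formula $(n-r+1)\mathcal{V}^\phi_{r-1}\text{-type integral}=\cdots$ in Theorem \ref{theo: minkowski formula} with $r=1$, $(n+1)\mathcal{L}^{n+1}(C)=\int_{N^\phi(C)}(a\bullet\bm{n}^\phi(\eta))J^\phi_C\bm{H}^\phi_{C,0}\,d\mathcal{H}^n$ and $n\cdot(\text{that integral with }\bm{H}^\phi_{C,0})=1\cdot\int(a\bullet\bm{n}^\phi(\eta))J^\phi_C\bm{H}^\phi_{C,1}\,d\mathcal{H}^n=2\lambda\int(a\bullet\bm{n}^\phi(\eta))J^\phi_C\,d\mathcal{H}^n$; solving yields $2\lambda=\bm{H}^\phi_{C,1}(a,\eta)$ equal to $\tfrac{n}{(n+1)\mathcal{L}^{n+1}(C)}\mathcal{V}^\phi_n(C)\cdot(n+1)/\cdots$. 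I will simply substitute the already-established value of the constant. Note that a priori $\lambda$ could be negative, but the lower bound $\tfrac{n\mathcal{P}^\phi(C)}{(n+1)\mathcal{L}^{n+1}(C)}>0$ is exactly this constant; since $\mathcal{P}^\phi(C)>0$ (as $\Int(C)\neq\varnothing$ by Corollary \ref{cor: postive reach and viscosity boundary}\ref{cor: postive reach and viscosity boundary 3}, volume being positive and finite), we get $\lambda>0$ automatically, so the stated inequality holds with equality $\mathcal{H}^n$-a.e.\ on $\widetilde{N}^\phi_n(C)$, which by Remark \ref{rmk: curvature positive reach} (giving $\mathcal{H}^n((N^\phi(C)|\partial^v_+C)\setminus\widetilde{N}^\phi_n(C))=0$) and Corollary \ref{cor: postive reach and viscosity boundary}\ref{cor: postive reach and viscosity boundary 1} is what is asserted.

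For part (a), assume $\lambda\neq 0$, $r\geq 2$, and $C$ is $(r-1,\phi)$-mean convex. The idea is to apply the Maclaurin-type inequality of Lemma \ref{lem:Maclaurin} pointwise to the vector $x=(\kappa^\phi_{C,1}(a,\eta),\ldots,\kappa^\phi_{C,n}(a,\eta))$ of generalized $\phi$-curvatures at an Alexandrov point $a$ with $N^\phi(C,a)=\{\eta\}$, where all curvatures are finite (so $S_j(x)=\bm{h}^\phi_{C,j}(a)$). To invoke Lemma \ref{lem:Maclaurin} I need $x\in\Gamma_r$, i.e.\ $S_1(x)\geq0,\ldots,S_r(x)\geq0$; the mean-convexity hypothesis \eqref{eq mean convex 1} gives $\bm{h}^\phi_{C,i}(a)=S_i(x)\geq0$ for $i=1,\ldots,r-1$, and the relation $\bm{h}^\phi_{C,r}(a)=\bm{H}^\phi_{C,r}(a,\eta)=(r+1)\lambda$ (using Corollary \ref{cor: postive reach and viscosity boundary}\ref{cor: postive reach and viscosity boundary 1} to pass between $\bm{h}$ and $\bm{H}$), together with the lower bound on the constant to be proved, shows $S_r(x)\geq0$; but I must be careful about the sign of $\lambda$. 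Since the quantity $\bm{h}^\phi_{C,r}(a)/\binom{n}{r}$ equals $(r+1)\lambda/\binom{n}{r}$, and Maclaurin (applied with indices $1\leq 1\leq r$, assuming momentarily $x\in\Gamma_r$) gives $\bm{h}^\phi_{C,1}(a)/\binom{n}{1}\geq(\bm{h}^\phi_{C,r}(a)/\binom{n}{r})^{1/r}$, the sign issue resolves itself: either the chain of inequalities holds (forcing the last term $\geq$ something, which I cross-check against $\mathcal{P}^\phi(C)/((n+1)\mathcal{L}^{n+1}(C))$ using the formula for the constant in part of the statement already proved, namely $\bm{h}^\phi_{C,r}(a)=\tfrac{n-r+1}{(n+1)\mathcal{L}^{n+1}(C)}\mathcal{V}^\phi_{n-r+1}(C)$, and the Minkowski formula to identify $\tfrac{(n-r+1)\mathcal{V}^\phi_{n-r+1}(C)}{\binom{n}{r}}$ with something $\geq(\mathcal{P}^\phi(C)/((n+1)\mathcal{L}^{n+1}(C)))^r$). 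The finiteness $\bm{h}^\phi_{C,1}(a)<\infty$ is clear on $\mathcal{A}(C)$ since at Alexandrov points all curvatures are finite. I expect the \textbf{main obstacle} to be the careful bookkeeping of normalization constants connecting $\mathcal{V}^\phi_{n-r+1}(C)$, the Minkowski formulae, $\mathcal{P}^\phi(C)$, and $\mathcal{L}^{n+1}(C)$, and in particular verifying that $x\in\Gamma_r$ (equivalently that all the intermediate $S_i(x)$ are nonnegative) before Lemma \ref{lem:Maclaurin} can be applied — this requires combining the mean-convexity assumption with the constancy of $\bm{H}^\phi_{C,r}$ and an inductive use of the partial Maclaurin chain rather than assuming the full cone membership up front.
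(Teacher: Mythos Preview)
Your skeleton matches the paper's: extract pointwise information from the measure identity, use the Minkowski formulae of Theorem~\ref{theo: minkowski formula} to pin down the constant, then handle (b) directly and (a) via Maclaurin. However, several of the steps you leave vague are precisely where the content lies, and a couple of your claims are incorrect.

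For the constant identification $(r+1)\lambda=\frac{n-r+1}{(n+1)\mathcal{L}^{n+1}(C)}\mathcal{V}^\phi_{n-r+1}(C)$, your route via $\mathcal{V}^\phi_{n-r}(C)=\lambda\,\mathcal{V}^\phi_n(C)$ does not produce $\mathcal{V}^\phi_{n-r+1}(C)$. The paper instead applies the Minkowski formula at level $r$, which gives
\[
(n-r+1)\int_{N^\phi(C)}\phi(\bm{n}^\phi(\eta))J^\phi_C\bm{H}^\phi_{C,r-1}\,d\mathcal{H}^n
= r\int_{N^\phi(C)}(a\bullet\bm{n}^\phi(\eta))J^\phi_C\bm{H}^\phi_{C,r}\,d\mathcal{H}^n,
\]
then substitutes $\bm{H}^\phi_{C,r}=(r+1)\lambda\cdot\bm{1}_{\widetilde{N}^\phi_n(C)}$ and evaluates the remaining integral via the second Minkowski identity $\int_{N^\phi(C)}(a\bullet\bm{n}^\phi(\eta))J^\phi_C\bm{H}^\phi_{C,0}\,d\mathcal{H}^n=(n+1)\mathcal{L}^{n+1}(C)$. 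The left side equals $(n-r+1)\cdot r\cdot\mathcal{V}^\phi_{n-r+1}(C)$.

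For part (b), two corrections. First, the normalization in $\Theta^\phi_n$ is $1/(n-n+1)=1$, not $1/(n+1)$, so $\mathcal{V}^\phi_n(C)=\int_{\widetilde{N}^\phi_n(C)}\phi(\bm{n}^\phi(\eta))J^\phi_C\,d\mathcal{H}^n$. Second, and more importantly, the relation to the perimeter is an \emph{inequality} $\mathcal{V}^\phi_n(C)\geq\mathcal{P}^\phi(C)$, not equality: by Corollary~\ref{cor: postive reach and viscosity boundary}\ref{cor: postive reach and viscosity boundary 2} the perimeter integral runs only over $N^\phi(C)|\partial^v_+C$, which (up to null sets, Remark~\ref{rmk: curvature positive reach}) is contained in $\widetilde{N}^\phi_n(C)$ but may be strictly smaller when $\mathcal{H}^n(\partial C\setminus\partial^vC)>0$. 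So the statement of (b) is a genuine inequality, not an equality as you suggest.

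For part (a), the ``main obstacle'' you flag is resolved as follows. The definition of $(r-1,\phi)$-mean convexity includes the \emph{global} condition \eqref{eq mean convex 2}, namely $\bm{H}^\phi_{C,r-1}(a,\eta)\geq 0$ for $\mathcal{H}^n$-a.e.\ $(a,\eta)\in N^\phi(C)$ (not merely at Alexandrov points). Plugging this into the displayed Minkowski identity above makes the left side nonnegative, and the right side equals $r(r+1)\lambda(n+1)\mathcal{L}^{n+1}(C)$; hence $\lambda\geq 0$, so $\lambda>0$. Now $x\in\Gamma_r$ pointwise and Lemma~\ref{lem:Maclaurin} gives the chain. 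The final lower bound is obtained by integrating the Maclaurin inequality $\bm{h}^\phi_{C,r-1}(a)\geq\binom{n}{r-1}\bigl[(r+1)\lambda/\binom{n}{r}\bigr]^{(r-1)/r}$ over $N^\phi(C)|\partial^v_+C$ (so that the right produces $\mathcal{P}^\phi(C)$), using again $\bm{H}^\phi_{C,r-1}\geq 0$ on all of $N^\phi(C)$ to bound the left by the full integral, and comparing with the Minkowski identity; after the simplification $(n-r+1)\binom{n}{r-1}/(r\binom{n}{r})=1$ this yields $[(r+1)\lambda]^{1/r}\geq\binom{n}{r}^{1/r}\frac{\mathcal{P}^\phi(C)}{(n+1)\mathcal{L}^{n+1}(C)}$.
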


\begin{proof}
From the equality $ \Theta^\phi_{n-r}(C, \cdot) = \lambda\,
\Theta^\phi_n(C, \cdot) $ we get
\begin{flalign*}
&\int_{B \cap \widetilde{N}^\phi_n(C)} \phi(\bm{n}^\phi(\eta))\,J^\phi_C(a,\eta)\, \Big(\frac{1}{r+1}\bm{H}^\phi_{C,r}(a,\eta) - \lambda\Big)\; d\mathcal{H}^n(a,\eta) \\
& \qquad + \frac{1}{r+1} \int_{(N^\phi(C) \setminus \widetilde{N}_n^\phi(C))\cap B} \phi(\bm{n}^\phi(\eta))\,J^\phi_C(a,\eta)\, \bm{H}^\phi_{C, r}(a,\eta)\; d\mathcal{H}^n(a,\eta) =0
\end{flalign*}
for every bounded Borel set $ B \subset N^\phi(C) $. Since $ J^\phi_C(a,\eta)\phi(\bm{n}^\phi(\eta)) > 0 $ for $ \mathcal{H}^n $ a.e.\ $ (a,\eta)\in N^\phi(C) $, we conclude that $\bm{H}^\phi_{C,r}(a,\eta) = 0$ for $ \mathcal{H}^n $ a.e.\ $(a,\eta)\in N^\phi(C) \setminus \widetilde{N}^\phi_n(C) $ and $\bm{H}^\phi_{C,r}(a,\eta) = (r+1) \lambda$ for $ \mathcal{H}^n $ a.e.\ $(a,\eta)\in \widetilde{N}^\phi_n(C) $. Noting that $ \bm{H}^\phi_{C,0} = \bm{1}_{\widetilde{N}^\phi_n(C)} $ and employing Theorem \ref{theo: minkowski formula}, we derive
\begin{flalign}\label{lem: constant mean curvature properties: eq1}
	& (n-r+1)\int_{N^\phi(C)} \phi(\bm{n}^\phi(\eta))\, J^\phi_C(a,\eta)\, \bm{H}^\phi_{C, r-1}(a,\eta)\, d\mathcal{H}^n(a,\eta) \notag \\
&\qquad = r \int_{N^\phi(C)} [a \bullet \bm{n}^\phi(\eta)]J^\phi_C(a,\eta)\, \bm{H}^\phi_{C,r}(a,\eta)\, d\mathcal{H}^n(a,\eta) \notag \\
& \qquad  = r(r+1)\lambda \int_{\widetilde{N}_n^\phi(C)} [a \bullet \bm{n}^\phi(\eta)]J^\phi_C(a,\eta)\, d\mathcal{H}^n(a,\eta) \notag \\
& \qquad  = r(r+1) \lambda (n+1)\mathcal{L}^{n+1}(C),
\end{flalign}
from which we infer that
\begin{equation}\label{eq:representation}
	\bm{H}^\phi_{C,r}(a,\eta) = \frac{n-r+1}{(n+1)\mathcal{L}^{n+1}(C)}\mathcal{V}^\phi_{n-r+1}(C) \qquad \textrm{for $ \mathcal{H}^n $ a.e.\ $(a,\eta)\in \widetilde{N}^\phi_n(C) $,}
\end{equation}
since $\mathcal{L}^{n+1}(C)\in (0,\infty)$. In particular, using Corollary  \ref{cor: postive reach and viscosity boundary}, we obtain
\begin{equation*}
    	\bm{h}^\phi_{C,r}(a) = \frac{n-r+1}{(n+1)\mathcal{L}^{n+1}(C)}\mathcal{V}^\phi_{n-r+1}(C) \qquad \textrm{for $ \mathcal{H}^n $ a.e.\ $a \in \partial^v C $.}
\end{equation*}
Notice that the asserted conclusion for $ r = 1 $ already follows from \eqref{eq:representation}, since $ \mathcal{V}^\phi_{n}(C) \geq \mathcal{P}^\phi(C)$.

We assume now $ r \geq 2 $, $ \lambda \neq 0 $ and that $ C $ is $ (r-1, \phi) $-mean convex.  The  non-negativity property of $\bm{H}^\phi_{C, r-1} $ in combination with \eqref{lem: constant mean curvature properties: eq1} implies that $ \lambda \geq 0$, that means $ \lambda > 0 $. Therefore $ \bm{h}^\phi_{C,r}(a)> 0 $ is satisfied for $ \mathcal{H}^n $ a.e. $ a \in \partial^v C $. Hence we can apply Lemma \ref{lem:Maclaurin} to conclude that
\begin{equation}\label{lem: constant mean curvature properties: Mclaurin inequality}
\frac{\bm{h}^\phi_{C,1}(a)}{\binom{n}{1}} \geq \ldots \geq
\left(\frac{\bm{h}^\phi_{C,r-1}(a)}{\binom{n}{r-1}}\right)^{\frac{1}{r-1}} \geq  \left(\frac{\bm{h}^\phi_{C,r}(a)}{\binom{n}{r}}\right)^{\frac{1}{r}}
=\left(\frac{(r+1)\lambda}{\binom{n}{r}}\right)^{\frac{1}{r}}
\end{equation}
for $ \mathcal{H}^n $ a.e.\ $ a \in \partial^v C $. Using again that $\bm{H}^\phi_{C,r-1}(a,\eta)\geq 0$ for $\mathcal{H}^n$ a.e. $(a,\eta)\in N^\phi(C)$ and the lower bound for $\bm{H}^\phi_{C,r-1}(a,\eta)$ from   \eqref{lem: constant mean curvature properties: Mclaurin inequality}, we get
\begin{flalign}\label{lem: constant mean curvature properties: eq2}
	&(n-r+1)\int_{N^\phi(C)} \phi(\bm{n}^\phi(\eta))\, J^\phi_C(a,\eta)\, \bm{H}^\phi_{C, r-1}(a,\eta)\, d\mathcal{H}^n(a,\eta) \notag \\
	& \qquad \geq (n-r+1) \int_{N^\phi(C)|\partial^v_+C} \phi(\bm{n}^\phi(\eta))\, J^\phi_C(a,\eta)\, \bm{H}^\phi_{C, r-1}(a,\eta)\, d\mathcal{H}^n(a,\eta)\notag  \\
	& \qquad \geq (n-r+1) [(r+1)\lambda]^{\frac{r-1}{r}}{\binom{n}{r}}^{\frac{1-r}{r}}{\binom{n}{r-1}}\int_{N^\phi(C)|\partial^v_+C} \phi(\bm{n}^\phi(\eta))\, J^\phi_C(a,\eta)\, d\mathcal{H}^n(a,\eta) \notag \\
	& \qquad =(n-r+1)[(r+1)\lambda]^{\frac{r-1}{r}} {\binom{n}{r}}^{\frac{1-r}{r}}{\binom{n}{r-1}}\mathcal{P}^\phi(C).
\end{flalign}
Combining \eqref{lem: constant mean curvature properties: eq1} and \eqref{lem: constant mean curvature properties: eq2}, noting that $ (n-r+1){\binom{n}{r}}^{-1}{\binom{n}{r-1}}\frac{1}{r} = 1 $ and recalling that $\lambda>0$, we conclude
\begin{equation*}
\infty>[\lambda (r+1)]^{\frac{1}{r}} \geq \frac{\mathcal{P}^\phi(C)}{(n+1)\mathcal{L}^{n+1}(C)} {\binom{n}{r}}^{\frac{1}{r}}.
\end{equation*}
Now the remaining assertion follows from \eqref{lem: constant mean curvature properties: Mclaurin inequality}.
\end{proof}

The Alexandrov theorem for sets of positive reach is now a corollary of our previous results.

\begin{Theorem}\label{theo: Alexandrov}
Let $ r\in\{ 1, \ldots , n \}$,  and let $ C \subset\mathbf{R}^{n+1} $ be an $(r-1, \phi)$-mean convex set with positive and  finite volume such that
\begin{equation}\label{eqmc}
	\Theta^\phi_{n-r}(C, \cdot) = \lambda\, \Theta^\phi_n(C, \cdot)\quad \textrm{for some $\lambda\in \R\setminus\{0\}$}.
\end{equation}

Then $\lambda>0$ and there exist a finite natural number $ N \geq 1 $ and $ c_1, \ldots , c_N \in \mathbf{R}^{n+1} $ such that
\begin{equation*}
\Int(C) = \bigcup_{i=1}^N \Int(c_i + \rho \mathcal{W}^\phi), \quad \dist^\phi\big(c_i + \rho \mathcal{W}^\phi, c_j + \rho  \mathcal{W}^\phi\big) \geq 2\reach^\phi(C) \quad \textrm{for  $ i \neq j $,}
\end{equation*}
where $ \rho$ satisfies the relations $ \binom{n}{r}^{\frac{1}{r}}\left(\lambda(r+1)\right)^{-\frac{1}{r}}= \rho  =\frac{(n+1)\mathcal{L}^{n+1}(C)}{\mathcal{P}^\phi(C)} $ and $\lambda>0$.

If $r=1$, then the same conclusion is obtained for any $\lambda\in\R$ and any set $C$ of positive reach with positive and finite volume.
\end{Theorem}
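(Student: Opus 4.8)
\textbf{Proof plan for Theorem \ref{theo: Alexandrov}.}
The overall strategy is to combine the Minkowski-type formulae of Theorem \ref{theo: minkowski formula}, the structural consequences of the hypothesis \eqref{eqmc} collected in Lemma \ref{lem: constant mean curvature properties}, and the rigidity in the Heintze--Karcher inequality from Theorem \ref{theo: heintze karcher positive reach} (or directly Corollary \ref{theo: lower bound}). First I would record that since $C$ has positive and finite volume, Corollary \ref{cor: postive reach and viscosity boundary} gives $\Int(C)\neq\varnothing$, $\mathcal{P}^\phi(C)>0$, and $\mathcal{H}^n(\partial^v C\setminus\mathcal{A}(C))=0$; in particular the pointwise $\phi$-mean curvatures $\bm h^\phi_{C,k}$ are defined $\mathcal{H}^n$-a.e.\ on $\partial^v C$. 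Writing $\rho:=\frac{(n+1)\mathcal{L}^{n+1}(C)}{\mathcal{P}^\phi(C)}\in(0,\infty)$, I want to show that the mean-convexity hypothesis, together with \eqref{eqmc}, forces $\bm h^\phi_{C,1}(a)\ge n/\rho$ for $\mathcal{H}^n$-a.e.\ $a\in\mathcal{A}(C)$, so that Corollary \ref{theo: lower bound} applies verbatim and produces the decomposition into disjoint rescaled Wulff shapes of radius $\rho$; the lower bound on the pairwise $\phi$-distance and the value of $N$ come out of the equality discussion in Theorem \ref{theo: heintze karcher positive reach}.

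For the case $r\ge 2$, I would apply Lemma \ref{lem: constant mean curvature properties}(a): the $(r-1,\phi)$-mean convexity of $C$ together with \eqref{eqmc} yields $\lambda>0$ and the Maclaurin-type chain of inequalities, whose left end gives precisely
$$
\bm h^\phi_{C,1}(a)\ \ge\ \binom{n}{1}\left(\frac{\bm h^\phi_{C,r}(a)}{\binom{n}{r}}\right)^{1/r}\ \ge\ \frac{n\,\mathcal{P}^\phi(C)}{(n+1)\mathcal{L}^{n+1}(C)}=\frac{n}{\rho}
$$
for $\mathcal{H}^n$-a.e.\ $a\in\partial^v C$. Moreover, Lemma \ref{lem: constant mean curvature properties} identifies $\bm H^\phi_{C,r}=(r+1)\lambda$ a.e.\ on $\widetilde N^\phi_n(C)$ and the equality $[\lambda(r+1)]^{1/r}\binom{n}{r}^{-1/r}=1/\rho$, which is exactly the relation $\binom{n}{r}^{1/r}(\lambda(r+1))^{-1/r}=\rho$ claimed in the statement. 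For the case $r=1$, I would instead use Lemma \ref{lem: constant mean curvature properties}(b), which directly gives $\bm H^\phi_{C,1}(a,\eta)\ge n\mathcal{P}^\phi(C)/((n+1)\mathcal{L}^{n+1}(C))=n/\rho$ for $\mathcal{H}^n$-a.e.\ $(a,\eta)\in\widetilde N^\phi_n(C)$ without any sign hypothesis and for arbitrary $\lambda\in\R$; translating this through Corollary \ref{cor: postive reach and viscosity boundary} gives $\bm h^\phi_{C,1}(a)\ge n/\rho$ for $\mathcal{H}^n$-a.e.\ $a\in\mathcal{A}(C)$, and one checks $\lambda>0$ from the same identity $\bm H^\phi_{C,1}=2\lambda$ on $\widetilde N^\phi_n(C)$ since the curvature is positive.

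With the pointwise lower bound in hand, I would invoke Corollary \ref{theo: lower bound} (whose hypothesis \eqref{eq:zeps} is exactly $\bm h^\phi_{C,1}(a)\ge n/\rho$ a.e.\ on $\mathcal{A}(C)$) to obtain $N\in\N$ and $c_1,\ldots,c_N\in\R^{n+1}$ with
$$
\Int(C)=\bigcup_{i=1}^N\Int(c_i+\rho\,\mathcal{W}^\phi),\qquad \dist^\phi(c_i+\rho\,\mathcal{W}^\phi,c_j+\rho\,\mathcal{W}^\phi)\ge 2\reach^\phi(C)\quad(i\neq j),
$$
and finiteness of $N$ follows from $\mathcal{L}^{n+1}(C)<\infty$. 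I expect the only genuinely delicate point to be bookkeeping: one must be careful that all the a.e.\ statements are compatible (the null set $\partial^v C\setminus\mathcal{A}(C)$, the null set $N^\phi(C)\setminus\widetilde N^\phi_n(C)$ over $\partial^v_+C$ from Remark \ref{rmk: curvature positive reach}, and the passage between $\bm H^\phi_{C,k}$ on the normal bundle and $\bm h^\phi_{C,k}$ on $\partial^v C$ via Corollary \ref{cor: postive reach and viscosity boundary}), and that the sign of $\lambda$ is pinned down correctly in each case. All of the hard analytic work — the Steiner formula, the Heintze--Karcher rigidity, and the Minkowski formulae — has already been done, so this theorem should reduce to assembling those pieces and extracting the explicit radius $\rho$.
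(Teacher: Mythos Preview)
Your approach is essentially the same as the paper's: combine Lemma \ref{lem: constant mean curvature properties} with Corollary \ref{theo: lower bound}. One imprecision to flag: Lemma \ref{lem: constant mean curvature properties} does \emph{not} by itself identify the equality $[\lambda(r+1)]^{1/r}\binom{n}{r}^{-1/r}=1/\rho$; its Maclaurin chain only yields the \emph{inequality} $[\lambda(r+1)]^{1/r}\ge \binom{n}{r}^{1/r}/\rho$ (see the last displayed line of that lemma's proof). The paper closes this gap \emph{a posteriori}: once Corollary \ref{theo: lower bound} has produced the decomposition into Wulff shapes of radius $\rho$, one computes directly that $\kappa^\phi_{\rho\mathcal{W}^\phi,i}=1/\rho$ for all $i$, hence $\bm H^\phi_{C,r}=\binom{n}{r}\rho^{-r}$ on $N^\phi(C)$, and matching this against $\bm H^\phi_{C,r}=(r+1)\lambda$ from Lemma \ref{lem: constant mean curvature properties} gives the claimed equality. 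Your plan implicitly contains all the ingredients for this, but you should make clear that the identification of $\rho$ in terms of $\lambda$ comes after the rigidity step, not from Lemma \ref{lem: constant mean curvature properties} alone.
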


\begin{proof}
The assertion is implied by a combination of Lemma \ref{lem: constant mean curvature properties} and Corollary \ref{theo: lower bound}, in particular it follows that $\lambda>0$.

To check the equation $ \rho = \binom{n}{r}^{\frac{1}{r}}\left(\lambda(r+1)\right)^{-\frac{1}{r}} $, we first observes that $ \kappa^\phi_{\rho \mathcal{W}^\phi}(a, \eta) = \frac{1}{\rho} $ for $(a, \eta)\in N^\phi(\rho \mathcal{W}^\phi) $ (see for instance \cite[Corollary 2.33]{MR4160798}); since $ C $ is the disjoint union of $ N $ translated copies of $ \rho \mathcal{W}^\phi $, one infers that $ \bm{H}^\phi_{C,r}(a,\eta)= \binom{n}{r}\rho^{-r} $ for $(a,\eta) \in N^\phi(C) $. As $\bm{H}^\phi_{C,r}(a,\eta) = \lambda(r+1) $ for $(a, \eta) \in N^\phi(C) $ by Lemma \ref{lem: constant mean curvature properties}, we obtain the aforementioned equation.
\end{proof}

\medskip

\medskip

In Theorem \ref{theo: Alexandrov} we deal with sets which are non-convex, hence it is natural to state the proportionality assumption \eqref{eqmc} on the  $r$-th  $\phi$-mean curvatures in terms of generalized curvature measures. However, under a slightly more restrictive mean convexity assumption we also get the following variant of  Theorem \ref{theo: Alexandrov} in which a corresponding assumption on the proportionality of a curvature measure is imposed in \eqref{eqmcb}. The proof is based on a  modified version of Lemma \ref{lem: constant mean curvature properties}, whose proof only requires minor adjustments (and uses \eqref{eq: partition} in first part of the argument).

\begin{Theorem}\label{theo: Alexandrovmodifiedb}
Let $ r\in\{ 1, \ldots , n \}$, and let $ C \subset\mathbf{R}^{n+1} $ be an $(r-1, \phi)$-mean convex set with positive and  finite volume and such that $ \bm{H}^\phi_{C,r}(a,\eta)\ge 0 $ for $\mathcal{H}^n$ a.e. $(a,\eta)\in N^\phi(C)$. Assume that
\begin{equation}\label{eqmcb}
	\mathcal{C}^\phi_{n-r}(C, \cdot) = \lambda\, \mathcal{C}^\phi_n(C, \cdot)\quad\textrm{for some $\lambda>0$.}
\end{equation}
Then there exist a finite natural number $ N \geq 1 $ and $ c_1, \ldots , c_N \in \mathbf{R}^{n+1} $ such that
\begin{equation*}
\Int(C) = \bigcup_{i=1}^N \Int(c_i + \rho \mathcal{W}^\phi), \quad \dist^\phi\big(c_i + \rho \mathcal{W}^\phi, c_j + \rho  \mathcal{W}^\phi\big) \geq 2\reach^\phi(C) \quad \textrm{for  $ i \neq j $,}
\end{equation*}
where $ \rho $ is given as in Theorem \ref{theo: Alexandrov}.
\end{Theorem}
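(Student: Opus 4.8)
The plan is to mimic the proof of Theorem \ref{theo: Alexandrov}, replacing the appeal to the full Lemma \ref{lem: constant mean curvature properties} by a ``curvature-measure'' variant adapted to hypothesis \eqref{eqmcb}. First I would record what the modified lemma must say: under the assumptions that $C$ is $(r-1,\phi)$-mean convex, that $\bm{H}^\phi_{C,r}\ge 0$ $\mathcal{H}^n$ a.e.\ on $N^\phi(C)$, and that $\mathcal{C}^\phi_{n-r}(C,\cdot)=\lambda\,\mathcal{C}^\phi_n(C,\cdot)$ for some $\lambda>0$, one has $\bm{H}^\phi_{C,r}(a,\eta)=0$ for $\mathcal{H}^n$ a.e.\ $(a,\eta)\in N^\phi(C)\setminus\widetilde{N}^\phi_n(C)$ and $\bm{H}^\phi_{C,r}(a,\eta)=(r+1)\lambda=\frac{n-r+1}{(n+1)\mathcal{L}^{n+1}(C)}\mathcal{V}^\phi_{n-r+1}(C)$ for $\mathcal{H}^n$ a.e.\ $(a,\eta)\in\widetilde{N}^\phi_n(C)$, together with the Maclaurin chain
\begin{equation*}
\infty>\frac{\bm{h}^\phi_{C,1}(a)}{\binom{n}{1}}\ge\left(\frac{\bm{h}^\phi_{C,2}(a)}{\binom{n}{2}}\right)^{1/2}\ge\cdots\ge\left(\frac{\bm{h}^\phi_{C,r}(a)}{\binom{n}{r}}\right)^{1/r}\ge\frac{\mathcal{P}^\phi(C)}{(n+1)\mathcal{L}^{n+1}(C)}
\end{equation*}
for $\mathcal{H}^n$ a.e.\ $a\in\partial^v C$ (and the simpler bound $\bm{H}^\phi_{C,1}\ge \frac{n\mathcal{P}^\phi(C)}{(n+1)\mathcal{L}^{n+1}(C)}$ when $r=1$). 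The point is that once this is in hand, Corollary \ref{theo: lower bound} applies verbatim (its hypothesis \eqref{eq:zeps} is exactly the last inequality in the Maclaurin chain), giving the bubble decomposition; the computation of the common radius $\rho$ is then identical to the last paragraph of the proof of Theorem \ref{theo: Alexandrov}.

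The first step in proving the modified lemma is to extract the pointwise consequence of \eqref{eqmcb}. Here is where the extra hypothesis $\bm{H}^\phi_{C,r}\ge 0$ and the partition \eqref{eq: partition} enter: projecting $\Theta^\phi_m(C,\cdot)$ to $\mathbf{R}^{n+1}$ collapses the two normals over points of $C^{(n)}\setminus\partial^v C$, but since $\bm{H}^\phi_{C,r}\ge 0$ everywhere and $\bm{H}^\phi_{C,0}=\bm{1}_{\widetilde{N}^\phi_n(C)}$, the identity $\mathcal{C}^\phi_{n-r}(C,\cdot)=\lambda\,\mathcal{C}^\phi_n(C,\cdot)$ forces, for every bounded Borel $B\subseteq\mathbf{R}^{n+1}$,
\begin{equation*}
\int_{(N^\phi(C)|B)\cap\widetilde{N}^\phi_n(C)}\phi(\bm{n}^\phi(\eta))J^\phi_C(a,\eta)\Big(\tfrac{1}{r+1}\bm{H}^\phi_{C,r}(a,\eta)-\lambda\Big)\,d\mathcal{H}^n+\tfrac{1}{r+1}\int_{(N^\phi(C)|B)\setminus\widetilde{N}^\phi_n(C)}\phi(\bm{n}^\phi(\eta))J^\phi_C(a,\eta)\bm{H}^\phi_{C,r}(a,\eta)\,d\mathcal{H}^n=0,
\end{equation*}
using Lemma \ref{lem: tangent of normal bundle} and the coarea formula to pass from $\mathcal{C}$ to the integral over $N^\phi(C)$, and Lemma \ref{lem: exterior normal basic properties closed}\ref{lem: exterior normal basic properties closed: c},\ref{lem: exterior normal basic properties closed: d} to handle the two-valued fibres over $C^{(n)}\setminus\partial^v C$ (this is exactly the role of \eqref{eq: partition}). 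Since both $\frac{1}{r+1}\bm{H}^\phi_{C,r}$ is bounded below on $\widetilde{N}^\phi_n$ by a constant depending on the sign of $\lambda$ and the singular integrand is $\ge 0$, and since $J^\phi_C\phi(\bm{n}^\phi(\cdot))>0$ $\mathcal{H}^n$ a.e., localising in $B$ yields $\bm{H}^\phi_{C,r}=0$ off $\widetilde{N}^\phi_n(C)$ and $\bm{H}^\phi_{C,r}=(r+1)\lambda$ on $\widetilde{N}^\phi_n(C)$, $\mathcal{H}^n$ a.e. From here the argument is word-for-word that of Lemma \ref{lem: constant mean curvature properties}: plug $\bm{H}^\phi_{C,r}=(r+1)\lambda$ into the Minkowski formula of Theorem \ref{theo: minkowski formula} with exponent $r$, use $\bm{H}^\phi_{C,0}=\bm{1}_{\widetilde{N}^\phi_n(C)}$ and $\int_{\widetilde{N}^\phi_n(C)}(a\bullet\bm{n}^\phi(\eta))J^\phi_C\,d\mathcal{H}^n=(n+1)\mathcal{L}^{n+1}(C)$ to get the representation $\bm{H}^\phi_{C,r}=\frac{n-r+1}{(n+1)\mathcal{L}^{n+1}(C)}\mathcal{V}^\phi_{n-r+1}(C)$, deduce $\lambda>0$ from $\bm{H}^\phi_{C,r-1}\ge 0$ and the Minkowski formula, then apply Lemma \ref{lem:Maclaurin} to the vector $(\bm{h}^\phi_{C,1}(a),\ldots,\bm{h}^\phi_{C,n}(a))\in\Gamma_r$ (which lies in $\Gamma_r$ by the $(r-1,\phi)$-mean convexity of $C$ together with $\bm{h}^\phi_{C,r}(a)=(r+1)\lambda>0$, valid $\mathcal{H}^n$ a.e.\ on $\partial^v C$ via Corollary \ref{cor: postive reach and viscosity boundary}), and finally combine the resulting lower bound on $\bm{H}^\phi_{C,r-1}$ with the Minkowski formula exactly as in \eqref{lem: constant mean curvature properties: eq2} to obtain $[\lambda(r+1)]^{1/r}\ge\binom{n}{r}^{1/r}\frac{\mathcal{P}^\phi(C)}{(n+1)\mathcal{L}^{n+1}(C)}$, hence the Maclaurin chain down to $\frac{\mathcal{P}^\phi(C)}{(n+1)\mathcal{L}^{n+1}(C)}$.

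With the modified lemma established, the proof of Theorem \ref{theo: Alexandrovmodifiedb} is short: its hypotheses give $\bm{h}^\phi_{C,1}(a)\ge n\,\mathcal{P}^\phi(C)/((n+1)\mathcal{L}^{n+1}(C))=n/\rho$ for $\mathcal{H}^n$ a.e.\ $a\in\mathcal{A}(C)$ (using $\mathcal{H}^n(\partial^v C\setminus\mathcal{A}(C))=0$ from Corollary \ref{cor: postive reach and viscosity boundary}), so Corollary \ref{theo: lower bound} furnishes $N\in\mathbb{N}$ and centres $c_1,\ldots,c_N$ with $\Int(C)=\bigcup_{i=1}^N\Int(c_i+\rho\mathcal{W}^\phi)$ and pairwise $\phi$-distance $\ge 2\reach^\phi(C)$; the identification $\binom{n}{r}^{1/r}(\lambda(r+1))^{-1/r}=\rho$ follows by the same Wulff-shape curvature computation ($\kappa^\phi_{\rho\mathcal{W}^\phi}\equiv 1/\rho$, hence $\bm{H}^\phi_{C,r}\equiv\binom{n}{r}\rho^{-r}$, matched against $\bm{H}^\phi_{C,r}=(r+1)\lambda$) as in the proof of Theorem \ref{theo: Alexandrov}. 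I expect the only genuinely delicate point to be the passage from the measure identity \eqref{eqmcb} on $\mathbf{R}^{n+1}$ to the pointwise statement on $N^\phi(C)$: one must be careful that the contribution of the doubled fibres over $C^{(n)}\setminus\partial^v C$ has the right sign, which is precisely what the additional assumption $\bm{H}^\phi_{C,r}\ge 0$ (not needed in Theorem \ref{theo: Alexandrov}, where the generalized, normal-bundle-level identity is assumed directly) is there to guarantee; everything else is a transcription of the already-proven arguments.
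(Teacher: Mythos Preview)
Your proposal is correct and follows precisely the route indicated in the paper: the paper's proof is just the remark that one runs a modified version of Lemma \ref{lem: constant mean curvature properties} (with the partition \eqref{eq: partition} used in the first part to split the base into $\bm{p}(\widetilde{N}^\phi_n(C))$ and its complement) and then invokes Corollary \ref{theo: lower bound} exactly as in Theorem \ref{theo: Alexandrov}. You have correctly identified the one place where care is needed---passing from the $\mathcal{C}^\phi$-identity on $\mathbf{R}^{n+1}$ to the pointwise statement on $N^\phi(C)$, where the extra hypothesis $\bm{H}^\phi_{C,r}\ge 0$ together with \eqref{eq: partition} is used; when you write this up, spell out carefully how the doubled fibres over $C^{(n)}\setminus\partial^v C$ are handled, since localizing only in the base variable $a$ gives a priori only a fibre-sum identity there.
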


We conclude this section discussing the validity of the hypothesis of Theorem \ref{theo: Alexandrov} in terms of Alexandrov points and pointwise curvatures for a large subclass of sets of positive reach, namely those sets $ C $ for which $ \mathcal{H}^{n}(\partial C \setminus \partial^v C) =0  $. This class includes all convex bodies, and more generally  all closed sets that can be locally represented as the epigraph of a semiconvex function; see Lemma \ref{lem: semiconvex sets}. But it includes much more; indeed,  it is easy to construct sets of positive reach $ C $ for which $ \mathcal{H}^{n}(\partial C \setminus \partial^v C) =0  $, but the boundary is not a topological manifold (see \cite[Example 7.12]{MR3683461} or \cite[Example 1]{MR2443761}).

The hypotheses in the next statement should be seen in connection with the disjoint union displayed in \eqref{eq: partition}.

\begin{Lemma}\label{thm: alexandorv points and mean convexity}
If $ k \in \{1, \ldots , n\} $, $ \lambda \in \mathbf{R}$ and $\varnothing\neq C \subset   \mathbf{R}^{n+1}$ is a set of positive reach such that $ \mathcal{H}^{n}(\partial C \setminus \partial^v C) =0  $  and $ \mathcal{P}^\phi(C) > 0$, then the following two statements hold.
\begin{enumerate}[{\rm (a)}]
    \item  If $ \bm{h}^\phi_{C,k}(a) = \lambda $ for $ \mathcal{H}^n $ a.e.\ $ a \in\mathcal{A}(C)$ and $  \mathcal{H}^{n-k}\big[ \bm{p}(\widetilde{N}^\phi(C) \setminus \widetilde{N}^\phi_n(C))\big] =0  $, then
    \begin{equation*}
        \Theta^\phi_{n-k}(C, \cdot) = \lambda\, \Theta^\phi_{n}(C, \cdot).
    \end{equation*}
    \item If $ \bm{h}^\phi_{C,1}(a) \geq 0, \ldots , \bm{h}^\phi_{C, k-1}(a) \geq 0  $ for $ \mathcal{H}^n $ a.e.\ $ a \in \mathcal{A}(C) $ and $  \mathcal{H}^{n-k+1}\big[ \bm{p}(\widetilde{N}^\phi(C) \setminus \widetilde{N}^\phi_n(C))\big] =0  $, then $ C $ is $ (k-1,\phi) $-mean convex.
\end{enumerate}
\end{Lemma}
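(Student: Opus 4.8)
\textbf{Proof plan for Lemma \ref{thm: alexandorv points and mean convexity}.}

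The plan is to reduce both statements to an application of Lemma \ref{lem:thetared}, which converts a generalized curvature measure into a pure surface integral over $\widetilde{N}^\phi_n(C)$ whenever the relevant singular stratum $\bm{p}(\widetilde{N}^\phi(C)\setminus\widetilde{N}^\phi_n(C))$ is $\mathcal{H}^m$-null, and then to identify the resulting integrand using the dictionary between $\bm{H}^\phi_{C,j}$ and the pointwise curvatures $\bm{h}^\phi_{C,j}$ furnished by Corollary \ref{cor: postive reach and viscosity boundary}. First I would record the standing consequences of $\mathcal{H}^n(\partial C\setminus\partial^v C)=0$: by Corollary \ref{cor: postive reach and viscosity boundary}\ref{cor: postive reach and viscosity boundary 4} this gives $\mathcal{H}^n[\bm{p}(\widetilde{N}^\phi_n(C))\setminus\mathcal{A}(C)]=0$, so that for $\mathcal{H}^n$-a.e.\ $a\in\bm{p}(\widetilde{N}^\phi_n(C))$ the point $a$ is an Alexandrov point and $\bm{H}^\phi_{C,j}(a,\eta)=\bm{h}^\phi_{C,j}(a)$ holds for the unique $\eta$ with $N^\phi(C,a)=\{\eta\}$ (Corollary \ref{cor: postive reach and viscosity boundary}\ref{cor: postive reach and viscosity boundary 1}); moreover $\mathcal{P}^\phi(C)>0$ forces $\Int(C)\neq\varnothing$ via Corollary \ref{cor: postive reach and viscosity boundary}\ref{cor: postive reach and viscosity boundary 3}.

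For part (a), I would apply Lemma \ref{lem:thetared} with $m=n-k$ (legitimate precisely because $\mathcal{H}^{n-k}[\bm{p}(\widetilde{N}^\phi(C)\setminus\widetilde{N}^\phi_n(C))]=0$ is the hypothesis), obtaining for every Borel $B\subseteq N^\phi(C)$
\begin{equation*}
(k+1)\,\Theta^\phi_{n-k}(C,B)=\int_{\widetilde{N}^\phi_n(C)\cap B}\phi(\bm{n}^\phi(\eta))\,J^\phi_C(a,\eta)\,\bm{H}^\phi_{C,k}(a,\eta)\,d\mathcal{H}^n(a,\eta).
\end{equation*}
Since $\Theta^\phi_n(C,B)=\Theta^\phi_n(C,B\cap\widetilde{N}^\phi_n(C))$ by Lemma \ref{lem: exterior normal basic properties closed}\ref{lem: exterior normal basic properties closed: c} and Remark \ref{remcurvdef} (as $\bm{H}^\phi_{C,0}=\bm{1}_{\widetilde{N}^\phi_n(C)}$), it remains to show $\bm{H}^\phi_{C,k}(a,\eta)=\lambda$ for $\mathcal{H}^n$-a.e.\ $(a,\eta)\in\widetilde{N}^\phi_n(C)$. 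For this I would project: by Lemma \ref{lem: tangent of normal bundle} and the coarea formula (the same device used in the proof of Lemma \ref{lem: abs continuity c.m.}), the measure $\mathcal{H}^n\restrict\widetilde{N}^\phi_n(C)$ pushes forward to an absolutely continuous multiple of $\mathcal{H}^n\restrict\bm{p}(\widetilde{N}^\phi_n(C))$, so it suffices to know $\bm{H}^\phi_{C,k}(a,\eta)=\lambda$ for $\mathcal{H}^n$-a.e.\ $a\in\bm{p}(\widetilde{N}^\phi_n(C))$; but on a full-measure subset of this set, $a\in\mathcal{A}(C)$ and $\bm{H}^\phi_{C,k}(a,\eta)=\bm{h}^\phi_{C,k}(a)=\lambda$ by hypothesis. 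Plugging back yields $(k+1)\Theta^\phi_{n-k}(C,B)=\lambda\cdot(k+1)\Theta^\phi_n(C,B)$, i.e.\ the claimed proportionality.

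For part (b), I would again invoke Lemma \ref{lem:thetared}, now with $m=n-i$ for each $i\in\{1,\ldots,k-1\}$; the hypothesis $\mathcal{H}^{n-k+1}[\bm{p}(\widetilde{N}^\phi(C)\setminus\widetilde{N}^\phi_n(C))]=0$ implies $\mathcal{H}^{n-i}[\bm{p}(\widetilde{N}^\phi(C)\setminus\widetilde{N}^\phi_n(C))]=0$ for all such $i$ since $n-i\ge n-k+1$, so Lemma \ref{lem:thetared} applies and gives $\Theta^\phi_{n-i}(C,B)$ as the surface integral of $\phi(\bm{n}^\phi(\eta))J^\phi_C(a,\eta)\bm{H}^\phi_{C,i}(a,\eta)$ over $\widetilde{N}^\phi_n(C)\cap B$. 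Arguing exactly as in part (a) — projecting and using $\bm{H}^\phi_{C,i}(a,\eta)=\bm{h}^\phi_{C,i}(a)\ge 0$ on the full-measure subset of Alexandrov points — shows $\bm{H}^\phi_{C,i}(a,\eta)\ge 0$ for $\mathcal{H}^n$-a.e.\ $(a,\eta)\in\widetilde{N}^\phi_n(C)$. Since $\mathcal{H}^n(N^\phi(C)\setminus\widetilde{N}^\phi_n(C))=0$ by Lemma \ref{lem: exterior normal basic properties closed}\ref{lem: exterior normal basic properties closed: c} (recall $\bm{p}(N^\phi(C)\setminus\widetilde{N}^\phi_n(C))$ is $\mathcal{H}^n$-null and the fibres over $\widetilde{N}^\phi_n(C)$ account for $\mathcal{H}^n$-a.e.\ normal), we get $\bm{H}^\phi_{C,i}(a,\eta)\ge 0$ for $\mathcal{H}^n$-a.e.\ $(a,\eta)\in N^\phi(C)$, which together with $\bm{h}^\phi_{C,i}(a)\ge 0$ for $\mathcal{H}^n$-a.e.\ $a\in\mathcal{A}(C)$ is precisely conditions \eqref{eq mean convex 1} and \eqref{eq mean convex 2} of Definition \ref{def:mc} with $r=k-1$; hence $C$ is $(k-1,\phi)$-mean convex. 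The main technical point to be careful about is the passage between integrals over $\widetilde{N}^\phi_n(C)$ and integrals over $\bm{p}(\widetilde{N}^\phi_n(C))$: one must check that the relevant Jacobian $\ap J^{N_s}_n\bm{p}$ is strictly positive $\mathcal{H}^n$-a.e.\ on $\widetilde{N}^\phi_n(C)$ (Lemma \ref{lem: tangent of normal bundle}) and handle the exhaustion by the sets $N_s$, but this is routine given the machinery already developed.
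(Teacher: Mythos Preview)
Your approach for part (a) is correct and essentially coincides with the paper's: reduce via Lemma~\ref{lem:thetared} to an integral over $\widetilde{N}^\phi_n(C)$, then use Corollary~\ref{cor: postive reach and viscosity boundary} to identify $\bm{H}^\phi_{C,k}$ with $\bm{h}^\phi_{C,k}$ on a set of full $\mathcal{H}^n$-measure. The paper packages the coarea/projection step by quoting Lemma~\ref{lem: abs continuity c.m.}, but the content is the same.

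In part (b), however, there is a genuine error. The claim $\mathcal{H}^n\big(N^\phi(C)\setminus\widetilde{N}^\phi_n(C)\big)=0$ is false in general, and Lemma~\ref{lem: exterior normal basic properties closed}\ref{lem: exterior normal basic properties closed: c} does not assert it: that lemma gives $\mathcal{H}^n\big(\bm{p}[N^\phi(C)\setminus\widetilde{N}^\phi_n(C)]\big)=0$, a statement about the \emph{projection}. For instance, take $C$ to be the union of two spherical caps as in Remark~\ref{spherical caps}: the ridge is $(n-1)$-dimensional, so $\mathcal{H}^n(\partial C\setminus\partial^v C)=0$ holds, yet the normal bundle over the ridge is an $n$-dimensional piece of $N^\phi(C)\setminus\widetilde{N}^\phi_n(C)$ with positive $\mathcal{H}^n$-measure. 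So you cannot pass from ``$\bm{H}^\phi_{C,i}\ge 0$ $\mathcal{H}^n$-a.e.\ on $\widetilde{N}^\phi_n(C)$'' to ``$\mathcal{H}^n$-a.e.\ on $N^\phi(C)$'' by a measure-zero complement argument.

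The fix is already implicit in what you wrote, and is exactly what the paper does. Your argument with Lemma~\ref{lem:thetared} actually shows that the \emph{measure} $\Theta^\phi_{n-k+1}(C,\cdot)$ is non-negative (the integral over $\widetilde{N}^\phi_n(C)\cap B$ is $\ge 0$ for every Borel $B$). But by definition
\[
\Theta^\phi_{n-k+1}(C,B)=\frac{1}{k}\int_{N^\phi(C)\cap B}\phi(\bm{n}^\phi(\eta))\,J^\phi_C(a,\eta)\,\bm{H}^\phi_{C,k-1}(a,\eta)\,d\mathcal{H}^n(a,\eta),
\]
and since the weight $\phi(\bm{n}^\phi(\eta))J^\phi_C(a,\eta)$ is strictly positive $\mathcal{H}^n$-a.e.\ on $N^\phi(C)$, non-negativity of the measure forces $\bm{H}^\phi_{C,k-1}\ge 0$ $\mathcal{H}^n$-a.e.\ on all of $N^\phi(C)$. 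Two further simplifications: you only need Lemma~\ref{lem:thetared} once, with $m=n-k+1$, since condition~\eqref{eq mean convex 1} for $i=1,\ldots,k-2$ is already the hypothesis, and only condition~\eqref{eq mean convex 2} with $r=k-1$ requires proof.
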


\begin{proof}
(a) Let $ B \subseteq N^\phi(C)$ be a Borel set. Noting that $  \Theta^\phi_{n}(C,B)  = \Theta^\phi_{n}\big(C,B \cap \widetilde{N}^\phi_n(C) \big) $ by definition of $ \bm{H}^\phi_{C,0}$, we can use  Lemma   \ref{lem:thetared}, Lemma \ref{lem: abs continuity c.m.} and Corollary \ref{cor: postive reach and viscosity boundary} (a), (b) together with $\eta(a)=\nabla\phi(\bm{n}(C,a))$ for $a\in \partial^vC$ to compute
\begin{flalign*}
    \Theta^\phi_{n-k}(C,B)  & = \Theta^\phi_{n-k}\big(C,B \cap \widetilde{N}^\phi_n(C) \big) \\
    & = \int_{ \partial^v C }\mathbf{1}_B(a,\eta(a))\phi(\bm{n}(C,a)) \bm{h}^\phi_{C, k}(a) \, d\mathcal{H}^n(a) \\
    & = \lambda  \int_{\partial^v C } \mathbf{1}_B(a,\eta(a))\phi(\bm{n}(C,a))   \, d\mathcal{H}^n(a)\\
    & = \lambda\, \Theta^\phi_n(C, B).
\end{flalign*}

(b) Arguing as in (a) we can compute
\begin{flalign*}
    \Theta^\phi_{n-k+1}(C,B)  & = \Theta^\phi_{n-k+1}\big(C,B \cap \widetilde{N}^\phi_n(C) \big) \\
     & = \int_{ \partial^v C }\mathbf{1}_B(a,\eta(a))\phi(\bm{n}(C,a)) \bm{h}^\phi_{C, k-1}(a) \, d\mathcal{H}^n(a)  \geq 0
\end{flalign*}
for every Borel set $ B \subseteq N^\phi(C)$. This means that $ \bm{H}^\phi_{C, k-1}(a, \eta) \geq 0 $ for $ \mathcal{H}^n $ a.e.\ $(a, \eta) \in N^\phi(C) $ and consequently $ C $ is $ (\phi, k-1) $-mean convex.
\end{proof}

Now with the help of Lemma \ref{thm: alexandorv points and mean convexity}, the following result can be easily deduced as a special case of Theorem \ref{theo: Alexandrov}.
\begin{Corollary}\label{Cor: Alexandrov}
Suppose $ k \in \{1, \ldots , n\} $, $ \lambda \in \mathbf{R} \setminus \{0\}$ and $\varnothing\neq C \subset   \mathbf{R}^{n+1}$ is a set of positive reach  with finite and positive volume such that
\begin{enumerate}
    \item[{\rm (1)}] $ \mathcal{H}^{n}(\partial C \setminus \partial^v C) =0  $ and  $ \mathcal{H}^{n-k}\big[ \bm{p}(\widetilde{N}^\phi(C) \setminus \widetilde{N}^\phi_n(C))\big] =0  $,
    \item[{\rm (2)}] $ \bm{h}^\phi_{C,k}(a) = \lambda $ for $ \mathcal{H}^n $ a.e.\ $ a \in\mathcal{A}(C)$,
    \item[{\rm (3)}] $ \bm{h}^\phi_{C,1}(a) \geq 0, \ldots , \bm{h}^\phi_{C, k-1}(a) \geq 0  $ for $ \mathcal{H}^n $ a.e.\ $ a \in \mathcal{A}(C) $.
\end{enumerate}
Then the conclusion of Theorem \ref{theo: Alexandrov} holds. If $ k = 1 $, then the same conclusion is  true for every $ \lambda \in \mathbf{R}$.
\end{Corollary}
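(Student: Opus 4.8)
The plan is to deduce Corollary \ref{Cor: Alexandrov} directly from Theorem \ref{theo: Alexandrov} by translating the pointwise hypotheses (2) and (3) on Alexandrov points into the measure-theoretic hypotheses required by that theorem, using Lemma \ref{thm: alexandorv points and mean convexity} as the bridge. First I would record the basic consequences of assumption (1): since $\mathcal{H}^n(\partial C\setminus\partial^v C)=0$, Corollary \ref{cor: postive reach and viscosity boundary}\ref{cor: postive reach and viscosity boundary 3} tells us that $C$ has non-empty interior (as $\mathcal{L}^{n+1}(C)>0$), hence $\mathcal{H}^n(\partial^v_+C)>0$ and $\mathcal{P}^\phi(C)>0$; also $\mathcal{H}^n(\partial^vC\setminus\mathcal{A}(C))=0$ by Corollary \ref{cor: postive reach and viscosity boundary}\ref{cor: postive reach and viscosity boundary 2}, so "$\mathcal{H}^n$ a.e.\ $a\in\mathcal{A}(C)$" and "$\mathcal{H}^n$ a.e.\ $a\in\partial^v C$" are interchangeable in (2) and (3).

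Next I would apply Lemma \ref{thm: alexandorv points and mean convexity}. Part (a) of that lemma, with the constant $\lambda$, gives $\Theta^\phi_{n-k}(C,\cdot)=\lambda\,\Theta^\phi_n(C,\cdot)$, using hypothesis (2) together with the smallness condition $\mathcal{H}^{n-k}[\bm{p}(\widetilde N^\phi(C)\setminus\widetilde N^\phi_n(C))]=0$ from (1). Part (b) of that lemma, using hypothesis (3) (the non-negativity of $\bm{h}^\phi_{C,1},\ldots,\bm{h}^\phi_{C,k-1}$), yields that $C$ is $(k-1,\phi)$-mean convex; the relevant smallness condition $\mathcal{H}^{n-k+1}[\bm{p}(\widetilde N^\phi(C)\setminus\widetilde N^\phi_n(C))]=0$ needed for part (b) is implied by the condition $\mathcal{H}^{n-k}[\ldots]=0$ in (1), since $n-k+1>n-k$ and vanishing of a lower-order Hausdorff measure forces vanishing of all higher-order ones. (When $k=1$ the mean convexity condition (3) and part (b) are vacuous, since $(0,\phi)$-mean convexity holds for every set of positive reach with non-empty interior by the remark following Definition \ref{def:mc}, so this degenerate case is automatically covered; this also matches the last sentence of the corollary where $\lambda\in\mathbf{R}$ is allowed.)

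With these two facts in hand, $C$ is an $(k-1,\phi)$-mean convex set of positive reach with positive and finite volume satisfying $\Theta^\phi_{n-k}(C,\cdot)=\lambda\,\Theta^\phi_n(C,\cdot)$ with $\lambda\in\mathbf{R}\setminus\{0\}$, which is exactly the hypothesis of Theorem \ref{theo: Alexandrov} (with $r=k$). That theorem then delivers the desired conclusion: $\lambda>0$, and there are a finite $N\ge 1$ and points $c_1,\ldots,c_N\in\mathbf{R}^{n+1}$ with $\Int(C)=\bigcup_{i=1}^N\Int(c_i+\rho\mathcal{W}^\phi)$ and $\dist^\phi(c_i+\rho\mathcal{W}^\phi,c_j+\rho\mathcal{W}^\phi)\ge 2\reach^\phi(C)$ for $i\neq j$, where $\rho=\binom{n}{k}^{1/k}(\lambda(k+1))^{-1/k}=(n+1)\mathcal{L}^{n+1}(C)/\mathcal{P}^\phi(C)$. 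For $k=1$ the same application of Theorem \ref{theo: Alexandrov} works for every $\lambda\in\mathbf{R}$, giving the final clause.

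I do not expect any serious obstacle here; the proof is essentially a matter of checking that the hypotheses line up correctly. The one point to be careful about is the interplay of the two smallness conditions in Lemma \ref{thm: alexandorv points and mean convexity}(a) and (b): one must verify that the single assumption $\mathcal{H}^{n-k}[\bm{p}(\widetilde N^\phi(C)\setminus\widetilde N^\phi_n(C))]=0$ in (1) simultaneously supplies what part (a) needs (the $\mathcal{H}^{n-k}$ condition) and what part (b) needs (the $\mathcal{H}^{n-k+1}$ condition), which follows from monotonicity of Hausdorff measures. A second minor point is the treatment of the boundary case $k=1$, where hypothesis (3) is empty and one should note explicitly that $(0,\phi)$-mean convexity is automatic, so Theorem \ref{theo: Alexandrov} applies with no restriction on $\lambda$.
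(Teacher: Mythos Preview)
Your proposal is correct and follows exactly the approach indicated by the paper, which simply states that the result is deduced as a special case of Theorem \ref{theo: Alexandrov} via Lemma \ref{thm: alexandorv points and mean convexity}. You have filled in the routine details (positivity of $\mathcal{P}^\phi(C)$, the monotonicity $\mathcal{H}^{n-k}=0\Rightarrow\mathcal{H}^{n-k+1}=0$ to feed part (b) of the lemma, and the vacuity of the mean convexity hypothesis when $k=1$) just as intended.
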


\begin{Remark}
Corollary \ref{Cor: Alexandrov} includes as very special cases the soap bubble theorems of Alexandrov (\cite{MR0102114}), Korevaar--Ros (\cite{MR996826} and \cite{MR925120}) and He--Li--Ma--Ge (\cite{MR2514391}). In fact for connected and compact domains with $ \mathcal{C}^2$-boundary the hypothesis (c) of Corollary \ref{Cor: Alexandrov} can be easily deduced from the existence of an elliptic point, the continuity of the principal curvatures and the Garding theory on hyperbolic polynomials (see \cite[page 450]{MR925120} for further details). The continuity of the principal curvatures and the assumption of connectedness play a key role in this argument.
\end{Remark}

Recall the definition of epigraph from \eqref{eq: epigraph}.
\begin{Lemma}\label{lem: semiconvex sets}
Suppose $ C \subseteq \mathbf{R}^{n+1}$ is a compact set such that for every $ a \in \partial C $ there exists $ u \in \mathbf{S}^n$, $ \epsilon, \delta > 0 $ and a semiconvex function $ f : a + u^\perp \rightarrow \mathbf{R}$ such that
\begin{equation}\label{eq:locepi}
    \epi(f)\cap U_{\varepsilon,\delta}(a,u)=  C \cap U_{\varepsilon,\delta}(a,u).
\end{equation}
Then $ \reach(C) > 0$ and $ \mathcal{H}^n(\partial C \setminus \partial^v C) =0$.
\end{Lemma}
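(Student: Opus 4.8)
The statement claims two things about a compact set $C$ which is locally the epigraph of a semiconvex function near each of its boundary points: first, $\reach(C)>0$, and second, $\mathcal{H}^n(\partial C\setminus\partial^v C)=0$. The plan is to localize, extract a uniform positive lower bound for a ``ball rolling inside $C$'' condition from the semiconvexity, and then invoke Alexandrov's theorem on twice differentiability of (semi)convex functions for the second assertion.

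First I would reduce $\reach(C)>0$ to a local statement. Since $C$ is compact and for each $a\in\partial C$ we have a representation \eqref{eq:locepi} with an associated triple $(u,\varepsilon,\delta)$ and a semiconvex $f$ on $a+u^\perp$, the semiconvexity constant $\kappa_a$ of $f$ controls the geometry: the function $y\mapsto \kappa_a|y|^2+f(y)$ is convex, which translates into a lower paraboloid touching condition for $\partial C$ from below (with respect to $u$) at points near $a$. Geometrically this means that through each nearby boundary point there is a Euclidean ball of a fixed radius $r_a>0$ (depending on $\kappa_a$, $\varepsilon$, $\delta$ and the Lipschitz constant of $f$) contained in $C$ and touching $\partial C$ there, with outer normal direction close to $u$; simultaneously, the complement $\R^{n+1}\setminus\Int(C)$ satisfies a uniform exterior cone (in fact a weaker one-sided) condition. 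Covering $\partial C$ by finitely many such neighborhoods $U_{\varepsilon_j,\delta_j}(a_j,u_j)$, $j=1,\dots,m$, and taking $r:=\min_j r_{a_j}>0$ together with a Lebesgue number for the covering, I would conclude that every point of $\partial C$ admits a rolling ball of radius $r$ inside $C$ and that points of $C$ at distance less than some $s_0>0$ from $\partial C$ have a unique footpoint. A clean way to package this is to appeal to results already cited in the excerpt for sets locally representable by semiconvex functions: \cite[Proposition 3.1, Theorem 5.9]{MR3683461} give precisely that the boundary is contained in a graph $M_u$ with a uniform rolling-ball property near each point, and \cite[Theorem 4.18]{MR0110078} (reproduced as \eqref{eq:Nr3} in the proof of Theorem \ref{prop:p1}) gives the one-sided paraboloid estimate $A\cap B(a,r)\subseteq\{a+x+tu: t\le \tfrac1r(|x|^2+t^2)\}$ which is exactly Federer's sufficient condition for positive reach; combining the finitely many local estimates gives $\reach(C)\ge \rho_0$ for some $\rho_0>0$.

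For the second assertion, once $\reach(C)>0$ I would use the epigraph structure more directly. Fix $a\in\partial C$ with its representation $\epi(f)\cap U_{\varepsilon,\delta}(a,u)=C\cap U_{\varepsilon,\delta}(a,u)$ for a semiconvex $f:a+u^\perp\to\R$. By the classical Alexandrov theorem, $f$ is (pointwise) twice differentiable at $\mathcal{L}^n$-a.e.\ point $b\in a+u^\perp$; at every such $b$ the subgradient $\partial f(b)$ is a singleton, so by \cite[Lemma 2.9]{MR0816398} the point $b-f(b)u\in\partial C$ has a one-dimensional normal cone, i.e.\ $b-f(b)u\in\partial^v C$. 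Hence the set of points of $\graph(f)\cap U_{\varepsilon,\delta}(a,u)$ that fail to lie in $\partial^v C$ is the image under the bi-Lipschitz map $b\mapsto b-f(b)u$ (Lipschitz since semiconvex functions on a neighborhood are locally Lipschitz) of an $\mathcal{L}^n$-null subset of $a+u^\perp$, and therefore has $\mathcal{H}^n$-measure zero. Covering the compact set $\partial C$ by finitely many such neighborhoods $U_{\varepsilon_j,\delta_j}(a_j,u_j)$ and noting $\partial C\setminus\partial^v C\subseteq\bigcup_j\big((\partial C\cap U_{\varepsilon_j,\delta_j}(a_j,u_j))\setminus\partial^v C\big)$ (up to a set already covered, since the $U$'s cover $\partial C$), subadditivity of $\mathcal{H}^n$ yields $\mathcal{H}^n(\partial C\setminus\partial^v C)=0$.

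The main obstacle I anticipate is the first part: deriving a \emph{uniform} positive lower bound on the reach from the local semiconvex representations. Individually each $f$ is semiconvex with some constant and Lipschitz on some (a priori shrinking) neighborhood, and one must check that the radius of the inscribed rolling ball one extracts does not degenerate as one moves within a fixed neighborhood, and that the finitely many local radii can be combined with a Lebesgue-number argument. This is exactly the content of the quantitative statements in \cite{MR3683461} and \cite{MR0110078} invoked in the proof of Theorem \ref{prop:p1}, so the cleanest route is to cite those results rather than reprove the paraboloid estimate by hand; the remaining work is the routine compactness/covering bookkeeping. The second part is essentially immediate from Alexandrov's theorem plus the bi-Lipschitz change of variables, and presents no real difficulty.
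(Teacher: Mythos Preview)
Your overall strategy is sound, but there are differences from the paper worth noting, and one of your citations is misdirected.

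For the first part, your plan to localize and then use compactness is exactly what the paper does. However, the key ingredient you propose---\cite[Proposition 3.1, Theorem 5.9]{MR3683461} and \cite[Theorem 4.18]{MR0110078}---is off target: those results are invoked in the proof of Theorem~\ref{prop:p1} to go in the \emph{opposite} direction (from positive reach to a local semiconvex representation), and \cite[Theorem 4.18]{MR0110078} gives the paraboloid estimate as a \emph{consequence} of positive reach, not as a sufficient condition. The paper instead uses \cite[Theorem 2.3]{MR0816398}, which states directly that the epigraph of a semiconvex function has positive reach; with this in hand, one sets $\rho(a)=\tfrac14\min\{\varepsilon(a),\delta(a),\reach(\epi(f_a))\}$, observes $U(a,\rho(a))\subseteq\Unp(C)$, extracts a finite subcover of $C$ by balls $U(a_i,\rho(a_i)/2)$, and takes $\tau$ smaller than all $\rho(a_i)/2$. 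Your ``rolling ball plus paraboloid'' route could be made to work, but it would essentially reprove Fu's theorem by hand.

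For the second part, your argument via Alexandrov's theorem (in fact Rademacher suffices, since you only need differentiability of $f$ to get a singleton subgradient and hence a one-dimensional normal cone) is correct and self-contained. The paper takes a different, shorter route: once $\reach(C)>0$, Lemma~\ref{lem: exterior normal basic properties closed}\ref{lem: exterior normal basic properties closed: c} already gives $\mathcal{H}^0(N(C,a))\in\{1,2\}$ for $\mathcal{H}^n$ a.e.\ $a\in\partial C$, and the epigraph structure \eqref{eq:locepi} rules out $\mathcal{H}^0(N(C,a))=2$ everywhere (two normals would have to be antipodal, but all normals point into the upper half-space determined by $u$). Your approach avoids relying on the Section~\ref{sec: steiner formula} machinery; the paper's approach reuses it to avoid repeating the Rademacher/bi-Lipschitz argument.
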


\begin{proof} For  $a\in\partial C$, let $\varepsilon(a),\delta(a)>0$ and the local representation in terms of a semiconvex function $f_a$ be as in \eqref{eq:locepi}. By \cite[Theorem 2.3]{MR0816398} we know that $\reach(\epi(f_a))\ge r(a)>0$. Define  $\rho(a)=\frac{1}{4}\min\{\varepsilon(a),\delta(a),r(a)\}$. Then  we have $U(a,\rho(a))\subseteq \Unp(C)$ for every $ a \in \partial C$. If $a\in\Int(C)$, then there is also a positive number $\rho(a)$ such that  $U(a,\rho(a))\subset\Int(C)\subseteq \Unp(C)$. Since the sets $U(a,\rho(a))$, for $a\in C$, are an open cover of the compact set $C$, we get a finite number of points $ a_1, \ldots , a_N \in C $ such that $ C \subseteq \bigcup_{i=1}^N U(a_i, \frac{\rho(a_i)}{2})$. Then for $0 < \tau < \inf\{\frac{\rho(a_1)}{2}, \ldots , \frac{\rho(a_N)}{2} \}$, it holds that for every $c\in  C$ there is some $a_i $ such that $U(c,\tau)\subseteq  U(a_i,\rho(a_i))\subseteq\Unp(C)$. This shows that $\reach(C)\ge \tau>0$.

Note that $\partial C=\bm{p}(N(C))$ (see e.g.~\cite[Corollary  4.12(a)]{MR3932153}). Since $\mathcal{H}^0(N(C,a))\neq 2$ for $a\in\partial C$ due to \eqref{eq:locepi}, it follows from Lemma \ref{lem: exterior normal basic properties closed}  (c) that  $\mathcal{H}^n(\bm{p}(N(C))\setminus \partial^v C)=0$, which gives the remaining assertion.
\end{proof}

\bigskip

\noindent
\textbf{Acknowledgements.} 
D. Hug was supported by DFG research grant HU 1874/5-1 (SPP 2265) and gratefully acknowledges support by
ICERM (Brown University).

\bigskip

\noindent
Daniel Hug, Karlsruhe Institute of Technology (KIT), Institute of Stochastics, D-76128 Karlsruhe, Germany, daniel.hug@kit.edu

\medskip

\noindent Mario Santilli,  Department of Information Engineering, Computer Science and Mathematics, Università degli Studi dell'Aquila,
  67100 L’Aquila, Italy, mario.santilli@univaq.it

\printindex

\end{document}